\documentclass[11pt]{article}
\usepackage{amsmath,amssymb,amsthm,mathrsfs}
\usepackage{hyperref}
\hypersetup{colorlinks=true, allcolors=black}
\usepackage{graphicx}
\usepackage[a4paper,margin=1in]{geometry}
\usepackage{bm,mathtools}
\usepackage{enumitem}
\usepackage{xcolor}
\usepackage{etoolbox}
\usepackage[ruled,lined,linesnumbered]{algorithm2e}

\numberwithin{equation}{section}

\newtheorem{theorem}{Theorem}[section]
\newtheorem{lemma}[theorem]{Lemma}
\newtheorem{proposition}[theorem]{Proposition}
\newtheorem{corollary}[theorem]{Corollary}

\newtheorem{introtheorem}{Theorem}

\theoremstyle{definition}
\newtheorem{definition}[theorem]{Definition}
\newtheorem{remark}[theorem]{Remark}
\newtheorem{example}[theorem]{Example}

\graphicspath{{figures/}}

\newcommand{\R}{\mathbb{R}}
\newcommand{\C}{\mathbb{C}}
\newcommand{\T}{\mathbb{T}}

\newcommand{\Pcal}{\mathcal{P}}
\newcommand{\F}{\mathcal{F}}
\newcommand{\E}{\mathcal{E}}

\newcommand{\Wtwo}{\operatorname{W}_2}
\newcommand{\Ent}{\operatorname{Ent}}
\newcommand{\Nbhd}{\mathcal{N}}

\newcommand{\vol}{\mathrm{vol}}
\newcommand{\potential}[1][\mu]{\dot{H}^1_{#1}}
\newcommand{\dualspace}[1][\mu]{\dot{H}^{-1}_{#1}}
\newcommand{\rieszmap}[1][\mu]{\mathcal{I}_{#1}}
\newcommand{\dotHk}[1][k]{\dot{H}^{#1}}
\newcommand{\dotHminusone}{\dot{H}^{-1}}
\newcommand{\constants}{\mathbf{1}}

\begin{document}

\title{Feedback Control and Local Convexification of \\ Wasserstein Gradient Flows}
\author{Dante Kalise, Lucas M. Moschen, and Grigorios A. Pavliotis}
\date{}
\maketitle

\begin{abstract}
    Building on the feedback stabilization framework that we developed for McKean--Vlasov PDEs, we treat a class of entropy-regularized free energies $\F = \E + \sigma\Ent$ on the flat torus, with $\E$ possibly nonconvex and nonlocal, and show that the resulting controlled system admits a gradient flow formulation in Wasserstein space.
    Specifically, we realize the Wasserstein Hessian at a target stationary measure $\bar\mu$ as a self-adjoint operator with compact resolvent and show that its negative is unitarily equivalent to the generator of the linearized dynamics.
    The resulting feedback stabilization can then be interpreted as a local convexification of the free energy landscape: for any prescribed threshold $\delta > 0$, the control, obtained from an algebraic Riccati equation for the linearized problem, induces a finite-rank perturbation of the Wasserstein Hessian that lifts its spectrum above $\delta$, yielding local displacement convexity on a H\"older neighborhood of $\bar\mu$ in regimes where the uncontrolled energy is nonconvex or slowly contracting.
    Under a local well-posedness assumption, $\bar\mu$ is moreover a locally exponentially stable equilibrium of the closed-loop dynamics, with rate at least $\delta$.
    The framework covers McKean--Vlasov models and extends to moment-constrained Fokker--Planck dynamics and Fokker--Planck equations on closed Riemannian manifolds.
\end{abstract}

\section{Introduction}
\label{sec:introduction}

Nonlocal Fokker--Planck equations arising as the mean-field limit of interacting
particle systems~\cite{Sznitman1991}, and the mean-field Langevin dynamics used in optimization and machine learning~\cite{HuEtAl2021, Chizat2022}, are both prominent instances of gradient flows on the space of probability measures with respect to the quadratic Wasserstein distance. 
This formulation was introduced for the Fokker--Planck equation
in~\cite{JordanKinderlehrerOtto1998}, and the associated formal Riemannian structure, including tangent spaces and Wasserstein Hessians, was developed in~\cite{Otto2001} in the context of the porous-medium equation. 
Concretely, the Wasserstein gradient flow of a free energy $\F$ is the continuity equation
\begin{equation}
    \label{eq:intro_gradient_flow}
    \partial_t \mu_t = \nabla\cdot\left(\mu_t\nabla\frac{\delta\F}{\delta\mu}(\mu_t)\right), \qquad \mu_t \in \Pcal_2(\T^d),
\end{equation}
where $\frac{\delta\F}{\delta\mu}$ denotes the first variation of $\F$. 
In this paper, we consider a class of entropy-regularized free energies on $\Pcal_2(\T^d)$ of the form
\[
\F(\mu) = \E(\mu) + \sigma\Ent(\mu), \qquad \Ent(\mu) \coloneqq \int_{\T^d} \mu\log\mu\,dx, \qquad \sigma > 0,
\]
where $\T^d$ is the flat $d$-dimensional torus and $\E$ is a possibly nonconvex and nonlocal energy, such as the McKean--Vlasov energy $\int_{\T^d} V \, d\mu + \frac{1}{2} \iint_{\T^d\times\T^d} W(x-y) \, d\mu(x) \, d\mu(y)$~\cite{CarrilloMccannVillani2003}, or one of the form $\int_{\T^d} U(K*\mu) \, dx$ arising in mean-field Langevin training~\cite{Chizat2022}.

The long-time behavior of~\eqref{eq:intro_gradient_flow} is well understood under convexity assumptions: strong displacement convexity yields a unique equilibrium $\bar\mu$ together with exponential convergence to it~\cite{Santambrogio2017}.
The same conclusion follows from a global Log-Sobolev inequality~\cite{Chizat2022}, a condition that may hold beyond the displacement convex setting.
Even then, the rate can be slow, since it is governed by the spectral gap of the Wasserstein Hessian at $\bar\mu$.
This is typical of nonconvex interaction energies, where the dynamics may become metastable and remain near a local minimum for exponentially long times before transitioning to a stable state~\cite{BolleyGentilGuillin2012, CarrilloEtAl2020, Monmarche2025}.
Near such a metastable equilibrium, the local convergence rate is governed by the Poincar\'e constant of the corresponding stationary measure~\cite{WangChizat2026, MonmarcheReygner2025}.
The same energy landscape may also contain unstable equilibria.
Slow convergence and linear instability are local spectral phenomena encoded in a single operator, the Wasserstein Hessian at the equilibrium, and can therefore be addressed together by modifying its spectrum.
We do so by controlling the linearized dynamics.
Therefore, we fix a stationary state $\bar\mu$ and study how control can modify the Wasserstein Hessian in its neighborhood.

Our aim is to locally convexify the free energy around a prescribed stationary state by modifying the dynamics to strengthen or create local attraction toward $\bar\mu$.
Given a threshold $\delta > 0$, we construct a modification of the free energy that retains the gradient-flow structure of~\eqref{eq:intro_gradient_flow} and adds a finite-rank perturbation to its Wasserstein Hessian at $\bar\mu$, shifting the spectrum above $\delta$.
The starting point is the infinite-horizon linear-quadratic feedback control constructed in~\cite{KaliseMoschenPavliotis2025} for the linearized McKean--Vlasov equation.
In that work, the stabilization problem is formulated in a weighted $L^2$ space and its optimal feedback is characterized by an algebraic Riccati equation.
Here, we show that this Riccati equation leads to the finite-rank Hessian perturbation described above.
To establish this connection, we first realize the Hessian of $\F$ at $\bar\mu$ as a self-adjoint operator with compact resolvent, in the spirit of~\cite{denzler2005fast,seis2014long}, and then identify its negative with the linearization of~\eqref{eq:intro_gradient_flow} through the mapping $\rieszmap[\bar\mu] \phi \coloneqq -\nabla \cdot (\bar\mu\nabla\phi)$, so that the curvature of the energy at $\bar\mu$ and the decay of the linearized flow are governed by the same discrete spectrum.

\begin{introtheorem}
    \label{introthm:hessian}
    Under Assumptions~\ref{ass:H1}--\ref{ass:H3}, the Wasserstein Hessian of $\F$ at $\bar\mu$ admits a self-adjoint realization
    \[
    A : D(A) \subset X \to X, \qquad X \coloneqq \potential[\bar\mu]
    \]
    with compact resolvent. 
    The linearization of~\eqref{eq:intro_gradient_flow} around $\bar\mu$ admits a closed realization in $X'$, given by
    \[
    L = -\rieszmap[\bar\mu]A\rieszmap[\bar\mu]^{-1}.
    \]
    In $X$, the corresponding linearized dynamics are $\partial_t\phi = -A\phi$.
\end{introtheorem}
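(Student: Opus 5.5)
The proof builds $A$ from a quadratic form by the Friedrichs/Kato representation theorem, and then conjugates by the Riesz map $\rieszmap[\bar\mu]$ to obtain $L$.

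\textbf{Step 1: the form.} Work in $X=\potential[\bar\mu]$, meaning mean-zero potentials with norm $\|\phi\|_X^2=\int|\nabla\phi|^2\,d\bar\mu$. The form is the second variation of $\F$ along the geodesic direction $\nabla\phi$. Under Assumptions~\ref{ass:H1}--\ref{ass:H3}, $\bar\mu$ has a smooth density bounded above and below on $\T^d$. Using the stationarity relation $\sigma\nabla\log\bar\mu=-\nabla\frac{\delta\E}{\delta\mu}(\bar\mu)$, the Otto calculus form can then be written as
\[
a(\phi,\psi)=\sigma\int \nabla^2\phi:\nabla^2\psi\,d\bar\mu+\int \nabla\phi\cdot K\nabla\psi\,d\bar\mu+b_{\E}(\phi,\psi).
\]
Here $K$ is a smooth matrix field coming from the Hessian of the effective potential. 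The term $b_{\E}$ is the nonlocal contribution $\frac{\delta^2\E}{\delta\mu^2}(\bar\mu)[\rieszmap[\bar\mu]\phi,\rieszmap[\bar\mu]\psi]$, which the assumptions should make bounded on $X$ or $X$-compact. The form domain is $V=\{\phi\in X:\nabla^2\phi\in L^2(\bar\mu)\}$, which is equivalent to $\dot H^2$ because the density is bounded above and below.

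\textbf{Step 2: lower bound and self-adjointness.} Show that $a$ is symmetric, closed and bounded below on $V\subset X$. Symmetry follows from the symmetry of second variations. For the bound, the second-order term is coercive on $\dot H^2$. The lower-order terms are controlled by $\|\nabla\phi\|_{L^2}^2$ plus an interpolation $\epsilon\|\nabla^2\phi\|^2+C_\epsilon\|\nabla\phi\|^2$. This gives $a(\phi,\phi)+\lambda\|\phi\|_X^2\ge c\|\phi\|_V^2$. Kato's first representation theorem then yields a self-adjoint, bounded-below $A$ with $D(A)\subset V$.

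\textbf{Step 3: compact resolvent.} Since $\dot H^2\hookrightarrow \dot H^1$ is compact on the torus by Rellich, the form domain embeds compactly into $X$. Hence $(A+\lambda)^{-1}$ is compact.

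\textbf{Step 4: identification of $L$.} Linearize the right-hand side of \eqref{eq:intro_gradient_flow} at $\bar\mu$ with $\mu=\bar\mu+\rho$. This gives $L\rho=\nabla\cdot(\rho\nabla\frac{\delta\F}{\delta\mu}(\bar\mu))+\nabla\cdot(\bar\mu\nabla\frac{\delta^2\F}{\delta\mu^2}(\bar\mu)\rho)$, and the first term vanishes by stationarity. So $L\rho=-\rieszmap[\bar\mu]H\rho$, where $H=\frac{\delta^2\F}{\delta\mu^2}(\bar\mu)$. Note that $\rieszmap[\bar\mu]:X\to X'=\dualspace[\bar\mu]$ is the Riesz isometry, since $\langle\rieszmap[\bar\mu]\phi,\psi\rangle=\int\nabla\phi\cdot\nabla\psi\,d\bar\mu$.

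The key identity to verify is $a(\phi,\psi)=\langle H\rieszmap[\bar\mu]\phi,\rieszmap[\bar\mu]\psi\rangle$ for smooth $\phi,\psi$. It states that the Wasserstein Hessian equals the flat Hessian pulled back along $\phi\mapsto\rieszmap[\bar\mu]\phi$; the gradient term drops out because $\bar\mu$ is critical. This identity says that $A$, in the $X$ inner product, is represented by $\rieszmap[\bar\mu]^{-1}H\rieszmap[\bar\mu]$, so that $A\phi=H\rieszmap[\bar\mu]\phi$ modulo constants. Therefore $L=-\rieszmap[\bar\mu]A\rieszmap[\bar\mu]^{-1}$ on $D(L)=\rieszmap[\bar\mu]D(A)$. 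Closedness of $L$ in $X'$ follows because it is unitarily equivalent to the self-adjoint $A$. Finally, setting $\rho_t=\rieszmap[\bar\mu]\phi_t$ turns $\partial_t\rho=L\rho$ into $\partial_t\phi=-A\phi$.

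\textbf{Main obstacle.} The hard part is making Step 4 rigorous while keeping domains consistent. This requires checking that the operator generated by the form coincides with the PDE linearization on a core, namely smooth functions, and that smooth functions form a core for the form. It also requires controlling the nonlocal $\E$-term together with the entropy term $\sigma\int\rho^2/\bar\mu$, which in the $\phi$ variable becomes fourth order. The plan is to verify the identity on $C^\infty(\T^d)$, where the computation is an integration by parts, and then to extend by density using the coercivity from Step 2 together with elliptic regularity for $\rieszmap[\bar\mu]$.
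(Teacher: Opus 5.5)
Your Steps 1--3 are the paper's argument (Theorems~\ref{thm:hessian_form} and~\ref{thm:compact_resolvent_a}). The entropy part is coercive on $\dotHk[2](\T^d)$ and the rest is a bounded perturbation. Kato's representation theorem then gives $A$, and the compact embedding $\dotHk[2](\T^d)\hookrightarrow\dotHk[1](\T^d)$ gives the compact resolvent.

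Your Step~4 is correct but organized differently. You use the second-order chain rule at a critical point. Along $\Xi(s\varphi)$, the correction $\langle D^2\Xi(0)[\varphi,\psi],\frac{\delta\F}{\delta\mu}(\bar\mu)\rangle$ vanishes, because $\frac{\delta\F}{\delta\mu}(\bar\mu)$ is constant and $D^2\Xi(0)[\varphi,\psi]$ has zero mass. So the Wasserstein Hessian equals the flat Hessian $D^2\F(\bar\mu)[\rieszmap[\bar\mu]\varphi,\rieszmap[\bar\mu]\psi]$, which you then match with $L=-\rieszmap[\bar\mu]\frac{\delta^2\F}{\delta\mu^2}(\bar\mu)$.

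The paper instead computes the weak form of $D\mathcal{G}(\bar\mu)\rieszmap[\bar\mu]\varphi$ along $\Xi(\varepsilon\varphi)$. It integrates by parts twice, turning the diffusion--drift part into $\sigma q_0(\varphi,\psi)+\langle D^2\Xi(0)[\varphi,\psi],\Phi\rangle_{X',X}$. It then recognizes the remaining terms as $H_\E$ through Lemma~\ref{lem:second_variation_pushforward}, which is exactly your chain rule for the $\E$-part.

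The cancellation is the same in both routes. Using $\sigma\nabla\log\bar\mu=-\nabla\Phi$, the paper's identity reads $\sigma\int\rieszmap[\bar\mu]\varphi\,\rieszmap[\bar\mu]\psi\,\bar\mu^{-1}\,dx=\sigma q_0(\varphi,\psi)+\langle D^2\Xi(0)[\varphi,\psi],\Phi\rangle_{X',X}$. This is the chain rule for $\sigma\Ent$, with its correction $\sigma\langle D^2\Xi(0)[\varphi,\psi],\log\bar\mu\rangle$ rewritten via~\eqref{eq:stationarity}.

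Each packaging buys something different. Yours explains why the Hessian and the linearization share one spectrum. It also gives closedness (indeed self-adjointness) of $L$ in $X'$ at once from unitarity of $\rieszmap[\bar\mu]$. The paper's only ever uses $\nabla\Phi$. By defining $D(L)$ through the weak formulation, it gets $D(L)=\rieszmap[\bar\mu]D(A)$ directly from the definition of $D(A)$, so no operator core is needed.

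Two points to adjust. First, Assumption~\ref{ass:H1} gives only $\Phi\in W^{1,\infty}(\T^d)$, so $\bar\mu$ is Lipschitz, not smooth, and the hypotheses supply no smooth matrix field $K$. The lower bound should therefore not be argued through such a $K$ plus interpolation. It is exactly Assumption~\ref{ass:H2}, which bounds the whole $\E$-Hessian on $X$ and yields $a\ge -C_E$ directly.

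Second, Assumption~\ref{ass:H3} expands $\Theta_E$ only along pushforwards $\Xi(\xi)$. The linearization should therefore be taken along $\Xi(\varepsilon\varphi)$ rather than along $\bar\mu+\rho$ for arbitrary $\rho$. This costs nothing, since $D\Xi(0)\varphi=\rieszmap[\bar\mu]\varphi$ and only $\rho\in\rieszmap[\bar\mu]D(a)$ enters. With these changes, your fallback plan completes the proof: verify the identity as bilinear forms on $C^\infty\times C^\infty$ and extend it to $\dotHk[2]\times\dotHk[2]$ by continuity.
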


Here, the quadratic form represented by $A$ is
\[
a(\varphi,\psi) = H_\E(\varphi,\psi) + \sigma\int_{\T^d} \nabla^2\varphi:\nabla^2\psi\,d\bar\mu,
\]
where $H_\E$ denotes the Wasserstein Hessian of the energy $\E$ as a bilinear form in $X$.
The entropy contribution gives the coercive part of the form, while $H_\E$ is bounded on $X$.
The precise form domain and proof are given in Section~\ref{sec:hessian}.

Given any prescribed threshold $\delta > 0$, compactness of the resolvent implies that only finitely many eigenvalues of $A$ lie below $\delta$. 
The perturbation is therefore chosen on the finite-dimensional space of eigenfunctions associated with eigenvalues $\lambda \le \delta$.
More precisely, we define an operator $B : \R^m \to X$ whose range spans this subspace, and verify the Hautus condition
\[
\ker(A - \lambda I) \cap \ker B^* = \{0\} \quad \text{for every eigenvalue } \lambda \le \delta,
\]
so that the algebraic Riccati equation has a unique bounded nonnegative solution $\Pi$ under a detectability assumption.
The following result collects the consequences of this construction at the nonlinear level by proving that the closed-loop equation is again a Wasserstein gradient flow, of an explicitly modified energy that is locally displacement convex near $\bar\mu$, and $\bar\mu$ becomes locally exponentially stable.

\begin{introtheorem}
    \label{introthm:convexification}
    Let $A$ be the Hessian realization from Theorem~\ref{introthm:hessian} and fix $\delta > 0$.
    Let $\{e_1, \dots, e_m\}$ be an $X$-orthonormal basis of the subspace of eigenfunctions of $A$ associated with eigenvalues $\lambda \le \delta$, set $B u \coloneqq \sum_{j=1}^m u_j e_j$, and let $Q$ be a bounded, self-adjoint, nonnegative operator on $X$, diagonal in the eigenfunctions of $A$, such that $(-A + \delta I, Q^{1/2})$ is detectable.
    Then the algebraic Riccati equation, understood in the weak sense,
    \[
    (A - \delta I)^* \Pi + \Pi(A - \delta I) + \Pi BB^* \Pi = Q,
    \]
    has a unique self-adjoint, nonnegative, and bounded solution $\Pi$, which is diagonal. 
    The corresponding modified operator $A_\Pi \coloneqq A + BB^*\Pi$ is self-adjoint with a closed and coercive form.
    Moreover, the closed-loop equation is the Wasserstein gradient flow of a modified energy $\F_{\rm cl}$.
    Under Assumptions~\ref{ass:H4}--\ref{ass:H5}, the modified energy $\F_{\rm cl}$ is locally $\hat{\lambda}$-displacement convex for some $\hat{\lambda} > \delta$ on a H\"older neighborhood of $\bar\mu$.
    Finally, if Assumption~\ref{ass:H6} holds, $\bar\mu$ is a locally exponentially stable equilibrium of the closed-loop equation, with decay rate at least $\delta$.
\end{introtheorem}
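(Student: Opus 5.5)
The plan is to take the conclusions in the order stated, working first entirely in $X=\potential[\bar\mu]$, where $A$ is self-adjoint with compact resolvent (Theorem~\ref{introthm:hessian}), and only afterwards passing to the nonlinear level. Let $(\lambda_k,e_k)_{k\ge1}$ be an $X$-orthonormal eigenbasis of $A$ with $\lambda_k\uparrow\infty$, and let $e_1,\dots,e_m$ span the eigenvalues $\lambda_k\le\delta$. Since $B$ maps onto this span and $BB^*=P_m$ is the orthogonal projection onto it, the Hautus condition holds trivially. Stabilizability of $(-A+\delta I,B)$ then follows: on $P_mX$ every mode is directly actuated, and on $(I-P_m)X$ the operator $-(A-\delta I)$ has spectrum at most $-(\lambda_{m+1}-\delta)<0$.

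\textbf{Riccati equation.} Because $Q$ is diagonal, I look for $\Pi=\sum_k\pi_k\langle\cdot,e_k\rangle e_k$. The weak Riccati equation then decouples into scalar equations. For $k\le m$, writing $\mu_k=\lambda_k-\delta\le0$, it reads $2\mu_k\pi_k+\pi_k^2=q_k$; the nonnegative root is $\pi_k=-\mu_k+\sqrt{\mu_k^2+q_k}$, which is strictly positive when $\mu_k<0$. Even if $\mu_k=0$ and $q_k=0$, that mode is not yet stabilized, so I will need detectability to force $q_k>0$ on the undamped/unstable modes. For $k>m$, $B^*e_k=0$ and the equation is $2\mu_k\pi_k=q_k$, giving $\pi_k=q_k/(2\mu_k)$, which is bounded because $\mu_k\ge\lambda_{m+1}-\delta>0$. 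Uniqueness among bounded nonnegative solutions I would obtain from the standard infinite-dimensional LQ theory (Curtain--Zwart) under stabilizability and detectability, or directly by showing that any such solution commutes with $A$. Then $A_\Pi=A+P_m\Pi$ is a bounded symmetric perturbation of $A$, so it is self-adjoint on $D(A)$ with form $a_\Pi=a+\langle P_m\Pi\cdot,\cdot\rangle$, which is closed. Its eigenvalues are $\lambda_k+\pi_k=\delta+\sqrt{\mu_k^2+q_k}>\delta$ for $k\le m$ (using $q_k>0$ there) and $\lambda_k>\delta$ for $k>m$. This gives coercivity with constant $\hat\lambda=\min(\lambda_{m+1},\min_{k\le m}(\lambda_k+\pi_k))>\delta$.

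\textbf{Gradient-flow structure and convexity.} The feedback is $u=-B^*\Pi\phi$, where $\phi$ is the potential of the perturbation, $\rho=\mu-\bar\mu=\rieszmap[\bar\mu]\phi$. Since $P_m\Pi$ is finite rank and symmetric, I set
\[
\F_{\rm cl}(\mu)=\F(\mu)+\tfrac12\sum_{j,k\le m}\pi_{jk}\,\langle \mu-\bar\mu,e_j\rangle_{L^2}\langle\mu-\bar\mu,e_k\rangle_{L^2},
\]
using $\langle \rieszmap[\bar\mu]\phi,e_j\rangle_{L^2}=\langle\phi,e_j\rangle_X$. This is a quadratic interaction energy with smooth kernel $\sum\pi_{jk}e_j(x)e_k(y)$. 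Its first variation is $\sum_{j,k}\pi_{jk}\langle\mu-\bar\mu,e_k\rangle e_j$, so the controlled equation, with the control entering as an additional drift potential, is exactly~\eqref{eq:intro_gradient_flow} for $\F_{\rm cl}$. Its Wasserstein Hessian at $\bar\mu$ equals $a+\langle P_m\Pi\cdot,\cdot\rangle=a_\Pi\ge\hat\lambda\|\cdot\|_X^2$. For local displacement convexity I would use Assumptions~\ref{ass:H4}--\ref{ass:H5}, presumably continuity of the Hessian form of $\E$ in $\mu$ in a Hölder topology. The aim is to show that along geodesics staying in a small Hölder ball, the Hessian at $\mu$ differs from that at $\bar\mu$ by at most $\varepsilon\|\nabla\varphi\|^2_{L^2_\mu}$, and that the norms at $\mu$ and at $\bar\mu$ are comparable. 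The entropy part contributes $\sigma\int|\nabla^2\varphi|^2d\mu\ge0$, and the added interaction term is smooth, so it is also continuous. Shrinking the neighborhood then gives a constant that can still be taken $>\delta$, since $\hat\lambda>\delta$ leaves room.

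\textbf{Stability and main obstacle.} For the last claim I would combine the local well-posedness in Assumption~\ref{ass:H6} with the standard EVI/energy-dissipation argument for $\hat\lambda$-convex functionals on a geodesically convex neighborhood. This gives $\Wtwo(\mu_t,\bar\mu)\le e^{-\hat\lambda t}\Wtwo(\mu_0,\bar\mu)$ as long as the solution stays in the neighborhood, and invariance follows from Assumption~\ref{ass:H6} for initial data close enough. The main obstacle is this convexity transfer: a Hölder neighborhood is not obviously geodesically convex, and the Hessian bound must hold uniformly along Wasserstein geodesics. I would first try to use Caffarelli-type regularity of optimal maps between Hölder-close densities on $\T^d$ to keep geodesics inside a slightly larger Hölder ball.
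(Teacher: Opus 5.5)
Your Riccati and gradient-flow steps match the paper: Hautus with $BB^*=P_\delta$, the diagonal ansatz and formula for $\pi_k$, uniqueness from Curtain--Zwart under stabilizability and detectability, $\mathcal{Q}(\mu)=\frac12\sum_\ell\pi_\ell\bigl(\int e_\ell\,d(\mu-\bar\mu)\bigr)^2$ with first variation $P_\delta\Pi\rieszmap[\bar\mu]^{-1}(\mu-\bar\mu)$, and $\mathrm{Hess}_{\Wtwo}\F_{\rm cl}(\bar\mu)=a_\Pi\ge\lambda_\Pi$. The gaps are in the two nonlinear conclusions.

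\textbf{Convexity transfer.} Your additive comparison $\mathrm{Hess}(\mu)\ge\mathrm{Hess}(\bar\mu)-\epsilon\|\nabla\varphi\|_{L^2_\mu}^2$ fails for the entropy. The difference $\sigma\int|\nabla^2\varphi|^2\,d(\mu-\bar\mu)$ is only bounded by $\sigma r\,q_0(\varphi,\varphi)$, an $\dotHk[2]$-quantity that no multiple of $\|\varphi\|_X^2$ controls. You also cannot discard the entropy Hessian at $\mu$ as merely nonnegative. The coercivity $a_\Pi\ge\lambda_\Pi$ at $\bar\mu$ relies on $\sigma q_0$, and the remaining form $H_\E+\langle P_\delta\Pi\,\cdot,\cdot\rangle_X$ is only bounded and in general indefinite on $X_\delta^\perp$ (e.g.\ for the double well).

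What is needed is the relative bound of Proposition~\ref{prop:persistence_hessian_lower_bound}. On $\{\|\mu/\bar\mu-1\|_{L^\infty}\le r\}$ one has $q_\mu\ge(1-r)q_0$. The loss $\sigma r q_0$ is then absorbed through $\sigma q_0\le a_\Pi+M\|\cdot\|_X^2$, with $M=C_E+\|P_\delta\Pi\|_{\mathcal{L}(X)}$. This gives $\mathrm{Hess}_{\Wtwo}\F_{\rm cl}(\mu)(\nabla\phi,\nabla\phi)\ge(1-r)\,a_\Pi(\phi,\phi)-(\epsilon+r(M+C_Q))\|\phi\|_X^2$, and then $a_\Pi\ge\lambda_\Pi\|\cdot\|_X^2$ closes it. With this fix your geodesic argument works. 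Note that the paper only needs geodesics between points of $\Nbhd^\alpha_\varepsilon$ to stay in this $L^\infty$-ratio neighborhood, not in a larger H\"older ball. That follows from the change-of-variables formula and $\|\nabla\psi\|_{C^1}\le\eta$ in Lemma~\ref{lem:brenier_stability}.

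\textbf{Stability via EVI.} This step does not go through under the stated hypotheses. It needs the trajectory to remain where $\F_{\rm cl}$ is $\hat\lambda$-convex, i.e.\ $C^{0,\alpha}$-close to $\bar\mu$. It also needs the PDE solution to be identified with an EVI solution of a functional that is only locally convex. Assumption~\ref{ass:H6} supplies neither. It gives local existence in $\mathcal{Y}_T$ for $L^2$-small data and a quadratic $L^2\to X'$ Lipschitz bound on $\mathfrak{R}$, with no H\"older control. So your invariance claim is unsupported, and $L^2$-small initial data (the class in Theorem~\ref{thm:local_stab}) need not lie in $\Nbhd^\alpha_\varepsilon$ at all.

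The paper's proof does not use convexity. It writes the closed loop as $\partial_t\nu=L_\Pi\nu+\mathfrak{R}(\nu)+\mathfrak{B}_{\rm fb}(\nu)$. It uses $A_\Pi-\gamma I\ge(\lambda_\Pi-\gamma)I$ to get maximal regularity for $\partial_t-(L_\Pi+\gamma I)$ in $\mathcal{Y}_T$ with a $T$-independent constant (Proposition~\ref{prop:weighted_linear_density}). It bounds both remainders quadratically in $L^2$, via Assumption~\ref{ass:H6} and Lemma~\ref{lem:feedback_remainder} (which is where \ref{ass:H5} enters). It then closes a bootstrap/continuation argument for $w_t=e^{\gamma t}\nu_t$. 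This yields decay at every rate $\gamma<\lambda_\Pi$, in particular at rate $\delta$.
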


The Riccati equation yields the bilinear form $a_\Pi(\phi,\psi) \coloneqq a(\phi,\psi) + \langle BB^*\Pi\phi, \psi\rangle_X$ satisfying 
\[
a_\Pi (\phi,\phi) \ge \lambda_\Pi \|\phi\|_X^2 \qquad \forall \phi\in D(a),
\]
for some $\lambda_\Pi > \delta$.
Under Assumptions~\ref{ass:H4}--\ref{ass:H5}, for every $\hat{\lambda} \in (0,\lambda_\Pi)$, there exists $r_{\hat{\lambda}} \in (0,1)$ such that
\[
\mathrm{Hess}_{\Wtwo} \F_{\rm cl}(\mu)(\nabla\phi,\nabla\phi) \ge \hat{\lambda} \int_{\T^d} |\nabla\phi|^2 \, d\mu
\]
for every probability density $\mu$ with $\|\mu/\bar\mu - 1\|_{L^\infty(\T^d)} \le r_{\hat{\lambda}}$ and every $\phi \in W^{2,\infty}(\T^d)$.
Combined with the regularity of the optimal transport map between densities close to $\bar\mu$ in $C^{0,\alpha}(\T^d)$, this estimate yields local $\hat\lambda$-displacement convexity of $\F_{\rm cl}$ on a H\"older neighborhood of $\bar\mu$.
Because the perturbation defined through $B$ and $Q$ is finite-rank and determined by finitely many eigenfunctions and eigenvalues of $A$, the feedback is computable from a Galerkin discretization of the Hessian. 
Section~\ref{sec:applications-numerics} and Appendix~\ref{app:numerics} illustrate this on double-well and Kuramoto interactions, on moment-constrained dynamics, and on the sphere.

\subsection{Related work}

The variational formulation of the Fokker--Planck equation as a Wasserstein gradient flow was introduced by Jordan--Kinderlehrer--Otto~\cite{JordanKinderlehrerOtto1998}. 
Otto's formal Riemannian calculus~\cite{Otto2001} was subsequently complemented by the metric theory of gradient flows developed in~\cite{AmbrosioGigliSavare2008}; see also~\cite{Santambrogio2017}.
The second-order calculus underlying the Wasserstein Hessian appears in~\cite{Otto2001, Lott2008, Gigli2012}, and this perspective has also been extended to total variation flow and related fourth-order evolutions~\cite{CarlierPoon2019}.
The most direct precedents for our construction are spectral realizations of displacement Hessians, or of the corresponding linearized operators, on weighted potential spaces.
In fast diffusion, the complete spectrum of the Hessian operator associated with the rescaled flow around the stationary state is computed and related to long-time asymptotics~\cite{denzler2005fast}.
For the confined porous-medium equation, the displacement Hessian is realized as a self-adjoint operator with purely discrete spectrum, and its eigenfunctions describe higher-order asymptotics~\cite{seis2014long}.
Those works use the realized operator to analyze the uncontrolled dynamics.
Here, this self-adjoint realization is used for entropy-regularized free energies and then modified by a perturbation defined through an algebraic Riccati equation.

In the context of nonconvex Wasserstein gradient flows, stability and metastability are studied at the local level. 
Classical spectral results for McKean--Vlasov dynamics go back to~\cite{Tamura1984}, and criteria for local exponential stability in the Wasserstein-$1$ distance have recently been obtained on both $\R^d$ and the torus~\cite{Cormier2025}.
More quantitatively, in the setting of~\cite{WangChizat2026}, the sharp local convergence rate is governed by the Poincar\'e spectral gap of the stationary measure and agrees with the rate predicted by a tight positivity estimate for the Wasserstein Hessian.
For double-well potentials, a local nonlinear Log-Sobolev inequality can also be established~\cite{MonmarcheReygner2025}, providing a complementary route to local exponential convergence. 
Related Kramers-type phase transitions for nonlocal Fokker--Planck equations with a time-dependent constraint were studied in~\cite{HerrmannNiethammerVelazquez2014}.
Rather than deriving stability from a functional inequality satisfied by the original energy, we modify the energy so as to change its Hessian at a prescribed equilibrium.

The finite-rank perturbation in this paper is chosen by Riccati stabilization of the linearized equation.
A nearby line of work is mean-field control, where a central planner acts on the population law~\cite{HuangCainesMalhame2012,BensoussanFrehseYam2013,FornasierSolombrino2014,AlbiEtAl2017,CarmonaDelarue2018I}.
The direct control of continuity equations, formulated at the level of measure-valued solutions and allowing nonlocal velocities, was introduced and developed in~\cite{PiccoliRossi2013,PiccoliRossi2014}.
Feedback laws have also been proposed for Fokker--Planck equations~\cite{BreitenKunischPfeiffer2018}, with nonlocal extensions in~\cite{KaliseMoschenPavliotis2025}.
Approximate controllability with localized vector fields is treated in~\cite{DuprezMoranceyRossi2019}, and stabilization around reference trajectories of nonlocal continuity equations in~\cite{PogodaevRossi2024}.
Our perturbation is instead finite-rank, selected from the Wasserstein Hessian spectrum, and designed through the Riccati equation to change the local curvature of the free energy.

From an optimal-control perspective, necessary conditions such as the Pontryagin Maximum Principle have been established for continuity equations without diffusion in Wasserstein space~\cite{BonnetRossi2019}, while the value function has been characterized through Hamilton--Jacobi--Bellman equations for multi-agent systems~\cite{JimenezMarigondaQuincampoix2020} and through dynamic programming for sparse mean-field problems~\cite{CavagnariMarigondaPiccoli2020}.
Optimal-control formulations for Fokker--Planck equations are developed in~\cite{AnnunziatoBorzi2010}, with second-order optimality conditions in~\cite{AronnaTroltzsch2021} and nonlinear extensions in~\cite{Barbu2023}.
A complementary perspective interprets Euclidean and Wasserstein gradient flows as optimal controlled evolutions for suitable action functionals~\cite{ChenGeorgiouPavon2025}.
These works are mainly variational, open-loop, or finite-horizon.
Sparse feedback stabilization has been studied for alignment models~\cite{CaponigroEtAl2013}, while model-predictive control with mean-field limits appears in~\cite{AlbiPareschi2018}.
Mean-field games~\cite{LasryLions2007}, by contrast, model competitive equilibria of optimizing agents and lead to coupled forward--backward systems.
Seeking optimal feedback for the nonlinear dynamics would instead require solving a Hamilton--Jacobi--Bellman equation on $\Pcal_2(\T^d)$, in the tradition of the infinite-dimensional theory of Crandall and Lions~\cite{CrandallLions1985}, for which viscosity solutions have been studied for gradient flows~\cite{ConfortiKraaijTonon2023, ConfortiKraaijTamaniniTonon2026} and for controlled McKean--Vlasov equations on the torus~\cite{SonerYan2024}.
We avoid this route by linearizing at $\bar\mu$, at the price of a feedback that is optimal only for the linearized problem.

Finally, the work is related to sampling and variational inference, where Wasserstein gradient flow formulations over Gaussian and Gaussian-mixture families appear in~\cite{LambertEtAl2022}.
Convergence of the unadjusted Langevin algorithm is governed by Log-Sobolev inequalities~\cite{BakryEmery1985}, with quantitative nonasymptotic rates in convex settings~\cite{Dalalyan2017,VempalaWibisono2023}.
For mean-field dynamics, analogous results under suitable Log-Sobolev inequalities are obtained in~\cite{Chizat2022}.
Nonasymptotic rates are also available for some nonconvex landscapes~\cite{RaginskyRakhlinTelgarsky2017}.
Complementary constructive directions include acceleration via nonreversible drifts~\cite{LelievreNierPavliotis2013,DuncanNuskenPavliotis2017} and consensus-based global optimization~\cite{CarrilloEtAl2018}.

\subsection{Organization}

Section~\ref{sec:setting} introduces the notation, main definitions, and assumptions.
Section~\ref{sec:hessian} proves Theorem~\ref{introthm:hessian}, while Section~\ref{sec:feedback} establishes the Riccati-based convexification and stabilization results of Theorem~\ref{introthm:convexification}.
Section~\ref{sec:applications-numerics} treats McKean--Vlasov models, constrained dynamics, and the extension to closed manifolds, including the sphere example.
The appendices contain the auxiliary Hessian and displacement calculations, the nonlinear stability estimates, and the numerical methodology.

\section{Preliminaries}
\label{sec:setting}

We write $\T^d = \R^d/\mathbb{Z}^d$, $|\cdot|$ for the Euclidean norm on $\R^d$, and $d_{\T^d}(x,y)$ for the distance on $\T^d$.
Let $\Pcal_2(\T^d)$ be the space of probability measures on $\T^d$ with finite second moments, endowed with the quadratic Wasserstein distance $\Wtwo$.
This metric underlies Otto's formal Riemannian viewpoint on Wasserstein space~\cite{Otto2001,AmbrosioGigliSavare2008}.
At a measure $\mu \in \Pcal_2(\T^d)$, tangent vectors are represented, in Otto's calculus, by gradient vector fields in
\[
T_\mu\Pcal_2(\T^d) \coloneqq \overline{\{\nabla\varphi : \varphi\in C^\infty(\T^d)\}}^{L^2(\mu)}.
\]
Following~\cite{denzler2005fast}, we represent tangent vectors by weakly differentiable functions through the map $\varphi \mapsto \nabla\varphi$, which leads to the pre-Hilbert space
\[
\potential \coloneqq \left\{ \varphi : \T^d \to \R \, \Big| \, \int_{\T^d} |\nabla\varphi|^2 \, d\mu < \infty\right\} \big/ \{\|\cdot\|_{\potential} = 0\},
\]
endowed with the inner product $\langle \varphi, \psi \rangle_{\potential} \coloneqq \int_{\T^d} \nabla\varphi \cdot \nabla\psi \, d\mu$.
The dual of $\potential$ is denoted by $\dualspace \coloneqq (\potential)^\prime$ with the inner product
\[
\langle \nu, \eta \rangle_{\dualspace} \coloneqq \left\langle \rieszmap[\mu]^{-1}\nu, \rieszmap[\mu]^{-1}\eta \right\rangle_{\potential}, \qquad \nu, \eta \in \dualspace.
\]
The associated Riesz map is
\[
\rieszmap : \potential \to \dualspace, \qquad
\left\langle \rieszmap \varphi, \psi\right\rangle_{\dualspace,\potential} \coloneqq \langle \varphi, \psi \rangle_{\potential},
\]
for $\varphi, \psi \in \potential$.
If $\mu$ has a density (also denoted by $\mu$) that satisfies $0 < c \le \mu \le C < +\infty$, then $\potential$ is a Hilbert space, as we prove in Lemma~\ref{lem:hilbert_structure}.
In this case, by the Riesz representation theorem, $\rieszmap$ is an isometric isomorphism from $\potential$ onto $\dualspace$.
Moreover, in the sense of distributions, $\rieszmap \varphi = -\nabla\cdot(\mu \nabla\varphi)$, as follows by integration by parts on the torus.

A curve $(\mu_s)_{s \in [0,1]}$ in $\Pcal_2(\T^d)$ is a constant speed $\Wtwo$-geodesic if, for every $s, t \in [0,1]$, 
\[
\Wtwo(\mu_s, \mu_t) = |s-t| \Wtwo(\mu_0,\mu_1).
\]
For a function $\varphi$, write $T_\varphi \coloneqq \mathrm{Id} + \nabla\varphi$, understood through its periodic extension to $\R^d$.
If $\mu \in \Pcal_2(\T^d)$ is absolutely continuous and $\nu \in \Pcal_2(\T^d)$, there is a $\mu$-a.e.\ unique optimal transport map from $\mu$ to $\nu$ for the quadratic cost, of the form $T_\psi = \mathrm{Id} + \nabla\psi$ with $\frac{1}{2}|\cdot|^2 + \psi$ convex on $\R^d$ and $\psi$ $\mathbb{Z}^d$-periodic.
Moreover $|T_\psi(x) - x| = d_{\T^d}(x, T_\psi(x))$ for $\mu$-a.e.\ $x$~\cite{cordero1999transport}.
Set $\mathscr{D} \coloneqq \left\{\nabla\varphi : \varphi \in W^{2,\infty}(\T^d)\right\}$, so that, by mollification, $\mathscr{D} \subset T_\mu\Pcal_2(\T^d)$ for every $\mu \in \Pcal_2(\T^d)$.
In particular, if $\mu$ is absolutely continuous and $\nabla\varphi \in \mathscr{D}$, then for $|s|$ sufficiently small the curve $\mu_s = (T_{s\varphi})_{\sharp}\mu$ is a local $\Wtwo$-geodesic starting from $\mu$ with initial velocity $\nabla\varphi$ (see Lemma~\ref{lem:local_displacement_interpolation}).

Let $\mathcal{F} : \Pcal_2(\T^d) \to \R \cup \{+\infty\}$ be a functional.
We say that $\mathcal{F}$ admits a {\em Wasserstein Hessian} at $\mu$ on $\mathscr{D}$ if, for every $\nabla\varphi \in \mathscr{D}$, the map $s \mapsto \mathcal{F}(\mu_s)$ is twice differentiable at $s = 0$ along a local $\Wtwo$-geodesic $(\mu_s)_{|s|< \varepsilon}$ satisfying $\mu_0 = \mu$ and having initial velocity $\nabla \varphi$, and if the resulting second variation defines a quadratic form on $\mathscr{D}$.
Then, the Wasserstein Hessian of $\mathcal{F}$ at $\mu$ is defined as
\[
\mathrm{Hess}_{\Wtwo} \mathcal{F}(\mu)(\nabla\varphi, \nabla\varphi) \coloneqq \frac{d^2}{ds^2} \mathcal{F}(\mu_s)\Big|_{s=0}.
\]
The associated symmetric bilinear form $\mathrm{Hess}_{\Wtwo} \mathcal{F}(\mu)(\nabla \varphi, \nabla\psi)$ on $\mathscr{D} \times \mathscr{D}$ is obtained by polarization.
In this paper, we use the following local version of displacement convexity.

\begin{definition}[Local $\lambda$-displacement convexity]
    \label{def:local-strong-displacement-convexity}
    Let $\F : \Pcal_2(\T^d) \to \R \cup \{+\infty\}$, $\lambda \in \R$, and $\mathcal{U} \subseteq \Pcal_2(\T^d)$.
    We say that $\F$ is {\em $\lambda$-displacement convex on $\mathcal{U}$} if, for every $\mu_0, \mu_1 \in \mathcal{U}$, the constant speed $\Wtwo$-geodesic $(\mu_s)_{s \in [0,1]}$ joining them satisfies
    \[
    \F(\mu_s) \le (1-s)\F(\mu_0) + s \F(\mu_1) - \lambda\frac{s(1-s)}{2} \Wtwo^2(\mu_0,\mu_1), \quad \forall s \in [0,1].
    \]
    We say that $\F$ is {\em locally $\lambda$-displacement convex near $\mu$} if it is $\lambda$-displacement convex on a set which is a neighborhood of $\mu$ in $C^{0,\alpha}(\T^d)$ for some $\alpha \in (0,1)$.
\end{definition}

Given a proper and lower semicontinuous function $\E : \Pcal_2(\T^d) \to \R \cup \{+\infty\}$, we consider entropy-regularized free energies of the form
\[
\F(\mu) = \E(\mu) + \sigma\Ent(\mu),
\]
where $\sigma > 0$ is a temperature parameter and $\Ent(\mu) \coloneqq \int_{\T^d} \mu\log\mu\,dx$ if $\mu$ is absolutely continuous with respect to Lebesgue measure and $+\infty$ otherwise.
We assume throughout that $\E$ admits a first variation
\[
\Theta_E(\mu) \coloneqq \frac{\delta\E}{\delta\mu}(\mu),
\]
so that the formal Wasserstein gradient flow is well-defined and written as
\[
\partial_t\mu_t = \nabla\cdot\left(\mu_t\nabla\frac{\delta\F}{\delta\mu}(\mu_t)\right) = \nabla\cdot\left(\mu_t \nabla \Theta_E(\mu_t)\right) + \sigma \Delta \mu_t.
\]

Let $\bar\mu \in \operatorname{dom}(\F)$ be a stationary point of $\F$ and write $\Phi(x) = \Theta_E(\bar\mu)(x)$.
The stationarity condition reads as
\begin{equation}
    \label{eq:stationarity}
    \Phi + \sigma(\log\bar\mu + 1) = c_{\bar\mu} \qquad \text{a.e.\ on } \T^d
\end{equation}
for some $c_{\bar\mu} \in \R$.
Since the analysis is local around $\bar\mu$, we work on the space $X \coloneqq \potential[\bar\mu]$, which, under the assumptions below, is a Hilbert space by Lemma~\ref{lem:hilbert_structure}.
Additionally, since $X$ is a quotient space, we identify each equivalence class with its unique representative satisfying
\begin{equation}
    \label{eq:mean_zero}
    \int_{\T^d} \varphi\,d\bar\mu = 0.
\end{equation}

We make the following structural assumptions near $\bar\mu$, which ensure that the Wasserstein Hessian of $\F$ at $\bar\mu$ admits an operator realization on $X$.
\begin{enumerate}[label=(H\arabic*),ref=(H\arabic*)]
    \item\label{ass:H1} The first variation of $\E$ at $\bar\mu$ satisfies $\Phi \in W^{1,\infty}(\T^d)$.

    \item\label{ass:H2} The Wasserstein Hessian of $\E$ at $\bar\mu$, initially defined on $\mathscr{D}$, extends to a bounded symmetric bilinear form on $X$.
    More precisely, for $\varphi, \psi \in W^{2,\infty}(\T^d)$, we set
    \[
    H_\E(\varphi,\psi) \coloneqq \mathrm{Hess}_{\Wtwo} \E(\bar\mu)(\nabla\varphi,\nabla\psi), 
    \]
    and assume that there exists $C_E > 0$ such that for all $\varphi, \psi \in W^{2,\infty}(\T^d)$, 
    \[
    |H_\E(\varphi,\psi)| \le C_E\|\varphi\|_X\|\psi\|_X. 
    \]
    Hence $H_\E$ extends uniquely by continuity to a bounded symmetric bilinear form on $X$.
\end{enumerate}

\subsection{Consequences of the structural assumptions}
\label{subsec:consequences-structural-assumptions}

For $k \ge 1$, we write $\dotHk(\T^d) \coloneqq H^k(\T^d)/\R$ and endow this quotient space with the norm
\[
\|\varphi\|_{\dotHk} \coloneqq \|\nabla^k \varphi\|_{L^2} = \left(\sum_{i_1,\ldots,i_k=1}^d \left\| \partial_{i_1} \cdots \partial_{i_k}\varphi \right\|_{L^2}^2\right)^{1/2},
\]
where, for matrix- and tensor-valued functions, the pointwise norm is the Frobenius norm.
This norm is well-defined on the quotient space because adding a constant does not change the derivatives.
Moreover, if $\|\nabla^k \varphi\|_{L^2} = 0$, then all $k$-th order weak derivatives of $\varphi$ vanish.
In Fourier variables this gives $\sum_{n \in \mathbb{Z}^d}|n|^{2k} |\widehat\varphi_n|^2 = 0$, and hence $\widehat\varphi_n = 0$ for every $n \neq 0$ and $\varphi$ is a.e.\ constant on $\T^d$.
This norm is equivalent to the canonical quotient norm $\|\varphi\|_{\dotHk}^{\rm quot}
\coloneqq \inf_{c \in \R} \|\varphi+c\|_{H^k}$.
Indeed, the infimum is attained by $\varphi_0 = \varphi - \int_{\T^d} \varphi \, dx$, and the Poincaré--Wirtinger inequality gives
\[
\|\varphi\|_{\dotHk}^{\rm quot} = \|\varphi_0\|_{H^k} \simeq \|\nabla^k \varphi_0\|_{L^2} = \|\nabla^k \varphi\|_{L^2}.
\]
Finally, we denote $\dot{H}^{-k} = (\dotHk[k](\T^d))'$.
For $k \ge 0$ and $\alpha \in (0,1]$, we write $C^{k,\alpha}(\T^d)$ for the space of $k$ times continuously differentiable functions on $\T^d$ whose derivatives of order $k$ are $\alpha$-Hölder continuous, endowed with the norm $\|f\|_{C^{k,\alpha}} \coloneqq \|f\|_{C^{k}} + [\nabla^k f]_\alpha$, where
\[
\|f\|_{C^{k}} \coloneqq \sum_{j=0}^{k} \|\nabla^j f\|_{L^\infty}, \qquad [g]_\alpha \coloneqq \sup_{x \neq y} \frac{|g(x)-g(y)|}{d_{\T^d}(x,y)^\alpha}.
\]
In particular, $\|f\|_{C^{0,\alpha}} = \|f\|_{L^\infty} + [f]_\alpha$.
We now record two lemmas that will be used throughout the paper.

\begin{lemma}
    \label{lem:mu_bar_bounds}
    Let $\bar\mu \in \operatorname{dom}(\F)$ be a stationary point of $\F$.
    Assume~\ref{ass:H1}.
    Then:
    \begin{itemize}
        \item[(i)] $\bar\mu(x) = Z^{-1}\exp(-\Phi(x)/\sigma)$ for a.e.\ $x\in\T^d$, where $Z \coloneqq \int_{\T^d} e^{-\Phi(y)/\sigma}\,dy$.
        \item[(ii)] There exist constants $0 < c_0 \le C_0 < \infty$ such that for a.e.\ $x \in \T^d$, $c_0 \le \bar\mu(x) \le C_0$.
        \item[(iii)] $\bar\mu\in W^{1,\infty}(\T^d)$ and for a.e.\ $x\in\T^d$,
        \begin{equation}
            \label{eq:derivative_barmu}
            \nabla\bar\mu(x) = -\frac{1}{\sigma} \bar\mu(x) \nabla\Phi(x). 
        \end{equation}
    \end{itemize}
\end{lemma}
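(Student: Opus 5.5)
The plan is to solve the stationarity condition~\eqref{eq:stationarity} pointwise for $\log\bar\mu$ and then read off (ii) and (iii) from the regularity of $\Phi$ given by~\ref{ass:H1}. Since $\bar\mu\in\operatorname{dom}(\F)$, the entropy is finite. Hence $\bar\mu$ is absolutely continuous, and $\bar\mu\log\bar\mu\in L^1$.

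For (i), the stationarity relation, read a.e., gives $\log\bar\mu(x) = (c_{\bar\mu} - \Phi(x))/\sigma - 1$ for a.e.\ $x$. Implicitly this says $\bar\mu>0$ a.e., which is the meaning of~\eqref{eq:stationarity}: the left side must be finite for the identity to make sense. Exponentiating gives $\bar\mu = K e^{-\Phi/\sigma}$ a.e., with $K = e^{c_{\bar\mu}/\sigma - 1}>0$. Because $\bar\mu$ is a probability density, $1 = K\int_{\T^d} e^{-\Phi/\sigma}\,dy = KZ$, so $K = Z^{-1}$. Here $Z$ is finite and positive because $\Phi\in L^\infty$ by~\ref{ass:H1}.

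For (ii), I would use $\|\Phi\|_{L^\infty}<\infty$, which bounds $e^{-\Phi/\sigma}$ between $e^{-\|\Phi\|_\infty/\sigma}$ and $e^{\|\Phi\|_\infty/\sigma}$. Combined with the same bounds on $Z$ (the torus has unit volume), this gives $c_0 = e^{-2\|\Phi\|_\infty/\sigma}$ and $C_0 = e^{2\|\Phi\|_\infty/\sigma}$.

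For (iii), the function $t\mapsto Z^{-1}e^{-t/\sigma}$ is smooth, and it is Lipschitz on the bounded range $[-\|\Phi\|_\infty,\|\Phi\|_\infty]$. Composing it with the Lipschitz function $\Phi\in W^{1,\infty}(\T^d)$ yields a $W^{1,\infty}$ function, and the chain rule for Sobolev functions composed with $C^1$ maps gives $\nabla\bar\mu = -\sigma^{-1}Z^{-1}e^{-\Phi/\sigma}\nabla\Phi = -\sigma^{-1}\bar\mu\nabla\Phi$ a.e. This is~\eqref{eq:derivative_barmu}.

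The only delicate point is in (i): justifying positivity a.e.\ of $\bar\mu$, equivalently interpreting $\log\bar\mu$ in~\eqref{eq:stationarity}. I would handle it by noting that the stationarity equation is assumed to hold a.e.\ with finite values, so $\log\bar\mu$ is finite a.e. Everything else is routine.
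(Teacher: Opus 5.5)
Your proposal is correct and follows essentially the same route as the paper: solve the stationarity condition~\eqref{eq:stationarity} for $\bar\mu$, fix the constant by normalization, get (ii) from the boundedness of $\Phi$, and get (iii) from the chain rule. You are slightly more explicit than the paper about the constants $c_0 = e^{-2\|\Phi\|_{L^\infty}/\sigma}$, $C_0 = e^{2\|\Phi\|_{L^\infty}/\sigma}$ and about reading $\bar\mu>0$ a.e.\ off the stationarity relation, which the paper leaves implicit.
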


\begin{proof}
    From \eqref{eq:stationarity}, $\bar\mu(x) = \exp(c_{\bar\mu} / \sigma - 1) e^{-\Phi(x)/\sigma}$.
    The normalization $\int_{\T^d} \bar\mu\,dx = 1$ gives
    \[
    \exp\left(\frac{c_{\bar\mu}}{\sigma}-1\right) = \left(\int_{\T^d} e^{-\Phi(y)/\sigma}\,dy\right)^{-1} = Z^{-1}.
    \]
    Since $\Phi \in W^{1,\infty}(\T^d)$ and $\T^d$ is compact, $\Phi$ is bounded. 
    Hence $e^{-\Phi/\sigma}$ is bounded above and below away from zero, which gives the claimed bounds on $\bar\mu$. 
    Finally, the chain rule gives $e^{-\Phi/\sigma} \in W^{1,\infty}(\T^d)$ and~\eqref{eq:derivative_barmu}.
\end{proof}

\begin{lemma}
\label{lem:hilbert_structure}
    Let $\mu \in L^1(\T^d)$ be a probability density with $0 < c \le \mu \le C < +\infty$.
    Then $\potential$ is a Hilbert space and the norm in $\potential$ is equivalent to the norm on $\dotHk[1](\T^d)$.
    In particular, $X$ is a Hilbert space and each equivalence class in $X$ has a unique representative satisfying~\eqref{eq:mean_zero}.
    Moreover, $X'$ is linearly isomorphic to
    \[
    \left\{f\in H^{-1}(\T^d) : \langle f, \constants \rangle_{H^{-1}(\T^d), H^1(\T^d)} = 0 \right\} 
    \]
    with equivalent norms, where $H^{-1}(\T^d) = [H^1(\T^d)]^\prime$.
\end{lemma}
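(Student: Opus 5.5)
The plan has three steps: compare norms pointwise, pass to the quotient and get completeness from the unweighted space, then identify the dual through the Riesz map.

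\emph{Norm equivalence.} For any weakly differentiable $\varphi$ we have the pointwise bounds $c|\nabla\varphi|^2 \le \mu|\nabla\varphi|^2 \le C|\nabla\varphi|^2$. Integrating gives
\[
c\,\|\nabla\varphi\|_{L^2}^2 \le \|\varphi\|_{\potential}^2 \le C\,\|\nabla\varphi\|_{L^2}^2 .
\]
So the functions with finite $\potential$-seminorm are exactly those with $\nabla\varphi\in L^2$. I interpret the class of admissible $\varphi$ as $H^1_{\mathrm{loc}}$ functions on the compact torus, so these are exactly the functions in $H^1(\T^d)$.

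\emph{Quotient and completeness.} The seminorm vanishes iff $\nabla\varphi=0$ a.e. By the Fourier argument already recorded for $\dotHk$, this happens iff $\varphi$ is a.e.\ constant. Hence the quotient in the definition of $\potential$ coincides with $\dotHk[1](\T^d)=H^1/\R$, and the two norms are equivalent. Completeness of $\dotHk[1]$ follows from the equivalence with the quotient norm of the complete space $H^1$ modulo the closed subspace of constants. Equivalent norms transfer completeness, and the polarization identity shows the $\potential$ norm comes from the stated inner product. So $\potential$ is Hilbert.

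Applying this with $\mu=\bar\mu$, which is admissible by Lemma~\ref{lem:mu_bar_bounds}(ii), shows that $X$ is Hilbert. For the normalization, given a class $[\varphi]$ the representatives are $\varphi+k$. The condition \eqref{eq:mean_zero} forces $k=-\int\varphi\,d\bar\mu$, which is finite since $\varphi\in L^2$ and $\bar\mu$ is bounded. Existence and uniqueness of the representative follow because $\int d\bar\mu=1$.

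\emph{Dual characterization.} I would define $J:\{f\in H^{-1}:\langle f,\constants\rangle=0\}\to X'$ by $(Jf)([\varphi])=\langle f,\varphi\rangle$. This is well defined because $f$ annihilates constants. The bound $\|Jf\|_{X'}\lesssim\|f\|_{H^{-1}}$ follows by choosing the representative $\varphi_0$ with Lebesgue mean zero and using $\|\varphi_0\|_{H^1}\simeq\|\nabla\varphi\|_{L^2}\simeq\|\varphi\|_X$ (Poincar\'e--Wirtinger).

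For surjectivity and the reverse bound, take $\ell\in X'$ and set $f(v)\coloneqq\ell([v])$ for $v\in H^1$. Then $f$ vanishes on constants and satisfies $|f(v)|\le\|\ell\|\,\|[v]\|_X\le C^{1/2}\|\ell\|\,\|v\|_{H^1}$. So $f\in H^{-1}$ with $Jf=\ell$. Injectivity holds because $Jf=0$ means $f$ vanishes on all of $H^1$.

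The only slightly delicate point is this compatibility of representatives: making sure $f$ is well defined on classes, and that the $H^1$ norm of the mean-zero representative controls, and is controlled by, the gradient norm. Poincar\'e--Wirtinger handles both, so I expect no real obstacle.
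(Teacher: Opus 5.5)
Your proposal is correct and follows essentially the same route as the paper. The pointwise bounds $c\le\mu\le C$ give equivalence with the $\dot{H}^1(\T^d)$ norm and hence completeness, and the mean-zero representative comes from subtracting $\int\varphi\,d\bar\mu$. The dual is identified by composing functionals with the quotient map $H^1(\T^d)\to\dot{H}^1(\T^d)$ and, conversely, by restricting an $f\in H^{-1}(\T^d)$ that annihilates constants to equivalence classes.

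The differences are cosmetic. You identify $X'$ directly, with explicit constants $c^{-1/2}$ and $C^{1/2}$ obtained via Poincar\'e--Wirtinger, whereas the paper first identifies $(\dot{H}^1)'$ and then transfers. The Riesz map mentioned in your plan is never actually needed.
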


\begin{proof}
    The bounds $0 < c \le \mu \le C$ imply
    \[ 
    c\int_{\T^d} |\nabla\varphi|^2\,dx \le \int_{\T^d} |\nabla\varphi|^2\,d\mu \le C\int_{\T^d} |\nabla\varphi|^2\,dx.
    \]
    Hence $\potential$ is linearly isomorphic to $\dotHk[1](\T^d)$ with equivalent norms. 
    Since $\dotHk[1](\T^d)$ is complete, $\potential$ is a Hilbert space.
    Applying this to $\mu = \bar\mu$, Lemma~\ref{lem:mu_bar_bounds} implies that $X$ is a Hilbert space.
    For each equivalence class in $X$, subtracting the constant $\int_{\T^d} \varphi \, d\bar\mu$ gives the unique representative with zero $\bar\mu$-mean.

    Since $X$ is linearly isomorphic to $\dotHk[1](\T^d)$ with equivalent norms, it is enough to identify the dual of $\dotHk[1](\T^d)$.
    Let $q : H^1(\T^d) \to \dotHk[1](\T^d)$ be the quotient map.
    If $\Lambda \in (\dotHk[1](\T^d))'$, then $f \coloneqq \Lambda \circ q \in H^{-1}(\T^d)$ and $\langle f, \constants \rangle_{H^{-1}(\T^d), H^1(\T^d)} = \Lambda(q(\constants)) = 0$.
    Thus $f\in \dotHminusone$.
    Conversely, if $f \in \dotHminusone$, then
    \[
    \Lambda_f([\varphi]) \coloneqq \langle f,\varphi\rangle_{H^{-1}(\T^d), H^1(\T^d)}
    \]
    is well-defined on $\dotHk[1](\T^d)$.
    The maps $\Lambda \mapsto \Lambda \circ q$ and $f \mapsto \Lambda_f$ are inverse to each other.
    Moreover, both maps are continuous, and the inverse map is continuous by the quotient norm characterization.
    Hence $(\dotHk[1](\T^d))'$ is identified with this subspace of $H^{-1}(\T^d)$.
    Since $X \simeq \dotHk[1](\T^d)$ with equivalent norms, it follows that $X' \simeq \dotHminusone$ with equivalent norms.
\end{proof}

We close this subsection with two model classes for which Assumptions~\ref{ass:H1}--\ref{ass:H2} can be verified directly.

\begin{example}[McKean--Vlasov]
    \label{ex:mckean-vlasov-energy}
    Let $V, W \in W^{2,\infty}(\T^d)$ and, for simplicity, $W$ even. 
    Set
    \begin{equation}
        \label{eq:mckean_vlasov_equation}
        \E(\mu) = \int_{\T^d} V(x) \, d\mu(x) + \frac{1}{2}\iint_{\T^d\times\T^d} W(x-y)\,d\mu(x)d\mu(y).
    \end{equation}
    Then $\Phi = V + W * \bar\mu$ and~\ref{ass:H1} holds.
    For $\varphi, \psi \in W^{2, \infty}(\T^d)$, the Wasserstein Hessian is
    \begin{align*}
        H_{\E}(\varphi,\psi) &= \int_{\T^d} \bar\mu\,\nabla^2 V\,\nabla\varphi \cdot \nabla\psi \, dx \\
        &\quad + \frac{1}{2}\iint_{\T^d\times\T^d} \nabla^2 W(x-y)\,(\nabla\varphi(x)-\nabla\varphi(y))\cdot(\nabla\psi(x)-\nabla\psi(y))\,d\bar\mu(x)\,d\bar\mu(y).
    \end{align*}
    Hence $|H_\E(\varphi,\psi)| \le \left(\|\nabla^2V\|_{L^\infty} + \|\nabla^2W\|_{L^\infty}\right)\|\varphi\|_X\|\psi\|_X$, so Assumption~\ref{ass:H2} holds.
\end{example}

\begin{example}[Nonlinear nonlocal interaction]
\label{ex:nonlinear-nonlocal-interaction}
    Let $K \in W^{2,\infty}(\T^d)$, $U \in C^3(\R)$, and
    \[ 
    \E(\mu) = \int_{\T^d} U((K*\mu)(x)) \, dx.
    \]
    Define $\widetilde{K}(z) \coloneqq K(-z)$. 
    Then $\Phi = \widetilde K*U'(K*\bar\mu) \in W^{1,\infty}(\T^d)$ by the regularity of $K$ and $U$ and Assumption~\ref{ass:H1} holds.
 
    For $\nu \in X'$, the derivative of the first variation is
    \[ 
    D\Theta_E[\bar\mu] \nu = \widetilde{K} * \left(U''(K * \bar\mu)(K * \nu)\right). 
    \]
    For $\varphi, \psi \in W^{2,\infty}(\T^d)$, the Wasserstein Hessian of $\E$ can be written in the form
    \[ 
    H_\E(\varphi,\psi) = \int_{\T^d} \nabla^2\Theta_E(\bar\mu)(x) \nabla\varphi(x) \cdot \nabla\psi(x) \, d\bar\mu(x) + \int_{\T^d} \nabla(D\Theta_E[\bar\mu] \rieszmap[\bar\mu]\varphi)(x) \cdot \nabla\psi(x) \, d\bar\mu(x). 
    \]
    The first term is bounded by $\|\nabla^2\Theta_E(\bar\mu)\|_{L^\infty}\|\varphi\|_X\|\psi\|_X$. 
    For the second term, integration by parts in the convolution gives bounds of the form $\|\nabla(D\Theta_E[\bar\mu]\rieszmap[\bar\mu]\varphi)\|_{L^\infty} \le C\|\varphi\|_X$, where $C$ depends only on the norms of $K$, the $C^3$-norm of $U$ on the range of $K*\bar\mu$, and $\bar\mu$. 
    Consequently, Assumption~\ref{ass:H2} holds.
\end{example}

\section{The Hessian realization and the linearized flow}
\label{sec:hessian}

This section realizes the Wasserstein Hessian of $\F$ at $\bar\mu$ as a self-adjoint operator on $X$ and identifies its negative with the linearization of the gradient flow through the Riesz map $\rieszmap[\bar\mu]$. 
We start by recalling the known expression for the Hessian of the entropy.
For a formal derivation, we refer the reader to~\cite[Section 9.1]{Villani2003}, whereas a rigorous framework can be found in~\cite{Lott2008, Gigli2012}.
See also Appendix~\ref{app:proof_prop_entropy} for a proof.

\begin{proposition}
    \label{prop:entropy_hessian}
    Let $\mu \in \Pcal_2(\T^d) \cap L^\infty(\T^d)$.
    For $\varphi, \psi\in W^{2,\infty}(\T^d)$,
    \begin{equation}\label{eq:Hess_ent}
        \mathrm{Hess}_{\Wtwo} \mathrm{Ent}(\mu) (\nabla \varphi, \nabla \psi) = \int_{\T^d} \mu(x)\,\nabla^2\varphi(x) : \nabla^2\psi(x) \, dx.
    \end{equation}
\end{proposition}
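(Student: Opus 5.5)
The plan is to compute the second derivative of $s\mapsto \Ent(\mu_s)$ directly along the local geodesic $\mu_s=(T_{s\varphi})_\sharp\mu$, $T_{s\varphi}=\mathrm{Id}+s\nabla\varphi$. This curve has initial velocity $\nabla\varphi$ by Lemma~\ref{lem:local_displacement_interpolation}. Afterwards we polarize to obtain the bilinear form.

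\textbf{Step 1: change of variables.} For $\varphi\in W^{2,\infty}(\T^d)$ and $|s|<\varepsilon$ with $\varepsilon\|\nabla^2\varphi\|_{L^\infty}<1$, the matrix $\nabla T_{s\varphi}=I+s\nabla^2\varphi$ is uniformly positive definite. Hence $T_{s\varphi}$ is a bi-Lipschitz homeomorphism of $\T^d$: it is a periodic perturbation of the identity, injective by monotonicity of the gradient of the uniformly convex function $\tfrac12|x|^2+s\varphi$, and of degree one, hence surjective. The Monge--Ampère / change-of-variables formula then gives
\[
\mu_s(T_{s\varphi}(x))\det(I+s\nabla^2\varphi(x))=\mu(x)\quad\text{a.e.},
\]
and therefore
\[
\Ent(\mu_s)=\Ent(\mu)-\int_{\T^d}\mu(x)\log\det\bigl(I+s\nabla^2\varphi(x)\bigr)\,dx .
\]
Since $\mu\in L^\infty$ and the torus is compact, $\Ent(\mu)$ is finite. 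The change of variables for Lipschitz bijections applied to $\mu\log\mu$ is legitimate because $\mu\log\mu\in L^1$. I expect the main technical point to be justifying this formula with only $W^{2,\infty}$ regularity, that is, a Lipschitz map with a.e.\ defined Jacobian. I would invoke the area formula for Lipschitz injective maps, which requires only a.e.\ differentiability.

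\textbf{Step 2: differentiate under the integral.} Set $M=\nabla^2\varphi(x)$, a symmetric matrix bounded in $L^\infty$. Then
\[
\frac{d}{ds}\log\det(I+sM)=\operatorname{tr}\bigl((I+sM)^{-1}M\bigr),\qquad \frac{d^2}{ds^2}\log\det(I+sM)=-\operatorname{tr}\bigl(((I+sM)^{-1}M)^2\bigr).
\]
These expressions are uniformly bounded in $x$ for $|s|<\varepsilon$. Dominated convergence (with $\mu\in L^1$) therefore allows differentiation twice under the integral, and at $s=0$
\[
\frac{d^2}{ds^2}\Ent(\mu_s)\Big|_{s=0}=\int_{\T^d}\mu\,\operatorname{tr}\bigl((\nabla^2\varphi)^2\bigr)\,dx=\int_{\T^d}\mu\,\nabla^2\varphi:\nabla^2\varphi\,dx,
\]
where the last equality uses the symmetry of the Hessian.

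\textbf{Step 3: well-posedness and polarization.} The right-hand side is a quadratic form on $\mathscr{D}$. It is independent of the chosen local geodesic, since any local geodesic from an absolutely continuous $\mu$ with velocity $\nabla\varphi$ coincides with $(T_{s\varphi})_\sharp\mu$ for small $|s|$, by uniqueness of optimal maps (the map is optimal because $\tfrac12|x|^2+s\varphi$ is convex). Polarizing the quadratic form then yields \eqref{eq:Hess_ent} for $\varphi,\psi\in W^{2,\infty}(\T^d)$.
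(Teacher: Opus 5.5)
Your proposal is correct and follows essentially the same route as the paper's proof: the change of variables along $\mu_s=(T_{s\varphi})_\sharp\mu$ reduces the entropy to $\int\mu\log\mu\,dx-\int\mu\log\det(I+s\nabla^2\varphi)\,dx$, you differentiate twice under the integral using the uniform positivity of the Jacobian, and you conclude by polarization. Your Step~3, which argues that the result does not depend on which local geodesic is chosen, is an extra point the paper does not make; it simply computes along the curve furnished by Lemma~\ref{lem:local_displacement_interpolation}.
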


\subsection{Self-adjoint realization and spectral structure}

For $\varphi, \psi \in C^\infty(\T^d)$, we define
\begin{equation}
    \label{eq:q0_def}
    q_0(\varphi, \psi) \coloneqq \mathrm{Hess}_{\Wtwo}\mathrm{Ent}(\bar\mu)(\nabla \varphi, \nabla \psi).
\end{equation}

\begin{lemma}
    \label{lem:h2_coercivity}
    There exist constants $c, C > 0$ such that, for all $\varphi \in C^\infty(\T^d)$,
    \begin{equation}
        \label{eq:h2_inequality_q0}
        c\|\varphi\|_{\dotHk[2]}^2 \le q_0(\varphi, \varphi) \le C\|\varphi\|_{\dotHk[2]}^2.
    \end{equation}
    Consequently, $q_0$ extends uniquely to a closed symmetric form on $D(q_0) = \dotHk[2](\T^d)$, still given by~\eqref{eq:Hess_ent} with $\mu = \bar\mu$, and $q_0^{1/2}$ is an equivalent norm on $\dotHk[2](\T^d)$.
    Moreover, the form norm
    \[
    \|\varphi\|_\star^2 \coloneqq q_0(\varphi,\varphi) + \|\varphi\|_X^2
    \]
    is also equivalent to $\|\varphi\|_{\dotHk[2]}^2$.
\end{lemma}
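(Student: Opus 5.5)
The plan is to reduce everything to the pointwise two-sided bound $c_0 \le \bar\mu \le C_0$ from Lemma~\ref{lem:mu_bar_bounds}(ii) and then extend by density. For $\varphi \in C^\infty(\T^d)$, Proposition~\ref{prop:entropy_hessian} with $\mu = \bar\mu$ gives
\[
q_0(\varphi,\varphi) = \int_{\T^d} \bar\mu\,|\nabla^2\varphi|^2\,dx .
\]
Since $\bar\mu$ is bounded above by $C_0$ and below by $c_0$, this yields~\eqref{eq:h2_inequality_q0} with $c = c_0$ and $C = C_0$, because the integrand is the Frobenius norm appearing in the definition of $\|\cdot\|_{\dotHk[2]}$. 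Both sides are invariant under adding constants, so the inequality is well posed on the quotient.

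For the extension, I use that $C^\infty(\T^d)$, modulo constants, is dense in $\dotHk[2](\T^d)$; truncating Fourier series gives this directly. By~\eqref{eq:h2_inequality_q0}, $q_0$ is uniformly continuous for the $\dotHk[2]$ norm on this dense subspace, so it extends uniquely to a bounded symmetric form on $\dotHk[2](\T^d)$. The formula $\int \bar\mu\,\nabla^2\varphi:\nabla^2\psi\,dx$ is itself continuous on $\dotHk[2]\times\dotHk[2]$ (by Cauchy--Schwarz and $\bar\mu \le C_0$). It agrees with the extension on smooth functions, hence everywhere. The two-sided bound passes to the limit, so $q_0^{1/2}$ is an equivalent norm on $\dotHk[2]$.

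The form norm needs one extra estimate, namely control of $\|\varphi\|_X$ by $\|\varphi\|_{\dotHk[2]}$. By Lemma~\ref{lem:hilbert_structure}, $\|\varphi\|_X^2 \le C_0\|\nabla\varphi\|_{L^2}^2$. Each $\partial_i\varphi$ has zero Lebesgue mean on the torus, so Poincar\'e--Wirtinger (or Parseval, using $|n|^2 \le |n|^4$ for $n \neq 0$) gives $\|\nabla\varphi\|_{L^2} \le (2\pi)^{-1}\|\nabla^2\varphi\|_{L^2}$. Hence
\[
c_0\|\varphi\|_{\dotHk[2]}^2 \le \|\varphi\|_\star^2 \le \bigl(C_0 + C_0/(4\pi^2)\bigr)\|\varphi\|_{\dotHk[2]}^2 .
\]
Closedness then follows: $D(q_0) = \dotHk[2](\T^d)$ is complete for $\|\cdot\|_\star$, since this norm is equivalent to the complete $\dotHk[2]$ norm.

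The main subtlety, rather than any hard estimate, is making sense of $\dotHk[2]$ as a subspace of $X$. Since $\dotHk[2] \hookrightarrow \dotHk[1] \simeq X$ continuously and injectively (the same constants are quotiented out), closedness in the sense of forms on the Hilbert space $X$ is exactly the completeness above. Densely definedness in $X$ is immediate, because smooth functions are dense in $\dotHk[1]$.
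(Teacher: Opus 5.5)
Your proposal is correct and follows the paper's proof essentially step by step. The pointwise bounds $c_0 \le \bar\mu \le C_0$ give the two-sided estimate with $c = c_0$, $C = C_0$; density of $C^\infty(\T^d)/\R$ gives the unique extension; and the Poincaré inequality on the torus controls $\|\varphi\|_X$ by $\|\nabla^2\varphi\|_{L^2}$, so that $\|\cdot\|_\star$ is equivalent to the complete $\dotHk[2]$-norm and closedness follows. The only cosmetic difference is that you make the Poincaré constant explicit, namely $(2\pi)^{-1}$ via Parseval, where the paper leaves it as $C_P$.
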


\begin{proof}
    For $\varphi \in C^\infty(\T^d)$, Proposition~\ref{prop:entropy_hessian} and~\eqref{eq:q0_def} give the formula $q_0(\varphi,\varphi) = \int_{\T^d}|\nabla^2\varphi|^2\,d\bar\mu$.
    By Lemma~\ref{lem:mu_bar_bounds}(ii), $c_0 \le \bar\mu \le C_0$ a.e., and therefore
    \[
    c_0\|\nabla^2\varphi\|_{L^2}^2 \le q_0(\varphi,\varphi) \le C_0\|\nabla^2\varphi\|_{L^2}^2,
    \]
    which is precisely~\eqref{eq:h2_inequality_q0} for $c = c_0$ and $C = C_0$.
    Thus $q_0$ is continuous on $C^\infty(\T^d)$ with respect to the $\dotHk[2]$-norm.
    Since $C^\infty(\T^d)/\R$ is dense in $\dotHk[2](\T^d)$, $q_0$ extends uniquely to a continuous symmetric bilinear form on $\dotHk[2](\T^d)$.
    Additionally, as $\bar\mu \in L^\infty(\T^d)$, the right-hand side of~\eqref{eq:Hess_ent} is continuous on $\dotHk[2](\T^d)$ too, so the representation persists.

    Finally, by Lemma~\ref{lem:mu_bar_bounds}(ii) and the Poincaré inequality on the torus with constant $C_P$,
    \[
    \|\varphi\|_X^2 \le C_0\|\nabla\varphi\|_{L^2}^2 \le C_0 C_P \|\nabla^2\varphi\|_{L^2}^2 \le C_P C_0 c_0^{-1} q_0(\varphi,\varphi).
    \]
    Hence $q_0(\varphi,\varphi) \le q_0(\varphi,\varphi) + \|\varphi\|_X^2 \le (1 + C_P C_0 c_0^{-1}) q_0(\varphi,\varphi)$, and $\|\cdot\|_\star$ is equivalent to the $\dotHk[2]$-norm. 
    Since $\dotHk[2](\T^d)$ is complete, the extended form is closed.
\end{proof}

From now on, we denote by $q_0$ this closed extension with domain $D(q_0) = \dotHk[2](\T^d)$.

\begin{theorem}[Hessian as a closed form]
    \label{thm:hessian_form}
    Let $\bar\mu$ be a stationary point of $\F$ and assume~\ref{ass:H1}--\ref{ass:H2}. 
    Then, there exists a symmetric bilinear form
    \[
    a: D(a) \times D(a) \to \R, \qquad D(a) = \dotHk[2](\T^d),
    \]
    such that, for all $\varphi, \psi \in C^\infty(\T^d)$,
    \begin{equation}
        \label{eq:HessianF_formula}
        \mathrm{Hess}_{\Wtwo} \F(\bar\mu)(\nabla \varphi, \nabla \psi) = a(\varphi,\psi) \coloneqq H_\E(\varphi,\psi) + \sigma q_0(\varphi, \psi).
    \end{equation}
    Moreover, $a$ is densely defined, symmetric, closed, and bounded from below in the Hilbert space $(X, \langle\cdot,\cdot\rangle_X)$.
    Consequently, there exists a unique self-adjoint operator $A : D(A)\subset X \to X$ such that, for all $\varphi \in D(A)$ and $\psi \in D(a)$,
    \begin{equation}
        \label{eq:A_representation}
        a(\varphi, \psi) = \langle A\varphi, \psi\rangle_X.
    \end{equation}
    The domain of $A$ is given by
    \[
    D(A) = \big\{\varphi \in D(a) : \exists f \in X \text{ such that } a(\varphi,\psi) = \langle f,\psi\rangle_X \; \forall \psi \in D(a)\big\},
    \]
    and $A\varphi = f$.
\end{theorem}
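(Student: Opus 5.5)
The plan is to prove the Hessian identity~\eqref{eq:HessianF_formula} for smooth test functions first, and then to obtain the operator $A$ from the abstract representation theorem for closed semibounded forms (Kato's first representation theorem).

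\textbf{Step 1: the Hessian identity.} For $\varphi\in C^\infty(\T^d)\subset W^{2,\infty}(\T^d)$, take the local geodesic $\mu_s=(T_{s\varphi})_\sharp\bar\mu$ from Lemma~\ref{lem:local_displacement_interpolation}. Along this curve $s\mapsto\F(\mu_s)=\E(\mu_s)+\sigma\Ent(\mu_s)$. By~\ref{ass:H2}, the second derivative of the first summand at $s=0$ is $H_\E(\varphi,\varphi)$. By Proposition~\ref{prop:entropy_hessian}, the second summand contributes $\sigma\int|\nabla^2\varphi|^2\,d\bar\mu=\sigma q_0(\varphi,\varphi)$. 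Since second derivatives add, the quadratic forms add, and polarization gives the bilinear identity~\eqref{eq:HessianF_formula}. We then define $a\coloneqq H_\E+\sigma q_0$ on $D(a)=\dotHk[2](\T^d)$, using the continuous extensions: $H_\E$ extends to $X$ by~\ref{ass:H2}, and $q_0$ extends to $\dotHk[2]$ by Lemma~\ref{lem:h2_coercivity}. The inclusion $\dotHk[2]\subset\dotHk[1]\simeq X$ holds by Lemma~\ref{lem:hilbert_structure}, so $H_\E$ is defined on $D(a)$.

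\textbf{Step 2: the form properties in $X$.}
\begin{itemize}
\item \emph{Density.} Trigonometric polynomials modulo constants lie in $\dotHk[2]$ and are dense in $\dotHk[1]$, hence dense in $X$ by the norm equivalence of Lemma~\ref{lem:hilbert_structure}.
\item \emph{Symmetry.} This is inherited from $H_\E$ and $q_0$.
\item \emph{Lower bound.} Using $|H_\E(\varphi,\varphi)|\le C_E\|\varphi\|_X^2$ and $q_0\ge0$, we get
\[
a(\varphi,\varphi)\ge -C_E\|\varphi\|_X^2 .
\]
\item \emph{Closedness.} This is the main point. For $\omega\coloneqq C_E+1$,
\[
a(\varphi,\varphi)+\omega\|\varphi\|_X^2 \;\ge\; \sigma q_0(\varphi,\varphi)+\|\varphi\|_X^2 \;\ge\; \min(\sigma,1)\,\|\varphi\|_\star^2 .
\]
Conversely, $a(\varphi,\varphi)+\omega\|\varphi\|_X^2\le \max(\sigma,1+2C_E)\|\varphi\|_\star^2$. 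So the norm induced by $a+\omega$ is equivalent to $\|\cdot\|_\star$, which by Lemma~\ref{lem:h2_coercivity} is equivalent to the $\dotHk[2]$-norm. Since $\dotHk[2](\T^d)$ is complete, $D(a)$ is complete under this norm, and therefore $a$ is closed.

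In other words, $a$ is a bounded perturbation, in the KLMN sense with relative bound $0$, of the closed nonnegative form $\sigma q_0$. The one subtle check is that the continuous extension of $H_\E$ restricted to $\dotHk[2]$ is consistent with how it is defined there, which holds by uniqueness of continuous extensions.
\end{itemize}

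\textbf{Step 3: the representation.} Apply Kato's first representation theorem to the densely defined, closed, lower-bounded symmetric form $a$. This yields a unique self-adjoint $A$, bounded below by $-C_E$, with $D(A)\subset D(a)$, satisfying~\eqref{eq:A_representation}. Its domain is exactly the set of $\varphi\in D(a)$ for which $\psi\mapsto a(\varphi,\psi)$ is represented by some $f\in X$, and $A\varphi=f$; uniqueness of $f$ follows from density of $D(a)$ in $X$.

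I expect no serious obstacle: the substantive analytic input (the Hessian formulas and the $\dotHk[2]$ equivalence) is already available from Proposition~\ref{prop:entropy_hessian}, Lemma~\ref{lem:h2_coercivity} and~\ref{ass:H2}. The remaining work is bookkeeping on quotient spaces and the mean-zero normalization~\eqref{eq:mean_zero}.
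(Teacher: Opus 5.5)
Your proposal is correct and follows essentially the same route as the paper. Both identify the Hessian on smooth functions via~\ref{ass:H2} and Proposition~\ref{prop:entropy_hessian}, then obtain closedness by showing that $a(\varphi,\varphi)+\omega\|\varphi\|_X^2$ with $\omega>C_E$ is equivalent to $\|\cdot\|_\star$, hence to the $\dotHk[2]$-norm, and finally apply Kato's representation theorem; your constants $\min(\sigma,1)$ and $\max(\sigma,1+2C_E)$ simply make the paper's equivalence step explicit.
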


\begin{proof}
    By~\ref{ass:H2} and Proposition~\ref{prop:entropy_hessian}, $H_\E$ is a bounded symmetric bilinear form on $X$ and
    \[
    \mathrm{Hess}_{\Wtwo}\F(\bar\mu)(\nabla \varphi,\nabla \psi) = H_\E(\varphi,\psi) + \sigma q_0(\varphi,\psi)
    \]
    for all $\varphi, \psi \in C^\infty(\T^d)$.
    Since $C^\infty(\T^d)/\R$ is dense in $\dotHk[2](\T^d)$ and $\dotHk[2](\T^d)$ is dense in $X$ through the equivalence $X \simeq \dotHk[1](\T^d)$ of Lemma~\ref{lem:hilbert_structure}, the form $a$ defined in~\eqref{eq:HessianF_formula} is densely defined on $X$.
    It is symmetric because both $q_0$ and $H_\E$ are symmetric.

    By~\ref{ass:H2}, $|H_\E(\varphi,\varphi)| \le C_E\|\varphi\|_X^2$.
    Hence $a(\varphi,\varphi) \ge -C_E\|\varphi\|_X^2$ and $a$ is bounded from below.
    Choose $\gamma > C_E$ and set
    \[
    \|\varphi\|_{a,\gamma}^2 \coloneqq a(\varphi,\varphi) + \gamma\|\varphi\|_X^2.
    \]
    Then, we have $\|\varphi\|_{a,\gamma}^2 \ge \sigma q_0(\varphi,\varphi) + (\gamma-C_E)\|\varphi\|_X^2$ and $\|\varphi\|_{a,\gamma}^2 \le \sigma q_0(\varphi,\varphi) + (\gamma + C_E)\|\varphi\|_X^2$.
    Therefore, $\|\cdot\|_{a,\gamma}$ is equivalent on $D(a)$ to $\|\cdot\|_\star$, hence to the $\dotHk[2]$-norm by Lemma~\ref{lem:h2_coercivity}.
    Since $\dotHk[2](\T^d)$ is complete, $a$ is closed.

    The representation theorem for densely defined symmetric closed forms bounded from below (e.g., \cite[Ch.~VI, Thm.~2.6]{Kato1980}) implies the existence of a unique self-adjoint operator $A$ satisfying~\eqref{eq:A_representation}. 
    Moreover, $A$ is bounded from below with the same lower bound as $a$.
\end{proof}

\begin{theorem}[Compact resolvent of $A$]
    \label{thm:compact_resolvent_a}
    Under Assumptions~\ref{ass:H1}--\ref{ass:H2}, the self-adjoint operator $A : D(A) \subset X \to X$ has compact resolvent.
\end{theorem}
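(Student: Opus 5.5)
The plan is to use the standard criterion for operators associated with closed forms. A self-adjoint operator bounded from below that comes from a closed form $a$ on a Hilbert space $X$ has compact resolvent if and only if the embedding of the form domain $(D(a),\|\cdot\|_{a,\gamma})$ into $X$ is compact. We will therefore reduce the statement to a Rellich-type compactness of $\dotHk[2](\T^d)\hookrightarrow \dotHk[1](\T^d)$ and transfer it to $X$ through the norm equivalences already established.

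\textbf{Step 1: norm equivalences.} Fix $\gamma > C_E$ as in the proof of Theorem~\ref{thm:hessian_form}. By that proof and Lemma~\ref{lem:h2_coercivity}, the form norm $\|\cdot\|_{a,\gamma}$ is equivalent on $D(a)=\dotHk[2](\T^d)$ to the $\dotHk[2]$-norm. By Lemma~\ref{lem:hilbert_structure}, $\|\cdot\|_X$ is equivalent to the $\dotHk[1]$-norm.

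\textbf{Step 2: compact embedding.} It then suffices to show that $\dotHk[2](\T^d)\hookrightarrow\dotHk[1](\T^d)$ is compact. We argue with Fourier series, using mean-zero representatives. A bounded set in $\dotHk[2]$ satisfies $\sum_{n\neq0}|n|^4|\widehat\varphi_n|^2\le M$. The tail contribution to the $\dotHk[1]$-norm then obeys
\[
\sum_{|n|>N}|n|^2|\widehat\varphi_n|^2\le N^{-2}M .
\]
Hence the inclusion is the operator-norm limit of the finite-rank truncations $P_N$, and is therefore compact. Alternatively, one can invoke Rellich--Kondrachov on $\T^d$ applied to $\nabla\varphi\in H^1\hookrightarrow L^2$.

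\textbf{Step 3: compactness of the resolvent.} Take $\lambda>C_E$, so that $\lambda\in\rho(-A)$ and $(A+\lambda)^{-1}$ is defined. This resolvent maps $X$ boundedly into $(D(a),\|\cdot\|_{a,\lambda})$. Indeed, for $f\in X$ and $u=(A+\lambda)^{-1}f$,
\[
\|u\|_{a,\lambda}^2=\langle f,u\rangle_X\le\|f\|_X\|u\|_X\le (\lambda-C_E)^{-1/2}\|f\|_X\|u\|_{a,\lambda}.
\]
Composing this bounded map with the compact embedding from Step 2 shows that $(A+\lambda)^{-1}$ is compact on $X$. By the resolvent identity, compactness at one point of the resolvent set gives compactness at every point. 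The spectrum of $A$ is therefore discrete, consisting of real eigenvalues of finite multiplicity that accumulate only at $+\infty$.

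\textbf{Main obstacle.} The main point needing care is that $X$ carries the weighted inner product rather than the flat one. Compactness, however, is invariant under passing to equivalent norms, so Steps 1 and 2 handle this. The analytic content is really just Rellich compactness on the torus.
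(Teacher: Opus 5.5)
Your proposal is correct and follows essentially the same route as the paper. It bounds $u=(A+\gamma I)^{-1}f$ in the form norm via $\|u\|_{a,\gamma}^2=\langle f,u\rangle_X$ and $\|u\|_{a,\gamma}^2\ge(\gamma-C_E)\|u\|_X^2$, then uses the equivalence of $\|\cdot\|_{a,\gamma}$ with the $\dot{H}^2$-norm and the compact embedding $\dot{H}^2(\T^d)\hookrightarrow\dot{H}^1(\T^d)\simeq X$; your Fourier-tail proof of that embedding and the resolvent-identity remark are extra detail the paper leaves implicit.
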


\begin{proof}
    Choose $\gamma > C_E$.
    As in the proof of Theorem~\ref{thm:hessian_form}, $\|\cdot\|_{a,\gamma}$ is equivalent to $\|\cdot\|_\star$ on $D(a)$, and by Lemma~\ref{lem:h2_coercivity}, it is equivalent to the $\dotHk[2]$-norm on $D(a) = \dotHk[2](\T^d)$.
    Moreover, since $A$ is bounded from below by $-C_E$, the operator $A + \gamma I$ is invertible on $X$.

    Let $(f_n)_{n \ge 1}$ be a bounded sequence in $X$ and define $u_n \coloneqq (A + \gamma I)^{-1}f_n$.
    Then $u_n \in D(A) \subset D(a)$ and, for all $\psi \in D(a)$, we have $a(u_n, \psi) + \gamma\langle u_n, \psi\rangle_X = \langle f_n, \psi\rangle_X$.
    Taking $\psi = u_n$ gives $\|u_n\|_{a,\gamma}^2 = \langle f_n,u_n\rangle_X \le \|f_n\|_X \|u_n\|_X$.
    On the other hand, from the lower bound of $a$,
    \[
    \|u_n\|_{a,\gamma}^2 = a(u_n,u_n)+\gamma\|u_n\|_X^2 \ge (\gamma-C_E)\|u_n\|_X^2.
    \]
    Since $(f_n)$ is bounded in $X$, these two inequalities imply that $(u_n)$ is bounded in the norm $\|\cdot\|_{a, \gamma}$.
    By the equivalence of this norm with the $\dotHk[2]$-norm, $(u_n)$ is bounded in $\dotHk[2](\T^d)$.

    The embedding $\dotHk[2](\T^d) \hookrightarrow \dotHk[1](\T^d)$ is compact on the torus, and $\|\cdot\|_{\dotHk[1]}$ is equivalent to $\|\cdot\|_X$ by Lemma~\ref{lem:hilbert_structure}.
    Therefore, after passing to a subsequence, $u_n$ converges in $X$.
    Hence $(A+\gamma I)^{-1} : X \to X$ is compact and $A$ has compact resolvent.
\end{proof}

As a consequence of Theorem~\ref{thm:compact_resolvent_a}, the spectrum of $A$ consists of real eigenvalues of finite multiplicity, with no finite accumulation point.
We fix an $X$-orthonormal basis $(e_k)_{k \ge 1}$ of eigenfunctions of $A$, with corresponding eigenvalues
\[
A e_k = \lambda_k e_k, \qquad \lambda_1 \le \lambda_2 \le \cdots, \qquad \lambda_k \to+\infty.
\]
For $\delta > 0$, we denote by
\[
\Sigma_\delta \coloneqq \{k \ge 1 : \lambda_k \le \delta\}, \qquad X_\delta \coloneqq \operatorname{span}\{e_k : k \in \Sigma_\delta\},
\]
and by $P_\delta : X\to X_\delta$ the corresponding $X$-orthogonal projection. 
Since $A$ has compact resolvent, $X_\delta$ is finite-dimensional.
Throughout, we assume $\Sigma_\delta \neq \emptyset$.

\subsection{Identification with the linearized generator in \texorpdfstring{$X'$}{X'}}

Write the gradient flow~\eqref{eq:intro_gradient_flow} as $\partial_t \mu_t = \mathcal{G}(\mu_t)$, where 
\[
\mathcal{G}(\mu) \coloneqq \sigma\Delta\mu + \nabla\cdot(\mu\nabla\Theta_E(\mu)).
\] 
By stationarity, $\mathcal{G}(\bar\mu) = 0$.
Fix $p > d/2 + 2$ so that $H^p(\T^d) \hookrightarrow C^2(\T^d)$, set $\mathcal{U}_p \coloneqq \{\xi \in \dotHk[p](\T^d) : \|\nabla^2\xi\|_{L^\infty} < 1\}$, and define $\Xi(\xi) \coloneqq (T_\xi)_{\sharp}\bar\mu$.
In this setting, $T_\xi$ is a $C^1$-diffeomorphism of $\T^d$ for every $\xi \in \mathcal{U}_p$, and the pushforward derivatives used below are well defined.
By Lemma~\ref{lem:pushforward_map}, $\Xi(\xi) - \bar\mu \in X'$ and $D\Xi(0)\eta = \rieszmap[\bar\mu]\eta$.
Specifically, for perturbations of the form $\nu = \rieszmap[\bar\mu]\varphi$, our goal is to compute
\[
D\mathcal{G}(\bar\mu)\nu = \lim_{\varepsilon \to 0} \frac{\mathcal{G}(\Xi(\varepsilon\varphi))}{\varepsilon}
\]
in $\dot{H}^{-2}$, and then identify its closed realization on $X'$ with the operator induced by the Hessian form $a$.
To make this linearization in $\dot{H}^{-2}$ precise, we assume the following.

\begin{enumerate}[label=(H\arabic*),ref=(H\arabic*),start=3]
    \item\label{ass:H3} The map $\xi \mapsto \Theta_E(\Xi(\xi))$ is continuous from $\mathcal{U}_p$ to $X$, and there exists $\mathcal{K}_E \in \mathcal{L}(X', X)$ such that
    \[
    \Theta_E(\Xi(\xi)) = \Theta_E(\bar\mu) + \mathcal{K}_E(\Xi(\xi)-\bar\mu) + o(\|\Xi(\xi)-\bar\mu\|_{X'})
    \]
    in $X$ for $\xi \in \mathcal{U}_p$ sufficiently close to $0$.
\end{enumerate}

We first express $H_\E$ using the expansion of $\Theta_E(\Xi(\xi))$ at $\xi = 0$. 
The following lemma converts the differentiability of $\Theta_E(\Xi(\xi))$ as a map from $\dotHk[p](\T^d)$ to $X$ into a formula for the Wasserstein Hessian of $\E$ at $\bar\mu$. 
This formula will be used below to identify the weak form of the linearized PDE with the Wasserstein Hessian.
See Appendix~\ref{app:displacement-map} for the proof.

\begin{lemma}
    \label{lem:second_variation_pushforward}
    Assume~\ref{ass:H1}--\ref{ass:H3}.
    Then, for all $\varphi, \psi\in C^\infty(\T^d)$,
    \[
    H_\E(\varphi,\psi)=\left\langle D^2\Xi(0)[\varphi,\psi], \Phi \right\rangle_{X',X} + \left\langle \rieszmap[\bar\mu]\psi, \mathcal{K}_E \rieszmap[\bar\mu]\varphi \right \rangle_{X',X}.
    \]
\end{lemma}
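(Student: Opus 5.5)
The plan is to compute the second derivative of $s\mapsto \E(\Xi(s\xi))$ at $s=0$ with $\xi=\varphi+\psi$ (then polarize), using the curve $\mu_s=\Xi(s\varphi)=(T_{s\varphi})_\sharp\bar\mu$. By Lemma~\ref{lem:local_displacement_interpolation}, this curve is a local $\Wtwo$-geodesic with initial velocity $\nabla\varphi$. Hence, by the definition of the Wasserstein Hessian, $H_\E(\varphi,\varphi)=\frac{d^2}{ds^2}\E(\mu_s)|_{s=0}$. For smooth $\varphi$ and small $|s|$ we have $s\varphi\in\mathcal U_p$, so Lemma~\ref{lem:pushforward_map} applies and $s\mapsto\Xi(s\varphi)-\bar\mu$ is a $C^2$ curve in $X'$ with $D\Xi(0)\varphi=\rieszmap[\bar\mu]\varphi$. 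I would take for granted the twice differentiability of $\Xi$ into $X'$ on smooth directions, which is implicit in the statement since $D^2\Xi(0)$ appears.

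\textbf{First derivative.} By the chain rule for the first variation,
\[
\frac{d}{ds}\E(\mu_s)=\big\langle \partial_s\mu_s,\ \Theta_E(\mu_s)\big\rangle_{X',X}.
\]
To justify this with only~\ref{ass:H3}, I would write $\E(\mu_{s+h})-\E(\mu_s)=\int_0^1\langle \mu_{s+h}-\mu_s,\Theta_E(\mu_s+\tau(\mu_{s+h}-\mu_s))\rangle\,d\tau$ along linear interpolations, which is what the definition of the first variation provides. The continuity of $\xi\mapsto\Theta_E(\Xi(\xi))$ from~\ref{ass:H3} then lets me pass to the limit. Concretely, $\partial_s\mu_s=D\Xi(s\varphi)\varphi$.

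\textbf{Second derivative at $s=0$.} Differentiating the product of a $C^2$ curve in $X'$ and a curve in $X$ gives two terms:
\[
\frac{d^2}{ds^2}\E(\mu_s)\Big|_{s=0}=\langle D^2\Xi(0)[\varphi,\varphi],\Phi\rangle_{X',X}+\lim_{s\to0}\Big\langle \rieszmap[\bar\mu]\varphi,\ \frac{\Theta_E(\mu_s)-\Phi}{s}\Big\rangle_{X',X}.
\]
The second term requires care, because I only know differentiability of $\Theta_E\circ\Xi$ at $0$, not in a neighborhood. So I would write the difference quotient of the derivative, $s^{-1}[\langle\partial_s\mu_s,\Theta_E(\mu_s)\rangle-\langle\partial_s\mu_0,\Phi\rangle]$, and split it as
\[
\Big\langle \tfrac{\partial_s\mu_s-\partial_s\mu_0}{s},\Theta_E(\mu_s)\Big\rangle+\Big\langle\partial_s\mu_0,\tfrac{\Theta_E(\mu_s)-\Phi}{s}\Big\rangle.
\]
The first piece converges to $\langle D^2\Xi(0)[\varphi,\varphi],\Phi\rangle$ by the $C^2$ property and the continuity of $\Theta_E\circ\Xi$. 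For the second piece, \ref{ass:H3} gives $\Theta_E(\mu_s)-\Phi=\mathcal K_E(\mu_s-\bar\mu)+o(\|\mu_s-\bar\mu\|_{X'})$ with $\|\mu_s-\bar\mu\|_{X'}=O(s)$. Since $(\mu_s-\bar\mu)/s\to\rieszmap[\bar\mu]\varphi$ in $X'$ and $\mathcal K_E$ is bounded, the second piece converges to $\langle\rieszmap[\bar\mu]\varphi,\mathcal K_E\rieszmap[\bar\mu]\varphi\rangle$.

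\textbf{Polarization.} This yields the diagonal identity. Applying it to $\varphi+\psi$ and $\varphi-\psi$ and using bilinearity gives the off-diagonal formula. For this I need $D^2\Xi(0)$ to be the symmetric bilinear second derivative and the pairing $\langle\rieszmap[\bar\mu]\psi,\mathcal K_E\rieszmap[\bar\mu]\varphi\rangle$ to be symmetric. Symmetry follows because $H_\E$ is symmetric and the $D^2\Xi$ term is symmetric, so the $\mathcal K_E$ term must be as well. Alternatively, the polarized expression automatically produces the symmetrized $\mathcal K_E$ term, and it coincides with the stated one since the left side is symmetric.

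\textbf{Main obstacle.} The hard step is the rigorous first-derivative chain rule, namely justifying $\frac{d}{ds}\E(\mu_s)=\langle\partial_s\mu_s,\Theta_E(\mu_s)\rangle$ for all small $s$ and not only at $0$, using just~\ref{ass:H3} and the abstract first variation. I would first use the mean-value integral representation together with uniform continuity of $\Theta_E$ along the compact family $\{\Xi(\xi)\}$, with $\xi$ in a small ball of $\dotHk[p]$.
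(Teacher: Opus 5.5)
\textbf{Gap in the polarization step.} Your computation of the pure second derivative $\frac{d^2}{ds^2}\E(\Xi(s\varphi))|_{s=0}$ is essentially right, but the polarization step does not give the stated formula. From the diagonal identity
\[
H_\E(\xi,\xi)=\langle D^2\Xi(0)[\xi,\xi],\Phi\rangle_{X',X}+\langle\rieszmap[\bar\mu]\xi,\mathcal K_E\rieszmap[\bar\mu]\xi\rangle_{X',X}
\]
for all smooth $\xi$, polarization only yields
\[
H_\E(\varphi,\psi)=\langle D^2\Xi(0)[\varphi,\psi],\Phi\rangle_{X',X}+\tfrac12\big(\langle\rieszmap[\bar\mu]\psi,\mathcal K_E\rieszmap[\bar\mu]\varphi\rangle_{X',X}+\langle\rieszmap[\bar\mu]\varphi,\mathcal K_E\rieszmap[\bar\mu]\psi\rangle_{X',X}\big).
\]
This is the symmetric part of the $\mathcal K_E$ pairing. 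Your argument that ``$H_\E$ and the $D^2\Xi$ term are symmetric, so the $\mathcal K_E$ term must be as well'' is circular. A bilinear form is seen on the diagonal only through its symmetric part, so an antisymmetric part of $(\varphi,\psi)\mapsto\langle\rieszmap[\bar\mu]\psi,\mathcal K_E\rieszmap[\bar\mu]\varphi\rangle_{X',X}$ is invisible to your computation. Assumption~\ref{ass:H3} does not say $\mathcal K_E$ is symmetric. Your alternative (``the left side is symmetric'') has the same defect. The order matters later: in Proposition~\ref{prop:linearized_operator_identification} the linearized PDE produces exactly $\int\bar\mu\nabla(\mathcal K_E\rieszmap[\bar\mu]\varphi)\cdot\nabla\psi\,dx=\langle\rieszmap[\bar\mu]\psi,\mathcal K_E\rieszmap[\bar\mu]\varphi\rangle_{X',X}$. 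The symmetrized version would not identify $-D\mathcal G(\bar\mu)\rieszmap[\bar\mu]$ with $A$.

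\textbf{The missing idea.} You need a mixed second derivative together with the symmetry of a genuine second derivative, which is what the paper uses. Set $\widehat{\E}\coloneqq\E\circ\Xi$. Along $s\mapsto\Xi(\xi+s\psi)$ the first variation gives $D\widehat\E(\xi)[\psi]=\langle D\Xi(\xi)\psi,\Theta_E(\Xi(\xi))\rangle_{X',X}$ for $\xi$ near $0$. Now differentiate this in the direction $\varphi$ at $\xi=0$, using your own splitting but with $\psi$ in the first slot. This gives
\[
D^2\widehat\E(0)[\varphi,\psi]=\langle D^2\Xi(0)[\varphi,\psi],\Phi\rangle_{X',X}+\langle\rieszmap[\bar\mu]\psi,\mathcal K_E\rieszmap[\bar\mu]\varphi\rangle_{X',X},
\]
with the correct order. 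Then use that $D^2\widehat\E(0)$ is symmetric. For instance, $f(s,t)=\E(\Xi(s\varphi+t\psi))$ is differentiable near $0$, and its gradient is differentiable at $0$ by the second-order behaviour of $\Xi$ at $0$ and the Fr\'echet expansion in~\ref{ass:H3}. Being symmetric, $D^2\widehat\E(0)$ is determined by its diagonal $H_\E(\varphi,\varphi)$, and polarization gives the stated identity. As a byproduct, the $\mathcal K_E$ pairing is symmetric on $\rieszmap[\bar\mu]C^\infty(\T^d)$.

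A smaller point concerns your first-derivative chain rule. Justifying it via linear interpolants $\mu_s+\tau(\mu_{s+h}-\mu_s)$ does not work with~\ref{ass:H3}, since these are not of the form $\Xi(\xi)$ and~\ref{ass:H3} gives no control of $\Theta_E$ there. The paper instead takes $D\widehat\E(\xi)[\psi]=\langle D\Xi(\xi)\psi,\Theta_E(\Xi(\xi))\rangle_{X',X}$ directly as the meaning of the first variation along the pushforward curves.
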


Let $\varphi \in C^\infty(\T^d)$ and set $\nu \coloneqq \rieszmap[\bar\mu]\varphi$.
We claim that for $\psi \in C^{\infty}(\T^d)$,
\begin{equation}
    \label{eq:derivative_G}
    \left\langle D \mathcal{G}(\bar\mu) \nu, \psi\right\rangle_{\dot{H}^{-2}, \dotHk[2]} = \sigma\int_{\T^d}\nu \Delta \psi \, dx - \int_{\T^d} \nu \nabla\Phi \cdot \nabla\psi \, dx - \int_{\T^d} \bar\mu  \nabla(\mathcal{K}_E \nu) \cdot \nabla\psi \, dx.
\end{equation}
Indeed, set $\mu_\varepsilon \coloneqq \Xi(\varepsilon\varphi)$ and $q_\varepsilon \coloneqq (\mu_\varepsilon - \bar\mu)/\varepsilon$.
Since $\bar\mu \in W^{1, \infty}(\T^d)$, the change of variables formula for $T_{\varepsilon\varphi}$ gives $q_\varepsilon \to \nu$ in $L^2(\T^d)$, hence in $X'$.
Combining this with Assumption~\ref{ass:H3} implies
\[
\frac{\Theta_E(\mu_\varepsilon)-\Phi}{\varepsilon} = \mathcal{K}_E\nu + r_\varepsilon, \qquad \|r_\varepsilon\|_X \to 0.
\]
Thus, testing $(\mathcal{G}(\mu_\varepsilon)) / \varepsilon$ against $\psi \in C^{\infty}(\T^d)$ gives
\[
\sigma\int_{\T^d} q_\varepsilon \Delta\psi \, dx - \int_{\T^d}q_\varepsilon \nabla\Phi \cdot \nabla\psi \, dx - \int_{\T^d} \bar\mu \nabla(\mathcal{K}_E\nu + r_\varepsilon) \cdot \nabla\psi \, dx -\int_{\T^d}(\mu_\varepsilon-\bar\mu) \nabla(\mathcal{K}_E\nu + r_\varepsilon)\cdot \nabla\psi \, dx.
\]
The first two terms converge to
\[
\sigma \int_{\T^d} \nu \Delta\psi \, dx - \int_{\T^d} \nu \nabla\Phi \cdot \nabla\psi \, dx
\]
because $q_\varepsilon \to \nu$ in $L^2(\T^d)$. 
The term containing $r_\varepsilon$ in the third integral vanishes since
\[
\left|\int_{\T^d} \bar\mu\nabla r_\varepsilon \cdot \nabla\psi \,dx \right| \le \|r_\varepsilon\|_X \|\psi\|_X \to 0.
\]
Finally, the last integral vanishes because $\mu_\varepsilon \to \bar\mu$ in $L^\infty(\T^d)$, while $\mathcal{K}_E\nu + r_\varepsilon$ remains bounded in $X$.
Therefore,~\eqref{eq:derivative_G} holds.

\begin{proposition}[Identification of the linearized operator in $X'$]
    \label{prop:linearized_operator_identification}
    Assume~\ref{ass:H1}--\ref{ass:H3}. 
    Then,
    \[
    \left\langle -D\mathcal{G}(\bar\mu)\rieszmap[\bar\mu]\varphi,\psi\right\rangle_{\dot{H}^{-2}, \dotHk[2]} = a(\varphi, \psi)
    \]
    for all $\varphi, \psi\in C^\infty(\T^d)$.
    Moreover, this bilinear form extends by density to all $\varphi, \psi \in D(a)$.
    Set
    \[
    D(L) \coloneqq \left\{\nu \in \rieszmap[\bar\mu]D(a) : \exists f \in X'; \; \forall \psi \in D(a), \; \left\langle D\mathcal{G}(\bar\mu)\nu, \psi\right\rangle_{\dot{H}^{-2}, \dotHk[2]} = \left\langle f,\psi\right\rangle_{X',X} \right\}.
    \]
    For $\nu \in D(L)$, we define $L\nu \coloneqq f$.
    Then
    \[
    D(L) = \rieszmap[\bar\mu]D(A), \qquad L = -\rieszmap[\bar\mu]A\rieszmap[\bar\mu]^{-1}.
    \]
\end{proposition}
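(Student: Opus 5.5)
The plan has three steps: identify the bilinear form for smooth functions, extend it by density, and then read off the operator identity from the definitions of $D(L)$ and $D(A)$.

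\textbf{Step 1: the identity for smooth functions.} Fix $\varphi,\psi\in C^\infty(\T^d)$ and set $\nu=\rieszmap[\bar\mu]\varphi=-\nabla\cdot(\bar\mu\nabla\varphi)$, which is a genuine $L^\infty$ function since $\bar\mu\in W^{1,\infty}$. I start from~\eqref{eq:derivative_G} and compute $-\langle D\mathcal{G}(\bar\mu)\nu,\psi\rangle$ term by term.

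The nonlocal term is immediate. Since $\mathcal{K}_E\nu\in X$,
\[
\int\bar\mu\nabla(\mathcal{K}_E\nu)\cdot\nabla\psi\,dx=\langle\rieszmap[\bar\mu]\psi,\mathcal{K}_E\rieszmap[\bar\mu]\varphi\rangle_{X',X}.
\]
This is exactly the second term in Lemma~\ref{lem:second_variation_pushforward}.

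The local terms are
\[
-\sigma\int\nu\Delta\psi+\int\nu\nabla\Phi\cdot\nabla\psi .
\]
Using~\eqref{eq:derivative_barmu}, namely $\nabla\Phi=-\sigma\nabla\bar\mu/\bar\mu$, they combine into
\[
-\sigma\int\nu\,\bar\mu^{-1}\nabla\cdot(\bar\mu\nabla\psi)\,dx=-\sigma\langle\rieszmap[\bar\mu]\varphi,\bar\mu^{-1}\rieszmap[\bar\mu]\psi\rangle .
\]
I then need the Bochner-type identity
\[
\int\bar\mu^{-1}\nabla\cdot(\bar\mu\nabla\varphi)\,\nabla\cdot(\bar\mu\nabla\psi)\,dx=\int\bar\mu\nabla^2\varphi:\nabla^2\psi\,dx+\int\bar\mu\nabla^2(\log\bar\mu)^{\!\top}\ldots
\]
More cleanly, I would instead match against Lemma~\ref{lem:second_variation_pushforward}: it suffices to show that
\[
\langle D^2\Xi(0)[\varphi,\psi],\Phi\rangle+\sigma q_0(\varphi,\psi)
\]
equals the local terms above. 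To do this I compute $D^2\Xi(0)[\varphi,\psi]$ from $\Xi(\xi)=(\mathrm{Id}+\nabla\xi)_\sharp\bar\mu$. Testing against $\Phi$ gives $\int\bar\mu\nabla^2\Phi\nabla\varphi\cdot\nabla\psi$. The local expression, after integrating by parts twice on the torus and commuting derivatives, equals
\[
\int\bar\mu\nabla^2\Phi\nabla\varphi\cdot\nabla\psi+\sigma\int\bar\mu\nabla^2\varphi:\nabla^2\psi ,
\]
with the first-order terms cancelling because $\sigma\nabla\bar\mu=-\bar\mu\nabla\Phi$. Since $\nabla^2\Phi$ is only defined distributionally under~\ref{ass:H1}, I would carry this out with $\Phi$ kept inside pairings, or by mollifying $\bar\mu$, to avoid requiring second derivatives of $\Phi$. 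This integration-by-parts bookkeeping, including the commutator $\nabla\cdot(\nabla^2\varphi\,\bar\mu)$ versus $\nabla\Delta\varphi$ and the Ricci-flatness of the torus, is the main obstacle. I would first check it in the case $\Phi$ smooth and then pass to the limit.

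\textbf{Step 2: extension by density.} Both sides of the identity are continuous on $\dotHk[2]\times\dotHk[2]$:
\begin{itemize}
\item the right side by Theorem~\ref{thm:hessian_form};
\item the left side because $\nu\mapsto D\mathcal{G}(\bar\mu)\nu$ given by~\eqref{eq:derivative_G} is bounded from $X'$ into $\dot H^{-2}$, using $\nabla\Phi\in L^\infty$, $\mathcal{K}_E\in\mathcal{L}(X',X)$, and $\rieszmap[\bar\mu]:X\to X'$ isometric.
\end{itemize}
Density of $C^\infty/\R$ in $\dotHk[2]$ then extends the identity to $\varphi,\psi\in D(a)$.

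\textbf{Step 3: the operator identity.} For $\nu=\rieszmap[\bar\mu]\varphi$ with $\varphi\in D(a)$, membership $\nu\in D(L)$ means there is $f\in X'$ with
\[
-a(\varphi,\psi)=\langle f,\psi\rangle_{X',X}=\langle\rieszmap[\bar\mu]^{-1}f,\psi\rangle_X\quad\text{for all }\psi\in D(a).
\]
By the characterization of $D(A)$ in Theorem~\ref{thm:hessian_form}, this holds if and only if $\varphi\in D(A)$, and in that case $A\varphi=-\rieszmap[\bar\mu]^{-1}f$. Hence $D(L)=\rieszmap[\bar\mu]D(A)$ and $L=-\rieszmap[\bar\mu]A\rieszmap[\bar\mu]^{-1}$. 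Closedness of $L$ follows by unitary equivalence from the self-adjointness of $A$.
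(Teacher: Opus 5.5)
Your overall route is the paper's, including Step 3 essentially verbatim. However, the justification in Step 2 is false. The map $\nu\mapsto D\mathcal{G}(\bar\mu)\nu$ given by \eqref{eq:derivative_G} is \emph{not} bounded from $X'\simeq\dotHminusone$ into $\dot H^{-2}$, for two reasons:
\begin{itemize}
\item Pairing $\sigma\int\nu\Delta\psi$ with $\nu$ merely in $\dotHminusone$ needs $\Delta\psi\in\dotHk[1]$, i.e.\ $\psi\in\dotHk[3]$. In other words, $\sigma\Delta$ sends $\dotHminusone$ to $\dot H^{-3}$.
\item The product $\nu\nabla\Phi$ is not even defined for $\nu\in\dotHminusone$ when $\nabla\Phi$ is only $L^\infty$.
\end{itemize}
So composing with the isometry $\rieszmap[\bar\mu]:X\to X'$ gives no bound. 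Without a better argument, the pairing $\langle D\mathcal{G}(\bar\mu)\nu,\psi\rangle_{\dot H^{-2},\dotHk[2]}$ in the definition of $D(L)$ would not even be meaningful for $\nu\in\rieszmap[\bar\mu]D(a)$.

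What you need is the regularity gained from $\varphi\in\dotHk[2]$. Since $\bar\mu\in W^{1,\infty}$ (Lemma~\ref{lem:mu_bar_bounds}(iii)), we have $\nu=\rieszmap[\bar\mu]\varphi=-\bar\mu\Delta\varphi-\nabla\bar\mu\cdot\nabla\varphi\in L^2$ with $\|\nu\|_{L^2}\le C\|\varphi\|_{\dotHk[2]}$; this is the converse part of Lemma~\ref{lem:l2_density_to_form_domain}. Then the first two terms of \eqref{eq:derivative_G} are bounded by $C(\sigma+\|\nabla\Phi\|_{L^\infty})\|\nu\|_{L^2}\|\psi\|_{\dotHk[2]}$, using Poincar\'e for $\nabla\psi$. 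Only the $\mathcal{K}_E$ term uses the $X'$-norm and~\ref{ass:H3}. This is the content of the paper's remark that \eqref{eq:derivative_G} is continuous on $\dotHk[2]\times\dotHk[2]$ ``under Assumption~\ref{ass:H3} and Lemma~\ref{lem:mu_bar_bounds}''. With this replacement, your density argument and Step 3 go through as written.

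In Step 1 your formal identity is correct. The regularity issue you flag needs neither a Bochner identity nor mollification. Simply stop integrating by parts before any derivative falls on $\nabla\Phi$. Using only $\sigma\nabla\bar\mu=-\bar\mu\nabla\Phi$ and $\partial_j\partial_{ij}\psi=\partial_i\Delta\psi$, the paper obtains
\[
-\sigma\int_{\T^d}\nu\Delta\psi\,dx+\int_{\T^d}\nu\nabla\Phi\cdot\nabla\psi\,dx=\sigma q_0(\varphi,\psi)-\sum_{i,j=1}^d\int_{\T^d}\partial_j\Phi\,\partial_i\bigl(\bar\mu\,\partial_i\varphi\,\partial_j\psi\bigr)\,dx .
\]
The last term is exactly $\langle D^2\Xi(0)[\varphi,\psi],\Phi\rangle_{X',X}$ by the $W^{1,\infty}$ version of the formula in Lemma~\ref{lem:pushforward_map}, which holds for $\Phi\in W^{1,\infty}$. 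Note also that mollifying $\bar\mu$ alone would break the relation $\sigma\nabla\bar\mu=-\bar\mu\nabla\Phi$, on which the cancellation of the first-order terms depends.
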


\begin{proof}
    Let $\varphi, \psi \in C^\infty(\T^d)$ and set $\nu = \rieszmap[\bar\mu]\varphi$.
    Integrating by parts twice and using $-\sigma\nabla \bar\mu = \bar\mu \nabla \Phi$ leads to
    \begin{align*}
        -\sigma \int_{\T^d} \nu \Delta \psi \, dx &= -\sigma \int_{\T^d} \bar\mu \nabla \varphi \cdot \nabla \Delta \psi \, dx = \sigma \sum_{i,j=1}^d \int_{\T^d} \partial_j (\bar\mu \partial_i \varphi) \partial_{ij} \psi \, dx \\
        &= \sigma \sum_{i,j=1}^d \int_{\T^d} \partial_j \bar\mu \partial_i \varphi \partial_{ij} \psi \, dx + \sigma \sum_{i,j=1}^d \int_{\T^d} \bar\mu \partial_{ij} \varphi \partial_{ij} \psi \, dx \\
        &= -\sum_{i,j=1}^d \int_{\T^d} \bar\mu \partial_j \Phi \partial_i \varphi \partial_{ij} \psi \, dx + \sigma q_0(\varphi, \psi).
    \end{align*}
    Hence, using again that $\nu = -\sum_{i=1}^d (\partial_i \bar\mu \partial_i \varphi + \bar\mu \partial_{ii} \varphi)$, we obtain
    \begin{align*}
        -\sigma \int_{\T^d} \nu \Delta \psi \, dx + \int_{\T^d} \nu \nabla \Phi \cdot \nabla \psi \, dx &= \sigma q_0(\varphi, \psi) - \sum_{i,j=1}^d \int_{\T^d} \partial_j \Phi \partial_i (\bar\mu \partial_i \varphi \partial_j \psi) \, dx \\
        &= \sigma q_0(\varphi, \psi) + \left\langle D^2 \Xi(0)[\varphi, \psi], \Phi \right\rangle_{X', X},
    \end{align*}
    where we apply Lemma~\ref{lem:pushforward_map} in the last equality.

    Moreover, $\int_{\T^d} \bar\mu \nabla(\mathcal{K}_E \nu) \cdot \nabla \psi \, dx = \left\langle \rieszmap[\bar\mu]\psi, \mathcal{K}_E \rieszmap[\bar\mu]\varphi \right\rangle_{X', X}$, and, by Lemma~\ref{lem:second_variation_pushforward},
    \begin{equation*}
        \left\langle -D \mathcal{G}(\bar\mu) \rieszmap[\bar\mu]\varphi, \psi \right\rangle_{\dot{H}^{-2}, \dotHk[2]} = \sigma q_0(\varphi, \psi) + H_\E(\varphi, \psi) = a(\varphi, \psi).
    \end{equation*}
    Since smooth functions are dense in $D(a) = \dotHk[2](\T^d)$, and the expression in~\eqref{eq:derivative_G} defines a continuous bilinear form on $\dotHk[2](\T^d) \times \dotHk[2](\T^d)$ under Assumption~\ref{ass:H3} and Lemma~\ref{lem:mu_bar_bounds}, the identity extends to all $\varphi, \psi \in D(a)$.

    For the second part, by the definition of $D(L)$, for $\varphi \in D(a)$, there exists $f \in X'$ such that $\left\langle D\mathcal{G}(\bar\mu) \rieszmap[\bar\mu]\varphi, \psi \right\rangle_{\dot{H}^{-2}, \dotHk[2]} = \left\langle f, \psi \right\rangle_{X', X}$ for all $\psi \in D(a)$. 
    Thus, for all $\psi \in D(a)$, 
    \[
    a(\varphi, \psi) = -\left\langle f, \psi \right\rangle_{X', X} = -\left\langle \rieszmap[\bar\mu]^{-1} f, \psi \right\rangle_X.
    \]
    Hence $\varphi \in D(A)$, $A \varphi = -\rieszmap[\bar\mu]^{-1} f$, and $L \rieszmap[\bar\mu]\varphi = -\rieszmap[\bar\mu] A \varphi $ because $L\rieszmap[\bar\mu]\varphi = f$.
    Therefore $D(L) \subset \rieszmap[\bar\mu]D(A) $.
    Conversely, if $\varphi \in D(A)$, for every $\psi \in D(a)$, one has
    \begin{equation*}
        \left\langle D\mathcal{G}(\bar\mu) \rieszmap[\bar\mu]\varphi, \psi \right\rangle_{\dot{H}^{-2}, \dotHk[2]} = -a(\varphi, \psi) = -\left\langle A \varphi, \psi \right\rangle_X = \left\langle -\rieszmap[\bar\mu] A \varphi, \psi \right\rangle_{X', X}.
    \end{equation*}
    Hence $\rieszmap[\bar\mu]\varphi \in D(L)$ and $L \rieszmap[\bar\mu]\varphi = -\rieszmap[\bar\mu] A \varphi$. 
    Therefore $D(L) = \rieszmap[\bar\mu]D(A)$ and $L = -\rieszmap[\bar\mu] A \rieszmap[\bar\mu]^{-1}$.
\end{proof}

\section{Feedback control and local convexification of the free energy}
\label{sec:feedback}

The previous section shows that the generator of the linearized dynamics at $\bar\mu$ is $L = -\rieszmap[\bar\mu] A \rieszmap[\bar\mu]^{-1}$, where $A$ is a self-adjoint realization of the Wasserstein Hessian.
When $A$ has negative eigenvalues, $\bar\mu$ is linearly unstable, and when $\lambda_1 > 0$ is small, the convergence is slow.
We therefore construct a feedback control whose range is $X_\delta$, the subspace associated with eigenvalues $\lambda_k \le \delta$, for a prescribed threshold $\delta > 0$.
The resulting perturbation shifts this part of the spectrum of $A$ and hence modifies $L$ accordingly.
At the nonlinear level, combined with a local lower semicontinuity assumption on the Hessian, this yields local displacement convexity of the controlled energy around $\bar\mu$.
We also prove local exponential stabilization of the closed-loop equation.

\subsection{Formulation and spectral decomposition}
\label{sec:formulation}

The controlled gradient flow reads as
\begin{equation}
    \label{eq:gradient_flow_controlled}\partial_t\mu = \nabla\cdot\left(\mu\nabla\frac{\delta\F}{\delta\mu}(\mu)\right) + \sum_{j=1}^m u_j(t)\,\nabla\cdot(\mu\nabla\alpha_j), \qquad m \coloneqq \operatorname{dim} X_\delta,
\end{equation}
where $\alpha_j \in D(A)$ are prescribed shape functions and $u_j(t) \in \R$ are scalar controls.
We set the initial condition $\mu^{\rm in} \in \operatorname{dom}(\F)$.
Linearization around $(\bar\mu,0)$ yields, in $X$,
\begin{equation}
    \label{eq:linearized_equation_potential}
    \partial_t \xi = -A\xi - Bu(t), \qquad Bu \coloneqq \sum_{j=1}^m u_j\alpha_j,
\end{equation}
with initial condition $\xi(0) = \xi_0 = \rieszmap[\bar\mu]^{-1}(\mu^{\rm in} - \bar\mu) \in X$.
Since $A$ is self-adjoint and bounded from below by Theorem~\ref{thm:hessian_form}, $-A$ generates the $C_0$-semigroup $(e^{-tA})_{t \ge 0}$ on $X$.
Thus, for $\xi_0 \in X$ and admissible controls $u$, the linearized equation~\eqref{eq:linearized_equation_potential} is understood in the semigroup sense.
The corresponding equation in $X'$ is $\partial_t\nu = L\nu - \rieszmap[\bar\mu] Bu(t)$.

We verify $\delta$-stabilizability for the system $\partial_t\xi = -A\xi - Bu$ using the infinite-dimensional Hautus condition, see, e.g., \cite[Sec.~5.2]{CurtainZwart1995}.
This requires
\[
\ker(\lambda I + A) \cap \ker(B^*) = \{0\} \qquad \text{for all } \lambda \in \C \text{ with } \Re\lambda \ge -\delta.
\]
Since $A$ is self-adjoint with compact resolvent, the condition reduces to requiring that for all $k \in \Sigma_\delta$, if $\varphi \in \ker(A-\lambda_k I)\setminus\{0\}$, then $B^*\varphi = (\langle\varphi, \alpha_j\rangle_X)_{j=1}^m \neq 0$.
Since $B^* \varphi \neq 0$ if and only if $\varphi \notin (\operatorname{span}{\{\alpha_j\}_{j=1}^m})^\perp$, it is enough to select $\alpha_j \in D(A)$ for $j \in \Sigma_\delta$ so that 
\[
X_\delta \subseteq \operatorname{span}\{\alpha_1, \dots, \alpha_m\}.
\]
For concreteness, we henceforth fix the choice $\alpha_j = e_j$ for $j \in \Sigma_\delta$.
Then $B B^* = P_\delta$ and $B^*\xi = (\langle\xi,e_k\rangle_X)_{k \in \Sigma_\delta}$.
This discussion gives the following Hautus verification.

\begin{proposition}[Hautus $\delta$-stabilizability]
    \label{prop:hautus}
    Let $\delta > 0$ and choose shape functions $\alpha_1, \dots, \alpha_m \in D(A)$ to be an $X$-orthonormal basis $\{e_1,\dots,e_m\}$ of $X_\delta$.
    Then the pair $(-A,-B)$ satisfies the infinite-dimensional Hautus condition at rate $\delta$, namely,
    \[
    \ker(\lambda I+A) \cap \ker (-B^*) = \{0\} \qquad \forall \lambda \in \mathbb{C} \text{ with }\Re\lambda\ge -\delta.
    \]
    Consequently, $(-A,-B)$ is $\delta$-stabilizable.
\end{proposition}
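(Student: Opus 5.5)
The plan is to verify the Hautus condition directly from the spectral structure of $A$, and then invoke the standard infinite-dimensional result that, for a generator with compact resolvent and finite-dimensional input space, the Hautus test at rate $\delta$ implies $\delta$-stabilizability (e.g.\ \cite[Sec.~5.2]{CurtainZwart1995}).

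First, let $\lambda\in\C$ with $\Re\lambda\ge-\delta$ and take $\varphi\in\ker(\lambda I+A)\cap\ker(-B^*)$, with $\varphi$ possibly complex-valued after complexifying $X$. If $\varphi\neq0$, then $A\varphi=-\lambda\varphi$, so $-\lambda$ is an eigenvalue of $A$. Since $A$ is self-adjoint by Theorem~\ref{thm:hessian_form}, $-\lambda$ is real, and $\Re\lambda\ge-\delta$ gives $-\lambda\le\delta$. By Theorem~\ref{thm:compact_resolvent_a}, $-\lambda=\lambda_k$ for some $k$, and the eigenspace $\ker(A-\lambda_kI)$ is spanned by the basis vectors $e_j$ with $\lambda_j=\lambda_k$. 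All of these indices lie in $\Sigma_\delta$, so $\varphi\in X_\delta$. Second, $-B^*\varphi=0$ means $\langle\varphi,e_j\rangle_X=0$ for every $j\in\Sigma_\delta$. Since $\{e_j\}_{j\in\Sigma_\delta}$ is an orthonormal basis of $X_\delta$, this forces $\varphi=P_\delta\varphi=0$, a contradiction. Hence the intersection is trivial.

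For stabilizability, I would not rely only on the abstract Hautus theorem; instead I would exhibit an explicit feedback. Decompose $X=X_\delta\oplus X_\delta^\perp$. This decomposition is invariant under $A$ because $A$ is self-adjoint with eigenbasis $(e_k)$. On $X_\delta^\perp$ every eigenvalue satisfies $\lambda_k>\delta$, and since only finitely many eigenvalues lie below any level, $\inf_{k\notin\Sigma_\delta}\lambda_k>\delta$. Hence $e^{-tA}$ restricted to $X_\delta^\perp$ decays at a rate strictly larger than $\delta$. On $X_\delta$, take the feedback $u=-B^*\!\left(\max_k|\lambda_k|+\delta+1\right)\xi$, i.e.\ $-Bu=cP_\delta\xi$ with a suitable sign, chosen so that $-A-B K$ on $X_\delta$ has all eigenvalues below $-\delta$. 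Concretely, set $K=\kappa B^*$ with $\kappa>\delta-\lambda_1$; then the closed-loop operator is $-(A+\kappa P_\delta)$, which is self-adjoint with spectrum $\{\lambda_k+\kappa\}_{k\in\Sigma_\delta}\cup\{\lambda_k\}_{k\notin\Sigma_\delta}$, all strictly above $\delta$. Its semigroup therefore satisfies $\|e^{-t(A+\kappa P_\delta)}\|\le e^{-\delta' t}$ for some $\delta'>\delta$, which is $\delta$-stabilizability.

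The main obstacle is bookkeeping rather than analysis. The sign convention in the pair $(-A,-B)$ must be tracked so that the feedback $u=K\xi$ indeed produces $-A-BK=-(A+\kappa P_\delta)$. One must also confirm that $\ker(A-\lambda_k)$ is entirely contained in $X_\delta$ when eigenvalues are repeated, which holds because $\Sigma_\delta$ is defined by the eigenvalue and not by the index.
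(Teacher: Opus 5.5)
Your argument is correct. The Hautus verification is the same as the paper's: an eigenvector of the self-adjoint $A$ with $-\lambda\le\delta$ lies in $X_\delta$, and $B^*\varphi=0$ forces $\varphi\perp X_\delta$. Your remark that $\ker(A-\lambda_k I)$ is spanned by all $e_j$ with $\lambda_j=\lambda_k$ makes explicit a point the paper leaves implicit.

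Where you differ is the step from the Hautus condition to $\delta$-stabilizability. The paper simply invokes \cite[Thm.~5.2.11]{CurtainZwart1995}. You instead exhibit the feedback $u=\kappa B^*\xi$, which gives the closed-loop generator $-(A+\kappa P_\delta)$. This operator is self-adjoint, being a bounded perturbation of $A$, and it is diagonal in $(e_k)$. It is bounded below by $\delta'=\min\{\lambda_1+\kappa,\min_{k\notin\Sigma_\delta}\lambda_k\}>\delta$.

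What your route buys:
\begin{itemize}
\item It is elementary and self-contained, since you do not have to check the hypotheses of the cited theorem (spectrum decomposition at $-\delta$, finite-dimensional unstable part).
\item It yields an explicit rate.
\item It anticipates the diagonal structure of the Riccati feedback in Proposition~\ref{prop:are_diagonal} and Corollary~\ref{cor:spectral_gap}.
\end{itemize}

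What it costs: it makes the Hautus condition logically unnecessary for the stabilizability conclusion. It also hinges on $X_\delta$ being $A$-invariant and $BB^*=P_\delta$ acting as the identity there, which is special to the orthonormal-eigenfunction choice. The paper's Hautus route is the one that extends to general shape functions with $X_\delta\subseteq\operatorname{span}\{\alpha_j\}$, as discussed just before the proposition.

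One cleanup: drop your preliminary formula $u=-B^*(\max_k|\lambda_k|+\delta+1)\xi$ in favour of the concrete $u=\kappa B^*\xi$ with $\kappa>\delta-\lambda_1$. Under the convention $\partial_t\xi=-A\xi-Bu$ it has the wrong sign, since it would produce $-A+cP_\delta$. Its maximum would also have to run over $k\in\Sigma_\delta$, because $\lambda_k\to\infty$.
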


\begin{proof}
    For the choice $\alpha_j = e_j$, where $\{e_1,\dots,e_m\}$ is an $X$-orthonormal basis of $X_\delta$, one has $\|Bu\|_X^2 = \sum_{j=1}^m |u_j|^2 = \|u\|_U^2$, so $B \in \mathcal{L}(\R^m, X)$.
    We now verify the Hautus condition. 
    Let $\lambda \in \mathbb{C}$ satisfy $\Re\lambda\ge -\delta$, and let
    \[
    \varphi \in \ker(\lambda I+A) \cap \ker (-B^*).
    \]
    Since $A$ is self-adjoint, $\ker(\lambda I + A) \neq \{0\}$ only if $\lambda = -\lambda_k$ for some eigenvalue $\lambda_k$ of $A$. 
    The condition $\Re\lambda\ge -\delta$ then gives $\lambda_k \le \delta$, and hence $\varphi \in X_\delta$. 
    On the other hand, $B^*\varphi = 0$ means $\langle \varphi, e_j\rangle_X = 0$, for $j=1,\dots,m$.
    Since $\{e_1, \dots, e_m\}$ is an orthonormal basis of $X_\delta$, this implies $\varphi \perp X_\delta$. 
    But $\varphi \in X_\delta$, so $\varphi = 0$.
    Thus, using that $-A$ generates a $C_0$-semigroup, \cite[Thm.~5.2.11]{CurtainZwart1995} implies that the pair $(-A, -B)$ is $\delta$-stabilizable.
\end{proof}

\begin{remark}
    Since $D(A) \subseteq \dotHk[2](\T^d) \subset \dotHk[1](\T^d)$, each $\alpha_j \in \dotHk[1](\T^d)$. 
    Hence, if $\mu \in L^\infty(\T^d)$, the expression $\rieszmap[\mu]\alpha_j = -\nabla \cdot (\mu \nabla \alpha_j)$ is well-defined as an element of $X'$.
    Therefore, on any local neighborhood where the densities remain bounded, the nonlinear control operator $\rieszmap[\mu] B u$ is well-defined as an element of $X'$.
\end{remark}

\subsection{Linear-quadratic optimal control and construction of the feedback law}
\label{sec:feedback_law}

With the shape functions $\alpha_j$ fixed as above, it remains to specify the time-dependent control coefficients $u_j$ in feedback form.
We do this by posing the following time-weighted linear-quadratic regulator (LQR) problem for the linearized equation~\eqref{eq:linearized_equation_potential}.
The state space is $X$, and the control space is $U \coloneqq \R^m$, equipped with its Euclidean inner product.
For a prescribed decay rate $\delta > 0$ and a bounded, self-adjoint, nonnegative operator $Q : X \to X$, define the cost functional
\begin{equation}
    \label{eq:cost_functional}
    J(\xi_0, u) \coloneqq \frac{1}{2}\int_0^\infty e^{2\delta t} \left(\langle Q\xi_t, \xi_t\rangle_X + \|u(t)\|_U^2\right)\,dt.
\end{equation}
The set of admissible controls is 
\[
u \in L^2_\delta(0,\infty; U) \coloneqq \left\{ u :(0, \infty) \to U \;\Big|\; \int_0^\infty e^{2\delta t}\|u(t)\|_U^2 \, dt < +\infty\right\}.
\]
By Proposition~\ref{prop:hautus}, the pair $(-A, -B)$ is $\delta$-stabilizable.
Setting $z_t \coloneqq e^{\delta t} \xi_t$ and $v(t) \coloneqq e^{\delta t}u(t)$ transforms the weighted problem into the unweighted LQR dynamics
\[
\partial_t z_t = -(A - \delta I)z_t - Bv(t).
\]
We assume that $Q$ is diagonal in $\{e_k\}_{k \ge 1}$, namely $Q e_k = q_k e_k$ with $q_k \ge 0$ for all $k \ge 1$ and $q_k > 0$ for $k \in \Sigma_\delta$, so that $Q$ penalizes all directions in $X_\delta$.

The algebraic Riccati equation associated with this LQR problem is 
\begin{equation}
    \label{eq:ARE-abstract}
    - \langle (A-\delta I)\varphi, \Pi\psi\rangle_X - \langle \Pi\varphi,(A-\delta I)\psi\rangle_X - \langle B^*\Pi\varphi,B^*\Pi\psi\rangle_U + \langle Q\varphi, \psi\rangle_X = 0
\end{equation}
for all $\varphi, \psi \in D(A)$.
Here $\Pi : X \to X$ is the Riccati operator.
The diagonal assumption on $Q$ also implies the detectability of $(-A + \delta I, Q^{1/2})$: for every $\lambda \in \mathbb{C}$ with $\Re\lambda \ge 0$,
\[
\ker(\lambda I + A - \delta I) \subseteq X_\delta, \qquad \ker(Q^{1/2}) \cap X_\delta = \{0\}.
\]
The next proposition gives the resulting Riccati operator and the optimal feedback control.

\begin{proposition}[Optimal control from the Riccati equation]
    \label{prop:are}
    Suppose that the control profiles are chosen as in Proposition~\ref{prop:hautus} and that $Q$ satisfies the diagonal condition.
    Then, there exists a unique self-adjoint, nonnegative $\Pi \in \mathcal{L}(X)$ solving \eqref{eq:ARE-abstract}.
    For each initial condition $\xi_0 \in X$, the optimal control along the corresponding semigroup solution is given by
    \[
    u(t) = B^*\Pi\xi_t
    \]
    and it minimizes $J(\xi_0, u)$ over $L^2_\delta(0,\infty; U)$.
    The corresponding modified operator is
    \[
    A_\Pi \coloneqq A + B B^*\Pi = A + P_\delta\Pi,
    \]
    and $-A_\Pi$ generates an exponentially stable semigroup with decay rate at least $\delta$.
\end{proposition}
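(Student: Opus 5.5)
Because every operator in play is diagonal in the eigenbasis $(e_k)_{k\ge1}$ of $A$, I would construct $\Pi$ explicitly mode by mode, check that it solves \eqref{eq:ARE-abstract}, and then obtain uniqueness, optimality and stability from the standard infinite-dimensional LQR theory, e.g.\ \cite[Ch.~6]{CurtainZwart1995}. That theory applies to the shifted system $\partial_t z=-(A-\delta I)z-Bv$ and uses two facts already in hand: $\delta$-stabilizability of $(-A,-B)$ (Proposition~\ref{prop:hautus}) and the detectability of $(-A+\delta I,Q^{1/2})$ recorded before the statement.

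\textbf{Explicit construction.} Look for $\Pi e_k=\pi_k e_k$. For $k\notin\Sigma_\delta$ we have $B^*e_k=0$, and testing \eqref{eq:ARE-abstract} with $\varphi=\psi=e_k$ gives
\[
2(\lambda_k-\delta)\pi_k=q_k .
\]
Set $\pi_k=q_k/(2(\lambda_k-\delta))\ge0$. This is well defined because $\lambda_k>\delta$ off $\Sigma_\delta$. For $k\in\Sigma_\delta$ we have $\langle B^*\Pi e_k,B^*\Pi e_k\rangle_U=\pi_k^2$, so the scalar equation is
\[
\pi_k^2+2(\lambda_k-\delta)\pi_k-q_k=0 .
\]
Its unique nonnegative root is
\[
\pi_k=-(\lambda_k-\delta)+\sqrt{(\lambda_k-\delta)^2+q_k},
\]
which is strictly positive because $q_k>0$.

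Only finitely many $k$ lie in $\Sigma_\delta$, and $\lambda_k\to\infty$. Hence $\sup_k\pi_k\le \max_{\Sigma_\delta}\pi_k+\sup_k q_k/(2\inf_{k\notin\Sigma_\delta}(\lambda_k-\delta))<\infty$, using $\sup_k q_k\le\|Q\|$. So $\Pi\in\mathcal L(X)$ is self-adjoint and nonnegative. The weak identity \eqref{eq:ARE-abstract} for general $\varphi,\psi\in D(A)$ then follows by expanding in the eigenbasis: all sums converge absolutely because $\sum_k\lambda_k|\langle\varphi,e_k\rangle_X|^2<\infty$ on $D(A)$ and $\Pi$ is bounded.

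\textbf{Uniqueness and optimality.} Rather than deriving these directly, I would invoke the LQR theorem: under stabilizability and detectability, the ARE has a unique nonnegative self-adjoint solution in $\mathcal L(X)$. For the shifted problem this theorem gives the feedback $v=B^*\Pi z$ as the minimiser of the unweighted cost, and that feedback is exponentially stabilizing. The sign convention matters here: the input operator is $-B$, so the usual feedback $-(-B)^*\Pi z$ equals $B^*\Pi z$. Undoing the substitution $z=e^{\delta t}\xi$, $v=e^{\delta t}u$ gives $u=B^*\Pi\xi_t$ minimising $J$ over $L^2_\delta$.

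If a self-contained uniqueness proof is preferred, I would argue as follows. Any nonnegative solution $\tilde\Pi$ produces a stabilizing closed loop by detectability. The value-function identity $\langle\tilde\Pi\xi_0,\xi_0\rangle=2\min J$ then forces $\tilde\Pi=\Pi$. This is the step I expect to need the most care, since it requires verifying the hypotheses of the cited theorem in the weak, unbounded-$A$ formulation; the diagonal structure should make that routine.

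\textbf{Stability.} The closed-loop operator is $A_\Pi=A+BB^*\Pi=A+P_\delta\Pi$, which is diagonal with eigenvalues
\[
\lambda_k+\pi_k=\delta+\sqrt{(\lambda_k-\delta)^2+q_k}>\delta \quad\text{for } k\in\Sigma_\delta,
\]
and eigenvalues $\lambda_k>\delta$ for $k\notin\Sigma_\delta$. It is a bounded symmetric perturbation of $A$, so it is self-adjoint on $D(A)$. Its spectrum is $\{\lambda_k\}_{k\notin\Sigma_\delta}\cup\{\lambda_k+\pi_k\}_{k\in\Sigma_\delta}$, and since $\lambda_k\to\infty$ and $\Sigma_\delta$ is finite, $\inf_k$ of these eigenvalues is a minimum over finitely many values, each exceeding $\delta$. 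Hence this infimum is some $\lambda_\Pi>\delta$. The spectral theorem then gives $\|e^{-tA_\Pi}\|\le e^{-\lambda_\Pi t}\le e^{-\delta t}$.
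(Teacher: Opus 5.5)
Your proposal is correct and uses the same core argument as the paper. Existence, uniqueness and optimality come from the standard LQR theorem \cite[Thm.~6.2.7]{CurtainZwart1995} applied to the shifted system, using the Hautus stabilizability of Proposition~\ref{prop:hautus} and the detectability of $(-A+\delta I,Q^{1/2})$, and $u=B^*\Pi\xi_t$ is recovered by undoing $z=e^{\delta t}\xi$, $v=e^{\delta t}u$. Your explicit mode-by-mode construction of $\Pi$ and the direct spectral proof of decay, which gives the sharper rate $\lambda_\Pi>\delta$, are exactly what the paper does immediately afterwards in Proposition~\ref{prop:are_diagonal} and Corollary~\ref{cor:spectral_gap}, identifying the diagonal operator with the abstract solution through the same uniqueness statement.
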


\begin{proof}
    Proposition~\ref{prop:hautus} establishes $\delta$-stabilizability of $(-A,-B)$, while the preceding paragraph establishes detectability of $(-A+\delta I, Q^{1/2})$.
    Applying~\cite[Thm.~6.2.7]{CurtainZwart1995} to the shifted system for $(z,v)$ gives a unique self-adjoint, nonnegative solution $\Pi \in \mathcal{L}(X)$ of \eqref{eq:ARE-abstract} and the optimal feedback $v(t) = B^*\Pi z_t$.
    Since $z_t = e^{\delta t}\xi_t$ and $v(t) = e^{\delta t}u(t)$, this is equivalent to $u(t) = B^*\Pi\xi_t$ in the original variables.
    The cited theorem also gives exponential stability of $-(A-\delta I + BB^*\Pi)$ for the shifted system, which is equivalent to decay at rate at least $\delta$ for the semigroup generated by $-A_\Pi$.
\end{proof}

Under this diagonal choice of $Q$, the Riccati equation reduces to scalar equations along the eigenfunctions of $A$.

\begin{proposition}
\label{prop:are_diagonal}
    Suppose that $Q$ satisfies the diagonal condition stated above.
    Then the unique nonnegative solution $\Pi \in \mathcal{L}(X)$ of~\eqref{eq:ARE-abstract} is the diagonal operator $\Pi e_k = \pi_k e_k$, with
    \begin{equation}
        \label{eq:pi_k_definition}
        \pi_k =
        \begin{cases}
            -(\lambda_k-\delta) + \sqrt{(\lambda_k-\delta)^2 + q_k} & \text{for } k\in\Sigma_\delta, \\[1ex]
            \displaystyle \frac{q_k}{2(\lambda_k-\delta)} & \text{for } k\notin\Sigma_\delta.
        \end{cases}
    \end{equation}
    In particular, the modified operator $A_\Pi$ is self-adjoint with eigenvalues $\lambda_k + \pi_k$ for $k \in \Sigma_\delta$ and $\lambda_k$ for $k \notin \Sigma_\delta$.
\end{proposition}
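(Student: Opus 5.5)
The plan is to verify directly that the proposed diagonal operator solves the weak Riccati equation~\eqref{eq:ARE-abstract}, and then to invoke the uniqueness part of Proposition~\ref{prop:are}.

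First, we check that $\Pi$ is well defined. For $k\in\Sigma_\delta$ the formula gives $\pi_k$ as the nonnegative root of $\pi^2+2(\lambda_k-\delta)\pi-q_k=0$. Indeed, $q_k>0$ forces $\sqrt{(\lambda_k-\delta)^2+q_k}>|\lambda_k-\delta|$, so $\pi_k>0$. For $k\notin\Sigma_\delta$ we have $\lambda_k>\delta$, so $\pi_k=q_k/(2(\lambda_k-\delta))\ge0$. Since $\lambda_k\to\infty$, only finitely many $k\notin\Sigma_\delta$ satisfy $\lambda_k-\delta<1$, and there is a positive gap $\lambda_{m+1}-\delta>0$. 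Using $q_k\le\|Q\|$, this gives $\sup_k\pi_k<\infty$. Hence $\Pi$ is bounded, self-adjoint and nonnegative on $X$.

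Next, we verify~\eqref{eq:ARE-abstract}. Both sides are bilinear and continuous on $D(A)$ equipped with the graph norm. The eigenfunctions $e_k$ span a core for $A$: finite sums $\sum_{k\le N}\langle\varphi,e_k\rangle_Xe_k$ converge to $\varphi$ in graph norm because $\sum\lambda_k^2|\langle\varphi,e_k\rangle|^2<\infty$. It therefore suffices to test with $\varphi=e_k$ and $\psi=e_l$. Using $Ae_k=\lambda_ke_k$, $\Pi e_k=\pi_ke_k$, $Qe_k=q_ke_k$, and $\langle B^*\Pi e_k,B^*\Pi e_l\rangle_U=\pi_k\pi_l\langle P_\delta e_k,e_l\rangle_X$, the equation vanishes trivially for $k\ne l$. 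For $k=l$ it reduces to
\[
-2(\lambda_k-\delta)\pi_k-\mathbf 1_{k\in\Sigma_\delta}\pi_k^2+q_k=0.
\]
This is exactly the quadratic (for $k\in\Sigma_\delta$) or linear (for $k\notin\Sigma_\delta$) scalar equation solved by~\eqref{eq:pi_k_definition}. Uniqueness of the bounded self-adjoint nonnegative solution from Proposition~\ref{prop:are} then identifies this diagonal $\Pi$ as \emph{the} solution. The only choice made is the sign of the root, and nonnegativity forces the $+$ root.

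Finally, since $BB^*=P_\delta$ is diagonal with entries $\mathbf 1_{k\in\Sigma_\delta}$, we get $A_\Pi e_k=(\lambda_k+\mathbf 1_{k\in\Sigma_\delta}\pi_k)e_k$. Moreover, $A_\Pi$ is the bounded symmetric perturbation $P_\delta\Pi$ of the self-adjoint operator $A$, with domain $D(A)$, so it is self-adjoint by Kato--Rellich (the bounded case). Its eigenbasis $\{e_k\}$ is complete, so these are all its eigenvalues.

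The main obstacle I expect is the density step: showing that the weak identity tested on eigenfunctions extends to all of $D(A)$. This is where the graph-norm core argument above is needed, together with the boundedness of $\Pi$ on both sides so that each term is continuous in graph norm.
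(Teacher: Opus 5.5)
Your proposal is correct and follows essentially the same route as the paper's proof: define the diagonal candidate, check that it is bounded, self-adjoint and nonnegative using $\inf_{k\notin\Sigma_\delta}\lambda_k>\delta$, test \eqref{eq:ARE-abstract} on eigenfunctions, extend by density of their span in $D(A)$ with the graph norm, and conclude by the uniqueness in Proposition~\ref{prop:are}. Your explicit justifications of the core property and of the self-adjointness of $A_\Pi$ as a bounded symmetric perturbation of $A$ fill in details that the paper only asserts.
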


\begin{proof}
    By Proposition~\ref{prop:are}, there exists a unique nonnegative solution $\Pi \in \mathcal{L}(X)$ to the Riccati equation~\eqref{eq:ARE-abstract}.
    To identify it, define a diagonal operator $\widetilde\Pi$ by $\widetilde\Pi e_k = \pi_k e_k$ with $\pi_k \ge 0$ defined in~\eqref{eq:pi_k_definition}.
    Since $Q$ is bounded and $A$ has compact resolvent, the sequence $(q_k)_{k \ge 1}$ is bounded and $\inf_{k \notin \Sigma_\delta} \lambda_k > \delta$.
    Hence $(\pi_k)_{k \ge 1}$ is bounded, so $\widetilde{\Pi} \in \mathcal{L}(X)$ is self-adjoint and nonnegative.
    Testing~\eqref{eq:ARE-abstract} with $\varphi = \psi = e_k$ yields
    \begin{align*}
        0 &= -\langle (A-\delta I)e_k,\widetilde\Pi e_k\rangle_X - \langle \widetilde\Pi e_k, (A-\delta I)e_k \rangle_X - \langle B^*\widetilde\Pi e_k,B^*\widetilde\Pi e_k \rangle_U + \langle Qe_k, e_k\rangle_X \\
        &= -(\lambda_k-\delta)\pi_k\langle e_k,e_k \rangle_X - (\lambda_k-\delta)\pi_k\langle e_k,e_k\rangle_X - \pi_k^2\|B^* e_k\|_U^2 + q_k\langle e_k,e_k\rangle_X \\
        &= -2(\lambda_k-\delta)\pi_k -\mathbf{1}_{\{k \in \Sigma_\delta\}}\pi_k^2
        + q_k.
    \end{align*}
    Here we used $\|B^*e_k\|_U^2 = \langle P_\delta e_k, e_k\rangle_X =\mathbf{1}_{\{k\in\Sigma_\delta\}}$.
    If $k \notin \Sigma_\delta$, this gives $\pi_k = \frac{q_k}{2(\lambda_k-\delta)}$, since $\lambda_k > \delta$.
    If $k \in \Sigma_\delta$, it gives $\pi_k^2 + 2(\lambda_k - \delta)\pi_k - q_k = 0$, whose unique nonnegative root is $\pi_k = -(\lambda_k - \delta) + \sqrt{(\lambda_k - \delta)^2 + q_k}$.
    Testing~\eqref{eq:ARE-abstract} with $\varphi = e_k$ and $\psi = e_\ell$, $k \neq \ell$, gives zero since all the operators are diagonal in $\{e_k\}_{k \ge 1}$.
    Therefore, equation~\eqref{eq:ARE-abstract} holds for all $\varphi, \psi$ in the linear span of the eigenfunctions. 
    Since this span is dense in $D(A)$ with respect to $\|\cdot\|_A$ and all terms in~\eqref{eq:ARE-abstract} are continuous in that norm, the identity extends to every $\varphi, \psi \in D(A)$. 
    Thus, $\widetilde\Pi$ is a nonnegative solution of the algebraic Riccati equation.
    By the uniqueness guaranteed by Proposition~\ref{prop:are}, we conclude that $\Pi = \widetilde\Pi$.
    Finally, $A_\Pi = A + P_\delta\Pi$ has eigenfunctions $e_k$ with eigenvalues $\lambda_k + \pi_k$ for $k \in \Sigma_\delta$, and $\lambda_k$ for $k \notin \Sigma_\delta$.
\end{proof}

\begin{corollary}
\label{cor:spectral_gap}
    Under the hypotheses of Proposition~\ref{prop:are_diagonal}, the modified operator $A_\Pi = A + P_\delta \Pi$ is self-adjoint with eigenvalues
    \[
    \lambda_k^\Pi =
    \begin{cases}
        \lambda_k + \pi_k = \sqrt{(\lambda_k-\delta)^2 + q_k} + \delta, & k \in \Sigma_\delta,\\
        \lambda_k, & k \notin \Sigma_\delta.
    \end{cases}
    \]
    In particular, we have $A_\Pi \ge \lambda_\Pi I$ for
    \[
    \lambda_\Pi \coloneqq \min\left(\min_{k \in \Sigma_\delta}\sqrt{(\lambda_k-\delta)^2 + q_k} + \delta, \min_{k\notin\Sigma_\delta} \lambda_k\right) > \delta.
    \]
\end{corollary}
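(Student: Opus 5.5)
The eigenvalue formula follows directly from Proposition~\ref{prop:are_diagonal}. There, $A_\Pi$ is shown to be diagonal in the orthonormal eigenbasis $(e_k)_{k\ge1}$, with eigenvalue $\lambda_k+\pi_k$ for $k\in\Sigma_\delta$ and $\lambda_k$ otherwise. For $k\in\Sigma_\delta$ we substitute $\pi_k=-(\lambda_k-\delta)+\sqrt{(\lambda_k-\delta)^2+q_k}$ from \eqref{eq:pi_k_definition}. This gives $\lambda_k+\pi_k=\delta+\sqrt{(\lambda_k-\delta)^2+q_k}$.

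Self-adjointness is handled as follows. $A_\Pi=A+P_\delta\Pi$ is a bounded perturbation of the self-adjoint operator $A$, with domain $D(A_\Pi)=D(A)$. Since $P_\delta$ and $\Pi$ are both diagonal in $(e_k)$, they commute, so $P_\delta\Pi$ is bounded and self-adjoint. By the Kato--Rellich theorem (bounded case), $A_\Pi$ is therefore self-adjoint on $D(A)$.

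For the lower bound we expand $\varphi\in D(A)$ as $\varphi=\sum_k\langle\varphi,e_k\rangle_X e_k$. Then
\[
\langle A_\Pi\varphi,\varphi\rangle_X=\sum_k\lambda_k^\Pi|\langle\varphi,e_k\rangle_X|^2\ge\Big(\inf_k\lambda_k^\Pi\Big)\|\varphi\|_X^2 .
\]
The same estimate extends to the form domain $D(a)$ by density, with the form $a_\Pi$.

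It remains to show that the infimum equals the stated minimum and exceeds $\delta$. For the first term, $\Sigma_\delta$ is finite, so the minimum over $k\in\Sigma_\delta$ is attained. Each value there is strictly greater than $\delta$, because $q_k>0$ for $k\in\Sigma_\delta$ makes $\sqrt{(\lambda_k-\delta)^2+q_k}>0$.

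The second term is the main point requiring care: we must rule out $\inf_{k\notin\Sigma_\delta}\lambda_k=\delta$, which would happen if $\delta$ were an accumulation point. Compactness of the resolvent (Theorem~\ref{thm:compact_resolvent_a}) excludes this. Since $\lambda_k\to+\infty$, only finitely many $\lambda_k$ lie below $\delta+1$. Hence $\min_{k\notin\Sigma_\delta}\lambda_k$ is attained at some $k$ with $\lambda_k>\delta$, and is therefore strictly greater than $\delta$.

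Both minima are strictly greater than $\delta$, so $\lambda_\Pi>\delta$ and $A_\Pi\ge\lambda_\Pi I$.
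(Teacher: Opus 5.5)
Your proof is correct and follows the same route as the paper, which derives the corollary from Proposition~\ref{prop:are_diagonal}, the positivity $q_k>0$ on $\Sigma_\delta$, and the fact that a self-adjoint operator diagonal in an orthonormal basis is bounded below by the infimum of its eigenvalues. You simply spell out what the paper leaves implicit: self-adjointness of $A_\Pi$ as a bounded symmetric perturbation of $A$, and the strict inequality $\min_{k\notin\Sigma_\delta}\lambda_k>\delta$, which follows from compactness of the resolvent.
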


\begin{proof}
    This follows from Proposition~\ref{prop:are_diagonal}, the positivity condition $q_k > 0$ for $k \in \Sigma_\delta$, and the fact that a self-adjoint diagonal operator is bounded below by the infimum of its eigenvalues.
\end{proof}

\subsection{Local convexification of the modified energy}
\label{sec:convexification}

We now show that $A_\Pi \ge \lambda_\Pi I$ yields a lower bound for the Wasserstein Hessian of a modified energy at $\bar\mu$.
For every $\mu \in \operatorname{dom}(\F)$ such that $\mu - \bar\mu \in X'$, define
\[
\F_{\rm cl}(\mu) \coloneqq \F(\mu) + \mathcal{Q}(\mu), \quad \mathcal{Q}(\mu) \coloneqq \frac{1}{2}\left\langle P_\delta \Pi \rieszmap[\bar\mu]^{-1}(\mu - \bar\mu), \rieszmap[\bar\mu]^{-1}(\mu - \bar\mu)\right\rangle_X.
\]

\begin{proposition}[Modified energy and the controlled dynamics]
\label{prop:modified_energy_closed_loop}
    For every $\mu \in \operatorname{dom}(\F)$ such that $\mu - \bar\mu \in X'$ and every $h \in X'$,
    \[
    D\mathcal{Q}(\mu)[h] = \left\langle h, P_\delta\Pi\rieszmap[\bar\mu]^{-1}(\mu-\bar\mu)\right\rangle_{X',X}.
    \]
    Consequently, with the feedback law
    \[
    u(t) = B^* \Pi \rieszmap[\bar\mu]^{-1}(\mu_t-\bar\mu),
    \]
    the controlled equation~\eqref{eq:gradient_flow_controlled} is the Wasserstein gradient flow of $\F_{\rm cl}$
    \[
    \partial_t\mu = \nabla\cdot\left( \mu \nabla \frac{\delta\F_{\rm cl}}{\delta\mu}(\mu)\right).
    \]
\end{proposition}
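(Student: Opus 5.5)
The plan is to compute the derivative of $\mathcal{Q}$ directly and then identify the first variation. Write $M \coloneqq P_\delta\Pi$. By Proposition~\ref{prop:are_diagonal}, $\Pi$ and $P_\delta$ are both diagonal in $\{e_k\}$, so they commute and $M$ is bounded, self-adjoint and nonnegative on $X$. Set $\xi \coloneqq \rieszmap[\bar\mu]^{-1}(\mu-\bar\mu) \in X$, which is well defined because $\rieszmap[\bar\mu]$ is an isometric isomorphism $X\to X'$ (Lemma~\ref{lem:hilbert_structure}). Then
\[
\mathcal{Q}(\mu) = \tfrac12\langle M\xi,\xi\rangle_X
\]
is a continuous quadratic form of the affine variable $\mu$. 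For a perturbation $h\in X'$ we have $\rieszmap[\bar\mu]^{-1}(\mu+\varepsilon h-\bar\mu)=\xi+\varepsilon\eta$ with $\eta\coloneqq\rieszmap[\bar\mu]^{-1}h$. Expanding,
\[
\mathcal{Q}(\mu+\varepsilon h)-\mathcal{Q}(\mu)=\varepsilon\langle M\xi,\eta\rangle_X+\tfrac{\varepsilon^2}{2}\langle M\eta,\eta\rangle_X,
\]
where symmetry of $M$ is used. This yields $D\mathcal{Q}(\mu)[h]=\langle M\xi,\eta\rangle_X$. By the definition of the Riesz map, $\langle \rieszmap[\bar\mu]\eta, M\xi\rangle_{X',X}=\langle\eta,M\xi\rangle_X$, so $D\mathcal{Q}(\mu)[h]=\langle h, M\xi\rangle_{X',X}$, which is the claimed formula.

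Next I identify the first variation. Admissible perturbations of a probability density have zero mass, and $X'$ consists exactly of zero-mean elements of $H^{-1}$ (Lemma~\ref{lem:hilbert_structure}). The formula then says that $\frac{\delta\mathcal{Q}}{\delta\mu}(\mu)=M\xi$, determined up to an additive constant. Since $M\xi\in X_\delta\subset D(A)\subset \dotHk[2]$, it has a gradient in $L^2$. Hence
\[
\frac{\delta\F_{\rm cl}}{\delta\mu}=\frac{\delta\F}{\delta\mu}+P_\delta\Pi\xi .
\]
On the other hand, with $\alpha_j=e_j$ the control term is
\[
\sum_j u_j\nabla\cdot(\mu\nabla e_j)=\nabla\cdot\left(\mu\nabla\sum_j u_je_j\right)=\nabla\cdot(\mu\nabla Bu).
\]
For the feedback $u=B^*\Pi\xi$, this gives $Bu=BB^*\Pi\xi=P_\delta\Pi\xi$, using $BB^*=P_\delta$. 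Adding this to the uncontrolled term gives
\[
\nabla\cdot\left(\mu\nabla\left(\frac{\delta\F}{\delta\mu}+P_\delta\Pi\xi\right)\right),
\]
which is the gradient flow of $\F_{\rm cl}$.

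The only delicate point is sign conventions and the meaning of the first variation. The linearized equation~\eqref{eq:linearized_equation_potential} has $-Bu$, while the nonlinear equation~\eqref{eq:gradient_flow_controlled} carries $+\nabla\cdot(\mu\nabla\alpha_j)u_j = -\rieszmap[\mu]Bu$. These are consistent, and with $u=B^*\Pi\xi$ the resulting additional potential is $+P_\delta\Pi\xi$, which matches $+\mathcal{Q}$. To make the first variation rigorous, I would note that the term $\nabla\cdot(\mu\nabla \cdot)$ is well defined for bounded $\mu$, by the Remark following Proposition~\ref{prop:hautus}. The identity is then interpreted in the distributional sense on the set of $\mu$ where $\F$ admits a first variation.
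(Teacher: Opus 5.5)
Your proposal is correct and follows essentially the same route as the paper. Diagonality from Proposition~\ref{prop:are_diagonal} makes $P_\delta\Pi$ bounded and self-adjoint, which gives $D\mathcal{Q}(\mu)[h] = \langle \rieszmap[\bar\mu]^{-1}h, P_\delta\Pi\rieszmap[\bar\mu]^{-1}(\mu-\bar\mu)\rangle_X = \langle h, P_\delta\Pi\rieszmap[\bar\mu]^{-1}(\mu-\bar\mu)\rangle_{X',X}$; then $BB^* = P_\delta$ identifies the feedback term $\nabla\cdot(\mu\nabla(Bu))$ with $\nabla\cdot\bigl(\mu\nabla\frac{\delta\mathcal{Q}}{\delta\mu}(\mu)\bigr)$. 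Your explicit quadratic expansion and sign check spell out these same steps in more detail.
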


\begin{proof}
    By Proposition~\ref{prop:are_diagonal}, both $\Pi$ and $P_\delta$ are diagonal in the basis $\{e_k\}_{k \ge 1}$, so they commute, and $P_\delta \Pi$ is bounded, self-adjoint, and nonnegative on $X$.
    Hence, for $h \in X'$,
    \[
    D\mathcal{Q}(\mu)[h] = \left\langle \rieszmap[\bar\mu]^{-1}h, P_\delta \Pi \rieszmap[\bar\mu]^{-1}(\mu-\bar\mu) \right\rangle_X = \left\langle h,
    P_\delta \Pi \rieszmap[\bar\mu]^{-1}(\mu-\bar\mu)\right\rangle_{X',X},
    \]
    implying that $\delta\mathcal{Q}/\delta\mu = P_\delta \Pi \rieszmap[\bar\mu]^{-1}(\mu-\bar\mu)$.
    Since the controlled part of~\eqref{eq:gradient_flow_controlled} is $\nabla \cdot (\mu \nabla (Bu))$, we use the feedback form of $u$ from Proposition~\ref{prop:are} to compute
    \[
    \nabla \cdot (\mu \nabla (Bu(t))) = \nabla \cdot \left(\mu \nabla \left(P_\delta \Pi \rieszmap[\bar\mu]^{-1}(\mu_t-\bar\mu)\right)\right) = \nabla \cdot \left(\mu \nabla \frac{\delta\mathcal{Q}}{\delta\mu}(\mu)\right).
    \]
    It follows that the controlled equation is the Wasserstein gradient flow of $\F_{\rm cl}$.
\end{proof}

The previous proposition identifies the first variation of $\mathcal{Q}$ with the feedback potential.
We now compute the second variation at $\bar\mu$ and recover the modified Hessian form.

\begin{proposition}[Hessian of the modified energy]
\label{prop:closed_loop_hessian}
    Assume the hypotheses of Theorem~\ref{thm:hessian_form} and Proposition~\ref{prop:are_diagonal}.
    Define, for $\varphi, \psi \in D(a)$,
    \[
    a_\Pi(\varphi, \psi) \coloneqq a(\varphi, \psi) + \langle P_\delta \Pi \varphi, \psi\rangle_X.
    \]
    Then, for all $\varphi, \psi \in C^{\infty}(\T^d)$,
    \[
    \mathrm{Hess}_{\Wtwo} \F_{\rm cl}(\bar\mu)(\nabla \varphi, \nabla \psi) = a_\Pi(\varphi, \psi).
    \]
    Moreover, the bilinear form defined by the left-hand side admits the unique continuous extension $a_\Pi$ to $D(a)\times D(a)$.
\end{proposition}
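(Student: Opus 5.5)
Since $\F_{\rm cl} = \F + \mathcal{Q}$ and the Wasserstein Hessian is defined as a second derivative along a local geodesic, the Hessian is additive. Theorem~\ref{thm:hessian_form} already identifies $\mathrm{Hess}_{\Wtwo}\F(\bar\mu)(\nabla\varphi,\nabla\psi)$ with $a(\varphi,\psi)$ for smooth $\varphi,\psi$. It therefore suffices to show
\[
\frac{d^2}{ds^2}\mathcal{Q}(\mu_s)\Big|_{s=0} = \langle P_\delta\Pi\varphi,\varphi\rangle_X, \qquad \mu_s \coloneqq \Xi(s\varphi) = (T_{s\varphi})_\sharp\bar\mu .
\]
Polarization then gives the bilinear identity. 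By Lemma~\ref{lem:local_displacement_interpolation}, $\mu_s$ is a local $\Wtwo$-geodesic with initial velocity $\nabla\varphi$, so it is an admissible curve in the definition. For smooth $\varphi$ and small $|s|$ we have $s\varphi\in\mathcal{U}_p$, so $\mu_s-\bar\mu\in X'$ by Lemma~\ref{lem:pushforward_map} and $\mathcal{Q}(\mu_s)$ is well defined.

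Set $\eta_s \coloneqq \rieszmap[\bar\mu]^{-1}(\mu_s-\bar\mu)\in X$ and $M\coloneqq P_\delta\Pi$, which is bounded, self-adjoint and nonnegative. Then $\mathcal{Q}(\mu_s)=\tfrac12\langle M\eta_s,\eta_s\rangle_X$. The map $s\mapsto \Xi(s\varphi)$ is twice differentiable into $X'$ at $s=0$, with $\Xi(0)=\bar\mu$, $D\Xi(0)\varphi=\rieszmap[\bar\mu]\varphi$, and second derivative $D^2\Xi(0)[\varphi,\varphi]$ (Lemma~\ref{lem:pushforward_map}). Since $\rieszmap[\bar\mu]^{-1}$ is an isometry, $s\mapsto\eta_s$ is twice differentiable in $X$ at $0$, with $\eta_0=0$, $\dot\eta_0=\varphi$ and $\ddot\eta_0=\rieszmap[\bar\mu]^{-1}D^2\Xi(0)[\varphi,\varphi]$. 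Differentiating the quadratic form gives
\[
\tfrac{d^2}{ds^2}\mathcal{Q}(\mu_s) = \langle M\dot\eta_s,\dot\eta_s\rangle_X + \langle M\eta_s,\ddot\eta_s\rangle_X .
\]
At $s=0$ the second term drops because $\eta_0=0$, leaving $\langle M\varphi,\varphi\rangle_X$. To avoid assuming second derivatives for $s\neq0$, I will instead expand $\eta_s = s\varphi+\tfrac{s^2}{2}\ddot\eta_0+o(s^2)$ in $X$ and insert this into the quadratic form. This gives $\mathcal{Q}(\mu_s)=\tfrac{s^2}{2}\langle M\varphi,\varphi\rangle_X+o(s^2)$, which shows twice differentiability at $0$ with the claimed value.

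The main obstacle is justifying the second-order Taylor expansion of $\Xi(s\varphi)$ in $X'$ (not merely in $\dot H^{-2}$). I would rely on the smoothness of $T_{s\varphi}$ and $\bar\mu\in W^{1,\infty}$: for $\psi\in X$,
\[
\langle\mu_s-\bar\mu,\psi\rangle = \int\big(\psi(x+s\nabla\varphi(x))-\psi(x)\big)\,d\bar\mu(x),
\]
where I take $\psi$ smooth first and extend by density. Then the estimates need to be uniform in $\|\psi\|_X$. Alternatively, I would simply cite Lemma~\ref{lem:pushforward_map}, whose statement supplies $D\Xi(0)$ and $D^2\Xi(0)$ as $X'$-valued derivatives. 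Actually only the first-order expansion $\eta_s=s\varphi+o(s)$ together with $\|\eta_s - s\varphi\| = O(s^2)$ is needed, since the cross term $\langle M s\varphi, O(s^2)\rangle$ is $O(s^3)$.

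Finally, for the extension, $a$ is continuous on $D(a)=\dot H^2(\T^d)$ in the $\dot H^2$-norm (Lemma~\ref{lem:h2_coercivity} together with \ref{ass:H2}), and $\langle M\varphi,\psi\rangle_X$ is continuous on $X\supset D(a)$. Hence $a_\Pi$ is a continuous bilinear form on $D(a)\times D(a)$. Since $C^\infty(\T^d)/\R$ is dense in $\dot H^2(\T^d)$, any continuous extension of the left-hand side must coincide with $a_\Pi$, which gives uniqueness.
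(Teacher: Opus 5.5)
Your proposal follows the paper's proof: additivity of the Hessian, Theorem~\ref{thm:hessian_form} for $\F$, and the expansion $\rieszmap[\bar\mu]^{-1}(\Xi(s\varphi)-\bar\mu)=s\varphi+o(s)$ in $X$ (from Lemma~\ref{lem:pushforward_map}) to identify the second variation of $\mathcal{Q}$ along $\Xi(s\varphi)$ as $\langle P_\delta\Pi\varphi,\varphi\rangle_X$, followed by polarization and extension using boundedness of $P_\delta\Pi$ on $X$ and density of smooth functions in $\dotHk[2](\T^d)$. One caution: the expansion $\mathcal{Q}(\Xi(s\varphi))=\tfrac{s^2}{2}\langle P_\delta\Pi\varphi,\varphi\rangle_X+o(s^2)$ does not by itself give twice differentiability at $s=0$ (e.g.\ $s^3\sin(1/s)$ has such an expansion with no second derivative at $0$), so keep your first route instead---the product rule applied only at $s=0$, which needs just that $s\mapsto\eta_s$ is differentiable near $0$ and twice differentiable at $0$, exactly what Lemma~\ref{lem:pushforward_map} supplies, and no second derivatives for $s\neq0$.
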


\begin{proof}
    By Theorem~\ref{thm:hessian_form}, for $\varphi, \psi \in C^\infty(\T^d)$,
    \[
    \mathrm{Hess}_{\Wtwo}\F(\bar\mu)(\nabla\varphi, \nabla\psi) = a(\varphi,\psi).
    \]
    Let $\varphi \in C^\infty(\T^d)$.
    By Lemma~\ref{lem:local_displacement_interpolation}, $s \mapsto \Xi(s \varphi)$ is a local $\Wtwo$-geodesic through $\bar\mu$ with initial velocity $\nabla \varphi$.
    Since $\Xi(s\varphi) - \bar\mu = s\rieszmap[\bar\mu]\varphi + o(s)$ in $X'$, we have $\rieszmap[\bar\mu]^{-1}(\Xi(s\varphi) - \bar\mu) = s\varphi + o(s)$ in $X$.
    Therefore
    \[
    \frac{d^2}{ds^2} \mathcal{Q}(\Xi(s\varphi)) \Big|_{s=0} = \langle P_\delta \Pi \varphi, \varphi\rangle_X.
    \]
    Polarization gives $\mathrm{Hess}_{\Wtwo} \mathcal{Q}(\bar\mu)(\nabla \varphi, \nabla \psi) = \langle P_\delta \Pi \varphi, \psi\rangle_X$ for $\varphi, \psi \in C^{\infty}(\T^d)$.
    The linearity of the Hessian gives the formula for $\F_{\rm cl}$.
    Finally, since $P_\delta \Pi$ is bounded on $X$, the perturbation $(\varphi,\psi) \mapsto \langle P_\delta \Pi\varphi, \psi\rangle_X$ is continuous with respect to the form norm on $D(a)$.
    Hence the identity extends uniquely to $D(a) \times D(a)$.
\end{proof}

Since $P_\delta \Pi$ is bounded and self-adjoint on $X$, the form $a_\Pi$ is a bounded symmetric perturbation of the closed form $a$.
It is therefore represented by the self-adjoint operator $A_\Pi = A + P_\delta\Pi$ in $X$.
Thus, for $\varphi \in D(A)$ and $\psi \in D(a)$,
\[
a_\Pi(\varphi, \psi) = \langle A_\Pi \varphi, \psi\rangle_X = -\langle L_\Pi \rieszmap[\bar\mu] \varphi, \rieszmap[\bar\mu] \psi \rangle_{X'}, \qquad L_\Pi \coloneqq -\rieszmap[\bar\mu] A_\Pi \rieszmap[\bar\mu]^{-1}.
\]
Under Assumption~\ref{ass:H3}, Proposition~\ref{prop:linearized_operator_identification} and the preceding identity show that the linearization at $\bar\mu$ of the gradient flow of $\F_{\rm cl}$ is generated by $L_\Pi$.
By Corollary~\ref{cor:spectral_gap}, $A_\Pi \ge \lambda_\Pi I$ with $\lambda_\Pi > \delta$.
Consequently, for $\varphi \in D(a)$,
\[
a_\Pi(\varphi, \varphi) \ge \lambda_\Pi\|\varphi\|_X^2.
\]

We now pass from the strict Hessian lower bound at $\bar\mu$ to a local displacement convexity statement.
We call a set $\mathcal{V} \subseteq \operatorname{dom}(\F)$ an \emph{admissible class at $\bar\mu$} if $\mathrm{Hess}_{\Wtwo}\E(\mu)$ is well-defined on $\mathscr{D}$ for every $\mu \in \mathcal{V}$, and if there exists $r_0 \in (0,1)$ such that every probability density $\mu$ with $\|\mu/\bar\mu - 1\|_{L^\infty} \le r_0$ belongs to $\mathcal{V}$.
From now on we fix such a class $\mathcal{V}$, together with a corresponding $r_0 \in (0,1)$, and, for $r \in (0,1)$, we set
\[
\mathcal{V}_r \coloneqq \left\{\mu \in \mathcal{V} : \left\|\frac{\mu}{\bar\mu} - 1\right\|_{L^\infty} \le r \right\}.
\]
In these neighborhoods, we impose the following additional assumptions, which are used only for the local convexification step.

\begin{enumerate}[label=(H\arabic*),ref=(H\arabic*), start=4]
    \item\label{ass:H4} For every $\varepsilon > 0$, there exists $r_\varepsilon \in (0, r_0]$ such that, for every $\mu \in \mathcal{V}_{r_\varepsilon}$ and every $\nabla \phi \in \mathscr{D}$,
    \[
    \mathrm{Hess}_{\Wtwo} \E(\mu)(\nabla\phi, \nabla\phi) \ge \mathrm{Hess}_{\Wtwo} \E(\bar\mu)(\nabla\phi, \nabla\phi) - \varepsilon\|\phi\|_X^2.
    \]
    \item\label{ass:H5} For every $k \in \Sigma_\delta$, $e_k \in W^{2,\infty}(\T^d)$.
\end{enumerate}

Using Assumption~\ref{ass:H5}, we are able to provide a local lower semicontinuity result for the Wasserstein Hessian of $\mathcal{Q}$.

\begin{lemma}
\label{lem:finite_rank_hessian_continuity}
    Assume~\ref{ass:H5}.
    Then there exists $C_Q > 0$ such that, for every $r \in (0,1)$, every $\mu \in \mathcal{V}_r$, and every $\phi \in W^{2,\infty}(\T^d)$,
    \[
    \mathrm{Hess}_{\Wtwo} \mathcal{Q}(\mu)(\nabla\phi, \nabla\phi) \ge \langle P_\delta\Pi\phi,\phi\rangle_X - C_Q r \|\phi\|_X^2.
    \]
\end{lemma}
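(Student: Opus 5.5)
The plan is to make $\mathcal{Q}$ explicit as a finite sum of squared linear functionals of $\mu$. Then differentiate twice along the local geodesic $\mu_s=(T_{s\phi})_\sharp\mu$ and compare each term with its value at $\bar\mu$.

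\emph{Reduction.} By Proposition~\ref{prop:are_diagonal}, $P_\delta\Pi=\sum_{k\in\Sigma_\delta}\pi_k\langle\cdot,e_k\rangle_X e_k$. For $\nu\in X'$ the Riesz map gives $\langle\rieszmap[\bar\mu]^{-1}\nu,e_k\rangle_X=\langle\nu,e_k\rangle_{X',X}=\int e_k\,d\nu$. Since $e_k$ has zero $\bar\mu$-mean by~\eqref{eq:mean_zero}, this yields
\[
\mathcal{Q}(\mu)=\frac12\sum_{k\in\Sigma_\delta}\pi_k\, g_k(\mu)^2,\qquad g_k(\mu)\coloneqq\int_{\T^d}e_k\,d\mu .
\]
This holds for every bounded density $\mu$, and in particular on $\mathcal{V}_r$, since there $\mu-\bar\mu\in X'$ by Lemma~\ref{lem:hilbert_structure}. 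Along $\mu_s$ we have $g_k(\mu_s)=\int e_k(x+s\nabla\phi(x))\,d\mu(x)$. Because $e_k\in W^{2,\infty}$ by~\ref{ass:H5} and $\phi\in W^{2,\infty}$, we get
\[
\frac{d}{ds}g_k\Big|_0=\int\nabla e_k\cdot\nabla\phi\,d\mu,\qquad \frac{d^2}{ds^2}g_k\Big|_0=\int\nabla^2e_k\,\nabla\phi\cdot\nabla\phi\,d\mu.
\]
Hence
\[
\mathrm{Hess}_{\Wtwo}\mathcal{Q}(\mu)(\nabla\phi,\nabla\phi)=\sum_{k\in\Sigma_\delta}\pi_k\Big[\Big(\int\nabla e_k\cdot\nabla\phi\,d\mu\Big)^2+g_k(\mu)\int\nabla^2e_k\nabla\phi\cdot\nabla\phi\,d\mu\Big].
\]

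\emph{Estimates.} Write $d\mu=(1+\rho)\,d\bar\mu$ with $\|\rho\|_{L^\infty}\le r$. For the first term,
\[
\int\nabla e_k\cdot\nabla\phi\,d\mu=\langle e_k,\phi\rangle_X+\varepsilon_k,\qquad |\varepsilon_k|\le r\|e_k\|_X\|\phi\|_X=r\|\phi\|_X .
\]
Expanding the square, using $(a+b)^2\ge a^2-2|a||b|$ and $|\langle e_k,\phi\rangle_X|\le\|\phi\|_X$, gives $(\cdot)^2\ge\langle e_k,\phi\rangle_X^2-2r\|\phi\|_X^2$. Summing $\pi_k\langle e_k,\phi\rangle_X^2$ over $k\in\Sigma_\delta$ recovers exactly $\langle P_\delta\Pi\phi,\phi\rangle_X$. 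For the second term, the zero mean gives
\[
|g_k(\mu)|=\Big|\int e_k\rho\,d\bar\mu\Big|\le r\|e_k\|_{L^\infty},\qquad \Big|\int\nabla^2e_k\nabla\phi\cdot\nabla\phi\,d\mu\Big|\le(1+r)\|\nabla^2e_k\|_{L^\infty}\|\phi\|_X^2 .
\]
Collecting terms and using $r<1$ and $\pi_k\ge0$ yields the claim with
\[
C_Q\coloneqq\sum_{k\in\Sigma_\delta}\pi_k\big(2+2\|e_k\|_{L^\infty}\|\nabla^2e_k\|_{L^\infty}\big),
\]
which is independent of $r$, $\mu$ and $\phi$.

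\emph{Main obstacle.} The delicate step is justifying that $s\mapsto g_k(\mu_s)$ is twice differentiable at $s=0$ with the stated formula, given only $e_k\in W^{2,\infty}$. The map $s\mapsto\nabla e_k(x+s\nabla\phi(x))\cdot\nabla\phi(x)$ is Lipschitz, so $g_k$ is $C^{1,1}$. Pointwise second differentiability, however, needs care, because $\nabla e_k$ is only a.e.\ differentiable and the lines $x+s\nabla\phi(x)$ might concentrate on a null set. I would first try to differentiate $g_k'(s)=\int\nabla e_k\circ T_{s\phi}\cdot\nabla\phi\,d\mu$ by a change of variables $y=T_{s\phi}(x)$, which is legitimate for small $|s|$ since $T_{s\phi}$ is bi-Lipschitz and $\mu$ is absolutely continuous. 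This shows that the difference quotients converge by dominated convergence, since $\nabla^2e_k$ exists a.e.\ and the pushforward of $\mu$ stays absolutely continuous. Failing that, I would mollify $e_k$, derive the inequality for the smooth approximants, and pass to the limit, using that all bounds depend only on $\|e_k\|_{W^{2,\infty}}$.
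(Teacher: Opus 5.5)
Your proposal is correct and follows the paper's proof essentially step by step. It uses the same reduction of $\mathcal{Q}$ to $\frac12\sum_{k\in\Sigma_\delta}\pi_k\bigl(\int e_k\,d(\mu-\bar\mu)\bigr)^2$, the same first and second derivatives along $(T_{s\phi})_\sharp\mu$, and the same three bounds: on $g_k(\mu)$, on the deviation of $\int\nabla e_k\cdot\nabla\phi\,d\mu$ from $\langle e_k,\phi\rangle_X$, and on $\int\nabla^2e_k\nabla\phi\cdot\nabla\phi\,d\mu$. The differentiability issue you flag is milder than you fear: at $s=0$ you only need $\nabla e_k$ to be differentiable at the base point $x$, which holds Lebesgue-a.e.\ by Rademacher and hence $\mu$-a.e., so dominated convergence applies directly (the paper just says ``the chain rule'' here and uses the same kind of argument for $V$ in Section~\ref{sec:mckean-vlasov-models}).
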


\begin{proof}
    We have $P_\delta\Pi \phi = \sum_{\ell=1}^m \pi_\ell \langle \phi,e_\ell\rangle_X e_\ell$.
    Thus
    \[
    \mathcal{Q}(\mu) = \frac{1}{2} \sum_{\ell=1}^m \pi_\ell\left(\int_{\T^d} e_\ell \, d(\mu - \bar\mu)\right)^2.
    \]
    Fix $r \in (0,1)$, $\mu \in \mathcal{V}_r$, and $\phi \in W^{2,\infty}(\T^d)$.
    Write $\mu = (1+\rho)\bar\mu$, with $\|\rho\|_{L^\infty}\le r$.
    By Lemma~\ref{lem:local_displacement_interpolation}, $s \mapsto (T_{s\phi})_\sharp\mu$ is a local $\Wtwo$-geodesic through $\mu$ with initial velocity $\nabla\phi$.
    Set
    \[
    m_\ell(s) \coloneqq \int_{\T^d} e_\ell \, d((T_{s\phi})_\sharp\mu - \bar\mu).
    \]
    The chain rule gives $m_\ell'(0) = \int_{\T^d} \nabla e_\ell \cdot \nabla\phi \, d\mu$ and $m_\ell''(0) = \int_{\T^d} (\nabla\phi)^\top \nabla^2 e_\ell \nabla\phi \, d\mu$.
    Therefore,
    \[
    \mathrm{Hess}_{\Wtwo} \mathcal{Q}(\mu)(\nabla\phi,\nabla\phi) = \sum_{\ell=1}^m \pi_\ell ((m_\ell'(0))^2 + m_\ell(0) m_\ell''(0)),
    \]
    $(m_\ell'(0))^2 \ge \langle e_\ell,\phi\rangle_X^2 - C_Q r\|\phi\|_X^2$, and $m_\ell(0)m_\ell''(0)\ge - C_Q r\|\phi\|_X^2$ for some $C_Q > 0$ because
    \[
    |m_\ell(0)| \le r\int_{\T^d} |e_\ell| \, d\bar\mu, \;\; |m_\ell'(0) - \langle e_\ell,\phi\rangle_X| \le r\|\phi\|_X, \;\; |m_\ell''(0)| \le (1+r)\|\nabla^2 e_\ell\|_{L^\infty}\|\phi\|_X^2.
    \]
    At $\mu = \bar\mu$, one has $m_\ell(0) = 0$ and $m_\ell'(0)= \langle e_\ell, \phi\rangle_X$, so $\mathrm{Hess}_{\Wtwo} \mathcal{Q}(\bar\mu)(\nabla\phi,\nabla\phi) = \langle P_\delta\Pi\phi,\phi\rangle_X$ and
    \[
    \mathrm{Hess}_{\Wtwo} \mathcal{Q}(\mu)(\nabla\phi,\nabla\phi) \ge \langle P_\delta\Pi\phi,\phi\rangle_X - C_Q r\|\phi\|_X^2. \qedhere
    \]
\end{proof}

Combining this lemma with Assumption~\ref{ass:H4} gives our desired result.

\begin{proposition}
\label{prop:persistence_hessian_lower_bound}
    Assume the hypotheses of Proposition~\ref{prop:are_diagonal} and that Assumptions~\ref{ass:H1}--\ref{ass:H2} and~\ref{ass:H4}--\ref{ass:H5} hold.
    Then, for every $\hat{\lambda} \in (0, \lambda_\Pi)$, there exists $r_{\hat{\lambda}} \in (0, r_0]$ such that for every $\mu \in \mathcal{V}_{r_{\hat{\lambda}}}$ and every $\phi \in W^{2,\infty}(\T^d)$,
    \[
    \mathrm{Hess}_{\Wtwo} \F_{\rm cl}(\mu)(\nabla\phi,\nabla\phi) \ge \hat{\lambda} \int_{\T^d} |\nabla\phi|^2\,d\mu.
    \]
\end{proposition}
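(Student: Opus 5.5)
The plan is to split $\F_{\rm cl} = \E + \sigma\Ent + \mathcal{Q}$ and bound each Hessian at $\mu \in \mathcal{V}_r$ from below in terms of its value at $\bar\mu$, up to errors of size $\varepsilon\|\phi\|_X^2$ or $C r\|\phi\|_X^2$. We then use $a_\Pi(\phi,\phi) \ge \lambda_\Pi \|\phi\|_X^2$ from Corollary~\ref{cor:spectral_gap} and Proposition~\ref{prop:closed_loop_hessian}. Finally, we convert $\|\phi\|_X^2 = \int|\nabla\phi|^2 d\bar\mu$ into $\int|\nabla\phi|^2 d\mu$ using $\mu = (1+\rho)\bar\mu$ with $\|\rho\|_{L^\infty}\le r$. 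This gives $\int |\nabla\phi|^2 d\mu \le (1+r)\|\phi\|_X^2$, so any lower bound $c\|\phi\|_X^2$ with $c>0$ yields $\frac{c}{1+r}\int|\nabla\phi|^2 d\mu$.

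\emph{The three terms.}
For the energy, Assumption~\ref{ass:H4} gives, for $r \le r_\varepsilon$,
\[
\mathrm{Hess}_{\Wtwo}\E(\mu)(\nabla\phi,\nabla\phi) \ge H_\E(\phi,\phi) - \varepsilon\|\phi\|_X^2 .
\]
For the feedback term, Lemma~\ref{lem:finite_rank_hessian_continuity} gives
\[
\mathrm{Hess}_{\Wtwo}\mathcal{Q}(\mu)(\nabla\phi,\nabla\phi) \ge \langle P_\delta\Pi\phi,\phi\rangle_X - C_Q r\|\phi\|_X^2 .
\]
For the entropy, Proposition~\ref{prop:entropy_hessian} applies since $\mu \in L^\infty$, and gives
\[
\mathrm{Hess}_{\Wtwo}\Ent(\mu)(\nabla\phi,\nabla\phi) = \int |\nabla^2\phi|^2 (1+\rho)\,d\bar\mu \ge (1-r)\,q_0(\phi,\phi) .
\]
Summing the three estimates,
\[
\mathrm{Hess}_{\Wtwo}\F_{\rm cl}(\mu)(\nabla\phi,\nabla\phi) \ge a_\Pi(\phi,\phi) - \sigma r\, q_0(\phi,\phi) - (\varepsilon + C_Q r)\|\phi\|_X^2 .
\]

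\emph{Main obstacle: the entropy error.}
The term $\sigma r\,q_0(\phi,\phi)$ is an $\dot H^2$ quantity, and it cannot be controlled by $\|\phi\|_X^2$. I would absorb it by convex combination instead of subtracting it, writing
\[
a_\Pi - \sigma r\,q_0 = (1-r)\,a_\Pi + r\,(a_\Pi - \sigma q_0) = (1-r)\,a_\Pi + r\,(H_\E + P_\delta\Pi).
\]
Here $H_\E + P_\delta\Pi \ge -C_E\|\phi\|_X^2$ by~\ref{ass:H2} and nonnegativity of $P_\delta\Pi$. Note that $\phi \in W^{2,\infty} \subset D(a)$, so $a_\Pi(\phi,\phi) \ge \lambda_\Pi\|\phi\|_X^2$ applies. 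Hence
\[
\mathrm{Hess}_{\Wtwo}\F_{\rm cl}(\mu)(\nabla\phi,\nabla\phi) \ge \big[(1-r)\lambda_\Pi - rC_E - \varepsilon - C_Q r\big]\,\|\phi\|_X^2 .
\]

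\emph{Choice of parameters.}
Given $\hat\lambda < \lambda_\Pi$, we need
\[
(1-r)\lambda_\Pi - r(C_E + C_Q) - \varepsilon \ge (1+r)\hat\lambda .
\]
First pick $\varepsilon$ and a candidate $r$ small enough that this holds. Then set $r_{\hat\lambda} = \min(r, r_\varepsilon, r_0)$; shrinking $r$ only improves the inequality. Dividing by $1+r$ via the comparison in the first paragraph gives the claim on $\mathcal{V}_{r_{\hat\lambda}}$.
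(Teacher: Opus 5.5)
Your proposal is correct and follows essentially the same route as the paper. You split the Hessian of $\F_{\rm cl}$ into its $\E$, entropy and $\mathcal{Q}$ parts, bound them with \ref{ass:H4}, with $\int|\nabla^2\phi|^2\,d\mu \ge (1-r)q_0(\phi,\phi)$, and with Lemma~\ref{lem:finite_rank_hessian_continuity}, absorb the $\dot{H}^2$-error $\sigma r\, q_0$ through the identity $\sigma q_0 = a_\Pi - (H_\E + P_\delta\Pi)$, and conclude with $a_\Pi \ge \lambda_\Pi I$ and $\mu \le (1+r)\bar\mu$. These are exactly the paper's steps. The only cosmetic difference is in the constants. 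The paper bounds $|H_\E + P_\delta\Pi|$ by $M = C_E + \|P_\delta\Pi\|_{\mathcal{L}(X)}$ and writes the final estimate as a multiple of $a_\Pi(\phi,\phi)$. You instead use the nonnegativity of $P_\delta\Pi$ to get the slightly sharper constant $C_E$ and keep everything in terms of $\|\phi\|_X^2$.
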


\begin{proof}
    Set $q_\mu(\phi,\phi) \coloneqq \int_{\T^d}|\nabla^2\phi|^2 \, d\mu$.
    By Proposition~\ref{prop:entropy_hessian},
    \[
    \mathrm{Hess}_{\Wtwo} \Ent(\mu)(\nabla\phi, \nabla\phi) = q_\mu(\phi,\phi).
    \]
    Fix $\hat{\lambda} \in (0,\lambda_\Pi)$. 
    Choose $\eta > 0$ such that $\lambda_\Pi - \eta > \hat{\lambda}$, and let $r_\eta \in (0,r_0]$ be given by Assumption~\ref{ass:H4}.
    If $0 < r < r_\eta$ and $\mu \in \mathcal{V}_r$, then $\mu = (1+\rho)\bar\mu$ with $\|\rho\|_{L^\infty} \le r$, and hence $q_\mu \ge (1-r) q_{\bar\mu} = (1-r)q_0$.
    Using Assumption~\ref{ass:H4} and Lemma~\ref{lem:finite_rank_hessian_continuity}, we obtain
    \[
    \mathrm{Hess}_{\Wtwo} \F_{\rm cl}(\mu)(\nabla\phi,\nabla\phi) \ge a_\Pi(\phi,\phi) - (\eta + C_Q r)\|\phi\|_X^2 - \sigma r q_0(\phi,\phi).
    \]
    Let $M \coloneqq C_E + \|P_\delta\Pi\|_{\mathcal{L}(X)}$.
    Since $a_\Pi(\phi,\phi) = H_\E(\phi,\phi) + \sigma q_0(\phi,\phi) + \langle P_\delta\Pi \phi, \phi\rangle_X$ and $|H_\E(\phi,\phi) + \langle P_\delta\Pi\phi,\phi\rangle_X| \le M\|\phi\|_X^2$, we have
    \[
    \sigma q_0(\phi,\phi) \le a_\Pi(\phi,\phi) + M\|\phi\|_X^2.
    \]
    Together with $a_\Pi \ge \lambda_\Pi I$, this gives
    \[
    \mathrm{Hess}_{\Wtwo}\F_{\rm cl}(\mu)(\nabla\phi,\nabla\phi) \ge \left(1 - \frac{\eta}{\lambda_\Pi} - r\left(1 + \frac{M + C_Q}{\lambda_\Pi}\right)\right) a_\Pi(\phi,\phi).
    \]
    Choose $r_{\hat{\lambda}} \in (0,r_\eta)$ sufficiently small so that
    \[
    \frac{\lambda_\Pi}{1 + r_{\hat{\lambda}}} \left(1 - \frac{\eta}{\lambda_\Pi} - r_{\hat{\lambda}} \left(1 + \frac{M + C_Q}{\lambda_\Pi}\right)\right) \ge \hat{\lambda}.
    \]
    This is possible because, for the chosen $\eta$, the left-hand side converges to $\lambda_\Pi - \eta > \hat{\lambda}$ as $r_{\hat{\lambda}} \to 0$.
    Using again $a_\Pi\ge\lambda_\Pi I$ and $\mu \le (1 + r_{\hat{\lambda}})\bar\mu$, we obtain that for every $\mu \in \mathcal{V}_{r_{\hat{\lambda}}}$,
    \[
    \mathrm{Hess}_{\Wtwo}\F_{\rm cl}(\mu)(\nabla\phi,\nabla\phi) \ge \hat{\lambda} \int_{\T^d} |\nabla\phi|^2\,d\mu. \qedhere
    \]
\end{proof}

Given $\hat{\lambda} \in (0, \lambda_\Pi)$, we write $r_\star \coloneqq r_{\hat{\lambda}}$ for the radius provided by Proposition~\ref{prop:persistence_hessian_lower_bound}.
We now pass from the pointwise Hessian estimate on $\mathcal{V}_{r_\star}$ to a local displacement convexity statement, using the regularity of the optimal map between densities that are close to $\bar\mu$ in $C^{0,\alpha}(\T^d)$ (Lemma~\ref{lem:brenier_stability}).
For $\alpha \in (0,1)$ and $\varepsilon > 0$, set
\[
\Nbhd^\alpha_\varepsilon \coloneqq \left\{\mu \in \Pcal_2(\T^d) : \|\mu - \bar\mu\|_{C^{0,\alpha}} \le \varepsilon \right\}.
\]

\begin{theorem}[Local displacement convexity]
    \label{thm:local_strong_displacement_convexity}
    Assume the hypotheses of Proposition~\ref{prop:persistence_hessian_lower_bound} and fix $\alpha \in (0,1)$ and $\hat{\lambda} \in (0,\lambda_\Pi)$. 
    Then there exists $\varepsilon > 0$ such that $\Nbhd^\alpha_\varepsilon \subseteq \mathcal{V}_{r_\star}$ and $\F_{\rm cl}$ is $\hat{\lambda}$-displacement convex on $\Nbhd^\alpha_\varepsilon$, meaning that for every $\mu_0, \mu_1 \in \Nbhd^\alpha_\varepsilon$, the constant speed $\Wtwo$-geodesic $(\mu_s)_{s \in [0,1]}$ connecting them satisfies
    \[
    \F_{\rm cl}(\mu_s) \le (1-s)\F_{\rm cl}(\mu_0) + s \F_{\rm cl}(\mu_1) - \hat{\lambda}\frac{s(1-s)}{2} \Wtwo^2(\mu_0,\mu_1)
    \]
    for every $s \in [0,1]$.
\end{theorem}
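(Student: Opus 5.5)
The plan is to reduce displacement convexity to a one-dimensional second-derivative estimate along each geodesic. To do so we must show that the whole geodesic stays in $\mathcal{V}_{r_\star}$ and is generated by a map of the form $T_{s\psi}$ with $\psi\in W^{2,\infty}$. Then Proposition~\ref{prop:persistence_hessian_lower_bound} can be applied at every intermediate time.

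\textbf{Step 1: the neighborhood sits inside $\mathcal{V}_{r_\star}$.} Since $\bar\mu\ge c_0$ by Lemma~\ref{lem:mu_bar_bounds}, we have $\|\mu/\bar\mu-1\|_{L^\infty}\le c_0^{-1}\|\mu-\bar\mu\|_{L^\infty}\le c_0^{-1}\varepsilon$. Hence $\Nbhd^\alpha_\varepsilon\subseteq\mathcal{V}_{r_\star}$ as soon as $\varepsilon\le c_0 r_\star$. Here we use that $r_\star\le r_0$, so such densities belong to $\mathcal{V}$ by admissibility.

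\textbf{Step 2: regularity of the optimal map and the interpolants stay close.} For $\mu_0,\mu_1\in\Nbhd^\alpha_\varepsilon$, invoke Lemma~\ref{lem:brenier_stability}. By Caffarelli-type regularity for the Monge--Amp\`ere equation on the torus, the optimal map is $T_\psi=\mathrm{Id}+\nabla\psi$ with $\psi\in C^{2,\alpha}$. Moreover $\|\nabla^2\psi\|_{L^\infty}\to0$ as $\varepsilon\to0$.

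The geodesic is then $\mu_s=(T_{s\psi})_\sharp\mu_0$, with $T_{s\psi}$ a diffeomorphism for all $s\in[0,1]$ once $\|\nabla^2\psi\|_{L^\infty}<1$. Its density is given by the change of variables
\[
\mu_s(T_{s\psi}(x))=\frac{\mu_0(x)}{\det(I+s\nabla^2\psi(x))}.
\]
Using the Hölder continuity of $\bar\mu$ (indeed $\bar\mu\in W^{1,\infty}$) and $|\nabla\psi|\le C\|\nabla^2\psi\|_{L^\infty}$, this gives
\[
\sup_s\|\mu_s/\bar\mu-1\|_{L^\infty}\le C\bigl(\varepsilon+\|\nabla^2\psi\|_{L^\infty}\bigr).
\]
Shrinking $\varepsilon$ ensures $\mu_s\in\mathcal{V}_{r_\star}$ for all $s$. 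The geodesic may leave the Hölder ball, but we only need it to remain in the $L^\infty$-ratio neighborhood, which is exactly where the Hessian bound holds.

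\textbf{Step 3: convexity of $f(s)=\F_{\rm cl}(\mu_s)$.} Fix $s_0$. The curve near $s_0$ is $\mu_{s_0+t}=(T_{t\phi_{s_0}})_\sharp\mu_{s_0}$, where $\nabla\phi_{s_0}=\nabla\psi\circ T_{s_0\psi}^{-1}$. This velocity field is in $\mathscr{D}$ because $T_{s_0\psi}^{-1}$ is $C^{1}$ with controlled derivatives. Moreover, by Lemma~\ref{lem:local_displacement_interpolation} and uniqueness of geodesics, this reparametrized curve agrees with the geodesic restricted near $s_0$.

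Therefore $f''(s_0)=\mathrm{Hess}_{\Wtwo}\F_{\rm cl}(\mu_{s_0})(\nabla\phi_{s_0},\nabla\phi_{s_0})$, and Proposition~\ref{prop:persistence_hessian_lower_bound} gives
\[
f''(s_0)\ge\hat\lambda\int|\nabla\phi_{s_0}|^2d\mu_{s_0}=\hat\lambda\int|\nabla\psi|^2d\mu_0=\hat\lambda\Wtwo^2(\mu_0,\mu_1).
\]
A function on $[0,1]$ with $f''\ge\hat\lambda W^2$ (continuous at the endpoints) satisfies the stated inequality by the standard comparison with $g(s)=f(s)-\hat\lambda W^2 s^2/2$, which is convex.

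\textbf{Main obstacle.} The delicate point is justifying that $f$ is genuinely twice differentiable on $(0,1)$ with the Hessian formula valid at each $\mu_{s_0}$. This requires:
\begin{itemize}
\item $\mu_{s_0}$ is not $\bar\mu$ and need not be regular beyond $C^{0,\alpha}$;
\item the definition of $\mathrm{Hess}_{\Wtwo}$ only requires pointwise twice differentiability along local geodesics, and admissibility guarantees existence on $\mathscr{D}$ for $\mu\in\mathcal{V}$.
\end{itemize}
I would therefore rely on the definition directly, using the semigroup property of the geodesic. If that identification is not automatic, I would fall back on a weaker route: show $f$ is continuous on $[0,1]$ and that its second symmetric difference quotients satisfy the lower bound, which also yields the convexity inequality.
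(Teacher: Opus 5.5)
Your proposal is correct and follows essentially the same route as the paper. Both arguments use Lemma~\ref{lem:brenier_stability} to get a small $C^{2,\alpha}$ Brenier potential $\psi$, then a change-of-variables bound showing every interpolant $(T_{s\psi})_\sharp\mu_0$ lies in $\mathcal{V}_{r_\star}$, and then the velocity potential of Lemma~\ref{lem:velocity_potential} with Proposition~\ref{prop:persistence_hessian_lower_bound} to get $f''\ge\hat\lambda\,\Wtwo^2(\mu_0,\mu_1)$.

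The differences are minor. The paper runs the argument on a bounded open interval $J\supset[0,1]$, so twice differentiability at the endpoints comes directly rather than through your separate continuity step. The paper also writes your $\phi_{s_0}$ explicitly as $(\psi+\frac{s_0}{2}|\nabla\psi|^2)\circ T_{s_0\psi}^{-1}$, which is what guarantees that $\nabla\psi\circ T_{s_0\psi}^{-1}$ is the gradient of a periodic $W^{2,\infty}$ function. You assert this but do not justify it.
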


\begin{proof}
    Fix a bounded open interval $J \supset [0,1]$, let $\eta \in (0, 1/2)$ be fixed below, let $\varepsilon = \varepsilon(\eta) > 0$ be given by Lemma~\ref{lem:brenier_stability}, and let $\mu_0, \mu_1 \in \Nbhd^\alpha_{\varepsilon}$.
    By that lemma, the optimal transport map from $\mu_0$ to $\mu_1$ is $T_\psi$, with $\psi \in C^{2,\alpha}(\T^d) \subset W^{2,\infty}(\T^d)$ and $\|\nabla\psi\|_{C^1} \le \eta$.
    Since $J$ is bounded, for $\eta$ small enough one has $|s| \eta < 1/2$ for every $s \in J$, hence $|s| \|\nabla^2\psi\|_{L^\infty} < 1$ and $|s| \|\nabla\psi\|_{L^\infty} < 1/2$.
    By Lemma~\ref{lem:local_displacement_interpolation}, $T_{s\psi}$ is therefore an optimal transport map from $\mu_0$ to $\mu_s \coloneqq (T_{s\psi})_\sharp\mu_0$ for every $s \in J$, and the restriction of $s \mapsto \mu_s$ to $[0,1]$ is the displacement interpolation between $\mu_0$ and $\mu_1$, hence the constant speed $\Wtwo$-geodesic connecting them.
    By the change of variables formula, $\mu_s(T_{s\psi}(x)) \det(I + s\nabla^2\psi(x)) = \mu_0(x)$ for a.e.\ $x$, and therefore
    \[
    \frac{\mu_s(T_{s\psi}(x))}{\bar\mu(T_{s\psi}(x))} = \frac{\mu_0(x)}{\bar\mu(x)} \cdot \frac{\bar\mu(x)}{\bar\mu(T_{s\psi}(x))} \cdot \frac{1}{\det(I + s\nabla^2\psi(x))}.
    \]
    Hence, for a.e.\ $x \in \T^d$,
    \[
    \begin{split}
        \left|\frac{\mu_s(T_{s\psi}(x))}{\bar\mu(T_{s\psi}(x))} - 1\right| &\le \left|\frac{\mu_0(x)}{\bar\mu(x)} - 1\right| \left|\frac{\bar\mu(x)}{\bar\mu(T_{s\psi}(x))}\right| \left|\frac{1}{\det(I + s\nabla^2\psi(x))}\right| \\
        &\quad + \left|\frac{\bar\mu(x)}{\bar\mu(T_{s\psi}(x))} - 1\right| \left|\frac{1}{\det(I + s\nabla^2\psi(x))}\right| + \left|\frac{1}{\det(I + s\nabla^2\psi(x))} - 1\right| \\
        &\le \frac{1}{(1 - |s|\eta)^d} \left(\frac{C_0}{c_0^2}\,\varepsilon + \frac{|s|\,\|\nabla \bar\mu\|_{L^\infty} \eta}{c_0} + 2C_d\,|s|\,\eta \right),
    \end{split}
    \]
    where $C_d > 0$ depends only on $d$, and where we used Lemma~\ref{lem:mu_bar_bounds}(ii)--(iii), the bounds $\|\nabla\psi\|_{C^1} \le \eta$ and $|s|\eta < 1/2$ for every $s \in J$, and $\|\mu_0 - \bar\mu\|_{L^\infty} \le \|\mu_0 - \bar\mu\|_{C^{0,\alpha}} \le \varepsilon$.
    The right-hand side is bounded by $C(\varepsilon + \eta)$ with $C > 0$ independent of $\varepsilon$ and $\eta$.
    Choosing first $\eta$ and then $\varepsilon$ small enough, we obtain $|\mu_s(T_{s\psi}(x)) / \bar\mu(T_{s\psi}(x)) - 1| \le r_\star$ for a.e.\ $x \in \T^d$ and every $s \in J$.
    Since $T_{s\psi}$ is a Lipschitz bijection of $\T^d$, it maps $\mathcal{L}^d$-null sets to $\mathcal{L}^d$-null sets, and hence $\|\mu_s/\bar\mu - 1\|_{L^\infty} \le r_\star$.
    Since $r_\star \le r_0$ and $\mathcal{V}$ is an admissible class at $\bar\mu$, this gives $\mu_s \in \mathcal{V}_{r_\star}$ for every $s \in J$.

    Fix $s \in J$. 
    Let $\varphi_s$ be the velocity potential of Lemma~\ref{lem:velocity_potential}.
    By that lemma, $\nabla \varphi_s \in \mathscr{D}$, the curve $h \mapsto \mu_{s+h}$ is a local $\Wtwo$-geodesic through $\mu_s$ with initial velocity $\nabla\varphi_s$, and
    \[
    \int_{\T^d} |\nabla\varphi_s|^2 \, d\mu_s = \int_{\T^d} |\nabla\psi|^2 \, d\mu_0 = \Wtwo^2(\mu_0,\mu_1),
    \]
    where the last equality follows from the optimality of $T_\psi$ together with the identity $|T_\psi(x) - x| = d_{\T^d}(x,T_\psi(x))$ of Lemma~\ref{lem:brenier_stability}.
    Define $g : J \to \R$ by $g(s) \coloneqq \F_{\rm cl}(\mu_s)$.
    Since $\mu_s \in \mathcal{V}$, the Wasserstein Hessian of $\E$ at $\mu_s$ is well defined on $\mathscr{D}$.
    Together with Proposition~\ref{prop:entropy_hessian} for the entropy and Lemma~\ref{lem:finite_rank_hessian_continuity} for $\mathcal{Q}$, and since $\nabla\varphi_s \in \mathscr{D}$, this gives that $g$ is twice differentiable at every $s \in J$, with $g''(s) = \mathrm{Hess}_{\Wtwo} \F_{\rm cl}(\mu_s)(\nabla\varphi_s, \nabla\varphi_s)$.
    Therefore, Proposition~\ref{prop:persistence_hessian_lower_bound} gives
    \[
    g''(s) \ge \hat{\lambda} \int_{\T^d} |\nabla\varphi_s|^2 \, d\mu_s = \hat{\lambda} \Wtwo^2(\mu_0,\mu_1)
    \]
    for every $s \in J$.
    Hence $s \mapsto g(s) - \frac{\hat{\lambda}}{2}\Wtwo^2(\mu_0,\mu_1)s^2$ has non-decreasing derivative on $J$, and is therefore convex on $J \supset [0,1]$.
    Evaluating at $s \in [0,1]$ gives the desired estimate.
\end{proof}

\begin{remark}
    For $p > d$, one has $W^{2,p}(\T^d) \hookrightarrow C^1(\T^d) \subset C^{0,\alpha}(\T^d)$ for every $\alpha \in (0,1)$.
    Hence, the set $\Nbhd^\alpha_\varepsilon$ contains every probability density of the form $\bar\mu + \nu$ with $\nu \in W^{2,p}(\T^d)$ and $\|\nu\|_{W^{2,p}}$ sufficiently small.
    If one assumes in addition that $\bar\mu \in W^{2,\infty}(\T^d)$, which holds in the model class of Section~\ref{sec:mckean-vlasov-models}, then \cite[Lemmas A.19--A.20]{seo2026local} provide a quantitative counterpart of Lemma~\ref{lem:brenier_stability} on $W^{2,p}$-balls, with
    \[
    \|\nabla\psi\|_{W^{2,p}} \le C\|\mu_1 - \mu_0\|_{W^{1,p}}.
    \]
\end{remark}

\subsection{Local exponential stabilization}
\label{sec:local-stab}

By Proposition~\ref{prop:modified_energy_closed_loop}, the closed-loop equation is the Wasserstein gradient flow of $\F_{\rm cl} = \F + \mathcal{Q}$.
Moreover, Assumptions~\ref{ass:H1} and~\ref{ass:H5} ensure that the feedback control term is well defined and $X'$-valued for every $\nu \in L^2(\T^d)$, hence on the solution space $\mathcal{Y}_T$ introduced below.
The stabilization proof uses the spectral gap of $A_\Pi$ through a maximal-regularity estimate for $L_\Pi$, and then absorbs the nonlinear remainder as in~\cite[Section~3.4]{KaliseMoschenPavliotis2025}.
For a solution $\mu_t$, set $\nu_t \coloneqq \mu_t-\bar\mu$ and $\xi_t \coloneqq \rieszmap[\bar\mu]^{-1}\nu_t$.
The equation becomes
\begin{equation}
    \label{eq:nonlinear_equation_x_prime}
    \partial_t \nu_t = L_\Pi\nu_t + \mathfrak{R}(\nu_t) + \mathfrak{B}_{\rm fb}(\nu_t),
\end{equation}
where
\[
\mathfrak{R}(\nu) \coloneqq \nabla\cdot\left((\bar\mu + \nu)\nabla\frac{\delta\F}{\delta\mu}(\bar\mu + \nu)\right) - L\nu, \qquad \mathfrak{B}_{\rm fb}(\nu) \coloneqq \nabla \cdot \left(\nu\nabla\left(P_\delta \Pi \rieszmap[\bar\mu]^{-1}\nu\right)\right).
\]

For \(T \in (0,\infty]\), set
\[
\mathcal{Y}_T \coloneqq L^\infty(0,T; L^2(\T^d)) \cap L^2(0,T; D(L)) \cap H^1(0,T; X'),
\]
with norm
\[
\|\nu\|_{\mathcal{Y}_T} \coloneqq \|\nu\|_{L^\infty(0, T; L^2(\T^d))} + \|\nu\|_{L^2(0, T; D(L))} + \|\partial_t\nu\|_{L^2(0, T; X')}.
\]
Here $D(L)$ is endowed with the norm $\|\cdot\|_{L} \coloneqq \|\cdot\|_{X'} + \|L\cdot\|_{X'}$.
By a solution of~\eqref{eq:nonlinear_equation_x_prime} on $[0,T]$ we mean $\nu \in \mathcal{Y}_T$ satisfying~\eqref{eq:nonlinear_equation_x_prime} in $L^2(0,T;X')$.

We claim that $\mathcal{Y}_T$ embeds continuously into
$C([0, T]; L^2(\T^d))$. 
Indeed, if $\nu \in \mathcal{Y}_T$ and $z = \rieszmap[\bar\mu]^{-1} \nu$, then $z \in L^2(0, T; D(A))$ and $\partial_t z \in L^2(0, T; X)$, which implies $\partial_t z \in L^2(0, T; D(A)')$, where $D(A)'$ denotes the dual of $D(A)$ with respect to the pivot space $D(a)$, since $X$ is continuously embedded in $D(A)'$.
Given that $D(A) \hookrightarrow D((A + \beta I)^{1/2}) = D(a) \hookrightarrow D(A)'$ for $\beta > -\inf \sigma(A)$, the Lions--Magenes lemma implies $z \in C([0, T]; D(a)) = C([0, T]; \dotHk[2](\T^d))$ (see~\cite[Ch. 3, Lem. 1.2]{temam1977navier}). 
It follows that $\nu \in C([0, T]; L^2(\T^d))$.

We impose one additional assumption for the local stabilization argument.

\begin{enumerate}[label=(H\arabic*),ref=(H\arabic*),start=6]
    \item\label{ass:H6} There exist $r > 0$ and $C_{\rm nl} > 0$ such that for every probability density $\mu_0 = \bar\mu + \nu_0$ with $\nu_0 \in L^2(\T^d)$ and $\|\nu_0\|_{L^2} < r$, there exist $T > 0$ and a solution $\nu \in \mathcal{Y}_T$ of~\eqref{eq:nonlinear_equation_x_prime}, with $\nu(0) = \nu_0$, such that $\bar\mu + \nu_t$ is a probability density for every $t \in [0,T]$.
    In addition, for all $\nu, \zeta \in D(L)$ such that
    $\bar\mu + \nu$ and $\bar\mu + \zeta$ are probability densities, with $\|\nu\|_{L^2} < r$, $\|\zeta\|_{L^2} < r$, the uncontrolled nonlinear remainder satisfies
    \[
    \|\mathfrak{R}(\nu) - \mathfrak{R}(\zeta)\|_{X'} \le C_{\rm nl} \left(\|\nu\|_{L^2} + \|\zeta\|_{L^2} \right) \|\nu - \zeta\|_{L^2}.
    \]
\end{enumerate}

\begin{theorem}[Local exponential stabilization]
\label{thm:local_stab}
    Assume~\ref{ass:H1}--\ref{ass:H3}, the hypotheses of Proposition~\ref{prop:are_diagonal}, and Assumptions~\ref{ass:H5}--\ref{ass:H6}.
    Then, for every $\gamma \in (0,\lambda_\Pi)$, there exist $\varepsilon > 0$ and $C_\gamma > 0$ such that, for every probability density $\mu_0 = \bar\mu + \nu_0$ satisfying $\nu_0 \in L^2(\T^d)$ and $\|\nu_0\|_{L^2} \le \varepsilon$, the closed-loop equation admits a unique global solution, which satisfies
    \[
    e^{\gamma(\cdot)}\nu \in \mathcal{Y}_\infty, \qquad \|e^{\gamma(\cdot)}\nu\|_{\mathcal{Y}_\infty} \le C_\gamma \|\nu_0\|_{L^2}.
    \]
    Moreover, for every $t \ge 0$,
    \[
    \|\nu_t\|_{X'} \le C_\gamma e^{-\gamma t}\|\nu_0\|_{L^2}.
    \]
    Equivalently, if $\xi_t = \rieszmap[\bar\mu]^{-1}\nu_t$, then for every $t \ge 0$, $\|\xi_t\|_X \le C_\gamma e^{-\gamma t}\|\nu_0\|_{L^2}$.
\end{theorem}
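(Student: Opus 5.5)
The plan is a fixed-point argument in the exponentially weighted space, in the spirit of \cite[Section~3.4]{KaliseMoschenPavliotis2025}. Fix $\gamma\in(0,\lambda_\Pi)$ and substitute $w\coloneqq e^{\gamma t}\nu$ into~\eqref{eq:nonlinear_equation_x_prime}. This gives
\[
\partial_t w = (L_\Pi+\gamma I)w + e^{\gamma t}\bigl(\mathfrak{R}(\nu)+\mathfrak{B}_{\rm fb}(\nu)\bigr).
\]
Since $L_\Pi=-\rieszmap[\bar\mu]A_\Pi\rieszmap[\bar\mu]^{-1}$ is unitarily equivalent to $-A_\Pi$, and $A_\Pi\ge\lambda_\Pi I$ by Corollary~\ref{cor:spectral_gap}, the operator $L_\Pi+\gamma I$ is self-adjoint on $X'$ and bounded above by $-(\lambda_\Pi-\gamma)<0$.

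\textbf{Step 1 (linear estimate).} I would prove a maximal-regularity estimate on $(0,\infty)$ for the inhomogeneous problem $\partial_t w=(L_\Pi+\gamma)w+f$, $w(0)=\nu_0$:
\[
\|w\|_{\mathcal{Y}_\infty}\le C\bigl(\|\nu_0\|_{L^2}+\|f\|_{L^2(0,\infty;X')}\bigr).
\]
In the $\xi$-variables this reads $\partial_t z=-(A_\Pi-\gamma)z+g$ in $X$. Expanding in the eigenbasis $e_k$ gives the $L^2(0,\infty;D(A))\cap H^1(0,\infty;X)$ bound from the uniform gap $\lambda_k^\Pi-\gamma\ge\lambda_\Pi-\gamma$. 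This needs $\xi_0\in D(a)^{1/2}$-type trace regularity, i.e.\ $\xi_0\in D(a)=\dotHk[2]$. That holds because $\nu_0\in L^2$ gives $\rieszmap[\bar\mu]^{-1}\nu_0\in\dotHk[2]$ by elliptic regularity for $-\nabla\cdot(\bar\mu\nabla\cdot)$ with $\bar\mu\in W^{1,\infty}$ (Lemma~\ref{lem:mu_bar_bounds}). The $L^\infty(L^2)$ component then follows from the Lions--Magenes embedding already noted before~\ref{ass:H6}.

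\textbf{Step 2 (nonlinear terms).} Next I would bound $F(\nu)\coloneqq\mathfrak{R}(\nu)+\mathfrak{B}_{\rm fb}(\nu)$ quadratically in $X'$. For $\mathfrak{R}$ this is~\ref{ass:H6}. For $\mathfrak{B}_{\rm fb}$, assumption~\ref{ass:H5} gives $\nabla P_\delta\Pi\rieszmap[\bar\mu]^{-1}\nu=\sum_\ell\pi_\ell\langle\nu,e_\ell\rangle\nabla e_\ell\in L^\infty$, with norm at most $C\|\nu\|_{X'}$. Hence
\[
\|\mathfrak{B}_{\rm fb}(\nu)-\mathfrak{B}_{\rm fb}(\zeta)\|_{X'}\le C(\|\nu\|_{L^2}+\|\zeta\|_{L^2})\|\nu-\zeta\|_{L^2}.
\]
Combining, and using $e^{\gamma t}\nu=w$,
\[
\|e^{\gamma t}(F(\nu)-F(\zeta))\|_{L^2(X')}\le C\,\|\nu\|_{L^\infty(L^2)}\,\|w_\nu-w_\zeta\|_{L^2(0,\infty;L^2)}.
\]
The remaining factor $\|w\|_{L^2(L^2)}$ is controlled by $\|w\|_{\mathcal{Y}_\infty}$, since $D(L)\simeq\rieszmap[\bar\mu]\dotHk[2]\subset L^2$ and $\|w\|_{L^2(L^2)}\lesssim\|w\|_{L^2(D(L))}$.

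\textbf{Step 3 (contraction and conclusion).} Define $\mathcal{S}(w)$ as the solution of the linear problem with $f=e^{\gamma t}F(e^{-\gamma t}w)$. On the ball $\{\|w\|_{\mathcal{Y}_\infty}\le 2C\|\nu_0\|_{L^2}\}$, Steps 1--2 show that $\mathcal{S}$ is a self-map and a contraction once $\varepsilon$ is small. The fixed point is a global solution with $\|e^{\gamma\cdot}\nu\|_{\mathcal{Y}_\infty}\le C_\gamma\|\nu_0\|_{L^2}$. From $\mathcal{Y}_\infty\hookrightarrow C([0,\infty);L^2)\hookrightarrow C([0,\infty);X')$ we get the pointwise decay bound, and the $X$ statement follows because $\rieszmap[\bar\mu]$ is an isometry.

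\textbf{Main obstacle.} The delicate point is the Lipschitz bound in~\ref{ass:H6}: it is only stated for probability densities in $D(L)$ with $\|\cdot\|_{L^2}<r$, so one must check that the iterates stay in that admissible class. I would handle this as follows:
\begin{itemize}
\item work in the closed subset of the ball where $\bar\mu+\nu_t$ has mass one and $\|\nu_t\|_{L^2}<r$; the contraction estimate keeps $\|\nu_t\|_{L^2}$ small;
\item use the local solution from~\ref{ass:H6}, together with uniqueness of fixed points on each $[0,T]$, to identify the fixed point with that local solution, which is a genuine probability density;
\item if positivity along iterates is problematic, run the contraction on finite intervals $[0,T]$ with the a priori bound independent of $T$, and extend by continuation.
\end{itemize}
Uniqueness of the global solution then follows from the contraction estimate applied to two solutions in the small ball.
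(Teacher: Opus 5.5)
Your Steps~1 and~2 match the paper's ingredients: the uniform-in-$T$ weighted maximal-regularity estimate (Proposition~\ref{prop:weighted_linear_density}) and the quadratic remainder bounds (Lemmas~\ref{lem:feedback_remainder} and~\ref{lem:weighted_remainder}). The gap is in Step~3: the Banach fixed point does not close under the stated hypotheses.

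\textbf{Why the contraction fails.} Assumption~\ref{ass:H6} gives the Lipschitz bound for $\mathfrak{R}$ only when $\bar\mu+\nu$ and $\bar\mu+\zeta$ are probability densities. In fact $\mathfrak{R}(\nu)$ contains $\Theta_E(\bar\mu+\nu)$, which is only defined on $\operatorname{dom}(\F)\subset\Pcal_2(\T^d)$. So $\mathcal{S}$ is defined and Lipschitz only on $w$ with $\bar\mu+e^{-\gamma t}w_t\ge 0$. That set is not invariant under $\mathcal{S}$. The function $\mathcal{S}(w)$ solves a linear equation with a frozen source, and $L_\Pi$ contains the nonlocal terms $\nabla\cdot(\bar\mu\nabla(\mathcal{K}_E\nu+P_\delta\Pi\rieszmap[\bar\mu]^{-1}\nu))$. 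These admit no maximum principle, and smallness in $\mathcal{Y}_\infty$ gives no pointwise lower bound on $\nu_t$. Your three fixes do not repair this:
\begin{itemize}
\item The first imposes only mass one and $L^2$-smallness. It omits nonnegativity, which is exactly what~\ref{ass:H6} requires.
\item The second is circular: identifying the fixed point with the~\ref{ass:H6} solution presupposes that the fixed point exists.
\item The third still runs a contraction, so it meets the same obstruction.
\end{itemize}

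\textbf{What is needed instead.} No contraction is needed, because~\ref{ass:H6} already supplies a local solution that is a probability density. The argument has two parts.
\begin{enumerate}
\item[(a)] \emph{Uniqueness and maximal continuation with a blow-up alternative} (Lemma~\ref{lem:maximal_continuation}). If $T_{\max}<\infty$, then $\limsup_{t\uparrow T_{\max}}\|\nu_t\|_{L^2}\ge r$ or $\sup_{T<T_{\max}}\|\nu\|_{\mathcal{Y}_T}=\infty$. This follows by reapplying~\ref{ass:H6} at the endpoint, using $\mathcal{Y}_T\hookrightarrow C([0,T];L^2)$ and the fact that probability densities are closed under $L^2$-limits. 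Uniqueness uses the Lipschitz bound between two genuine solutions on short intervals where $\|\nu^i\|_{L^2(0,\tau;L^2)}$ is small.
\item[(b)] \emph{A continuity argument on the actual solution $w=e^{\gamma t}\nu$.} Take $\rho\le\min\{r/2,\,1/(4C_{\rm lin,\gamma}C_{\rm rem,\gamma})\}$ and $C_{\rm lin,\gamma}\varepsilon\le\rho/4$. Then every interval on which $\|w\|_{\mathcal{Y}_T}\le\rho$ in fact gives $\|w\|_{\mathcal{Y}_T}\le\rho/2$. By continuity of $T\mapsto\|w\|_{\mathcal{Y}_T}$, the bound persists up to $T_{\max}$. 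The alternative in (a) then forces $T_{\max}=\infty$. Absorbing the quadratic term gives $\|w\|_{\mathcal{Y}_\infty}\le\frac43 C_{\rm lin,\gamma}\|\nu_0\|_{L^2}$.
\end{enumerate}
Your third bullet gestures at this, but the step doing the work is the a priori estimate along the~\ref{ass:H6} solution, not a contraction.

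Your closing claim gives uniqueness only inside a small ball of $\mathcal{Y}_\infty$. That is weaker than uniqueness among all probability-density solutions with $\|\nu_t\|_{L^2}<r$, which the short-time argument in (a) provides.
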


\begin{proof}
    Fix $\gamma \in (0,\lambda_\Pi)$ and define $\mathfrak{R}_\Pi(\nu) \coloneqq \mathfrak{R}(\nu) + \mathfrak{B}_{\rm fb}(\nu)$ and $\mathfrak{R}_{\Pi,\gamma}(w)(t) \coloneqq e^{\gamma t}\mathfrak{R}_\Pi(e^{-\gamma t}w_t)$.
    Let $C_{\rm lin,\gamma}$ and $C_{\rm rem,\gamma}$ denote the constants in Proposition~\ref{prop:weighted_linear_density} and Lemma~\ref{lem:weighted_remainder}, respectively.
    Choose
    \[
    \rho \coloneqq \min\left\{\frac{r}{2},\frac{1}{4C_{\rm lin,\gamma}C_{\rm rem,\gamma}}\right\}
    \]
    and then choose $\varepsilon > 0$ so small that $\varepsilon < \min\{r,\rho\}$ and $C_{\rm lin,\gamma} \varepsilon \le \rho / 4$.

    Let $\mu_0 = \bar\mu + \nu_0$ satisfy the hypotheses of the theorem.
    By Assumption~\ref{ass:H6} and Lemma~\ref{lem:maximal_continuation}, equation~\eqref{eq:nonlinear_equation_x_prime} admits a unique maximal solution $\nu$ on $[0,T_{\max})$, with $\nu \in \mathcal{Y}_T$ for every $T < T_{\max}$.
    Set $w_t\coloneqq e^{\gamma t}\nu_t$.
    Then, on every interval $[0,T]$ with $T < T_{\max}$, $w$ solves
    \[
    \partial_t w_t - (L_\Pi + \gamma I)w_t = \mathfrak{R}_{\Pi,\gamma}(w)(t),
    \]
    with $w_0 = \nu_0$.
    By Proposition~\ref{prop:weighted_linear_density},
    \[
    \|w\|_{\mathcal{Y}_T} \le C_{\rm lin,\gamma}\left(\|\nu_0\|_{L^2} + \|\mathfrak{R}_{\Pi,\gamma}(w)\|_{L^2(0,T;X')}\right)
    \]
    and, whenever $\|w\|_{L^\infty(0,T;L^2)} < r$, Lemma~\ref{lem:weighted_remainder} gives $\|\mathfrak{R}_{\Pi,\gamma}(w)\|_{L^2(0,T;X')} \le C_{\rm rem,\gamma}\|w\|_{\mathcal{Y}_T}^2$.
    We claim that $\|w\|_{\mathcal{Y}_T} \le \rho$ for every $T < T_{\max}$.
    Indeed, on any interval on which $\|w\|_{\mathcal{Y}_T} \le \rho$, the choice of $\rho$ gives $\|w\|_{L^\infty(0,T;L^2)} \le \rho < r$, and hence
    \[
    \|w\|_{\mathcal{Y}_T} \le C_{\rm lin,\gamma}\|\nu_0\|_{L^2} + C_{\rm lin,\gamma}C_{\rm rem,\gamma}\|w\|_{\mathcal{Y}_T}^2 \le \frac{\rho}{4} + \frac{\rho}{4} = \frac{\rho}{2}.
    \]
    Since $\mathcal{Y}_T$ embeds continuously into $C([0,T]; L^2(\T^d))$, the map $T \mapsto \|w\|_{\mathcal{Y}_T}$ is continuous and non-decreasing on $(0, T_{\max})$.
    Let $T_* \coloneqq \sup \left\{T < T_{\max} : \|w\|_{\mathcal{Y}_T} \le \rho \right\}$.
    Since $\|w\|_{\mathcal{Y}_T} \to \|\nu_0\|_{L^2} \le \varepsilon < \rho$ as $T \downarrow 0$, one has $T_* > 0$.
    If $T_* < T_{\max}$, continuity gives $\|w\|_{\mathcal{Y}_{T_*}} \le \rho$, hence $\|w\|_{\mathcal{Y}_{T_*}} \le \rho/2$ by the estimate above, and continuity again yields $T \in (T_*, T_{\max})$ with $\|w\|_{\mathcal{Y}_T} \le \rho$, contradicting the definition of $T_*$.
    Therefore $T_* = T_{\max}$, and the claim follows.

    It follows that
    \[
    \sup_{t < T_{\max}} \|\nu_t\|_{L^2} = \sup_{t < T_{\max}} e^{-\gamma t} \|w_t\|_{L^2} \le \rho < r.
    \]
    If $T_{\max} < +\infty$, then $\sup_{T < T_{\max}} \|\nu\|_{\mathcal{Y}_T} < +\infty$, since multiplication by $e^{-\gamma t}$ is bounded on $\mathcal{Y}_T$ uniformly for $T < T_{\max}$. 
    Together with $\sup_{t < T_{\max}} \|\nu_t\|_{L^2} \le \rho < r$, Lemma~\ref{lem:maximal_continuation} extends the solution beyond $T_{\max}$, contradicting maximality. 
    Hence $T_{\max} = \infty$.
    Taking the supremum over finite time intervals in the definition of the $\mathcal{Y}_\infty$-norm gives $e^{\gamma(\cdot)}\nu = w \in \mathcal{Y}_\infty$ and $\|w\|_{\mathcal{Y}_\infty} \le \rho$.
    Letting $T \to \infty$ in the weighted linear estimate and using $C_{\rm lin,\gamma} C_{\rm rem,\gamma} \rho \le 1/4$, we obtain
    \[
    \|w\|_{\mathcal{Y}_\infty} \le C_{\rm lin,\gamma}\|\nu_0\|_{L^2} + C_{\rm lin,\gamma}C_{\rm rem,\gamma}\rho\|w\|_{\mathcal{Y}_\infty} \le C_{\rm lin,\gamma}\|\nu_0\|_{L^2} + \frac{1}{4}\|w\|_{\mathcal{Y}_\infty}.
    \]
    Hence $\|w\|_{\mathcal{Y}_\infty} \le \frac{4}{3} C_{\rm lin,\gamma}\|\nu_0\|_{L^2}$.
    Since the subspace of functions in $L^2(\T^d)$ with zero total mass embeds continuously into $X'$, with constant $C_{\rm emb}$, setting $C_\gamma \coloneqq \frac{4}{3} C_{\rm lin,\gamma}\max\{1, C_{\rm emb}\}$ gives
    \[
    \|\nu_t\|_{X'} \le C_{\rm emb} e^{-\gamma t}\|w_t\|_{L^2} \le C_\gamma e^{-\gamma t}\|\nu_0\|_{L^2}
    \]
    for every $t \ge 0$.
    Finally, $\|\xi_t\|_X = \|\nu_t\|_{X'}$ proves the claimed estimate for $\xi_t$.
\end{proof}

\begin{corollary}
\label{cor:local_w2_stabilization}
    Under the assumptions of Theorem~\ref{thm:local_stab}, for every $t \ge 0$,
    \[
    \Wtwo(\mu_t, \bar\mu) \le C_\gamma e^{-\gamma t} \|\nu_0\|_{L^2}.
    \]
\end{corollary}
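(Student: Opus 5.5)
The plan is to bound $\Wtwo(\mu_t,\bar\mu)$ by the dual norm $\|\mu_t-\bar\mu\|_{X'} = \|\nu_t\|_{X'}$, up to a constant depending only on the bounds $c_0 \le \bar\mu \le C_0$ and on the $L^\infty$ size of $\mu_t$. After that, Theorem~\ref{thm:local_stab} closes the argument.

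Concretely, I would use the classical estimate comparing $\Wtwo$ with the weighted $\dot H^{-1}$ norm (Peyre's inequality, or Loeper's estimate on the torus). If $\mu_t$ and $\bar\mu$ are both bounded above by a constant $M$, then
\[
\Wtwo(\mu_t,\bar\mu) \le 2\sqrt{M}\,\|\mu_t-\bar\mu\|_{\dot H^{-1}(\T^d)}.
\]
The proof is short. Take the curve $\rho_s = (1-s)\bar\mu + s\mu_t$, which stays bounded by $M$. Let $\nabla\phi$ be a solution of $-\Delta\phi = \mu_t - \bar\mu$. Along $\rho_s$ use the velocity field $v_s = \nabla\phi/\rho_s$. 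This gives $\Wtwo^2 \le \int_0^1\int |\nabla\phi|^2/\rho_s\,dx\,ds$.

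The lower bound $\rho_s \ge \min(\bar\mu,\mu_t)$ is not uniform, since $\mu_t$ need not be bounded below. For this reason I would prefer Peyre's form, which only needs upper bounds on both measures. Alternatively, one can avoid the issue by using the flux $\bar\mu\nabla\xi_t$ directly: with $\xi_t = \rieszmap[\bar\mu]^{-1}\nu_t$, the curve $\rho_s$ solves $\partial_s\rho_s = -\nabla\cdot(\bar\mu\nabla\xi_t)$. The Benamou--Brenier formula then gives
\[
\Wtwo^2(\bar\mu,\mu_t) \le \int_0^1\!\int_{\T^d} \frac{\bar\mu^2|\nabla\xi_t|^2}{\rho_s}\,dx\,ds.
\]
Since $\rho_s \ge (1-s)\bar\mu$, the integrand is at most $\bar\mu|\nabla\xi_t|^2/(1-s)$, whose $s$-integral diverges logarithmically. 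So the naive bound fails, and Peyre's lemma is the right tool. Its constant depends only on an upper bound for the two densities, and it yields $\Wtwo(\mu_t,\bar\mu) \le C\|\mu_t-\bar\mu\|_{\dot H^{-1}}$.

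Next I need a uniform bound on $\mu_t$. Theorem~\ref{thm:local_stab} only controls $\|\nu_t\|_{L^2} \le \rho$, not $L^\infty$. This is the main obstacle. I would try to avoid it through a geodesic interpolation by displacement, but more simply, Peyre's inequality in the form $\Wtwo(\mu,\nu) \le 2\|\mu-\nu\|_{\dot H^{-1}(\nu)}$ requires only that the reference measure $\nu = \bar\mu$ be bounded below. Here $\dot H^{-1}(\nu)$ is the norm dual to $\int|\nabla f|^2\,d\nu$, which is exactly $\|\cdot\|_{X'}$ with $\nu = \bar\mu$. This is Peyre's inequality with no upper bound needed on $\mu$. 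The inequality follows by comparing the constant-speed geodesic from $\bar\mu$ with the $H^{-1}$ dual metric, which is done by integrating $\frac{d}{ds}$ of the dual norm along the geodesic.

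With this inequality, $\Wtwo(\mu_t,\bar\mu) \le 2\|\nu_t\|_{X'} \le 2C_\gamma e^{-\gamma t}\|\nu_0\|_{L^2}$, and the factor $2$ is absorbed by enlarging $C_\gamma$.
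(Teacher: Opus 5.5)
Your final argument is the paper's own. You apply Peyre's inequality $\Wtwo(\mu_t,\bar\mu)\le 2\|\mu_t-\bar\mu\|_{X'}$, which needs no upper bound on $\mu_t$, then the $X'$ decay of Theorem~\ref{thm:local_stab}, and absorb the factor $2$ into $C_\gamma$ (a step the paper leaves implicit). Incidentally, the flux computation you abandoned along $\rho_s=(1-s)\bar\mu+s\mu_t$ is exactly Peyre's proof: bounding $\Wtwo$ by the length $\int_0^1\|v_s\|_{L^2(\rho_s)}\,ds$ rather than the energy turns the divergent $\int_0^1(1-s)^{-1}\,ds$ into $\int_0^1(1-s)^{-1/2}\,ds=2$, whereas integrating the derivative of test integrals along the geodesic, as you describe, gives the reverse, Loeper-type inequality.
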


\begin{proof}
    By~\cite[Thm.~1.1]{peyre2018comparison}, $\Wtwo(\mu_t, \bar\mu) \le 2\|\mu_t - \bar\mu\|_{X'}$.
    The estimate follows from Theorem~\ref{thm:local_stab}.
\end{proof}

\section{Model classes and extensions}
\label{sec:applications-numerics}

After establishing the geometric and dynamical properties induced by the designed feedback control, we verify Assumptions~\ref{ass:H1}--\ref{ass:H6} for McKean--Vlasov dynamics, for which stabilization was already studied in~\cite{KaliseMoschenPavliotis2025}, and discuss two extensions of the construction.
The first incorporates fixed moment constraints into Fokker--Planck dynamics, while the second considers the same feedback mechanism on closed Riemannian manifolds, with the sphere as a numerical example.
The numerical implementation and additional experiments are collected in Appendix~\ref{app:numerics}.

\subsection{McKean--Vlasov models}
\label{sec:mckean-vlasov-models}

Consider the McKean--Vlasov energy defined in~\eqref{eq:mckean_vlasov_equation} with $W$ even.
If $V, W \in W^{2,\infty}(\T^d)$, then Example~\ref{ex:mckean-vlasov-energy} gives Assumptions~\ref{ass:H1}--\ref{ass:H2} with $\bar\mu \in W^{2,\infty}(\T^d)$.
Moreover,
\[
\Theta_E(\mu) = V + W * \mu, \qquad \mathcal{K}_E \nu = W * \nu,
\]
so $\|\mathcal{K}_E \nu\|_X = \|W * \nu\|_X \le C_0^{1/2} \|W * \nu\|_{\dotHk[1]} \le C_d C_0^{1/2} \|W\|_{W^{2,\infty}} \|\nu\|_{\dotHminusone} \le C \|\nu\|_{X'}$ using Lemma~\ref{lem:mu_bar_bounds} and Lemma~\ref{lem:hilbert_structure}, where $C$ depends on $W$, $\bar\mu$, and $d$.
Hence $\mathcal{K}_E \in \mathcal{L}(X', X)$ and Assumption~\ref{ass:H3} holds with zero remainder.

The relation between the linearized operator on $X'$ and the Hessian operator is also explicit.
Writing $L$ for the closed realization of the linearization defined above, one has, for $\nu \in D(L)$,
\[
L\nu = \sigma\Delta\nu + \nabla\cdot(\nu\nabla\Phi) + \nabla\cdot\left(\bar\mu \nabla(W\ast\nu)\right) = \sigma \nabla \cdot \left(\bar\mu \nabla\left(\frac{\nu}{\bar\mu}\right)\right) + \nabla\cdot\left(\bar\mu\nabla(W\ast\nu)\right),
\]
where the second equality uses $\sigma \nabla \bar\mu = -\bar\mu \nabla\Phi$.
For $\nu=\rieszmap[\bar\mu]\varphi$ with $\varphi,\psi\in C^\infty(\T^d)$, the integrations by parts underlying the Hessian formula in Example~\ref{ex:mckean-vlasov-energy} give
\[
\left\langle -L\rieszmap[\bar\mu]\varphi, \psi\right\rangle_{X', X} = \sigma \int_{\T^d} \bar\mu \,\nabla^2\varphi : \nabla^2\psi \, dx + H_\E(\varphi,\psi) = a(\varphi,\psi).
\]

We next check that $\mathrm{Hess}_{\Wtwo}\E(\mu)$ is well defined on $\mathscr{D}$ for every $\mu \in \Pcal_2(\T^d) \cap L^\infty(\T^d)$.
Fix $\mu \in \Pcal_2(\T^d) \cap L^\infty(\T^d)$ and $\phi \in W^{2,\infty}(\T^d)$, and let $\mu_s = (T_{s\phi})_\sharp\mu$, which by Lemma~\ref{lem:local_displacement_interpolation} is a
local $\Wtwo$-geodesic for $|s|$ small.
Then
\[
\E(\mu_s) = \int_{\T^d} V(T_{s\phi}(x)) \, d\mu(x) + \frac{1}{2} \iint_{\T^d \times \T^d} W (T_{s\phi}(x) - T_{s\phi}(y)) \, d\mu(x) \, d\mu(y).
\]
Since $\nabla V$ is Lipschitz, it is differentiable outside an $\mathcal{L}^d$-null set $N_V$, and since $T_{s\phi}$ is bi-Lipschitz, $T_{s\phi}^{-1}(N_V)$ is $\mathcal{L}^d$-null and hence $\mu$-null.
The difference quotients of $s \mapsto \nabla V(T_{s\phi}) \cdot \nabla\phi$ are dominated by $\|\nabla^2 V\|_{L^\infty} |\nabla\phi|^2 \in L^1(\mu)$, so dominated convergence gives
\[
\frac{d^2}{ds^2} \int_{\T^d} V(T_{s\phi}) \, d\mu = \int_{\T^d} \nabla^2 V(T_{s\phi}) \, \nabla\phi \cdot \nabla\phi \, d\mu.
\]
The same argument applies to the interaction term, using that $x \mapsto T_{s\phi}(x) - T_{s\phi}(y)$ is bi-Lipschitz for each fixed $y$, together with Fubini's theorem.
Evaluating at $s = 0$ recovers the expression for $H_\E$ in Example~\ref{ex:mckean-vlasov-energy} with $\bar\mu$ replaced by $\mu$, which is a quadratic form in $\nabla \phi$.
Hence, $\mathcal{V} = \Pcal_2(\T^d) \cap L^\infty(\T^d)$ is admissible at $\bar\mu$ with any $r_0 \in (0,1)$.

For this class, Assumption~\ref{ass:H4} is also a direct consequence of the explicit Hessian formula.
Indeed, write $\mu = (1 + \rho)\bar\mu$ with $\|\rho\|_{L^\infty} \le r$.
Using $|(1+\rho(x))(1+\rho(y))-1| \le 2r + r^2$, we have
\[
\left|\int_{\T^d}(\mu-\bar\mu) \, \nabla^2V \, \nabla\phi \cdot \nabla\phi \, dx\right| \le r\|\nabla^2V\|_{L^\infty}\|\phi\|_X^2,
\]
and
\begin{align*}
    &\left|\iint_{\T^d\times\T^d}\nabla^2W(x-y)(\nabla\phi(x)-\nabla\phi(y))\cdot(\nabla\phi(x)-\nabla\phi(y)) \, d(\mu \otimes \mu -\bar\mu \otimes \bar\mu)\right| \\
    &\hspace{10cm}\le 4(2r+r^2)\|\nabla^2W\|_{L^\infty}\|\phi\|_X^2.
\end{align*}
Consequently, $\left|\mathrm{Hess}_{\Wtwo}\E(\mu)(\nabla\phi,\nabla\phi) - \mathrm{Hess}_{\Wtwo}\E(\bar\mu)(\nabla\phi,\nabla\phi)\right| \le Cr\|\phi\|_X^2$, for $C > 0$ depending on $\|V\|_{W^{2,\infty}}$ and $\|W\|_{W^{2,\infty}}$.
This verifies Assumption~\ref{ass:H4}.

Regarding Assumption~\ref{ass:H5}, set $\nu_k \coloneqq \rieszmap[\bar\mu]e_k$.
By the ground-state transform (see~\cite{KaliseMoschenPavliotis2025}), $h_k \coloneqq \nu_k / \sqrt{\bar\mu}$ satisfies $H h_k = \lambda_k h_k$ with $H \coloneqq -\sigma \Delta + \Psi + \mathcal{K}$, where $\Psi \in L^\infty(\T^d)$, and $\mathcal{K}$ is an integral operator with kernel $\kappa \in L^\infty(\T^d \times \T^d)$ since $V, W \in W^{2,\infty}(\T^d)$.
By~\cite[Proposition~2.10]{KaliseMoschenPavliotis2025}, $h_k \in H^2(\T^d)$.
Applying the Calderón--Zygmund estimate (see, e.g.,~\cite[Thm. 9.11]{gilbarg2015elliptic}) to
\begin{equation}
    \label{eq:eigenfunction-elliptic-regularity}
    -\sigma \Delta h_k = (\lambda_k - \Psi)h_k - \mathcal{K}h_k,
\end{equation}
we obtain $h_k \in W^{2,p}(\T^d)$ whenever $h_k \in L^p(\T^d)$, for every $1 < p < \infty$.
Starting from $h_k \in H^2(\T^d)$, we iterate the preceding implication using the Sobolev embeddings $W^{2,p}(\T^d)\hookrightarrow L^{dp/(d-2p)}(\T^d)$ if $2p < d$, $W^{2,p}(\T^d) \hookrightarrow L^q(\T^d)$ for every $q < \infty$ if $2p = d$, and $W^{2,p}(\T^d) \hookrightarrow L^\infty(\T^d)$ if $2p > d$.
Hence $h_k \in L^\infty(\T^d)$ after finitely many steps and, consequently, $h_k \in W^{2,p}(\T^d)$ for every $1 < p < \infty$.
It follows that $\nu_k = \sqrt{\bar\mu} h_k \in W^{2,p}(\T^d)$ for every $1 < p <\infty$.
Finally, $e_k$ satisfies
\[
-\Delta e_k = \bar\mu^{-1}\nu_k + \nabla(\log\bar\mu) \cdot \nabla e_k.
\]
Since $\bar\mu^{-1} \nu_k \in L^{p}(\T^d)$ and $\nabla \log\bar\mu \in L^{\infty}(\T^d)$, the Calderón--Zygmund estimate and the same bootstrap strategy as above, now using the embeddings $W^{2,p}(\T^d) \hookrightarrow W^{1,q}(\T^d)$, yield $e_k \in W^{2,p}(\T^d)$ for every $1 < p < \infty$. 
The right-hand side then belongs to $W^{1,p}(\T^d)$, and the corresponding higher-order Calderón--Zygmund estimate yields $e_k \in W^{3,p}(\T^d)$ for every $1 < p < \infty$.
Choosing $p > d$, the Sobolev--Morrey embedding $W^{3,p}(\T^d) \hookrightarrow W^{2,\infty}(\T^d)$ verifies Assumption~\ref{ass:H5}.
It remains to verify Assumption~\ref{ass:H6}.
The uncontrolled nonlinear remainder is $\mathfrak{R}(\nu) = \nabla \cdot (\nu \nabla (W * \nu))$ and hence
\[
\mathfrak{R}(\nu) - \mathfrak{R}(\zeta) = \nabla \cdot ((\nu-\zeta) \nabla (W * \nu)) + \nabla \cdot (\zeta \nabla (W*(\nu-\zeta))).
\]
The bound $\|\nabla W * f\|_{L^\infty} \le C\|f\|_{L^2}$ therefore yields the Lipschitz estimate required in Assumption~\ref{ass:H6}.
Local existence in $\mathcal{Y}_T$ follows from the fixed-point argument of~\cite[Section~3.4]{KaliseMoschenPavliotis2025}, using Proposition~\ref{prop:weighted_linear_density} for the linear estimate and the bound above, together with Lemma~\ref{lem:feedback_remainder}, for the nonlinear terms.
The associated drift $\nabla\Phi + \nabla(W * \nu) + \nabla(P_\delta \Pi \rieszmap[\bar\mu]^{-1} \nu)$ is bounded on finite time intervals by~\ref{ass:H1}, the bound above, and~\ref{ass:H5}.
Hence the mass-conservation and nonnegativity arguments of~\cite[Appendix~B]{KaliseMoschenPavliotis2025} apply as well.
Figure~\ref{fig:mv-convexification} illustrates the finite-rank feedback convexification for the double-well and Kuramoto examples.

\begin{figure}[ht]
    \centering
    \begin{minipage}[t]{0.48\textwidth}
        \centering
        \includegraphics[width=\linewidth]{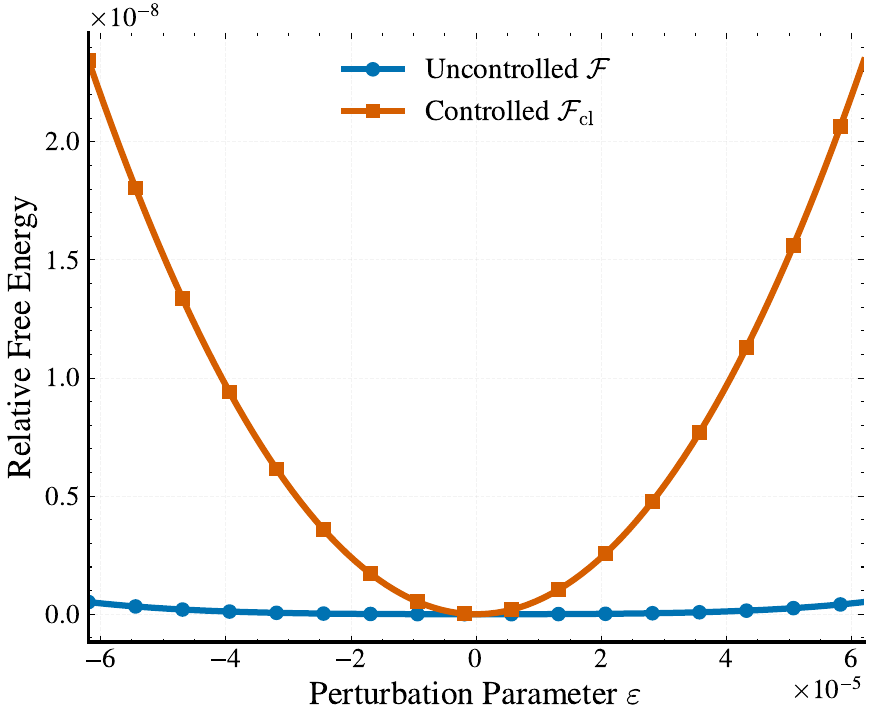}
    \end{minipage}
    \hfill
    \begin{minipage}[t]{0.48\textwidth}
        \centering
        \includegraphics[width=\linewidth]{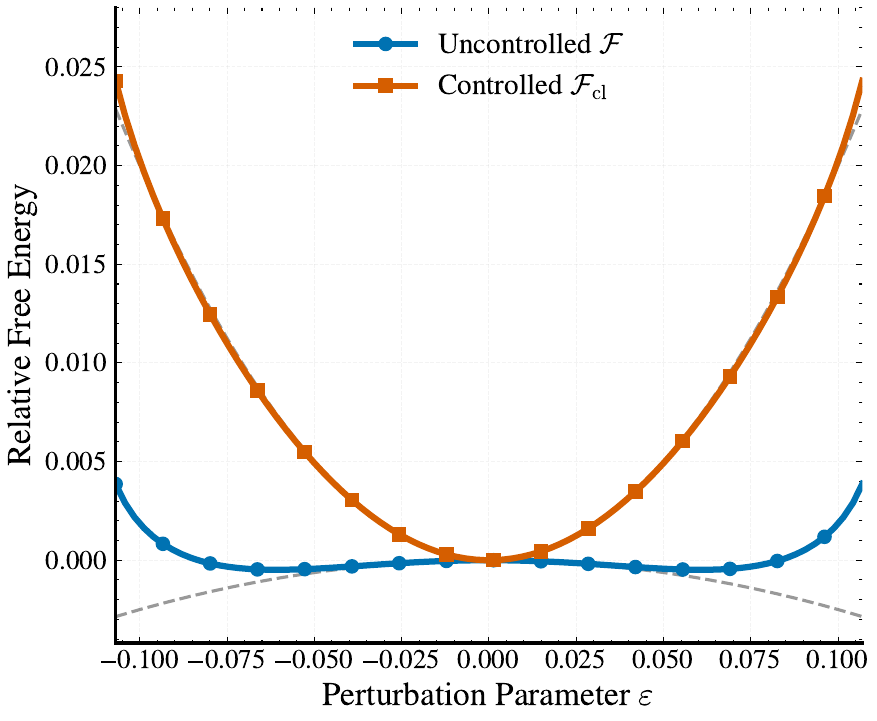}
    \end{minipage}
    \caption{\textbf{Convexification for McKean--Vlasov examples}.
    In both panels, we plot the functionals $\F(\mu_\varepsilon)$ and $\F_{\rm cl}(\mu_\varepsilon)$ along the family $\mu_\varepsilon = \Xi(\varepsilon\phi)$, with $\phi$ fixed.
    Left: double-well energy with $V(x) = -2\cos(4\pi x)$, $W \equiv 0$, and $\sigma = 0.5$.
    Right: Kuramoto interaction on $2\pi \T$ with $V \equiv 0$, $W(x) = -3\cos(2\pi x)$, and $\sigma = 1$. 
    The uniform distribution is linearly unstable, and the feedback restores local convexity.}
    \label{fig:mv-convexification}
\end{figure}

\subsection{Constrained dynamics}
\label{sec:constrained-dynamics}

We next consider dynamics with fixed linear moments on $\T^d$, so that the admissible set is
\[
\mathcal{M} \coloneqq \left\{\mu \in \Pcal_2(\T^d) : \ \int_{\T^d} g_\ell \, d\mu = c_\ell, \ell = 1, \ldots, k\right\},
\]
with $g_\ell \in W^{2,\infty}(\T^d)$. 
The constrained evolution can be written as
\[
\partial_t \mu = \sigma \Delta\mu + \nabla \cdot \left(\mu\nabla\Theta_E(\mu)\right) + \sum_{\ell=1}^k m_\ell(t) \nabla \cdot (\mu\nabla g_\ell),
\]
where $m_\ell(t) \in \R$ are scalar Lagrange multipliers chosen so that the dynamics stay in $\mathcal{M}$ at all times.
For $\ell = 1, \ldots, k$, set $C_\ell(\mu) \coloneqq \int_{\T^d} g_\ell \, d\mu$.
Differentiating $C_\ell(\mu_t) = c_\ell$ and integrating by parts on $\T^d$ gives
\[
\sum_{r=1}^k G_{\ell r}(\mu_t) m_r(t) = -b_\ell(\mu_t), \quad b_\ell(\mu) \coloneqq \langle g_\ell, \Theta_E(\mu) \rangle_{\potential[\mu]} - \sigma\int_{\T^d}\mu\,\Delta g_\ell\,dx,
\]
where $G_{\ell r}(\mu) \coloneqq \langle g_\ell, g_r \rangle_{\potential[\mu]}$.
If $G(\mu_t) = (G_{\ell r}(\mu_t))_{\ell r}$ is invertible, the multipliers are uniquely determined by $m(t) = -G(\mu_t)^{-1} b(\mu_t)$; see, e.g.,~\cite{EberleNiethammerSchlichting2017}.

Moment constraints of this type also arise in mean-field models for lithium-ion electrodes.
A charging protocol may prescribe, for instance, the first moment of the particle state~\cite{DreyerEtAl2010,EberleNiethammerSchlichting2017},
\[
\int_\R x \rho_t(x) \, dx = c(t),
\]
which corresponds to a moving constraint level rather than a fixed one when compared to the proposition below.
For fixed $c_\ell$, the tangent space and the projected Hessian at a constrained equilibrium are time-independent.
If the constraint levels vary in time, the reference state follows a prescribed path and the corresponding linearization is generally nonautonomous, leading to a time-dependent Riccati equation.

\begin{proposition}[Constrained equilibrium and projected linearization]
    \label{prop:constrained_projected_hessian}
    Let $\bar\mu \in \mathcal{M}$ be stationary for $\F$ restricted to $\mathcal{M}$, and assume that Assumptions~\ref{ass:H1}--\ref{ass:H2} hold at $\bar\mu$.
    Define
    \[
    g_\ell^\circ \coloneqq g_\ell - \int_{\T^d} g_\ell \,d\bar\mu, \qquad X_{\mathcal{M}} \coloneqq \left\{\phi \in \potential[\bar\mu] : \langle \phi,g_\ell^\circ\rangle_{\potential[\bar\mu]} = 0, \ell = 1, \ldots, k\right\},
    \]
    and assume that $G^X_{\ell r} \coloneqq \langle g_\ell^\circ, g_r^\circ\rangle_{X}$ is invertible.
    Then the following holds:
    \begin{itemize}
        \item[(i)] there exist unique multipliers $\bar\lambda_1, \ldots, \bar\lambda_k \in \R$ and a constant $c_{\bar\mu} \in \R$ such that
        \begin{equation}
            \label{eq:euler_lagrange_constrained}
            \sigma(\log\bar\mu + 1) + \Theta_E(\bar\mu) + \sum_{\ell=1}^k \bar\lambda_\ell g_\ell = c_{\bar\mu} \quad \text{a.e.\ on } \T^d,
        \end{equation}
        so $\bar\mu \propto \exp \left(-(\Theta_E(\bar\mu) + \sum_{\ell=1}^k \bar\lambda_\ell g_\ell) / \sigma \right)$, and $\bar\mu \in W^{1,\infty}(\T^d)$ satisfies $0 < c_0 \le \bar\mu \le C_0$.
        \item[(ii)] for $\ell = 1, \dots, k$, define 
        \[
        h_\ell(\phi,\psi) \coloneqq \mathrm{Hess}_{\Wtwo} C_\ell(\bar\mu)(\nabla\phi, \nabla\psi) = \int_{\T^d} \nabla^2 g_\ell \nabla \phi \cdot \nabla \psi \, d\bar\mu.
        \]
        The constrained Hessian form
        \[
        a_{\mathcal{M}}(\phi,\psi) \coloneqq a(\phi,\psi) + \sum_{\ell=1}^k \bar\lambda_\ell h_\ell(\phi,\psi), \qquad D(a_{\mathcal{M}}) = D(a) \cap X_{\mathcal{M}},
        \]
        is densely defined, closed, and bounded from below on $X_{\mathcal{M}}$.
        It therefore defines a self-adjoint operator $A_{\mathcal{M}}$ on $X_{\mathcal{M}}$ with compact resolvent.
        \item[(iii)] If, in addition, Assumption~\ref{ass:H3} holds at $\bar\mu$, then the linearized constrained dynamics in $X_{\mathcal{M}}$ admit the weak formulation
        \[
        \langle \partial_t\xi_t, \psi\rangle_X + a_{\mathcal{M}}(\xi_t, \psi) = 0, \qquad \psi \in D(a_{\mathcal{M}}).
        \]
    \end{itemize}
\end{proposition}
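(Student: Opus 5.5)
For part (i), stationarity of $\F$ restricted to $\mathcal{M}$ means that the first variation $\frac{\delta\F}{\delta\mu}(\bar\mu) = \Theta_E(\bar\mu) + \sigma(\log\bar\mu+1)$ annihilates every admissible tangent direction, that is, every $\rieszmap[\bar\mu]\phi$ with $\phi \in X_{\mathcal{M}}$. In $X$ this says that $\frac{\delta\F}{\delta\mu}(\bar\mu)$, viewed as an element of $X$ (it lies in $X$ once we know $\log\bar\mu$ has an $L^2(\bar\mu)$ gradient, which is part of $\bar\mu\in\operatorname{dom}$ and Assumption~\ref{ass:H1}), is orthogonal to $X_{\mathcal{M}} = \operatorname{span}\{g_\ell^\circ\}^{\perp}$. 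Hence it belongs to $\operatorname{span}\{g^\circ_\ell\}$, and we can write it as $-\sum_\ell \bar\lambda_\ell g_\ell^\circ$ modulo constants. This yields \eqref{eq:euler_lagrange_constrained}. The coefficients are unique because taking $X$-inner products with $g_r^\circ$ gives the linear system $G^X\bar\lambda = -(\langle \frac{\delta\F}{\delta\mu}(\bar\mu), g_r^\circ\rangle_X)_r$, and $G^X$ is invertible. The explicit exponential form, the bounds $c_0\le\bar\mu\le C_0$, and $\bar\mu\in W^{1,\infty}$ then follow exactly as in Lemma~\ref{lem:mu_bar_bounds}, with $\Phi$ replaced by $\Phi + \sum_\ell\bar\lambda_\ell g_\ell \in W^{1,\infty}$ since $g_\ell \in W^{2,\infty}$.

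For part (ii), $X_{\mathcal{M}}$ is a closed subspace of $X$ of finite codimension $k$, hence a Hilbert space. Each $h_\ell$ is bounded on $X$ with $|h_\ell(\phi,\psi)|\le \|\nabla^2 g_\ell\|_{L^\infty}\|\phi\|_X\|\psi\|_X$, so $a_{\mathcal{M}}$ is $a$ plus a bounded symmetric form. Restricting the lower bound and the closedness argument of Theorem~\ref{thm:hessian_form} gives the following. The norm $\|\cdot\|_{a_{\mathcal{M}},\gamma}$ is equivalent to the $\dotHk[2]$-norm on $D(a)\cap X_{\mathcal{M}}$ for $\gamma > C_E + \sum|\bar\lambda_\ell|\|\nabla^2g_\ell\|_{L^\infty}$. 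Moreover, $D(a)\cap X_{\mathcal{M}}$ is closed in $\dotHk[2]$, being the kernel of $k$ functionals that are continuous on $X$. Together these give closedness. The step needing care is density of $D(a)\cap X_{\mathcal{M}}$ in $X_{\mathcal{M}}$, and we handle it with the projection $P_{\mathcal{M}}$ onto $X_{\mathcal{M}}$. Since $g_\ell^\circ\in W^{2,\infty}\subset\dotHk[2]$, $P_{\mathcal{M}}$ subtracts a finite combination of the $g^\circ_\ell$ and so maps $D(a)$ into $D(a)\cap X_{\mathcal{M}}$. Approximating $\phi\in X_{\mathcal{M}}$ by $\phi_n\in D(a)$ in $X$ and then applying $P_{\mathcal{M}}$ gives density. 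Kato's representation theorem then produces $A_{\mathcal{M}}$. Compactness of the resolvent follows verbatim from the argument of Theorem~\ref{thm:compact_resolvent_a}, because the compact embedding $\dotHk[2]\hookrightarrow \dotHk[1]$ restricts to the closed subspace.

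For part (iii), we linearize the constrained PDE as in Proposition~\ref{prop:linearized_operator_identification}. The multipliers $m(t) = -G(\mu_t)^{-1}b(\mu_t)$ vanish at first order to give $\bar\lambda$ at $\bar\mu$, which is consistent with (i). Expanding $m_\ell(t) = \bar\lambda_\ell + \dot m_\ell + o$, the linear part of $\sum_\ell m_\ell\nabla\cdot(\mu\nabla g_\ell)$ has two pieces. The first is $\sum_\ell\bar\lambda_\ell\nabla\cdot(\nu\nabla g_\ell)$; tested against $\psi$ and combined with the $\Phi$-terms (now with $\Phi$ replaced by $\Phi+\sum\bar\lambda_\ell g_\ell$), it produces $\sum_\ell \bar\lambda_\ell h_\ell$ through the same $D^2\Xi(0)$ computation used for $\Phi$ in that proposition. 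The second is $\sum_\ell \dot m_\ell \rieszmap[\bar\mu] g_\ell$, a component in $\rieszmap[\bar\mu]\operatorname{span}\{g_\ell^\circ\}$. Testing with $\psi\in D(a_{\mathcal{M}})\subset X_{\mathcal{M}}$ annihilates this second piece, and the linearized constraint keeps $\xi_t\in X_{\mathcal{M}}$, which gives the stated weak formulation. I expect the main obstacle to be this bookkeeping: showing rigorously that the multiplier perturbation lies exactly in the orthogonal complement of $X_{\mathcal{M}}$, and that the $\bar\lambda$-terms recombine into $h_\ell$ with the correct sign. I would first check this by a direct integration by parts on smooth $\varphi,\psi$ and then extend by density as before.
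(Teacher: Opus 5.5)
Your proposal is correct and follows essentially the paper's route. For (i) you use the orthogonal-complement (Lagrange multiplier) argument with $G^X$; for (ii), a bounded perturbation of $a$ plus the projection $P_{\mathcal{M}}$ for density; and for (iii) you absorb the $\bar\lambda_\ell$-terms into the effective potential $\Phi+\sum_\ell\bar\lambda_\ell g_\ell$ and test with $\psi\in X_{\mathcal{M}}$ to annihilate the multiplier fluctuation. The paper packages (iii) a bit more economically: it applies Proposition~\ref{prop:linearized_operator_identification} directly to $\E+\sum_\ell\bar\lambda_\ell C_\ell$ and checks $m(\bar\mu)=\bar\lambda$ explicitly via $b_\ell(\bar\mu)=-\sum_r G^X_{\ell r}\bar\lambda_r$. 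After orthogonality the leftover term is $(m_\ell(\mu_\varepsilon)-\bar\lambda_\ell)\,\varepsilon^{-1}\int_{\T^d}(\mu_\varepsilon-\bar\mu)\nabla g_\ell\cdot\nabla\psi\,dx$ with $\mu_\varepsilon=\Xi(\varepsilon\varphi)$, which tends to zero, so the paper needs only continuity of $m(\mu)$ at $\bar\mu$ rather than your first-order expansion of $m$.
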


\begin{proof}
    For $\phi \in X$, $DC_\ell(\bar\mu)[\rieszmap[\bar\mu]\phi] = \langle\rieszmap[\bar\mu]\phi,g_\ell^\circ\rangle_{X',X} = \langle\phi,g_\ell^\circ\rangle_X$.
    Since $G^X$ is invertible, the differentials $DC_1(\bar\mu), \ldots, DC_k(\bar\mu)$ are linearly independent on $X'$.
    The Lagrange multiplier condition for the restriction of $\F$ to $\mathcal{M}$ therefore gives the existence of $\bar\lambda_1, \ldots, \bar\lambda_k, c_{\bar\mu} \in \R$ such that~\eqref{eq:euler_lagrange_constrained} holds.
    If $(\widetilde\lambda_1, \ldots, \widetilde\lambda_k)$ is another family of multipliers, subtracting both equations and subtracting the average with respect to $\bar\mu$ yields $\sum_{\ell=1}^k(\bar\lambda_\ell - \widetilde\lambda_\ell) g_\ell^\circ = 0$ in $X$.
    Taking the $X$-inner product with $g_r^\circ$ for $r=1,\ldots,k$ gives $G^X(\bar\lambda - \widetilde\lambda) = 0$, and hence $\bar\lambda = \widetilde\lambda$.
    Equation~\eqref{eq:euler_lagrange_constrained} then yields
    \[
    \bar\mu \propto \exp\left(-\frac{1}{\sigma} \left( \Theta_E(\bar\mu)+\sum_{\ell=1}^k\bar\lambda_\ell g_\ell \right) \right).
    \]
    Since $\Theta_E(\bar\mu) \in W^{1,\infty}(\T^d)$ and $g_\ell \in W^{2,\infty}(\T^d)$, (i) follows as in Lemma~\ref{lem:mu_bar_bounds}.

    For $\phi \in C^{\infty}(\T^d)$ and $\mu_s = \Xi(s\phi)$,
    \[
    \frac{d^2}{ds^2} C_\ell(\mu_s)\Big|_{s=0} = \int_{\T^d}\nabla^2g_\ell\,\nabla\phi \cdot \nabla\phi \, d\bar\mu.
    \]
    Polarization gives the expression for $h_\ell$. Moreover, $|h_\ell(\phi,\psi)| \le \|\nabla^2g_\ell\|_{L^\infty} \|\phi\|_X \|\psi\|_X$, so each $h_\ell$ is a bounded symmetric bilinear form on $X$.
    Thus, $a_{\mathcal{M}}$ is a bounded symmetric form perturbation of $a$ restricted to $D(a) \cap X_{\mathcal{M}}$.
    It remains to verify that $D(a_{\mathcal{M}})$ is dense in $X_{\mathcal{M}}$.
    Indeed, the $X$-orthogonal projection is
    \[
    P_{\mathcal{M}}\phi = \phi - \sum_{\ell,r=1}^k ((G^X)^{-1})_{\ell r} \langle \phi,g_r^\circ \rangle_X g_\ell^\circ.
    \]
    Since $g_\ell^\circ \in W^{2,\infty}(\T^d) \subset D(a)$, one has $P_{\mathcal{M}}D(a) \subset D(a) \cap X_{\mathcal{M}}$. 
    The density of $D(a)$ in $X$ therefore implies that $D(a_{\mathcal{M}})$ is dense in $X_{\mathcal{M}}$.
    Since $X_{\mathcal{M}}$ is closed in $X$, the restriction of $a$ to $D(a) \cap X_{\mathcal{M}}$ is closed and bounded from below on $X_{\mathcal{M}}$.
    The same argument as in Theorem~\ref{thm:hessian_form} then shows that $a_{\mathcal{M}}$ is closed and bounded from below.
    The representation theorem therefore defines a self-adjoint operator $A_{\mathcal{M}}$ on $X_{\mathcal{M}}$.
    The embedding $D(a_{\mathcal{M}}) \hookrightarrow X_{\mathcal{M}}$ is compact given that $\dotHk[2](\T^d) \hookrightarrow \dotHk[1](\T^d) \simeq X$ is compact. 
    The proof of Theorem~\ref{thm:compact_resolvent_a} then shows that $A_{\mathcal{M}}$ has compact resolvent.

    Let $\mathcal{G}_{\mathcal{M}}(\mu) \coloneqq \mathcal{G}(\mu) + \sum_{\ell=1}^k m_\ell(\mu)\nabla\cdot(\mu\nabla g_\ell)$, where $m(\mu) = -G(\mu)^{-1}b(\mu)$.
    Assume now~\ref{ass:H3} and define $\E_{\bar\lambda} \coloneqq \E + \sum_{\ell=1}^k \bar\lambda_\ell C_\ell$.
    By parts~(i)--(ii), Assumptions~\ref{ass:H1}--\ref{ass:H2} hold for $\E_{\bar\lambda}$.
    For $\zeta \in \mathcal{U}_p$ ($p > d/2 + 2$), the added constraint terms cancel in first-variation differences, and hence
    \[
    \Theta_{E_{\bar\lambda}}(\Xi(\zeta)) - \Theta_{E_{\bar\lambda}}(\bar\mu) = \Theta_E(\Xi(\zeta)) - \Theta_E(\bar\mu).
    \]
    Thus Assumption~\ref{ass:H3} holds for $\E_{\bar\lambda}$ with the same operator $\mathcal{K}_E$.
    By the Euler--Lagrange equation and integration by parts,
    \[
    b_\ell(\bar\mu) = \int_{\T^d} \bar\mu \nabla g_\ell \cdot \nabla\Theta_E(\bar\mu) \, dx - \sigma\int_{\T^d} \bar\mu \Delta g_\ell \, dx = -\sum_{r=1}^k G^X_{\ell r} \bar\lambda_r.
    \]
    Since $G(\bar\mu) = G^X$, one has $m(\bar\mu) = \bar\lambda$.

    Let $\mu_\varepsilon = \Xi(\varepsilon\phi)$ and $q_\varepsilon = (\mu_\varepsilon - \bar\mu)/\varepsilon$. 
    Assumption~\ref{ass:H3} and Lemma~\ref{lem:pushforward_map} imply $G(\mu_\varepsilon) \to G(\bar\mu)$ and $b(\mu_\varepsilon) \to b(\bar\mu)$ since $\mu_{\varepsilon} \to \bar\mu$ in $L^1$-norm and $(\mu_\varepsilon)_{\varepsilon}$ is uniformly bounded in $L^{\infty}(\T^d)$.
    Since $G(\bar\mu) = G^X$ is invertible, $G(\mu_\varepsilon)$ is invertible for all sufficiently small $\varepsilon$.
    Hence $m(\mu_\varepsilon) \to \bar\lambda$.
    For $\psi \in D(a_{\mathcal{M}})$,
    \[
    \int_{\T^d} \bar\mu \nabla g_\ell \cdot \nabla\psi \, dx = \langle g_\ell^\circ,\psi\rangle_X = 0.
    \]
    It follows that
    \[
    \frac{1}{\varepsilon} \sum_{\ell=1}^k (m_\ell(\mu_\varepsilon) - \bar\lambda_\ell) \left\langle \nabla \cdot (\mu_\varepsilon \nabla g_\ell), \psi \right\rangle_{X',X} = -\sum_{\ell=1}^k (m_\ell(\mu_\varepsilon) - \bar\lambda_\ell) \int_{\T^d}q_\varepsilon \nabla g_\ell \cdot \nabla\psi\,dx \to 0.
    \]
    Therefore, when tested against functions in $D(a_{\mathcal{M}})$, the linearization of $\mathcal{G}_{\mathcal{M}}$ agrees in weak form with that of the gradient flow associated with $\F + \sum_{\ell=1}^k \bar\lambda_\ell C_\ell$.
    Applying Proposition~\ref{prop:linearized_operator_identification} to $\E_{\bar\lambda}$ gives
    \[
    -\left\langle D\mathcal{G}_{\mathcal{M}}(\bar\mu) \rieszmap[\bar\mu]\phi, \psi \right\rangle_{\dot{H}^{-2}, \dotHk[2]} = a_{\mathcal{M}}(\phi, \psi), \qquad \phi \in C^\infty(\T^d) \cap X_{\mathcal{M}}, \psi \in D(a_{\mathcal{M}}).
    \]
    The identity then extends to all $\phi, \psi \in D(a_{\mathcal{M}})$ by density and continuity of the form.
    Therefore, we can write $\langle \partial_t\xi, \psi\rangle_X + a_{\mathcal{M}}(\xi, \psi) = 0$ for $\psi \in D(a_{\mathcal{M}})$, which proves (iii).
\end{proof}

The feedback construction of Section~\ref{sec:feedback} can then be carried out on the constrained space $X_{\mathcal{M}}$, using the eigenfunctions of $A_{\mathcal{M}}$ associated with eigenvalues below the prescribed threshold $\delta > 0$. 
With the same diagonal choice of the cost operator $Q$, the resulting Riccati operator shifts only these constrained modes. 
Since the corresponding control directions $\alpha_j$ lie in $X_{\mathcal{M}}$, the linearized moment constraints are preserved. 
The same feedback can be incorporated into the nonlinear constrained dynamics by viewing the closed-loop equation as the constrained Wasserstein gradient flow of $\F + \mathcal{Q}_{\mathcal{M}}$, where $\mathcal{Q}_{\mathcal{M}}$ is the analogue of $\mathcal{Q}$ built from $A_{\mathcal{M}}$.
The closed-loop Lagrange multipliers are determined by
\[
G(\mu)m^{\rm cl}(\mu) = - b(\mu) - \varrho(\mu), \qquad \varrho_\ell(\mu) \coloneqq \left\langle g_\ell, \frac{\delta \mathcal{Q}_{\mathcal{M}}}{\delta \mu}\right\rangle_{\potential}.
\]
Since $\delta \mathcal{Q}_{\mathcal{M}} /\delta\mu \in X_{\mathcal{M}}$ and it vanishes at $\bar\mu$, one has $\varrho(\bar\mu) = 0$ and $D\varrho(\bar\mu) = 0$.
Hence, the correction to the multipliers induced by the controllers does not alter the constrained linearization, which remains the $\delta$-stabilized system on $X_{\mathcal{M}}$.
This construction is illustrated in Appendix~\ref{app:sec:standard-models} for a double-well model in one dimension.

\subsection{Closed Riemannian manifolds and stabilization on the sphere}
\label{sec:sphere-example}

We now extend the Wasserstein Hessian realization developed on the flat torus to connected closed Riemannian manifolds.
With this formulation, the same strategy can be used to construct a feedback control. 
The main change is that the Ricci tensor enters the entropy Hessian, while the Riccati construction is otherwise unchanged.
We record the extension needed for the linear Fokker--Planck equation considered below.

Let $(\Omega, g)$ be a smooth, closed, and connected Riemannian manifold, and let $d\vol_g$ denote the Riemannian volume measure on $\Omega$.
Assume $V \in W^{2,\infty}(\Omega)$ and consider the free energy 
\[
\F_g(\mu) \coloneqq \sigma\Ent_g(\mu) + \int_\Omega V \, d\mu, \quad \Ent_g(\mu) \coloneqq \int_\Omega \mu \log\mu \, d\vol_g, \quad \sigma > 0.
\]
Its gradient flow in $\Pcal_2(\Omega)$ is~\cite[Ex. 15.5]{villani2008optimal}
\begin{equation}
    \label{eq:manifold-FP}
    \partial_t\mu = \sigma\Delta_g\mu + \operatorname{div}_g(\mu\nabla_gV),
\end{equation}
and its stationary density is $\bar\mu \propto e^{-V/\sigma} \in W^{2,\infty}(\Omega)$, where $\nabla_g$, $\operatorname{div}_g$, and $\Delta_g \coloneqq \operatorname{div}_g \nabla_g$ denote the Riemannian gradient, divergence, and Laplace--Beltrami operator, respectively.
Since $\Omega$ is compact, $\bar\mu$ is bounded above and below away from zero.

We consider the space $X_g \coloneqq \{\varphi : \Omega \to \R : \|\varphi\|_{X_g} < \infty\} / \{ \|\cdot\|_{X_g} = 0 \}$, where
\[
\|\varphi\|_{X_g} \coloneqq \sqrt{\langle \varphi, \varphi \rangle_{X_g}}, \qquad \langle \varphi, \psi \rangle_{X_g} \coloneqq \int_\Omega \langle \nabla_g\varphi ,\nabla_g\psi \rangle_g \, d\bar\mu.
\]
Since $\Omega$ is connected, and $\bar\mu$ is bounded above and below away from zero, $X_g$ is linearly isomorphic to $\dotHk[1](\Omega)$ with equivalent norms.
We denote its Riesz map by $\rieszmap[\bar\mu]^g \varphi \coloneqq -\operatorname{div}_g (\bar\mu \nabla_g\varphi)$, where $\dotHk[k](\Omega) \coloneqq H^k(\Omega)/\R$ is as previously stated.
Using the Wasserstein Hessian formula in~\cite[Formula~15.7]{villani2008optimal} gives, for $\varphi, \psi \in C^\infty(\Omega)$,
\[
\mathrm{Hess}_{\Wtwo} \F_g(\bar\mu)(\nabla_g\varphi, \nabla_g\psi) = a_g(\varphi,\psi),
\]
where
\[
a_g(\varphi,\psi) \coloneqq \int_\Omega \left[ \sigma \nabla_g^2 \varphi : \nabla_g^2 \psi + \left(\sigma\mathrm{Ric}_g + \nabla_g^2V \right) (\nabla_g\varphi, \nabla_g\psi) \right] d\bar\mu.
\]

\begin{proposition}[Closed-manifold extension]
    \label{prop:closed_manifold_extension}
    The form $a_g$, with domain $D(a_g) = \dotHk[2](\Omega)$, is densely defined, symmetric, closed, and bounded from below on $X_g$. 
    Its associated self-adjoint operator $A_g$ has compact resolvent.
    Moreover, the linearization of~\eqref{eq:manifold-FP} at $\bar\mu$ admits the weak formulation
    \[
    \langle \partial_t\xi_t, \psi\rangle_{X_g} + a_g(\xi_t, \psi) = 0, \qquad \psi \in D(a_g),
    \]
    where $\xi = (\rieszmap[\bar\mu]^g)^{-1} (\mu - \bar\mu)$.
\end{proposition}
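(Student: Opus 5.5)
The plan mirrors the torus argument of Theorem~\ref{thm:hessian_form}, Theorem~\ref{thm:compact_resolvent_a} and Proposition~\ref{prop:linearized_operator_identification}, with the Ricci term treated as part of the bounded perturbation. First, split
\[
a_g(\varphi,\psi) = \sigma q_0^g(\varphi,\psi) + b_g(\varphi,\psi), \qquad q_0^g(\varphi,\psi) \coloneqq \int_\Omega \nabla_g^2\varphi : \nabla_g^2\psi \, d\bar\mu,
\]
where $b_g(\varphi,\psi) = \int_\Omega (\sigma\mathrm{Ric}_g + \nabla_g^2 V)(\nabla_g\varphi,\nabla_g\psi)\,d\bar\mu$. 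Since $\Omega$ is compact and smooth, $\mathrm{Ric}_g$ is bounded. Together with $V\in W^{2,\infty}(\Omega)$, this gives
\[
|b_g(\varphi,\psi)| \le C_b\|\varphi\|_{X_g}\|\psi\|_{X_g}.
\]
So $b_g$ plays the role of $H_\E$ in~\ref{ass:H2}. Symmetry of $a_g$ is immediate from the symmetry of the Hessian and of $\mathrm{Ric}_g$.

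The key analytic step is the analogue of Lemma~\ref{lem:h2_coercivity}. We need $q_0^g(\varphi,\varphi) + \|\varphi\|_{X_g}^2 \simeq \|\varphi\|_{\dotHk[2](\Omega)}^2$, where the $\dotHk[2](\Omega)$ norm is the quotient norm of $H^2(\Omega)$. The upper bound is trivial from $\bar\mu\le C_0$. For the lower bound, use $c_0\le\bar\mu$ to reduce to unweighted quantities. Then apply the Bochner identity
\[
\int_\Omega |\nabla_g^2\varphi|^2 \, d\vol_g = \int_\Omega (\Delta_g\varphi)^2 \, d\vol_g - \int_\Omega \mathrm{Ric}_g(\nabla_g\varphi,\nabla_g\varphi) \, d\vol_g,
\]
valid for smooth $\varphi$ and extended by density, together with standard elliptic regularity on closed manifolds: $\|\varphi - \bar\varphi\|_{H^2}\lesssim \|\Delta_g\varphi\|_{L^2}$, where $\bar\varphi$ is the mean. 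Combining these, $\|\varphi\|^2_{\dotHk[2]} \lesssim \|\nabla_g^2\varphi\|_{L^2}^2 + \|\nabla_g\varphi\|_{L^2}^2$, and the $\mathrm{Ric}$ term is absorbed by the gradient norm. Poincaré on the connected manifold controls the $L^2$ part.

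I expect this to be the main obstacle, because unlike the torus one cannot use Fourier series. It is also the reason the $\|\cdot\|_{X_g}$ term must be kept: $\nabla_g^2\varphi=0$ does not force $\varphi$ to be constant in general, only $\nabla_g\varphi$ parallel. On a closed manifold a parallel gradient field still integrates to zero divergence, hence is zero, but the full norm is the safe route.

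Once the norm equivalence is established, the rest follows the torus proofs. For $\gamma>C_b$, the form norm $a_g(\varphi,\varphi)+\gamma\|\varphi\|_{X_g}^2$ is equivalent to the $\dotHk[2](\Omega)$ norm. Hence $a_g$ is closed, and it is bounded below by $-C_b$. It is densely defined since $C^\infty(\Omega)/\R$ is dense in $\dotHk[1](\Omega)\simeq X_g$. Kato's representation theorem then gives $A_g$. Compactness of the resolvent follows exactly as in Theorem~\ref{thm:compact_resolvent_a}, using the Rellich compact embedding $H^2(\Omega)\hookrightarrow H^1(\Omega)$ on the compact manifold.

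For the linearization, the equation is linear, so $\nu=\mu-\bar\mu$ satisfies $\partial_t\nu = \sigma\Delta_g\nu+\operatorname{div}_g(\nu\nabla_g V)$. Set $\nu=\rieszmap[\bar\mu]^g\xi$ and test against $\psi\in C^\infty(\Omega)$. We must show
\[
-\int_\Omega \bigl(\sigma\nu\Delta_g\psi - \nu\langle\nabla_g V,\nabla_g\psi\rangle_g\bigr) \, d\vol_g = a_g(\xi,\psi).
\]
Substituting $\nu = -\operatorname{div}_g(\bar\mu\nabla_g\xi)$ and integrating by parts gives $\sigma\int\bar\mu\langle\nabla_g\xi,\nabla_g\Delta_g\psi\rangle_g$ plus the potential term. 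The commutation formula $\nabla_g\Delta_g\psi = \operatorname{div}_g\nabla_g^2\psi - \mathrm{Ric}_g(\nabla_g\psi)$, a second integration by parts, and the relation $\sigma\nabla_g\bar\mu=-\bar\mu\nabla_g V$ then produce the Hessian term and the $\mathrm{Ric}_g$ term. The terms involving $\nabla_g\bar\mu$ combine with the drift contribution to give $\int \nabla_g^2V(\nabla_g\xi,\nabla_g\psi)\,d\bar\mu$. This is the same computation as in Proposition~\ref{prop:linearized_operator_identification}, with the commutator supplying the curvature. Since the left side is $\langle\partial_t\xi,\psi\rangle_{X_g}$ by definition of the Riesz map, we obtain the weak formulation. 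It extends to $\psi\in D(a_g)$ by density and continuity of $a_g$ in the $\dotHk[2]$ norm.
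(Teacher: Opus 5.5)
Your proof is correct, and its overall structure matches the paper's. The term $\int_\Omega(\sigma\mathrm{Ric}_g+\nabla_g^2V)(\nabla_g\varphi,\nabla_g\psi)\,d\bar\mu$ is a bounded form on $X_g$, the shifted form norm is equivalent to the $\dotHk[2](\Omega)$-norm, Kato's representation theorem gives $A_g$, and Rellich gives the compact resolvent. You depart from the paper in two places.

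\emph{Norm equivalence.} The paper simply cites equivalence of Sobolev norms on a closed manifold together with Poincar\'e, while you derive it from the elliptic estimate. Your Bochner identity is superfluous there. Since $|\Delta_g\varphi|^2\le n|\nabla_g^2\varphi|_g^2$ pointwise ($n=\dim\Omega$), the elliptic estimate already gives $\|\varphi-\bar\varphi\|_{H^2}\lesssim\|\nabla_g^2\varphi\|_{L^2}$. So $q_0^g$ alone is coercive on $\dotHk[2](\Omega)$, as on the torus, and the $X_g$ term is harmless but not needed.

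\emph{Linearization.} You integrate by parts directly with the commutation formula $\nabla_g\Delta_g\psi=\operatorname{div}_g\nabla_g^2\psi-\mathrm{Ric}_g(\nabla_g\psi)$. This reproduces the flat computation of Proposition~\ref{prop:linearized_operator_identification}, with the drift cross terms cancelling through $\sigma\nabla_g\bar\mu=-\bar\mu\nabla_gV$. The paper instead sets $\Delta_{\bar\mu}\varphi=\Delta_g\varphi-\sigma^{-1}\langle\nabla_gV,\nabla_g\varphi\rangle_g$ and uses the intertwining $\mathcal{G}_g\rieszmap[\bar\mu]^g=\sigma\rieszmap[\bar\mu]^g\Delta_{\bar\mu}$, where $\mathcal{G}_g\nu=\sigma\Delta_g\nu+\operatorname{div}_g(\nu\nabla_gV)$. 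This gives $-\langle\mathcal{G}_g\rieszmap[\bar\mu]^g\varphi,\psi\rangle=\sigma\int_\Omega\Delta_{\bar\mu}\varphi\,\Delta_{\bar\mu}\psi\,d\bar\mu$, which the paper identifies with $a_g$ via the integrated weighted (Bakry--\'Emery) Bochner formula. The two computations are the same identity unpacked differently. The paper's version makes the symmetry of the form manifest and yields $A_g=-\sigma\Delta_{\bar\mu}$ for free, while yours stays closer to the torus proof.

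You have two sign slips to fix. First, after substituting $\nu=\rieszmap[\bar\mu]^g\xi$ the first term is $-\sigma\int_\Omega\bar\mu\langle\nabla_g\xi,\nabla_g\Delta_g\psi\rangle_g\,d\vol_g$; with your sign the cross terms would not cancel. Second, the left-hand side of your identity equals $-\langle\partial_t\xi,\psi\rangle_{X_g}$ along solutions, not $+\langle\partial_t\xi,\psi\rangle_{X_g}$. With that sign it yields $\langle\partial_t\xi,\psi\rangle_{X_g}+a_g(\xi,\psi)=0$.
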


\begin{proof}
    The symmetry of $a_g$ follows from the symmetry of $\mathrm{Ric}_g$ and $\nabla_g^2 V$.
    We have
    \[
    \left|\int_\Omega \left[\sigma\mathrm{Ric}_g(\nabla_g \varphi, \nabla_g \psi) + \nabla_g^2 V(\nabla_g \varphi, \nabla_g \psi) \right] d\bar\mu \right| \leq C_g \|\varphi\|_{X_g} \|\psi\|_{X_g},
    \]
    where $C_g \coloneqq \|\nabla_g^2 V\|_{L^\infty(\Omega)} + \sigma\|\mathrm{Ric}_g\|_{L^{\infty}(\Omega)} < \infty$ because $\Omega$ is compact, $g$ is smooth, and $V \in W^{2,\infty}(\Omega)$.
    Moreover, given the bounds on $\bar\mu$, integration with respect to $d\bar\mu$ and $d\vol_g$ leads to equivalent $L^2$-norms.
    The equivalence of Sobolev norms on the closed manifold $\Omega$, together with the Poincaré inequality on the connected manifold $\Omega$, therefore yields
    \[
    \|\varphi\|_{\dotHk[2](\Omega)}^2 \simeq \int_{\Omega} \|\nabla_g^2 \varphi\|_g^2 \, d\bar\mu + \|\varphi\|_{X_g}^2.
    \]
    Hence, for $\gamma > C_g$, the norm
    \[
    \|\varphi\|_{a_g, \gamma}^2 \coloneqq a_g(\varphi,\varphi) + \gamma\|\varphi\|_{X_g}^2
    \]
    is equivalent to the $\dotHk[2](\Omega)$-norm. 
    Since $\dotHk[2](\Omega)$ is complete and dense in $X_g$, the form $a_g$ is densely defined, closed, and bounded from below. 
    The representation theorem then gives an associated self-adjoint operator $A_g$ on $X_g$.
    Additionally, the embedding $\dotHk[2](\Omega) \hookrightarrow \dotHk[1](\Omega)$ is compact, and the $\dotHk[1](\Omega)$-norm is equivalent to the $X_g$-norm. 
    The argument of Theorem~\ref{thm:compact_resolvent_a} therefore shows that $A_g$ has compact resolvent.

    It remains to identify the linearized equation.
    Set
    \[
    \Delta_{\bar\mu} \varphi \coloneqq \Delta_g \varphi - \frac{1}{\sigma} \langle\nabla_g V, \nabla_g \varphi\rangle_g, \qquad \mathcal{G}_g \nu \coloneqq \sigma \Delta_g \nu + \operatorname{div}_g(\nu \nabla_g V).
    \]
    Since $\nabla_g \bar\mu = -\sigma^{-1}\bar\mu \nabla_g V$, one has $\rieszmap[\bar\mu]^g\varphi = -\bar\mu \Delta_{\bar\mu} \varphi$ and $\mathcal{G}_g \rieszmap[\bar\mu]^g\varphi = \sigma \rieszmap[\bar\mu]^g \Delta_{\bar\mu}\varphi$.
    Thus, for $\varphi, \psi \in C^\infty(\Omega)$, integration by parts with respect to $d\bar\mu$ gives
    \[
    -\left\langle \mathcal{G}_g \rieszmap[\bar\mu]^g \varphi, \psi \right\rangle_{\dot{H}^{-2}(\Omega), \dotHk[2](\Omega)} = \sigma\int_\Omega(\Delta_{\bar\mu} \varphi)(\Delta_{\bar\mu} \psi) \, d\bar\mu = a_g(\varphi,\psi),
    \]
    where the last identity follows by polarizing and integrating the weighted Bochner formula
    \[
    \frac{1}{2} \Delta_{\bar\mu}|\nabla_g f|_g^2 = |\nabla_g^2f|_g^2 + \langle\nabla_g f,\nabla_g \Delta_{\bar\mu} f\rangle_g + \left( \mathrm{Ric}_g + \frac{1}{\sigma} \nabla_g^2 V \right)(\nabla_g f,\nabla_g f),
    \]
    and using the symmetry of $\Delta_{\bar\mu}$ with respect to $d\bar\mu$.
    By density, the identity extends to $\varphi, \psi \in D(a_g)$.
    If $\nu = \rieszmap[\bar\mu]^g\xi$ satisfies $\partial_t\nu = \mathcal{G}_g\nu$, then, for $\psi \in D(a_g)$,
    \[
    \langle \partial_t \xi_t, \psi\rangle_{X_g} = \langle \rieszmap[\bar\mu]^g \partial_t \xi_t, \psi \rangle_{X_g', X_g} = \langle \mathcal{G}_g \rieszmap[\bar\mu]^g\xi, \psi \rangle_{X_g', X_g} = -a_g(\xi, \psi),
    \]
    which gives the claimed weak formulation with $A_g = -\sigma \Delta_{\bar\mu}$.
\end{proof}

Since the Ricci contribution is only a bounded perturbation on $X_g$, the compact resolvent structure underlying the construction of Section~\ref{sec:feedback} is preserved.
The same feedback procedure based on the eigenfunctions of $A_g$ can therefore be carried out, yielding linearized $\delta$-stabilization on $\Omega$. 
A corresponding nonlinear result would require analogues of Assumptions~\ref{ass:H4}--\ref{ass:H6}.

We now specialize to the unit sphere $\mathbb{S}^2$ with its round metric.
Let $\theta \in [0, \pi]$ denote the polar angle and consider the double-well potential $V(\theta) = -\kappa \cos^2\theta$, with $\kappa > 0$.
Equation~\eqref{eq:manifold-FP} then has stationary density
\[
\bar\mu \propto \exp\left(\frac{\kappa}{\sigma} \cos^2\theta\right).
\]
The controlled and uncontrolled convergence to $\bar\mu$ is illustrated in Figure~\ref{fig:sphere}.
For $0 < \theta < \pi$, let $e_\theta \coloneqq \partial_\theta$ and $e_\varphi \coloneqq (\sin\theta)^{-1} \partial_\varphi$, so that $(e_\theta, e_\varphi)$ forms an orthonormal basis of the tangent space at each point away from the poles.
Since $\mathrm{Ric}_g = g$ on the unit sphere, at the equator one has
\[
\left(\sigma\mathrm{Ric}_g + \nabla_g^2 V\right)(e_\theta, e_\theta) = \sigma - 2\kappa.
\]
Hence, if $\kappa > \sigma/2$, this quantity is negative in a neighborhood of the equator.
Thus, the positive Ricci curvature of the sphere does not ensure local displacement convexity of the free energy by itself.

\begin{figure}[!htbp]
    \centering
    \includegraphics[width=0.5\textwidth]{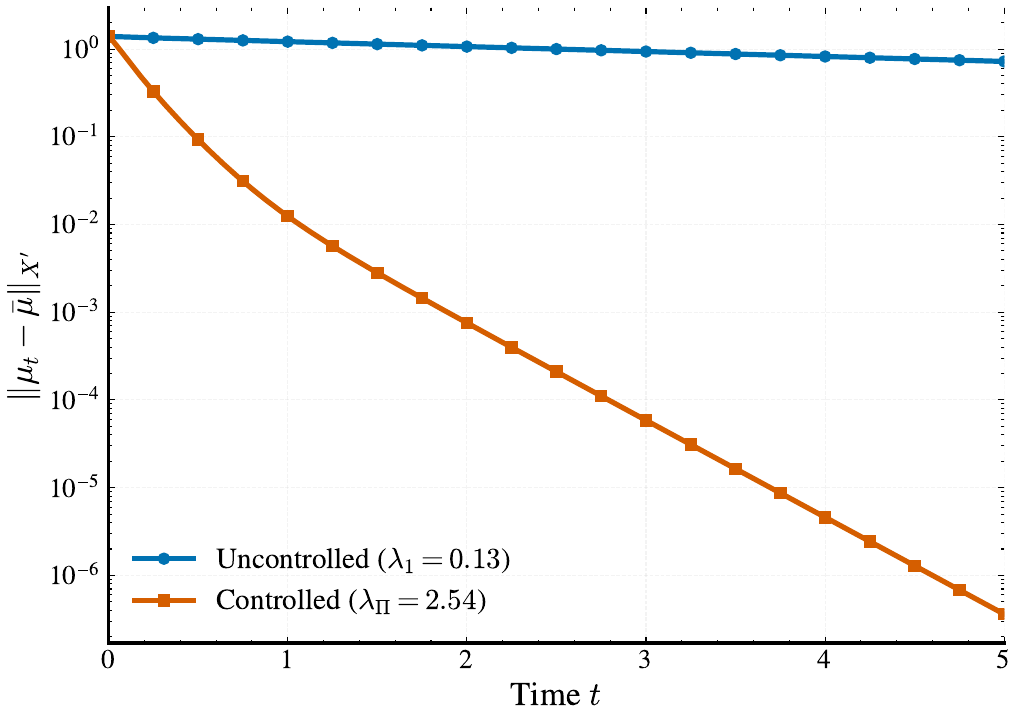}
    \caption{\textbf{Feedback acceleration on $\mathbb{S}^2$}. 
    For $V(\theta) = -\kappa\cos^2\theta$, with $\kappa = 2$ and $\sigma = 0.5$, the uncontrolled operator has spectral gap $\lambda_1 \approx 0.13$. 
    For the target rate $\delta = 2$, the rank-one Riccati operator raises the first eigenvalue above $\lambda_2$. 
    Consequently, the spectral gap of the closed-loop operator is $\lambda_\Pi = \lambda_2 \approx 2.54$, accelerating convergence to $\bar\mu$.}
    \label{fig:sphere}
\end{figure}

\section{Conclusion}

This work identifies the Wasserstein Hessian at a stationary density with a self-adjoint operator whose negative is conjugate to the generator of the linearized dynamics.
This identification gives the feedback control built via an algebraic Riccati equation a geometric interpretation: it produces a finite-rank perturbation of the Hessian that reshapes the free energy locally while also stabilizing the linearized gradient flow. 
Under the local Hessian lower-semicontinuity assumptions, the modified energy is locally $\hat\lambda$-displacement convex, with $\hat\lambda > \delta$, on a H\"older neighborhood $\Nbhd^\alpha_\varepsilon$ of $\bar\mu$.
With the additional local existence and remainder estimates, $\bar\mu$ is a locally exponentially stable equilibrium, while the controlled PDE retains its Wasserstein gradient-flow structure.

The construction presupposes knowledge of the target $\bar\mu$ and of the eigenfunctions of the Wasserstein Hessian realization.
Its online implementation, however, does not require the reconstruction of the density since, for $k \in \Sigma_\delta$,
\[
u_k(t) = \pi_k \int_{\T^d} e_k \, d(\mu_t - \bar\mu) = \pi_k \int_{\T^d} e_k \, d\mu_t,
\]
where the second equality uses the normalization~\eqref{eq:mean_zero}.
Extensions to settings in which the target is not known explicitly are left for future work.

\section*{Acknowledgements}

D. Kalise is supported by the Air Force Office of Scientific Research under award number FA8655-26-1-B011.
L. M. Moschen gratefully acknowledges financial support from the Department of Mathematics at Imperial through the Roth Scholarship, as well as additional support from the ICL-CNRS Lab.
G. A. Pavliotis is partially supported by an ERC-EPSRC Frontier Research Guarantee through grant no. EP/X038645, ERC through Advanced grant no. 247031.

\section*{Code availability}

Python implementations of the Wasserstein Hessian discretization, the feedback control construction, and the numerical experiments presented in this paper are available at \url{https://github.com/lucasmoschen/feedback-control-gradient-flows}.

\bibliographystyle{abbrv}
\bibliography{refs.getmref}

\appendix

\section{Proof of Proposition~\ref{prop:entropy_hessian}}
\label{app:proof_prop_entropy}

\begin{proof}
    Fix $\varphi \in W^{2,\infty}(\T^d)$.
    Let $\mu_s = (T_{s\varphi})_{\sharp}\mu$ for $|s|$ sufficiently small.
    By Lemma~\ref{lem:local_displacement_interpolation}, $(\mu_s)$ is the corresponding local $\Wtwo$-geodesic.
    Let $J_s(x) \coloneqq \det(I + s \nabla^2\varphi(x))$.
    The change of variables formula gives $\mu_s(T_{s\varphi}(x)) J_s(x) = \mu(x)$ for a.e.\ $x$ since $T_{s\varphi}$ is a bi-Lipschitz homeomorphism.
    Hence
    \[
    \Ent(\mu_s) = \int_{\T^d} \mu(x)\log\mu(x) \, dx - \int_{\T^d} \mu(x) \log J_s(x) \, dx.
    \]
    The first term is finite because $\mu \in L^\infty(\T^d)$ and $\T^d$ has finite volume.
    For a.e.\ $x$, $J_s(x)$ is a polynomial in $s$ whose coefficients are bounded in $L^\infty$, and $J_s \ge c > 0$ uniformly in $x$ for $|s|$ small, so differentiation under the integral is justified.
    Using
    \[
    \frac{d}{ds} \log\det(I+sS) = \operatorname{tr}((I+sS)^{-1}S),
    \]
    we obtain
    \[
    \frac{d^2}{ds^2}\Ent(\mu_s) \Big|_{s=0} = \int_{\T^d}\mu(x)\operatorname{tr}((\nabla^2\varphi(x))^2)\,dx = \int_{\T^d} \mu(x) |\nabla^2\varphi(x)|^2 \, dx.
    \]
    Polarization gives~\eqref{eq:Hess_ent}.
\end{proof}

\section{Displacement-map calculations}
\label{app:displacement-map}

We collect the pushforward and displacement facts used in Proposition~\ref{prop:linearized_operator_identification} and Theorem~\ref{thm:local_strong_displacement_convexity}.

\begin{lemma}
    \label{lem:pushforward_map}
    Assume~\ref{ass:H1}.
    For $\xi \in \mathcal{U}_p$, set $\Xi(\xi) = (T_\xi)_{\sharp}\bar\mu$.
    Then $T_\xi$ is a $C^1$-diffeomorphism of $\T^d$.
    Moreover, the translated map $\xi \mapsto \Xi(\xi)-\bar\mu$ maps $\mathcal{U}_p$ into $X'$ and is differentiable in a neighborhood of $0$ and $C^2$ at $0$ as a map into $X'$.
    At $\xi = 0$,
    \[
    D\Xi(0)\eta = -\nabla \cdot (\bar\mu \nabla\eta) = \rieszmap[\bar\mu]\eta,
    \]
    and, for $\eta_1, \eta_2 \in \dotHk[p](\T^d)$ and $\psi \in C^{\infty}(\T^d)$,
    \[
    \left\langle D^2\Xi(0)[\eta_1,\eta_2],\psi\right\rangle_{X',X} = \int_{\T^d}\bar\mu\,\nabla^2\psi\,\nabla\eta_1\cdot\nabla\eta_2\,dx.
    \]
    Moreover, for $\eta_1, \eta_2 \in C^\infty(\T^d)$ and $\zeta \in W^{1,\infty}(\T^d)$,
    \[
    \left\langle D^2\Xi(0)[\eta_1,\eta_2],\zeta\right\rangle_{X',X} = -\sum_{i,j=1}^d\int_{\T^d}\partial_j\zeta\,\partial_i(\bar\mu \,\partial_i\eta_1\,\partial_j\eta_2) \, dx.
    \]
\end{lemma}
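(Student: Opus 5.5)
We will prove the lemma by working with the pushforward in weak form. For $\psi$ smooth,
\[
\langle \Xi(\xi)-\bar\mu,\psi\rangle = \int_{\T^d}\bigl(\psi(x+\nabla\xi(x))-\psi(x)\bigr)\,d\bar\mu(x).
\]
All derivative formulas then follow from Taylor expansion of $\psi\circ T_\xi$ in $\xi$.

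\textbf{Step 1: $T_\xi$ is a diffeomorphism.} Since $p>d/2+2$, $\xi\in C^2$ and the map $T_\xi=\mathrm{Id}+\nabla\xi$ descends to $\T^d$ by periodicity of $\nabla\xi$. Its differential is $DT_\xi = I+\nabla^2\xi$, which is invertible because $\|\nabla^2\xi\|_{L^\infty}<1$. Hence $T_\xi$ is a local diffeomorphism.

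For injectivity on the universal cover, note that $x\mapsto \frac12|x|^2+\xi(x)$ is uniformly convex. Its gradient $T_\xi$ is therefore injective on $\R^d$, since $\langle T_\xi(x)-T_\xi(y),x-y\rangle\ge (1-\|\nabla^2\xi\|_\infty)|x-y|^2$. It is also proper, hence a bijection of $\R^d$ commuting with $\mathbb{Z}^d$-translations. Thus $T_\xi$ is a $C^1$-diffeomorphism of $\T^d$.

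\textbf{Step 2: $\Xi(\xi)-\bar\mu\in X'$.} The change of variables gives the density $\Xi(\xi)=\bar\mu\circ T_\xi^{-1}/\det(DT_\xi)\circ T_\xi^{-1}$, which is bounded and has zero total mass. Zero-mass bounded functions lie in $X'$ by Lemma~\ref{lem:hilbert_structure}.

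\textbf{Step 3: differentiability into $X'$.} Write $\psi(x+\nabla\xi)-\psi(x)=\int_0^1\nabla\psi(x+t\nabla\xi)\cdot\nabla\xi\,dt$. This shows that the pairing is controlled by $\|\nabla\psi\|_{L^2}$ alone, with no derivatives of $\psi$ beyond the first. That matters because $X$-test functions are only $\dotHk[1]$.

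Expanding once more,
\[
\langle \Xi(\xi)-\bar\mu,\psi\rangle=\int\bar\mu\nabla\psi\cdot\nabla\xi\,dx+\int_0^1(1-t)\int\bar\mu\,\nabla^2\psi(x+t\nabla\xi)\nabla\xi\cdot\nabla\xi\,dx\,dt.
\]
In the second term, change variables $y=x+t\nabla\xi$, using that $T_{t\xi}$ is a diffeomorphism by Step 1. Then integrate by parts in $y$ to move one derivative off $\psi$, which gives a bound $C\|\xi\|_{\dotHk[p]}^2\|\psi\|_X$. The constants involve $\|\bar\mu\|_{W^{1,\infty}}$ and $\|\xi\|_{C^2}$, and the $W^{1,\infty}$ bound on $\bar\mu$ comes from \ref{ass:H1} through Lemma~\ref{lem:mu_bar_bounds}.

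This shows $D\Xi(0)\eta=\rieszmap[\bar\mu]\eta$ in $X'$. The same representation at base point $\xi$ gives Fréchet differentiability near $0$: one uses $T_{\xi+\eta}=T_\xi+\nabla\eta$ and continuity of the integrand in $C^1$ of the displacement. Second differentiability at $0$ comes from a third-order Taylor remainder treated identically, with the remainder bounded by $\|\eta\|^3$ after one integration by parts.

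\textbf{Step 4: the second-derivative formulas.} Reading off the quadratic term gives
\[
\langle D^2\Xi(0)[\eta_1,\eta_2],\psi\rangle=\int\bar\mu\,\nabla^2\psi\,\nabla\eta_1\cdot\nabla\eta_2\,dx.
\]
For $\zeta\in W^{1,\infty}$ and smooth $\eta_i$, integrate by parts once in $\partial_i$ to obtain
\[
-\sum_{i,j}\int\partial_j\zeta\,\partial_i(\bar\mu\partial_i\eta_1\partial_j\eta_2)\,dx,
\]
first for smooth $\zeta$. The integrand now involves only $\nabla\zeta$, and $\bar\mu\in W^{1,\infty}$ with $\eta_i$ smooth, so this expression is continuous in $\zeta$ with respect to the $X$-norm. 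The formula therefore extends by density to all $\zeta\in W^{1,\infty}$, and this continuity also identifies the $X'$-element.

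\textbf{Main obstacle.} The delicate point is the uniform $X'$ estimate of the Taylor remainders. The naive bound needs $\nabla^2\psi$ or $\nabla^3\psi$, whereas $X'$ only allows control by $\|\nabla\psi\|_{L^2(\bar\mu)}$. The plan is to systematically change variables through $T_{t\xi}$ and then integrate by parts, putting derivatives onto $\bar\mu$, $\det DT_{t\xi}^{-1}$ and $\nabla\xi$. Each of these is controlled through the $C^2$ norm of $\xi$, via the embedding $H^p\hookrightarrow C^2$, together with $\bar\mu\in W^{1,\infty}$.
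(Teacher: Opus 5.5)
Your overall route matches the paper's: the weak representation $\int_{\T^d}\psi\circ T_\xi\,d\bar\mu$, membership in $X'$ via a bounded zero-mass density, reading off $D\Xi(0)$ and $D^2\Xi(0)$, and one integration by parts plus density for the $\zeta$-formula. Your Steps 2 and 4 are the paper's argument. In Step 1 you use strong monotonicity of $\nabla(\frac12|x|^2+\xi)$, properness and $\mathbb{Z}^d$-equivariance instead of the paper's degree argument; that is correct and slightly more elementary. The trouble is in Step 3, which goes beyond the paper's one-sentence justification of differentiability. Two of its claims fail as written.

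\textbf{The second-order remainder.} After $y=T_{t\xi}(x)$ the integrand carries the Jacobian factor $\det(I+t\nabla^2\xi)^{-1}\circ T_{t\xi}^{-1}$. Integrating by parts in $y$ differentiates this factor and produces $\nabla^3\xi$. That is not controlled by $\|\xi\|_{C^2}$, contrary to your ``main obstacle'' paragraph. It lies in $L^2$ only when $p\ge 3$, which is automatic for $d\ge2$ but not for $d=1$, $p\in(5/2,3)$. The integration by parts is unnecessary here. Keep $\psi$ at first order:
\[
\langle\Xi(\xi)-\bar\mu-\rieszmap[\bar\mu]\xi,\psi\rangle_{X',X}=\int_0^1\!\int_{\T^d}\nabla\psi\cdot(G_t-G_0)\,dy\,dt,\qquad G_t\coloneqq\Big[\frac{\bar\mu\nabla\xi}{\det(I+t\nabla^2\xi)}\Big]\circ T_{t\xi}^{-1}.
\]
For $\xi$ near $0$ you get $\|G_t-G_0\|_{L^\infty}\le Ct\|\xi\|_{C^2}^2$ directly. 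This uses that $\bar\mu\nabla\xi$ is Lipschitz, $|T_{t\xi}^{-1}(y)-y|\le t\|\nabla\xi\|_{L^\infty}$, and $|\det(I+t\nabla^2\xi)^{-1}-1|\le Ct\|\nabla^2\xi\|_{L^\infty}$.

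\textbf{The third-order remainder.} The claim of a bound by $\|\eta\|^3$ after one integration by parts cannot work. The cubic remainder contains $\nabla^3\psi$, so one integration by parts still leaves $\nabla^2\psi$. A second one would differentiate $\nabla\bar\mu$, which \ref{ass:H1} does not allow. In fact the bound is false under \ref{ass:H1}. Take $d=1$, $\bar\mu=c(1+|x-x_0|)$ near $x_0$, and $\eta=s\eta_0$ with $\eta_0'=1$ near $x_0$. Near $x_0$ the antiderivative of the remainder is exactly $-c(s-u)^2\mathbf{1}_{\{0<u<s\}}$ at $x_0+u$, so its $X'$-norm is of order $s^{5/2}$.

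Separately, a Taylor expansion of $\Xi$ at $0$ does not give differentiability of $\xi\mapsto D\Xi(\xi)$ at $0$. That is the form actually used in Lemma~\ref{lem:second_variation_pushforward}.

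\textbf{A repair.} Write $D\Xi(\xi)\eta=-\nabla\cdot F_\xi[\eta]$ with $F_\xi[\eta]\coloneqq[\bar\mu\nabla\eta/\det(I+\nabla^2\xi)]\circ T_\xi^{-1}$, and expand in $L^2$:
\[
F_\xi[\eta]=\bar\mu\nabla\eta-(\nabla\xi\cdot\nabla)(\bar\mu\nabla\eta)-\bar\mu\,\Delta\xi\,\nabla\eta+o(\|\xi\|_{\dotHk[p]}).
\]
Two facts justify this. Composition with near-identity bi-Lipschitz maps is continuous in $L^2$, which handles the merely $L^\infty$ factor $\nabla\bar\mu$. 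The embedding $p>d/2+2$ makes $\nabla^2\xi$ H\"older, which handles the Jacobian term $\Delta\xi\circ T_\xi^{-1}-\Delta\xi$. One integration by parts then identifies the linear term with $\int_{\T^d}\bar\mu\,\nabla^2\psi\,\nabla\xi\cdot\nabla\eta\,dx$. This gives an $o$ rather than $O$ remainder, which is all the lemma claims, and needs only $\bar\mu\in W^{1,\infty}$.
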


\begin{proof}
    Since $p > d/2 + 2$, $H^p(\T^d) \hookrightarrow C^2(\T^d)$. 
    If $\xi \in \mathcal{U}_p$, then $I + \nabla^2\xi$ is everywhere invertible with a positive determinant; hence, $T_\xi$ is a local $C^1$-diffeomorphism. 
    Since $T_\xi$ is homotopic to the identity through the map $\mathrm{Id} + r\nabla\xi$, for $0 \le r \le 1$, it has degree one and therefore is a global diffeomorphism.
    For $\psi \in C^{\infty}(\T^d)$,
    \begin{equation}
        \label{eq:Xi_representation}
        \int_{\T^d} \psi \, d\Xi(\xi) = \int_{\T^d} \psi(x + \nabla\xi(x)) \bar\mu(x) \, dx.
    \end{equation}
    Since $T_\xi$ is a $C^1$-diffeomorphism and $\bar\mu \in L^\infty(\T^d)$, the measure $\Xi(\xi)$ has a bounded density.
    Thus, $\Xi(\xi) - \bar\mu \in X'$, and
    \[
    \left\langle D\Xi(0)\eta, \psi\right\rangle_{X',X} = \int_{\T^d}\bar\mu\,\nabla\psi\cdot\nabla\eta\,dx = \left\langle -\nabla\cdot(\bar\mu\nabla\eta),\psi\right\rangle_{X',X}.
    \]
    Hence, $D\Xi(0)\eta = \rieszmap[\bar\mu]\eta$.
    Differentiating once more gives
    \[
    \left\langle D^2\Xi(0)[\eta_1,\eta_2], \psi \right\rangle_{X',X} = \int_{\T^d} \bar\mu \,\nabla^2\psi \, \nabla\eta_1 \cdot \nabla\eta_2 \, dx.
    \]

    For $\eta_1, \eta_2 \in C^\infty(\T^d)$, integration by parts gives, for $\zeta \in C^{\infty}(\T^d)$,
    \[
    \left\langle D^2\Xi(0)[\eta_1,\eta_2], \zeta \right\rangle_{X',X} = -\sum_{i,j=1}^d \int_{\T^d}\partial_j\zeta \, \partial_i(\bar\mu \,\partial_i \eta_1 \, \partial_j\eta_2)\,dx.
    \]
    Since $\bar\mu \in W^{1,\infty}(\T^d)$ and $\eta_1, \eta_2 \in C^\infty(\T^d)$, the functions $g_j \coloneqq \sum_{i=1}^d \partial_i (\bar\mu\,\partial_i\eta_1\,\partial_j\eta_2)$ belong to $L^\infty(\T^d)$ and satisfy
    \[
    \sum_{j=1}^d\|g_j\|_{L^2} \le C\|\eta_1\|_{\dotHk[p]}\|\eta_2\|_{\dotHk[p]}.
    \]
    Therefore, $D^2\Xi(0)$ is a bounded bilinear map from $\dotHk[p](\T^d) \times \dotHk[p](\T^d)$ to $H^{-1}(\T^d)$, hence to $X'$.
    If $\zeta \in W^{1,\infty}(\T^d)$, choose periodic mollifications $\zeta_n \in C^\infty(\T^d)$ with $\zeta_n \to \zeta$ in $W^{1,q}(\T^d)$ for every finite $q$. 
    The right-hand side of the identity above converges since $g_j \in L^\infty(\T^d)$, and the left-hand side converges since $D^2\Xi(0)[\eta_1,\eta_2] \in X'$ and $\zeta_n \to \zeta$ in $X$.
    The formula therefore holds for every $\zeta \in W^{1,\infty}(\T^d)$.
    The representation~\eqref{eq:Xi_representation} gives differentiability of $\Xi$ for $\xi$ near $0$, and the second-order Taylor remainder, controlled by $H^p(\T^d) \hookrightarrow C^2(\T^d)$, gives that $\Xi$ is $C^2$ at $0$ as a map into $X'$.
\end{proof}

\begin{lemma}[Local displacement interpolation]
    \label{lem:local_displacement_interpolation}
    Let $\mu \in \Pcal_2(\T^d)$ be absolutely continuous with respect to Lebesgue measure and let $\varphi \in W^{2,\infty}(\T^d)$.
    Then, for every $s$ satisfying $|s| \|\nabla^2\varphi\|_{L^\infty} < 1$ and $|s| \|\nabla\varphi\|_{L^\infty} < \frac{1}{2}$, the map $T_{s\varphi}$ is a bi-Lipschitz homeomorphism of $\T^d$ and an optimal transport map from $\mu$ to $(T_{s\varphi})_\sharp\mu$ for the quadratic cost, and $\tau \mapsto \mu_{\tau s}$, $\tau \in [0,1]$, is the displacement interpolation between $\mu$ and $\mu_s$.
    Moreover, for every $a > 0$ with $3a \|\nabla^2\varphi\|_{L^\infty} < 1$ and $4a \|\nabla\varphi\|_{L^\infty} < 1$, the curve $s \mapsto \mu_s = (T_{s\varphi})_\sharp\mu$ is a constant speed $\Wtwo$-geodesic on $[-a,a]$, with velocity $\nabla\varphi$ at $s = 0$.
\end{lemma}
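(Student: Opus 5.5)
Let $L_\varphi := \|\nabla\varphi\|_{L^\infty}$ and $\Lambda := \|\nabla^2\varphi\|_{L^\infty}$. The plan is to work on the universal cover $\R^d$ with the periodic lift of $\varphi$, and to use the fact that $\frac12|x|^2 + s\varphi$ is convex when $|s|\Lambda < 1$.

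\textbf{Step 1: $T_{s\varphi}$ is a bi-Lipschitz homeomorphism.} For $|s|\Lambda<1$, the map $x \mapsto x + s\nabla\varphi(x)$ on $\R^d$ satisfies
\[
\langle T(x)-T(y),x-y\rangle \ge (1-|s|\Lambda)|x-y|^2 .
\]
Hence it is injective and uniformly expanding. Being a proper local homeomorphism (a Lipschitz perturbation of the identity), it is surjective, and it is $\mathbb{Z}^d$-equivariant. It therefore descends to a bijection of $\T^d$. It is Lipschitz with constant $1+|s|\Lambda$, and its inverse has Lipschitz constant $(1-|s|\Lambda)^{-1}$.

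\textbf{Step 2: optimality of $T_{s\varphi}$.} By the McCann/Cordero-Erausquin characterization on the torus \cite{cordero1999transport}, a map $\mathrm{Id}+\nabla\psi$ with $\psi$ periodic and $\frac12|x|^2+\psi$ convex is optimal between $\mu$ and its image for the cost $\frac12 d_{\T^d}^2$. Equivalently, I would show that $c$-cyclical monotonicity holds with $d_{\T^d}$, using $|s|L_\varphi<\frac12$ to ensure $|T(x)-x|<\frac12$ so that the lifted displacement realizes the torus distance. Convexity of $\frac12|x|^2+s\varphi$ follows from $|s|\Lambda<1$ (weak Hessian bound via $W^{2,\infty}$). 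Applying the same reasoning to $\tau s$ for $\tau\in[0,1]$ gives optimality of $T_{\tau s\varphi}$, whose lifted displacement is $\tau$ times the original. This identifies $\tau\mapsto\mu_{\tau s}$ with McCann's displacement interpolation, hence the $\Wtwo$-geodesic from $\mu$ to $\mu_s$. The subtle point is that on $\T^d$ the identity $|T(x)-x| = d_{\T^d}(x,T(x))$ requires the smallness $|s|L_\varphi<\frac12$, which is exactly the hypothesis.

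\textbf{Step 3: constant-speed geodesic on $[-a,a]$.} This is the main obstacle, since the curve passes through $\mu$ at an interior time and is not a single interpolation from one endpoint. My plan is to view it as the interpolation from $\mu_{-a}$ to $\mu_a$. Write $\mu_s=(T_{s\varphi}\circ T_{-a\varphi}^{-1})_\sharp\mu_{-a}$ and set $S:=T_{a\varphi}\circ T_{-a\varphi}^{-1}$. Then
\[
S(y)-y = 2a\nabla\varphi(T_{-a\varphi}^{-1}y),
\]
which is a gradient. Indeed, $\frac12|y|^2 + \chi(y)$ is the Legendre-type transform combination of the convex functions $\frac12|x|^2 \pm a\varphi$. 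Concretely, $y\mapsto T_{-a\varphi}^{-1}(y)$ is the gradient of the Legendre dual of $\frac12|x|^2-a\varphi$. I then need to check that the composed potential is convex. Its Jacobian is
\[
(I+a\nabla^2\varphi)(I-a\nabla^2\varphi)^{-1},
\]
which is symmetric positive definite because the two factors commute and $a\Lambda<1$. Moreover, $|S(y)-y|\le 2aL_\varphi<\frac12$. The stronger constants $3a\Lambda<1$ and $4aL_\varphi<1$ are what I expect to need to control intermediate compositions $T_{s\varphi}\circ T_{-a\varphi}^{-1}$ uniformly; these are also symmetric positive definite by the same commuting argument. The intermediate maps are the linear interpolations between $\mathrm{Id}$ and $S$ in the lifted variable, since $T_{s\varphi}(x) = \frac{a-s}{2a}T_{-a\varphi}(x)+\frac{a+s}{2a}T_{a\varphi}(x)$.

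\textbf{Step 4: metric conclusion and velocity.} Step 2 applied to $S$ then gives $\Wtwo(\mu_s,\mu_t)=\frac{|s-t|}{2a}\Wtwo(\mu_{-a},\mu_a)$. The velocity at $0$ is read off from $\frac{d}{ds}T_{s\varphi}=\nabla\varphi$.
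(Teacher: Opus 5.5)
Your proposal is correct and follows the paper's proof: bi-Lipschitz property from the small Lipschitz perturbation of the identity, optimality from the periodic convex-potential (i.e.\ $c$-concavity) characterization, and the geodesic on $[-a,a]$ obtained by reading $s\mapsto\mu_s$ as the displacement interpolation from $\mu_{-a}$ to $\mu_a$ through $S=T_{a\varphi}\circ T_{-a\varphi}^{-1}$, which is exactly the paper's $T_{2a\theta_a}$ with $\theta_a=(\varphi-\frac a2|\nabla\varphi|^2)\circ T_{-a\varphi}^{-1}$. The differences are minor: you cite Cordero-Erausquin's uniqueness where the paper verifies $c$-concavity directly, and your Jacobian factorization $(I+a\nabla^2\varphi)(I-a\nabla^2\varphi)^{-1}$ shows that $a\|\nabla^2\varphi\|_{L^\infty}<1$ already suffices (the paper's factor $3$ comes from re-applying the first part to $\theta_a$); however, you still need to state explicitly that the potential of $S-\mathrm{Id}$ is periodic, which the explicit formula $2a\theta_a$ makes immediate.
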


\begin{proof}
    Set $L_s \coloneqq |s| \|\nabla^2\varphi\|_{L^\infty}$ and $\Psi_s(x) \coloneqq \frac{1}{2}|x|^2 + s\varphi(x)$ as a function on $\R^d$ through the periodic lift of $\varphi$.
    Then $\Psi_s$ is a convex function of class $C^{1,1}$ since $\nabla^2 \Psi_s \ge (1-L_s)I$ a.e.
    Also, $\Psi_s - \frac{1}{2}|\cdot|^2 = s\varphi$ is periodic and $T_{s\varphi} = \nabla\Psi_s$ satisfies $T_{s\varphi}(x + k) = T_{s\varphi}(x) + k$ for every $k \in \mathbb{Z}^d$.
    Since $s\nabla\varphi$ is Lipschitz with constant $L_s < 1$, one has $(1-L_s) |x - y| \le |T_{s\varphi}(x) - T_{s\varphi}(y)| \le (1+L_s) |x - y|$, so minimizing over translations in $\mathbb{Z}^d$ gives the corresponding bi-Lipschitz estimates on $\T^d$.
    Hence, $T_{s\varphi}$ is injective, and the degree argument of Lemma~\ref{lem:pushforward_map} gives surjectivity. 
    Since $|s| \|\nabla\varphi\|_{L^\infty} < 1/2$, the displacement $|T_{s\varphi}(x) - x| = |s \nabla\varphi(x)|$ is smaller than $1/2$, so it does not wrap around $\T^d$ and $|T_{s\varphi}(x) - x| = d_{\T^d}(x, T_{s\varphi}(x))$ for every $x \in \T^d$.
    Thus, the segment from $x$ to $T_{s\varphi}(x)$ projects to a minimizing geodesic of $\T^d$.

    Set $\Lambda(v) \coloneqq \inf_{z \in \T^d} \{ s\varphi(z) + \frac{1}{2} d^2_{\T^d}(z,v)\}$, fix $x \in \R^d$, and set $y = T_{s\varphi}(x)$.
    The function $z \mapsto s\varphi(z) + \frac{1}{2}|z-y|^2$ is convex, and its gradient $\nabla \Psi_s(z) - y$ vanishes at $z = x$, so it attains its global minimum there.
    By periodicity of $\varphi$ this minimum equals $\Lambda(y)$, and it equals $s\varphi(x) + \frac{1}{2}d^2_{\T^d}(x,y)$.
    Hence, $(\mathrm{Id}, T_{s\varphi})_\sharp\mu$ is optimal for the quadratic cost by the equivalence (a)$\,\Leftrightarrow\,$(d) in~\cite[Thm.~5.10(ii)]{villani2008optimal}, and since $|T_{s\varphi}(x) - x| = d_{\T^d}(x, T_{s\varphi}(x))$, the displacement interpolation between $\mu$ and $\mu_s$ is $\tau \mapsto (T_{\tau s\varphi})_\sharp\mu = \mu_{\tau s}$.

    To obtain a geodesic on an interval containing $0$, let $a > 0$ satisfy $3 a \|\nabla^2\varphi\|_{L^\infty} < 1$ and $4a \|\nabla\varphi\|_{L^\infty} < 1$.
    By the preceding paragraphs $T_{-a\varphi}$ is a bi-Lipschitz homeomorphism of $\T^d$ and $\mu_{-a} \ll \mathcal{L}^d$.
    Set $\theta_a \coloneqq (\varphi - \frac{a}{2}|\nabla\varphi|^2)\circ T_{-a\varphi}^{-1}$.
    The chain rule gives $\theta_a \in W^{2,\infty}(\T^d)$ with
    \[
    \nabla\theta_a = \nabla\varphi \circ T_{-a\varphi}^{-1}, \qquad \|\nabla^2\theta_a\|_{L^\infty} \le \frac{\|\nabla^2\varphi\|_{L^\infty}}{1 - a\|\nabla^2\varphi\|_{L^\infty}},
    \]
    and $T_{h\theta_a} \circ T_{-a\varphi} = T_{(h-a)\varphi}$, so that $(T_{h\theta_a})_\sharp\mu_{-a} = \mu_{h-a}$.
    By the choice of $a$,
    \[
    2a \|\nabla^2\theta_a\|_{L^\infty} \le \frac{2a\|\nabla^2\varphi\|_{L^\infty}}{1 - a\|\nabla^2\varphi\|_{L^\infty}} < 1, \qquad 2a\|\nabla\theta_a\|_{L^\infty} = 2a\|\nabla\varphi\|_{L^\infty} < \frac{1}{2},
    \]
    so the preceding argument applies to $\mu_{-a}$ and $\theta_a$ with parameter $h = 2a$.
    Therefore, the curve $h \mapsto \mu_{h-a}$, $h \in [0,2a]$, is the displacement interpolation between $\mu_{-a}$ and $\mu_a$, hence a constant speed $\Wtwo$-geodesic, and reparametrizing by $s = h-a$ gives the claim on $[-a,a]$.
    Finally, differentiating $\int_{\T^d} f \, d\mu_s = \int_{\T^d} f \circ T_{s\varphi} \, d\mu$ for $f \in C^\infty(\T^d)$ gives $\partial_s \mu_s + \nabla \cdot (\mu_s(\nabla \varphi \circ T_{s\varphi}^{-1})) = 0$ in the distributional sense, so the velocity at $s = 0$ is $\nabla\varphi$.
\end{proof}

\begin{proof}[Proof of Lemma~\ref{lem:second_variation_pushforward}]
    Let $\widehat{\E} \coloneqq \E \circ \Xi$.
    By the definition of the first variation applied to the curve $s \mapsto \Xi(\xi + s\psi)$,
    \[
    D\widehat\E(\xi)[\psi] = \left\langle D\Xi(\xi)\psi, \Theta_E(\Xi(\xi))\right\rangle_{X',X}
    \]
    for $\psi \in C^{\infty}(\T^d)$.
    By Lemma~\ref{lem:pushforward_map}, $\Xi(\xi) - \bar\mu = D\Xi(0) \xi + o(\|\xi\|_{\dotHk[p]})$ in $X'$. 
    Hence Assumption~\ref{ass:H3} gives $\Theta_E(\Xi(\xi)) = \Theta_E(\bar\mu) + \mathcal{K}_E D\Xi(0)\xi + o(\|\xi\|_{\dotHk[p]})$ in $X$.
    Therefore
    \[
    D(\Theta_E\circ\Xi)(0)[\varphi] = \mathcal{K}_E D\Xi(0)\varphi = \mathcal{K}_E\rieszmap[\bar\mu]\varphi.
    \]
    Notice that $\frac{D\widehat\E(t\varphi)[\psi] - D\widehat\E(0)[\psi]}{t}$ equals
    \[
    \left\langle \frac{D\Xi(t\varphi)\psi - D\Xi(0)\psi}{t}, \Theta_E(\Xi(t\varphi)) \right\rangle_{X',X} + \left\langle D\Xi(0)\psi, \frac{\Theta_E(\Xi(t\varphi))-\Theta_E(\bar\mu)}{t} \right\rangle_{X',X}.
    \]

    By Lemma~\ref{lem:pushforward_map}, in $X'$,
    \[
    \frac{D\Xi(t\varphi)\psi - D\Xi(0)\psi}{t} \to D^2\Xi(0)[\varphi,\psi],
    \]
    while $\Theta_E(\Xi(t\varphi)) \to \Theta_E(\bar\mu)= \Phi$ in $X$. 
    Moreover, by the preceding computation, in $X$,
    \[
    \frac{\Theta_E(\Xi(t\varphi)) - \Theta_E(\bar\mu)}{t} \to \mathcal{K}_E \rieszmap[\bar\mu] \varphi.
    \]
    Therefore $D^2\widehat\E(0)[\varphi,\psi] = \left\langle D^2\Xi(0)[\varphi,\psi],\Phi\right\rangle_{X',X} + \left\langle \rieszmap[\bar\mu]\psi, \mathcal{K}_E \rieszmap[\bar\mu]\varphi \right\rangle_{X',X}$.
    Finally, by Lemma~\ref{lem:local_displacement_interpolation}, for $s$ sufficiently small, the curve $s \mapsto \Xi(s\varphi)$ is a local $\Wtwo$-geodesic through $\bar\mu$ at $s=0$ with initial velocity $\nabla\varphi$. 
    Hence, by the definition of the Wasserstein Hessian,
    \[
    D^2\widehat\E(0)[\varphi,\varphi] = H_\E(\varphi,\varphi).
    \]
    By symmetry of the bilinear form $D^2\widehat\E(0)$, polarization gives the stated bilinear identity.
\end{proof}

\begin{lemma}
    \label{lem:velocity_potential}
    Let $\mu_0 \in \Pcal_2(\T^d)$ be absolutely continuous and $\psi \in W^{2,\infty}(\T^d)$.
    For $s \in \R$ with $|s| \|\nabla^2\psi\|_{L^\infty} < 1$ and $|s| \|\nabla\psi\|_{L^\infty} < 1/2$, set $\mu_s \coloneqq (T_{s\psi})_\sharp\mu_0$ and define, for $y \in \T^d$ and $x = T_{s\psi}^{-1}(y)$, $\varphi_s(y) \coloneqq \psi(x) + \frac{s}{2}|\nabla\psi(x)|^2$.
    Then, for every such $s$:
    \begin{enumerate}[label=(\roman*)]
        \item $\nabla\varphi_s(y) = \nabla\psi\left(T_{s\psi}^{-1}(y)\right)$ and $\nabla^2\varphi_s(y) = \nabla^2 \psi(x) \left(I + s\nabla^2\psi(x)\right)^{-1}$ for a.e.\ $y$.
        In particular, $\nabla\varphi_s \in \mathscr{D}$ with $\|\nabla^2 \varphi_s\|_{L^\infty} \le \|\nabla^2\psi\|_{L^\infty}/(1-|s|\|\nabla^2\psi\|_{L^\infty})$.
        \item Whenever $s+h$ satisfies the same inequalities, $\mu_{s+h} = (T_{h\varphi_s})_\sharp\mu_s$.
        In particular, for $|h|$ small, $h \mapsto \mu_{s+h}$ is a local $\Wtwo$-geodesic through $\mu_s$ with initial velocity $\nabla\varphi_s$.
        \item $\displaystyle \int_{\T^d} |\nabla\varphi_s|^2 \, d\mu_s = \int_{\T^d} |\nabla\psi|^2 \, d\mu_0$.
    \end{enumerate}
\end{lemma}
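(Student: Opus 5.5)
The plan is to work with the periodic lift on $\R^d$ and the convex function $\Psi_s(x) = \frac12|x|^2 + s\psi(x)$. Under the stated smallness conditions, Lemma~\ref{lem:local_displacement_interpolation} shows that $T_{s\psi} = \nabla\Psi_s$ is a bi-Lipschitz homeomorphism of $\T^d$. The inverse $T_{s\psi}^{-1}$ is Lipschitz, and $D T_{s\psi} = I + s\nabla^2\psi$ a.e. The main tool throughout is the chain rule for the composition of a $W^{1,\infty}$ function with a bi-Lipschitz map.

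For (i), write $\varphi_s = f_s \circ T_{s\psi}^{-1}$ with $f_s \coloneqq \psi + \frac{s}{2}|\nabla\psi|^2 \in W^{1,\infty}$. Its gradient is $\nabla f_s = (I + s\nabla^2\psi)\nabla\psi$ a.e., using the symmetry of $\nabla^2\psi$. The chain rule then gives
\[
\nabla\varphi_s(y) = (D T_{s\psi}^{-1}(y))^{\top}\nabla f_s(x) = (I+s\nabla^2\psi(x))^{-1}(I+s\nabla^2\psi(x))\nabla\psi(x) = \nabla\psi(x).
\]
This is the same computation as for $\theta_a$ in Lemma~\ref{lem:local_displacement_interpolation}. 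Differentiating once more by the chain rule gives $\nabla^2\varphi_s(y) = \nabla^2\psi(x)(I+s\nabla^2\psi(x))^{-1}$. The operator-norm bound follows from $\|(I+sS)^{-1}\| \le (1-|s|\|S\|)^{-1}$ for symmetric $S$. Hence $\varphi_s \in W^{2,\infty}$ and $\nabla\varphi_s \in \mathscr{D}$.

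For (ii), note that for $x = T_{s\psi}^{-1}(y)$,
\[
T_{h\varphi_s}(T_{s\psi}(x)) = x + s\nabla\psi(x) + h\nabla\psi(x) = T_{(s+h)\psi}(x).
\]
Thus $T_{h\varphi_s}\circ T_{s\psi} = T_{(s+h)\psi}$, and pushing $\mu_0$ forward gives $\mu_{s+h} = (T_{h\varphi_s})_\sharp\mu_s$. Since $\mu_s$ is absolutely continuous (it is the pushforward under a bi-Lipschitz map), the geodesic claim follows from the last part of Lemma~\ref{lem:local_displacement_interpolation} applied to $\mu_s$ and $\varphi_s$. This requires $|h|$ small relative to the bounds in (i).

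For (iii), change variables using (i):
\[
\int |\nabla\varphi_s|^2\,d\mu_s = \int |\nabla\varphi_s(T_{s\psi}(x))|^2\,d\mu_0(x) = \int |\nabla\psi|^2\,d\mu_0.
\]

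The only delicate point is the a.e.\ chain rule for second derivatives in (i). Here $\nabla f_s$ involves $\nabla^2\psi$, which is merely $L^\infty$, so I would not differentiate $f_s$ twice. Instead I would differentiate the first-order identity $\nabla\varphi_s = \nabla\psi\circ T_{s\psi}^{-1}$, which involves only the Lipschitz map $\nabla\psi$ composed with a bi-Lipschitz map. Bi-Lipschitz maps preserve null sets, which justifies this a.e.
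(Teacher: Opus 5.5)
Your proposal is correct and follows the paper's proof essentially step by step. The three ingredients are the same: the a.e.\ chain rule for $\varphi_s = (\psi + \frac{s}{2}|\nabla\psi|^2)\circ T_{s\psi}^{-1}$ with $DT_{s\psi}^{-1} = (I+s\nabla^2\psi)^{-1}$, the composition identity $T_{h\varphi_s}\circ T_{s\psi} = T_{(s+h)\psi}$ combined with Lemma~\ref{lem:local_displacement_interpolation} applied to $(\mu_s,\varphi_s)$, and a change of variables for (iii). Your remark that the Hessian formula should come from differentiating the first-order identity $\nabla\varphi_s = \nabla\psi\circ T_{s\psi}^{-1}$ is exactly what the paper's ``differentiating once more'' implicitly does.
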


\begin{proof}
    (i) By Lemma~\ref{lem:local_displacement_interpolation}, $T_{s\psi}$ is a bi-Lipschitz homeomorphism of $\T^d$ and since $\psi \in W^{2,\infty}(\T^d)$, it is differentiable a.e.\ with $DT_{s\psi} = I + s \nabla^2\psi$.
    Since $T_{s\psi}^{-1}$ is Lipschitz, it maps $\mathcal{L}^d$-null sets to $\mathcal{L}^d$-null sets and is differentiable a.e., with $DT_{s\psi}^{-1}(y) = \left(I + s\nabla^2\psi(x)\right)^{-1}$.
    Writing $g \coloneqq \psi + \frac{s}{2}|\nabla\psi|^2 \in W^{1,\infty}(\T^d)$, one has $\nabla g = (I + s\nabla^2\psi)\nabla\psi$ a.e., and $\varphi_s = g \circ T_{s\psi}^{-1}$, so the chain rule gives, for a.e.\ $y$,
    \[
    \nabla\varphi_s(y) = \left(I + s\nabla^2\psi(x)\right)^{-1} \nabla g(x) = \nabla\psi(x),
    \]
    where we used that $I + s\nabla^2\psi$ is symmetric.
    Differentiating once more yields the stated formula for $\nabla^2\varphi_s$.
    The bound follows since the eigenvalues of $I + s\nabla^2\psi$ lie in $[1 - |s| \|\nabla^2\psi\|_{L^\infty}, 1 + |s| \|\nabla^2\psi\|_{L^\infty}]$.
    In particular $\varphi_s \in W^{2,\infty}(\T^d)$ and $\|\nabla\varphi_s\|_{L^\infty} = \|\nabla\psi\|_{L^\infty}$.

    (ii) By (i), for a.e.\ $x$, $T_{h\varphi_s} (T_{s\psi}(x)) = T_{s\psi}(x) + h \nabla\varphi_s(T_{s\psi}(x)) = T_{s\psi}(x) + h \nabla\psi(x) = T_{(s+h)\psi}(x)$,
    that is, $T_{h\varphi_s}\circ T_{s\psi} = T_{(s+h)\psi}$ $\mu_0$-a.e.
    Pushing $\mu_0$ forward gives 
    \[
    (T_{h\varphi_s})_\sharp\mu_s = (T_{h\varphi_s}\circ T_{s\psi})_\sharp\mu_0 = \mu_{s+h}.
    \]
    Moreover, $\mu_s \ll \mathcal{L}^d$ since $T_{s\psi}$ is bi-Lipschitz and $\mu_0 \ll \mathcal{L}^d$, and $\varphi_s \in W^{2,\infty}(\T^d)$ by (i).
    Hence Lemma~\ref{lem:local_displacement_interpolation}, applied to $\varphi_s$ and $\mu_s$, shows that for $|h|$ small $h \mapsto \mu_{s+h}$ is a local $\Wtwo$-geodesic through $\mu_s$ with initial velocity $\nabla\varphi_s$.

    (iii) By the definition of the pushforward and (i),
    \[
    \int_{\T^d}|\nabla\varphi_s|^2\,d\mu_s = \int_{\T^d}|\nabla\varphi_s \circ T_{s\psi}|^2\,d\mu_0 = \int_{\T^d}|\nabla\psi|^2\,d\mu_0. \qedhere
    \]
\end{proof}

\begin{lemma}
    \label{lem:brenier_stability}
    Assume~\ref{ass:H1} and fix $\alpha \in (0,1)$.
    For every $\eta > 0$, there exists $\varepsilon > 0$ such that if $\mu_0, \mu_1$ are probability densities on $\T^d$ with $\|\mu_i - \bar\mu\|_{C^{0,\alpha}} \le \varepsilon$ for $i = 0,1$, then the optimal transport map from $\mu_0$ to $\mu_1$ for the quadratic cost is of the form $T_\psi = \mathrm{Id} + \nabla\psi$ with $\psi \in C^{2,\alpha}(\T^d)$,
    $\|\nabla\psi\|_{C^1} \le \eta$, and $|T_\psi(x) - x| = d_{\T^d}(x, T_\psi(x))$ for $\mu_0$-a.e.\ $x$.
    In particular $\nabla\psi \in \mathscr{D}$.
\end{lemma}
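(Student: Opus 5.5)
The plan is to reduce the statement to Caffarelli-type regularity for the Monge--Amp\`ere equation on the torus, in the periodic form of Cordero-Erausquin~\cite{cordero1999transport}, and then use a compactness argument to obtain smallness. By the result recalled in Section~\ref{sec:setting}, the optimal map from $\mu_0$ to $\mu_1$ is $T_\psi=\mathrm{Id}+\nabla\psi$, where $u=\frac12|x|^2+\psi$ is convex on $\R^d$ and $\psi$ is $\mathbb{Z}^d$-periodic. The identity $|T_\psi(x)-x|=d_{\T^d}(x,T_\psi(x))$ for $\mu_0$-a.e.\ $x$ is part of that result. So the content to prove is the $C^{2,\alpha}$ regularity of $\psi$ and the smallness bound $\|\nabla\psi\|_{C^1}\le\eta$.

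\textbf{Regularity.} Take $\varepsilon\le c_0/2$. Then by Lemma~\ref{lem:mu_bar_bounds}(ii) both densities lie in $[c_0/2,2C_0]$. Their $C^{0,\alpha}$ norms are bounded by $\|\bar\mu\|_{C^{0,\alpha}}+\varepsilon$, which is finite because $\bar\mu\in W^{1,\infty}$. The potential $u$ is an Alexandrov (Brenier) solution of
\[
\det\nabla^2u(x)=\frac{\mu_0(x)}{\mu_1(\nabla u(x))}.
\]
The densities are bounded above and below, and periodicity provides the convex target needed on the covering space. Caffarelli's interior theory then gives that $u$ is strictly convex and of class $C^{1,\beta}$. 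Once $u\in C^{1,\beta}$, the right-hand side is $C^{0,\alpha'}$ for some $\alpha'>0$, and Caffarelli's $C^{2,\alpha'}$ estimate applies. Bootstrapping the exponent (the right-hand side is $C^{0,\alpha}$ once $\nabla u$ is Lipschitz) yields $\psi\in C^{2,\alpha}(\T^d)$. This gives a uniform bound $\|\psi\|_{C^{2,\alpha}}\le K$, with $K$ depending only on $c_0$, $C_0$, $\|\bar\mu\|_{C^{0,\alpha}}$, $d$ and $\alpha$. Uniformity over the whole $\varepsilon$-ball holds because the periodic setting lets the interior estimates be applied on a fixed fundamental domain with uniform sections.

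\textbf{Smallness by contradiction.} Suppose the bound fails. Then there are sequences $\mu_0^n,\mu_1^n\to\bar\mu$ in $C^{0,\alpha}$ with $\|\nabla\psi_n\|_{C^1}>\eta$. Normalize $\psi_n$ to have zero mean. By the uniform $C^{2,\alpha}$ bound and Arzel\`a--Ascoli, a subsequence converges in $C^2$ to some $\psi_\infty$. Stability of optimal plans under weak convergence shows that $\mathrm{Id}+\nabla\psi_\infty$ is an optimal map from $\bar\mu$ to $\bar\mu$. By uniqueness of the optimal map from an absolutely continuous measure, this map is the identity. Hence $\nabla\psi_\infty=0$, which contradicts $\|\nabla\psi_n\|_{C^1}>\eta$ in the limit. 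As a consequence, $\psi\in C^{2,\alpha}\subset W^{2,\infty}$, so $\nabla\psi\in\mathscr{D}$.

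The main obstacle is the uniform $C^{2,\alpha}$ estimate, and specifically obtaining strict convexity and the interior estimates on the torus with constants uniform over the neighborhood. I would first try to cite the periodic Caffarelli regularity directly, together with the fact that for a periodic potential the sections have uniformly controlled shape; the latter follows from the $L^\infty$ bound $|\nabla\psi|\le\sqrt d/2$.
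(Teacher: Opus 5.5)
Your argument is correct in outline, but you reach smallness by a different route than the paper.

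Both proofs rest on the same input: a uniform bound $\|\psi\|_{C^{2,\alpha}}\le C_1$ over the $\varepsilon$-ball, from Caffarelli-type regularity on the torus. The paper gets this in one step from the Caffarelli--Viaclovsky estimate on compact Hessian manifolds for $\det(I+\nabla^2\psi)=f(x,\nabla\psi)$. You instead bootstrap Caffarelli's interior theory on the covering space.

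The paper then argues quantitatively. The displacement identity and the comparison $\Wtwo\le 2\|\cdot\|_{\dot{H}^{-1}}$ give $\int|\nabla\psi|^2\,d\mu_0=\Wtwo^2(\mu_0,\mu_1)\le C\varepsilon^2$. The Lipschitz bound on $\nabla\psi$ upgrades this to $\|\nabla\psi\|_{L^\infty}\le C\varepsilon^{2/(2+d)}$. Interpolation between $C^1$ and $C^{2,\alpha}$ then controls $\nabla^2\psi$, so $\varepsilon(\eta)$ is an explicit power. Your Arzel\`a--Ascoli contradiction avoids the functional inequality and the interpolation, but it yields no rate.

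A small point first. A limit map equal to the identity on $\T^d$ only gives $\nabla\psi_\infty\in\mathbb{Z}^d$ pointwise; this is fixable by continuity and zero mean. It is simpler to use $\int|\nabla\psi_n|^2\,d\mu_0^n=\Wtwo^2(\mu_0^n,\mu_1^n)\to0$ directly.

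More importantly, the bound $|\nabla\psi|\le\sqrt d/2$ only shows that sections of fixed height are uniformly bounded. It does not give the uniform modulus of strict convexity on which Caffarelli's constants depend; that needs a compactness step. Your own setup supplies it:
\begin{itemize}
\item The same $\Wtwo$ identity, Poincar\'e, and the uniform Lipschitz bound give $\psi_n\to0$ uniformly.
\item Hence $u_n\to\frac12|x|^2$ locally uniformly, and the sections become nearly round.
\item The interior $C^{1,\beta}$ and $C^{2,\alpha}$ estimates are then uniform along the sequence.
\end{itemize}
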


\begin{proof}
    By Lemma~\ref{lem:mu_bar_bounds}, $\bar\mu \in W^{1,\infty}(\T^d) \subset C^{0,\alpha}(\T^d)$ and $c_0 \le \bar\mu \le C_0$.
    Let $\varepsilon \le c_0/2$, so that $c_0/2 \le \mu_i \le C_0 + 1$ and $\|\mu_i\|_{C^{0,\alpha}} \le \|\bar\mu\|_{C^{0,\alpha}} + 1$ for $i = 0,1$.
    We know that the optimal transport map from $\mu_0$ to $\mu_1$ is $T_\psi$, with $\frac{1}{2}|\cdot|^2 + \psi$ convex on $\R^d$ and $\psi$ periodic.
    Normalize $\psi$ by $\int_{\T^d}\psi\,dx = 0$.
    The construction in~\cite{cordero1999transport} produces this map as a limit of couplings whose supports consist of pairs whose displacement is $d_{\T^d}$.
    In particular, the proof of~\cite[Thm.~1]{cordero1999transport} gives
    \begin{equation}
        \label{eq:displacement_realizes_distance}
        |T_\psi(x) - x| = d_{\T^d}\left(x, T_\psi(x)\right) \le \operatorname{diam}(\T^d) = \sqrt{d}/2
    \end{equation}
    for $\mu_0$-a.e.\ $x$.
    Hence $\|\nabla\psi\|_{L^\infty} \le \sqrt{d}/2$, and since $\psi$ is periodic with zero mean, $\|\psi\|_{L^\infty} \le d/4$.

    The function $\psi$ solves, in the Alexandrov sense (see~\cite[\S4]{cordero1999transport}), the Monge--Amp\`ere equation
    \[
    \det\left(I + \nabla^2\psi(x)\right) = f\left(x, \nabla\psi(x)\right), \qquad f(x,p) \coloneqq \frac{\mu_0(x)}{\mu_1(x+p)},
    \]
    and $I + \nabla^2\psi \ge 0$, since $\frac{1}{2}|\cdot|^2 + \psi$ is convex.
    Moreover, $f$ is positive and $\alpha$-H\"older in $(x,p)$, with $\|f\|_{C^{0,\alpha}}$ bounded in terms of $c_0$ and $\|\mu_i\|_{C^{0,\alpha}}$.
    Since $\T^d$ with the flat metric is a compact Hessian manifold, with $\frac{1}{2}|\cdot|^2$ as a local potential, \cite[Thm.~5]{CaffarelliViaclovsky2001} gives $\psi \in C^{2,\alpha}(\T^d)$ with $\|\psi\|_{C^{2,\alpha}} \le C_1$, where $C_1$ depends only on $d$, $\alpha$, $c_0$, $C_0$ and $\|\bar\mu\|_{C^{0,\alpha}}$.

    By~\eqref{eq:displacement_realizes_distance} and the optimality of $T_\psi$, $\int_{\T^d} |\nabla\psi|^2\,d\mu_0 = \Wtwo^2(\mu_0,\mu_1)$.
    Since $\|\mu_i - \bar\mu\|_{L^2} \le \|\mu_i - \bar\mu\|_{C^{0,\alpha}} \le \varepsilon$, \cite[Thm.~1.1]{peyre2018comparison} and the continuous embedding of zero-mean $L^2(\T^d)$ functions into $X'$ give
    \[
    \Wtwo(\mu_0,\mu_1) \le \Wtwo(\mu_0,\bar\mu) + \Wtwo(\bar\mu,\mu_1) \le 2\|\mu_0-\bar\mu\|_{X'} + 2\|\mu_1-\bar\mu\|_{X'} \le C\varepsilon, 
    \]
    and therefore, using $\mu_0 \ge c_0/2$, $\|\nabla\psi\|_{L^2} \le \left(\tfrac{2}{c_0}\right)^{1/2}\Wtwo(\mu_0,\mu_1) \le C_2\,\varepsilon$.
    Since $\|\psi\|_{C^{2,\alpha}} \le C_1$, the map $\nabla\psi$ is Lipschitz with constant $C_1$.
    On the ball centered where $\|\nabla\psi\|_{L^\infty}$ is attained with radius $\min\{\|\nabla\psi\|_{L^\infty}/(2C_1), 1/2\}$, $|\nabla\psi| \ge \|\nabla\psi\|_{L^\infty}/2$, $\|\nabla\psi\|_{L^2}^2 \ge c_d\,C_1^{-d} \|\nabla\psi\|_{L^\infty}^{2+d}$ for $\|\nabla\psi\|_{L^\infty} \le C_1$, and hence
    \[
    \|\nabla\psi\|_{L^\infty} \le C\,\varepsilon^{2/(2+d)}.
    \]
    Interpolating between $C^1(\T^d)$ and $C^{2,\alpha}(\T^d)$ (see, e.g.,~\cite[\S6.8]{gilbarg2015elliptic}) gives
    \[ 
    \|\nabla^2\psi\|_{L^\infty} \le C \|\nabla\psi\|_{L^\infty}^{\alpha/(1+\alpha)} \|\psi\|_{C^{2,\alpha}}^{1/(1+\alpha)}.
    \]
    Choosing $\varepsilon \le c_0/2$ small enough that $\|\nabla\psi\|_{C^1} = \|\nabla\psi\|_{L^\infty} + \|\nabla^2\psi\|_{L^\infty} \le \eta$ gives the claim.
\end{proof}

\section{Estimates for the local stabilization theorem}
\label{app:local-stabilization-estimates}

This appendix collects the estimates used in Subsection~\ref{sec:local-stab}.

\begin{lemma}
\label{lem:l2_density_to_form_domain}
    Assume~\ref{ass:H1}. 
    If $f \in L^2(\T^d)$ with zero total mass, then $z = \rieszmap[\bar\mu]^{-1} f \in \dotHk[2](\T^d)$ and
    \[
    \|z\|_{\dotHk[2]} \le C\|f\|_{L^2}.
    \]
    Conversely, if $z \in \dotHk[2](\T^d)$, then $f = \rieszmap[\bar\mu]z \in L^2(\T^d)$ and $f$ has zero total mass, and
    \[
    \|f\|_{L^2} \le C\|z\|_{\dotHk[2]}.
    \]
\end{lemma}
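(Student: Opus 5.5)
We prove the two directions separately, using elliptic regularity for the divergence-form operator $\rieszmap[\bar\mu]z = -\nabla\cdot(\bar\mu\nabla z)$. Its coefficient $\bar\mu$ is Lipschitz and bounded above and below by Lemma~\ref{lem:mu_bar_bounds}.

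\emph{The converse direction.} This part is a direct computation. For $z \in \dotHk[2](\T^d)$, the product rule in $W^{1,\infty}\cdot H^1$ gives
\[
\rieszmap[\bar\mu]z = -\bar\mu\Delta z - \nabla\bar\mu\cdot\nabla z = -\bar\mu\Delta z + \sigma^{-1}\bar\mu\nabla\Phi\cdot\nabla z,
\]
where we used~\eqref{eq:derivative_barmu}. Hence
\[
\|f\|_{L^2} \le C_0\|\Delta z\|_{L^2} + \sigma^{-1}C_0\|\nabla\Phi\|_{L^\infty}\|\nabla z\|_{L^2}.
\]
The bound $\|\Delta z\|_{L^2}\le \sqrt d\,\|\nabla^2 z\|_{L^2}$ controls the first term. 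The Poincar\'e inequality on the torus, applied to $\nabla z$ (which has zero mean), controls the second. Zero total mass follows by testing $f$ against the constant $\constants$ and integrating by parts on $\T^d$.

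\emph{The forward direction.} Let $f\in L^2$ have zero mass. By Lemma~\ref{lem:hilbert_structure}, $f$ defines an element of $X'$, with $\|f\|_{X'}\le C\|f\|_{L^2}$ via the Poincar\'e inequality. Hence $z=\rieszmap[\bar\mu]^{-1}f\in X\simeq\dotHk[1]$ is the weak solution of $\int\bar\mu\nabla z\cdot\nabla\psi = \int f\psi$, and $\|\nabla z\|_{L^2}\le C\|f\|_{L^2}$. To gain a derivative, rewrite the equation as
\[
-\Delta z = \bar\mu^{-1}f - \sigma^{-1}\nabla\Phi\cdot\nabla z,
\]
understood in the weak sense. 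This is legitimate because, for $\psi$ smooth, $\bar\mu^{-1}\psi\in W^{1,\infty}$ is an admissible test function, and expanding $\nabla(\bar\mu^{-1}\psi)$ produces exactly the drift term. The right-hand side lies in $L^2$ with norm at most $C\|f\|_{L^2}$. On the torus, Fourier analysis gives $\|\nabla^2 z\|_{L^2}=\|\Delta z\|_{L^2}$ for any $z$ with $\Delta z\in L^2$; no boundary terms arise, so no Calder\'on--Zygmund machinery is needed. This yields $z\in\dotHk[2]$ with the stated bound.

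\emph{Main obstacle.} The only delicate point is justifying the nondivergence rewriting when $z$ is a priori only in $H^1$, since the test function $\bar\mu^{-1}\psi$ is merely Lipschitz. We handle this by density: the weak formulation holds for all $\psi\in H^1$, and $\bar\mu^{-1}\psi\in H^1$ whenever $\psi\in C^\infty$. The identity $\|\nabla^2z\|=\|\Delta z\|$ then holds distributionally via Fourier coefficients.
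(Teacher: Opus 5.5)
Your proof is correct. It coincides with the paper's except in the step that gains the second derivative.

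Both arguments get $z\in X$ with $\|\nabla z\|_{L^2}\le C\|f\|_{L^2}$ from the energy estimate. Both then pass to the nondivergence form $-\Delta z+\sigma^{-1}\nabla\Phi\cdot\nabla z=\bar\mu^{-1}f$ and treat the converse by the product rule. You also make explicit two points the paper leaves implicit: the zero-mass check, and the admissible test function $\bar\mu^{-1}\psi\in W^{1,\infty}(\T^d)$ that justifies the rewriting. (Calling this a ``density'' argument is a misnomer; the weak formulation defining $\rieszmap[\bar\mu]^{-1}f$ already holds for every $H^1$ test function.)

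The paper then applies interior $H^2$ regularity (Evans, \S6.3.1, Thm.~1) to the periodic extension and covers $\T^d$ by finitely many balls. You instead use Parseval on the flat torus to get $\|\nabla^2 z\|_{L^2}=\|\Delta z\|_{L^2}$ exactly, so $z\in\dotHk[2](\T^d)$ follows directly from $\Delta z\in L^2$.

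Your route is shorter and self-contained. It gives an explicit constant depending only on $c_0$, $\sigma$, $\|\nabla\Phi\|_{L^\infty}$ and the Poincar\'e constant, and it needs no localization step. The paper's route uses more machinery, but it is the standard localizable elliptic-regularity argument. It does not exploit the global Fourier structure of the flat torus, so it adapts more directly to variable leading coefficients or to charts on a closed manifold.
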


\begin{proof} 
    Since $f$ has zero total mass, the equation $-\nabla \cdot (\bar\mu \nabla z) = f$ has a unique weak solution $z \in X$.
    Testing the equation with $z$, using $\bar\mu \ge c_0 > 0$, and the Poincaré--Wirtinger inequality, we obtain, for some constant $C_P > 0$,
    \[
    \|\nabla z\|_{L^2}^2 \le \frac{1}{c_0}\int_{\T^d}\bar\mu|\nabla z|^2\,dx = \frac{1}{c_0}\int_{\T^d} f z \,dx \le \frac{1}{c_0}\|f\|_{L^2}\|z\|_{L^2} \le \frac{C_P}{c_0}\|f\|_{L^2}\|\nabla z\|_{L^2}.
    \]
    Therefore, $\|\nabla z\|_{L^2} \le C_P\|f\|_{L^2}/c_0$.
    Using $\nabla\bar\mu = -\sigma^{-1} \bar\mu \nabla\Phi$, we rewrite the equation as 
    \[ 
    -\Delta z + \sigma^{-1} \nabla\Phi \cdot \nabla z = \bar\mu^{-1}f. 
    \] 
    Applying~\cite[Section~6.3.1, Theorem~1]{evans2022partial} to the periodic extension of this equation and covering the torus by finitely many balls yields 
    \[ 
    \|z\|_{H^2} \le C(\|\bar\mu^{-1}f\|_{L^2} + \|z\|_{L^2}) \le \frac{C}{c_0}(1 + C_P^2)\|f\|_{L^2}.
    \]
    Since $\|z\|_{\dotHk[2]} \le \|z\|_{H^2}$, the first estimate follows. 
    Conversely, if $z \in \dotHk[2](\T^d)$, then
    \[ 
    \|\rieszmap[\bar\mu]z\|_{L^2} \le \|\bar\mu\|_{L^\infty} \|\Delta z\|_{L^2} + \|\nabla\bar\mu\|_{L^\infty}\|\nabla z\|_{L^2} \le C_2\|z\|_{\dotHk[2]}
    \]
    for some constant $C_2 > 0$, which concludes the second estimate.
\end{proof}

\begin{lemma}
\label{lem:closed_loop_domain_equivalence}
    Under the hypotheses of Propositions~\ref{prop:linearized_operator_identification} and~\ref{prop:are_diagonal}, $D(L_{\Pi}) = D(L) = \rieszmap[\bar\mu]D(A)$, and the graph norms $\|\cdot\|_{L}$ and $\|\cdot\|_{L_\Pi} \coloneqq \|\cdot\|_{X'} + \|L_{\Pi}\cdot\|_{X'}$ are equivalent in $D(L)$.
    Moreover, $D(L) \hookrightarrow L^2(\T^d)$ continuously.
\end{lemma}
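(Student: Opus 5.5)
The plan is to read everything off the conjugation $L = -\rieszmap[\bar\mu]A\rieszmap[\bar\mu]^{-1}$ from Proposition~\ref{prop:linearized_operator_identification}, together with $L_\Pi = -\rieszmap[\bar\mu]A_\Pi\rieszmap[\bar\mu]^{-1}$ and $A_\Pi = A + P_\delta\Pi$ with $P_\delta\Pi\in\mathcal{L}(X)$. Since $P_\delta\Pi$ is bounded on $X$, the perturbation $A_\Pi - A$ is bounded and hence relatively bounded with bound zero. Therefore $D(A_\Pi) = D(A)$. Conjugating by the isometric isomorphism $\rieszmap[\bar\mu]: X\to X'$ then gives $D(L_\Pi) = \rieszmap[\bar\mu]D(A_\Pi) = \rieszmap[\bar\mu]D(A) = D(L)$.

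For the equivalence of graph norms, write $L_\Pi\nu = L\nu - \rieszmap[\bar\mu]P_\delta\Pi\rieszmap[\bar\mu]^{-1}\nu$ for $\nu\in D(L)$. Since $\rieszmap[\bar\mu]$ is an isometry, this yields
\[
\|L_\Pi\nu\|_{X'} \le \|L\nu\|_{X'} + \|P_\delta\Pi\|_{\mathcal{L}(X)}\|\nu\|_{X'},
\]
and the symmetric inequality holds with the roles of $L$ and $L_\Pi$ exchanged. Adding $\|\nu\|_{X'}$ to both sides gives $\|\nu\|_{L_\Pi}\le (1+\|P_\delta\Pi\|)\|\nu\|_L$ and conversely.

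For the embedding $D(L)\hookrightarrow L^2(\T^d)$, take $\nu = \rieszmap[\bar\mu]\varphi$ with $\varphi\in D(A)\subset D(a)=\dotHk[2](\T^d)$. The second part of Lemma~\ref{lem:l2_density_to_form_domain} gives $\|\nu\|_{L^2}\le C\|\varphi\|_{\dotHk[2]}$. It then remains to bound $\|\varphi\|_{\dotHk[2]}$ by the graph norm $\|\nu\|_L = \|\varphi\|_X + \|A\varphi\|_X$.

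To do this, fix $\gamma > C_E$ and use the equivalence of $\|\cdot\|_{a,\gamma}$ with the $\dotHk[2]$-norm established in the proof of Theorem~\ref{thm:hessian_form}. Testing the representation identity \eqref{eq:A_representation} with $\psi=\varphi$ gives
\[
\|\varphi\|_{a,\gamma}^2 = \langle (A+\gamma I)\varphi,\varphi\rangle_X \le \bigl(\|A\varphi\|_X + \gamma\|\varphi\|_X\bigr)\|\varphi\|_X \le \tfrac12\bigl(\|A\varphi\|_X + (\gamma+1)\|\varphi\|_X\bigr)^2.
\]
Hence $\|\varphi\|_{\dotHk[2]}\le C\|\nu\|_L$, and therefore $\|\nu\|_{L^2}\le C\|\nu\|_L$.

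The only step needing care is this last chain: one must use that $D(A)\subset D(a)$, so that the form identity applies, and that the $\|\cdot\|_{a,\gamma}$–$\dotHk[2]$ equivalence is uniform. Both facts are already available from Theorem~\ref{thm:hessian_form} and Lemma~\ref{lem:h2_coercivity}, so no genuine obstacle is expected. In particular, no $\dotHk[3]$ or full elliptic regularity of $D(A)$ is needed, since $\dotHk[2]$ control of $\varphi$ already suffices for $\nu\in L^2$.
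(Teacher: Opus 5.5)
Your proposal is correct and matches the paper's proof step for step. The paper argues the same way: $D(A_\Pi)=D(A)$ because the perturbation is bounded, conjugation by $\rieszmap[\bar\mu]$ then gives equal domains, and the bounded difference $L_\Pi-L=-\rieszmap[\bar\mu]P_\delta\Pi\rieszmap[\bar\mu]^{-1}$ gives graph-norm equivalence with constant $1+\|P_\delta\Pi\|_{\mathcal{L}(X)}$. The $L^2$ embedding likewise comes from the $\|\cdot\|_{a,\gamma}$--$\dotHk[2]$ equivalence together with Lemma~\ref{lem:l2_density_to_form_domain}; your test with $\psi=\varphi$ in \eqref{eq:A_representation} just spells out the bound $\|\varphi\|_{\dotHk[2]}\le C'(\|\varphi\|_X+\|A\varphi\|_X)$, which the paper states without derivation.
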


\begin{proof}
    Since $P_\delta\Pi \in \mathcal{L}(X)$, one has $D(A_\Pi) = D(A)$.
    Hence, by Proposition~\ref{prop:linearized_operator_identification}, $D(L_\Pi) = \rieszmap[\bar\mu]D(A_\Pi) = \rieszmap[\bar\mu]D(A) = D(L)$.
    Moreover, $L_\Pi - L = -\rieszmap[\bar\mu]P_\delta\Pi\rieszmap[\bar\mu]^{-1}$ is bounded from $X'$ to $X'$, implying that for every $\nu \in D(L)$, $\|\nu\|_{L} \le (1 + \|P_\delta \Pi\|_{\mathcal{L}(X)}) \|\nu\|_{L_{\Pi}}$ and $\|\nu\|_{L_{\Pi}} \le (1 + \|P_\delta \Pi\|_{\mathcal{L}(X)}) \|\nu\|_{L}$.
    This proves the equivalence of graph norms.
    If $\nu \in D(L)$, then $\nu = \rieszmap[\bar\mu]z$ with $z \in D(A)$. 
    As in the proof of Theorem~\ref{thm:hessian_form}, for $\gamma > C_E$, the norm $\|\cdot\|_{a,\gamma}$ is equivalent to $\|\cdot\|_{\dotHk[2]}$ on $\dotHk[2](\T^d)$. 
    In particular, there exists some $C' > 0$ such that $\|z\|_{\dotHk[2]} \le C'\left(\|z\|_X + \|Az\|_X\right)$.
    Lemma~\ref{lem:l2_density_to_form_domain} gives $\|\nu\|_{L^2} \le C\|z\|_{\dotHk[2]} \le CC'\left(\|\nu\|_{X'} + \|L\nu\|_{X'}\right)$, proving the continuous embedding.
\end{proof}

\begin{proposition}
\label{prop:weighted_linear_density}
    Assume the hypotheses of Proposition~\ref{prop:are_diagonal}.
    Fix $\gamma \in [0, \lambda_\Pi)$. 
    Then, for every $T \in (0,\infty]$, every $f \in L^2(0,T; X')$, and every $\nu_0 \in L^2(\T^d)$ with zero total mass, the problem 
    \begin{equation}
        \label{eq:maximal_regularity}
        \partial_t \nu_t - (L_\Pi + \gamma I)\nu_t = f,
    \end{equation}
    with initial condition $\nu_0$ has a unique solution $\nu \in \mathcal{Y}_T$, and
    \[
    \|\nu\|_{\mathcal{Y}_T} \le C_{\rm lin,\gamma} (\|\nu_0\|_{L^2} + \|f\|_{L^2(0,T;X')}),
    \]
    where $C_{\rm lin, \gamma}$ is independent of $T$.
\end{proposition}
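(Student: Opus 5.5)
The plan is to move everything to the potential space $X$ through $\rieszmap[\bar\mu]$, where the problem becomes the abstract parabolic equation with a self-adjoint, coercive generator. There we can use the spectral calculus of $A_\Pi$ together with Lions' maximal regularity theory in the Gelfand triple $D(a)\hookrightarrow X\hookrightarrow D(a)'$.

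Set $z_t \coloneqq \rieszmap[\bar\mu]^{-1}\nu_t$ and $g \coloneqq \rieszmap[\bar\mu]^{-1} f \in L^2(0,T;X)$. Then \eqref{eq:maximal_regularity} reads
\[
\partial_t z + (A_\Pi - \gamma I) z = g,
\]
with initial datum $z_0 = \rieszmap[\bar\mu]^{-1}\nu_0$. By Lemma~\ref{lem:l2_density_to_form_domain}, $z_0 \in \dotHk[2](\T^d) = D(a)$ and $\|z_0\|_{\dotHk[2]} \le C\|\nu_0\|_{L^2}$.

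By Corollary~\ref{cor:spectral_gap}, the operator $A_\gamma \coloneqq A_\Pi - \gamma I$ is self-adjoint with $A_\gamma \ge (\lambda_\Pi - \gamma) I$, and $\lambda_\Pi - \gamma > 0$. Its form $a_\Pi - \gamma\langle\cdot,\cdot\rangle_X$ has domain $D(a)$ and is coercive. As in Theorem~\ref{thm:hessian_form}, its square root norm is equivalent to $\|\cdot\|_{\dotHk[2]}$; the only changes are that the bounded perturbation $P_\delta\Pi$ enters and that the shift $\gamma$ is absorbed by coercivity. In particular $D(A_\gamma^{1/2}) = D(a)$ with equivalent norms.

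Expanding in the eigenbasis, $z_t = \sum_k z_k(t) e_k$, each coefficient satisfies the scalar ODE
\[
z_k' + (\lambda_k^\Pi - \gamma) z_k = g_k, \qquad \lambda_k^\Pi - \gamma \ge \lambda_\Pi - \gamma =: c_\gamma > 0.
\]
Variation of constants together with the standard Hilbert-space maximal regularity estimates (Plancherel or Young on each mode, uniformly in $k$) gives
\begin{align*}
\|A_\gamma z\|_{L^2(0,T;X)} + \|\partial_t z\|_{L^2(0,T;X)} &\le C\bigl(\|A_\gamma^{1/2} z_0\|_X + \|g\|_{L^2(0,T;X)}\bigr), \\
\sup_{t} \|A_\gamma^{1/2} z_t\|_X &\le C\bigl(\|A_\gamma^{1/2} z_0\|_X + \|g\|_{L^2(0,T;X)}\bigr),
\end{align*}
where the second estimate follows from the energy identity obtained by testing with $A_\gamma z$. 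The constants depend only on $c_\gamma$, and the strict positivity $c_\gamma > 0$ is exactly what makes them independent of $T$, including the case $T = \infty$. Uniqueness follows from the same energy estimate applied to the difference of two solutions with zero data.

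To return to $\nu = \rieszmap[\bar\mu] z$, we read off the three pieces of the $\mathcal{Y}_T$-norm.
\begin{itemize}
\item Since $\rieszmap[\bar\mu]$ is an isometry $X\to X'$, we get $\|\partial_t\nu\|_{L^2(X')} = \|\partial_t z\|_{L^2(X)}$.
\item The $D(L)$ graph norm is equivalent to $\|z\|_X + \|A z\|_X$, by Lemma~\ref{lem:closed_loop_domain_equivalence} and $L_\Pi = -\rieszmap[\bar\mu]A_\Pi\rieszmap[\bar\mu]^{-1}$. This quantity is controlled by $\|A_\gamma z\|_X$ together with $\|z\|_X \le c_\gamma^{-1}\|A_\gamma z\|_X$.
\item For the $L^\infty(0,T;L^2)$ part, Lemma~\ref{lem:l2_density_to_form_domain} gives $\|\nu_t\|_{L^2} \le C\|z_t\|_{\dotHk[2]} \le C'\|A_\gamma^{1/2} z_t\|_X$.
\end{itemize}
Finally, $\|g\|_{L^2(X)} = \|f\|_{L^2(X')}$, which combined with the above yields the claimed estimate.

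I expect the main obstacle to be the uniform-in-$T$ bound on $\sup_t\|A_\gamma^{1/2}z_t\|_X$, that is, the $L^\infty_tL^2$ control of the density. It requires the initial datum to lie in the form domain, which is exactly what $\nu_0\in L^2$ provides via Lemma~\ref{lem:l2_density_to_form_domain}. It also requires the norm equivalence $D(A_\gamma^{1/2}) \simeq \dotHk[2]$ for the perturbed, shifted operator. I would verify this equivalence carefully through the form-norm comparison of Theorem~\ref{thm:hessian_form}, using the coercivity $a_\Pi \ge \lambda_\Pi$ and the fact that $P_\delta\Pi$ is bounded on $X$.
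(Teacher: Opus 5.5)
Your proposal is correct and follows essentially the paper's argument. You transfer to $X$ via $\rieszmap[\bar\mu]^{-1}$, use $A_\Pi-\gamma I\ge(\lambda_\Pi-\gamma)I$ with form domain $\dotHk[2](\T^d)$ and $z_0\in\dotHk[2](\T^d)$ from Lemma~\ref{lem:l2_density_to_form_domain}, obtain the $T$-uniform energy estimate by testing with $(A_\Pi-\gamma I)z$, and map back through the Riesz isometry and the graph-norm equivalences. The one difference is cosmetic: you solve the decoupled modal ODEs exactly in the eigenbasis $\{e_k\}$, whereas the paper phrases the same computation as a Galerkin approximation with projections $P_N$ followed by a weak limit.
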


\begin{proof}
    Let $z = \rieszmap[\bar\mu]^{-1}\nu$, $z_0 = \rieszmap[\bar\mu]^{-1}\nu_0$, and $g = \rieszmap[\bar\mu]^{-1}f$. 
    The problem~\eqref{eq:maximal_regularity} is equivalent to $\partial_t z_t + (A_\Pi - \gamma I) z_t = g$, with initial condition $z_0$.
    The operator $S_\gamma \coloneqq A_\Pi - \gamma I$ is positive self-adjoint and satisfies $S_\gamma \ge (\lambda_\Pi - \gamma)I$.
    Moreover, $D(S_\gamma) = D(A)$ and $D(S_\gamma^{1/2}) = \dotHk[2](\T^d)$, with graph norm $\|\cdot\|_{S_\gamma^{1/2}} \coloneqq \|\cdot\|_X + \|S_\gamma^{1/2} \cdot\|_X$ being equivalent to $\|\cdot\|_{\dotHk[2]}$.
    By Lemma~\ref{lem:l2_density_to_form_domain}, $z_0 \in \dotHk[2](\T^d)$ and $\|z_0\|_{\dotHk[2]} \le C\|\nu_0\|_{L^2}$ for some constant $C > 0$.
    Let $P_N$ be the orthogonal projection onto $\operatorname{span}\{e_1,\dots,e_N\}$ and consider the finite-dimensional problem
    \[
    \partial_t z^N_t + S_\gamma z_t^N = P_N g
    \]
    with initial condition $P_N z_0$.
    Then, taking the $X$-inner product with $S_\gamma z_t^N$ yields
    \[
    \frac{1}{2}\frac{d}{dt} \|S_\gamma^{1/2}z_t^N\|_X^2 + \|S_\gamma z_t^N\|_X^2 = \langle P_N g, S_\gamma z_t^N\rangle_X \le \frac{1}{2} \|g\|_X^2 + \frac{1}{2}\|S_\gamma z_t^N\|_X^2.
    \]
    Integrating in time, using $\partial_t z_t^N = -S_\gamma z_t^N + P_N g$, passing to the weak limit $N \to \infty$, and applying lower semicontinuity of the relevant norms yields 
    \[
    \|z\|_{L^\infty(0,T; \dotHk[2])} + \|z\|_{L^2(0,T; D(A))} + \|\partial_t z\|_{L^2(0,T; X)} \le C_{\rm lin, \gamma}\left(\|z_0\|_{\dotHk[2]} + \|g\|_{L^2(0,T; X)} \right).
    \]
    Applying $\rieszmap[\bar\mu]$ and using $D(L)=\rieszmap[\bar\mu]D(A)$, Lemma~\ref{lem:l2_density_to_form_domain}, Lemma~\ref{lem:closed_loop_domain_equivalence}, and the definition of the $X'$-norm gives the estimate in $\mathcal{Y}_T$. 
    Uniqueness follows by applying the same estimate to the difference of two solutions.
\end{proof}

\begin{lemma}
\label{lem:feedback_remainder}
    Assume~\ref{ass:H5}. 
    Then there exists $C_{\rm fb} > 0$ such that, for all $\nu, \eta\in L^2(\T^d)$ with zero total mass,
    \[
    \|\mathfrak{B}_{\rm fb}(\nu) - \mathfrak{B}_{\rm fb}(\eta)\|_{X'} \le C_{\rm fb}\left(\|\nu\|_{L^2} + \|\eta\|_{L^2}\right) \|\nu-\eta\|_{L^2}.
    \]
\end{lemma}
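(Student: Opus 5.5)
The plan is to exploit the finite-rank structure of $P_\delta\Pi$ and write $\mathfrak{B}_{\rm fb}$ as a bilinear expression in $\nu$. As in the proof of Lemma~\ref{lem:finite_rank_hessian_continuity}, for $\nu \in L^2(\T^d)$ with zero total mass,
\[
P_\delta\Pi\rieszmap[\bar\mu]^{-1}\nu = \sum_{\ell=1}^m \pi_\ell \langle \rieszmap[\bar\mu]^{-1}\nu, e_\ell\rangle_X e_\ell = \sum_{\ell=1}^m \pi_\ell \left(\int_{\T^d} e_\ell\,\nu\,dx\right) e_\ell.
\]
Here I use $\langle \rieszmap[\bar\mu]^{-1}\nu, e_\ell\rangle_X = \langle \nu, e_\ell\rangle_{X',X}$, which for $\nu \in L^2$ equals the $L^2$ pairing. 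Setting $c_\ell(\nu) \coloneqq \int_{\T^d} e_\ell\,\nu\,dx$, each $c_\ell$ is a linear functional with $|c_\ell(\nu)| \le \|e_\ell\|_{L^2}\|\nu\|_{L^2}$. This bound is finite because $e_\ell \in W^{2,\infty}(\T^d)$ by Assumption~\ref{ass:H5}. Consequently
\[
\mathfrak{B}_{\rm fb}(\nu) = \sum_{\ell=1}^m \pi_\ell\, c_\ell(\nu)\, \nabla\cdot(\nu \nabla e_\ell).
\]

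Next I would use the bilinear splitting
\[
\mathfrak{B}_{\rm fb}(\nu) - \mathfrak{B}_{\rm fb}(\eta) = \sum_{\ell=1}^m \pi_\ell \Big[ c_\ell(\nu-\eta)\,\nabla\cdot(\nu\nabla e_\ell) + c_\ell(\eta)\,\nabla\cdot((\nu-\eta)\nabla e_\ell)\Big],
\]
and estimate each divergence term in $X'$ by duality. For $f \in L^2$ and $\psi \in X$,
\[
|\langle \nabla\cdot(f\nabla e_\ell), \psi\rangle| = \left|\int_{\T^d} f\,\nabla e_\ell\cdot\nabla\psi\,dx\right| \le \|\nabla e_\ell\|_{L^\infty}\|f\|_{L^2}\|\nabla\psi\|_{L^2} \le c_0^{-1/2}\|\nabla e_\ell\|_{L^\infty}\|f\|_{L^2}\|\psi\|_X .
\]
The last step uses Lemma~\ref{lem:mu_bar_bounds}(ii). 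The resulting distribution annihilates constants, so it lies in $X'$ by Lemma~\ref{lem:hilbert_structure}.

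Combining the two steps gives the claim with
\[
C_{\rm fb} = c_0^{-1/2}\sum_{\ell=1}^m |\pi_\ell|\,\|e_\ell\|_{L^2}\|\nabla e_\ell\|_{L^\infty}.
\]
The only step needing care is the identification $\langle \nu, e_\ell\rangle_{X',X} = \int e_\ell\,\nu\,dx$ for zero-mass $L^2$ densities. This holds because zero-mean $L^2$ embeds into $X'$ through the $L^2$ pairing, which is independent of the representative since $\nu$ has zero mass. Only $\nabla e_\ell \in L^\infty$ is actually needed, which is weaker than what Assumption~\ref{ass:H5} provides.
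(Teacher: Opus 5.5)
Your proof is correct and follows essentially the same route as the paper: a bilinear splitting of $\mathfrak{B}_{\rm fb}(\nu)-\mathfrak{B}_{\rm fb}(\eta)$, an $L^\infty$ bound on the gradient of the finite-rank feedback potential $P_\delta\Pi\rieszmap[\bar\mu]^{-1}\nu$ coming from Assumption~\ref{ass:H5}, and the duality estimate $\|\nabla\cdot(\rho\nabla\chi)\|_{X'}\lesssim\|\rho\|_{L^2}\|\nabla\chi\|_{L^\infty}$. The differences are cosmetic: you expand explicitly in the modes $e_\ell$ and bound the coefficients through the $L^2$ pairing rather than through $\|\nu\|_{X'}$, you use the other ordering of the splitting, and you obtain an explicit constant $C_{\rm fb}$.
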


\begin{proof}
    Since $P_\delta\Pi$ has range in $X_\delta$, Assumption~\ref{ass:H5} and the embedding of the subspace of zero total mass functions in $L^2(\T^d)$ into $X'$ gives $\|\nabla P_\delta\Pi\rieszmap[\bar\mu]^{-1} \nu\|_{L^\infty} \le C\|\nu\|_{X'} \le C\|\nu\|_{L^2}$ for some constant $C > 0$.
    Also, for $\rho \in L^2(\T^d)$ and $\nabla\chi \in L^\infty(\T^d)$, for some constant $C' > 0$, $\|\nabla\cdot(\rho\nabla\chi)\|_{X'} \le C'\|\rho\|_{L^2}\|\nabla\chi\|_{L^\infty}$.
    The identity
    \[
    \mathfrak{B}_{\rm fb}(\nu)-\mathfrak{B}_{\rm fb}(\eta) = \nabla\cdot((\nu-\eta)\nabla P_\delta\Pi\rieszmap[\bar\mu]^{-1}\nu) + \nabla \cdot(\eta \nabla P_\delta\Pi\rieszmap[\bar\mu]^{-1}(\nu-\eta))
    \]
    then gives the result.
\end{proof}

\begin{lemma}
\label{lem:full_remainder}
    Assume~\ref{ass:H5}--\ref{ass:H6}. 
    There exists $C_\Pi > 0$ such that, for all $\nu, \eta \in D(L) \subset L^2(\T^d)$ such that $\bar\mu + \nu$ and $\bar\mu + \eta$ are probability densities, with $\|\nu\|_{L^2} < r$ and $\|\eta\|_{L^2} < r$, one has
    \[
    \|\mathfrak{R}_\Pi(\nu)-\mathfrak{R}_\Pi(\eta)\|_{X'} \le C_\Pi\left(\|\nu\|_{L^2}+\|\eta\|_{L^2}\right)\|\nu-\eta\|_{L^2}.
    \]
    In particular, if $\bar\mu + \nu$ is a probability density and $\|\nu\|_{L^2} < r$, then $\|\mathfrak{R}_\Pi(\nu)\|_{X'} \le C_\Pi\|\nu\|_{L^2}^2$.
\end{lemma}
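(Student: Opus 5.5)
The plan is to split $\mathfrak{R}_\Pi = \mathfrak{R} + \mathfrak{B}_{\rm fb}$ and bound the two differences separately. Each bound then comes from a result already available. For the uncontrolled remainder I would apply Assumption~\ref{ass:H6} directly. Its hypotheses are exactly ours: $\nu, \eta \in D(L)$, $\bar\mu+\nu$ and $\bar\mu+\eta$ are probability densities, and $\|\nu\|_{L^2}, \|\eta\|_{L^2} < r$. This gives
\[
\|\mathfrak{R}(\nu)-\mathfrak{R}(\eta)\|_{X'} \le C_{\rm nl}\left(\|\nu\|_{L^2}+\|\eta\|_{L^2}\right)\|\nu-\eta\|_{L^2}.
\]

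For the feedback part I would invoke Lemma~\ref{lem:feedback_remainder}. This requires checking that $\nu,\eta \in L^2(\T^d)$ have zero total mass. The $L^2$ membership follows from the continuous embedding $D(L) \hookrightarrow L^2(\T^d)$ of Lemma~\ref{lem:closed_loop_domain_equivalence}. Zero mass holds because $\bar\mu+\nu$ and $\bar\mu$ are both probability densities, so $\int_{\T^d}\nu\,dx = 0$; the same argument applies to $\eta$. Lemma~\ref{lem:feedback_remainder} then yields the analogous estimate with constant $C_{\rm fb}$. The triangle inequality in $X'$ gives the claim with $C_\Pi \coloneqq C_{\rm nl} + C_{\rm fb}$.

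For the final statement I would take $\eta = 0$. This is admissible: $0 \in D(L)$, $\bar\mu+0=\bar\mu$ is a probability density, and $\|0\|_{L^2}=0<r$. It remains to check that $\mathfrak{R}_\Pi(0) = 0$. We have $\mathfrak{B}_{\rm fb}(0)=0$ trivially. Also $\mathfrak{R}(0) = \nabla\cdot(\bar\mu\nabla\frac{\delta\F}{\delta\mu}(\bar\mu)) - L0 = \mathcal{G}(\bar\mu) = 0$ by the stationarity condition~\eqref{eq:stationarity}, since $\nabla(\Phi+\sigma\log\bar\mu)=0$. Substituting gives $\|\mathfrak{R}_\Pi(\nu)\|_{X'} \le C_\Pi\|\nu\|_{L^2}^2$.

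There is no genuine obstacle here. The only points needing care are the bookkeeping ones: zero mass and $L^2$ membership of elements of $D(L)$, which are needed to apply Lemma~\ref{lem:feedback_remainder}, and the identity $\mathfrak{R}(0)=0$, which comes from stationarity.
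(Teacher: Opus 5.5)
Your proposal is correct and follows the paper's proof: the bound for $\mathfrak{R}$ is Assumption~\ref{ass:H6}, the bound for $\mathfrak{B}_{\rm fb}$ is Lemma~\ref{lem:feedback_remainder}, and the two estimates are added. The paper leaves your extra bookkeeping implicit, and it is accurate: zero mass of $\nu,\eta$ is needed to apply Lemma~\ref{lem:feedback_remainder}, and $\mathfrak{R}_\Pi(0)=0$ follows from stationarity, which gives the ``in particular'' bound with $\eta=0$.
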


\begin{proof}
    The estimate for $\mathfrak{R}$ is exactly Assumption~\ref{ass:H6}. 
    The estimate for $\mathfrak{B}_{\rm fb}$ is Lemma~\ref{lem:feedback_remainder}. 
    Adding the two estimates gives the claimed bound. 
\end{proof}

\begin{lemma}
\label{lem:weighted_remainder}
    Assume~\ref{ass:H5}--\ref{ass:H6}, and fix $\gamma > 0$.
    Then, there exists $C_{\rm rem,\gamma} > 0$ such that, for every $T > 0$,
    \[
    \|\mathfrak{R}_{\Pi, \gamma}(w)\|_{L^2(0,T;X')} \le C_{\rm rem, \gamma}\|w\|_{\mathcal{Y}_T}^2
    \]
    whenever $w \in \mathcal{Y}_T$, $\|w\|_{L^\infty(0,T; L^2)} < r$, and $\bar\mu+e^{-\gamma t} w_t$ is a probability density a.e.\ in $(0,T)$.
\end{lemma}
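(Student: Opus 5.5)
The bound will follow by pulling the exponential weight inside the quadratic estimate of Lemma~\ref{lem:full_remainder} and then using the embedding properties of $\mathcal{Y}_T$.

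First, for a.e.\ $t \in (0,T)$, set $\nu_t \coloneqq e^{-\gamma t}w_t$. By hypothesis $\bar\mu + \nu_t$ is a probability density. Since $\gamma > 0$ and $t \ge 0$, we have $\|\nu_t\|_{L^2} \le \|w_t\|_{L^2} < r$. Membership $w \in L^2(0,T;D(L))$ gives $\nu_t \in D(L)$ for a.e.\ $t$. So the quadratic bound in Lemma~\ref{lem:full_remainder} applies pointwise in time:
\[
\|\mathfrak{R}_{\Pi,\gamma}(w)(t)\|_{X'} = e^{\gamma t}\|\mathfrak{R}_\Pi(e^{-\gamma t}w_t)\|_{X'} \le C_\Pi e^{\gamma t}e^{-2\gamma t}\|w_t\|_{L^2}^2 \le C_\Pi \|w_t\|_{L^2}^2 .
\]
The weight helps here, because the nonlinearity is quadratic and the loss $e^{\gamma t}$ is beaten by $e^{-2\gamma t}$.

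Second, I integrate in time:
\[
\|\mathfrak{R}_{\Pi,\gamma}(w)\|_{L^2(0,T;X')}^2 \le C_\Pi^2\int_0^T \|w_t\|_{L^2}^4\,dt \le C_\Pi^2 \|w\|_{L^\infty(0,T;L^2)}^2 \int_0^T\|w_t\|_{L^2}^2\,dt .
\]
The first factor is bounded by $\|w\|_{\mathcal{Y}_T}^2$ directly from the definition of the norm. For the second factor, Lemma~\ref{lem:closed_loop_domain_equivalence} gives the continuous embedding $D(L)\hookrightarrow L^2(\T^d)$, so $\|w_t\|_{L^2} \le C_D\|w_t\|_{L}$ and hence $\int_0^T\|w_t\|_{L^2}^2\,dt \le C_D^2\|w\|_{L^2(0,T;D(L))}^2 \le C_D^2\|w\|_{\mathcal{Y}_T}^2$. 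Taking square roots gives the claim with $C_{\rm rem,\gamma} \coloneqq C_\Pi C_D$, which is independent of $T$.

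The main point to check is measurability and the a.e.\ applicability of Lemma~\ref{lem:full_remainder}. This is routine: $t\mapsto\mathfrak{R}_{\Pi,\gamma}(w)(t)$ is strongly measurable in $X'$ because $w$ is measurable into $D(L)$ and $\mathfrak{R}_\Pi$ is locally Lipschitz from the $L^2$-ball of radius $r$ into $X'$. Independence of $T$ is essential for the bootstrap in Theorem~\ref{thm:local_stab}. The argument above uses the $L^2(0,T;D(L))$ component of $\mathcal{Y}_T$ rather than the $L^\infty$ one, so no factor of $T$ appears.
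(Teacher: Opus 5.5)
Your proposal is correct and follows the paper's proof essentially step for step. The argument has the same three ingredients: the pointwise quadratic bound from Lemma~\ref{lem:full_remainder} applied to $e^{-\gamma t}w_t$, the splitting $\|w\|_{L^\infty(0,T;L^2)}\|w\|_{L^2(0,T;L^2)}$, and the embedding $D(L)\hookrightarrow L^2(\T^d)$ from Lemma~\ref{lem:closed_loop_domain_equivalence}; you additionally make explicit that $e^{-\gamma t}w_t\in D(L)$ for a.e.\ $t$, which the paper leaves implicit.
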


\begin{proof}
    Set $\nu_t = e^{-\gamma t}w(t)$. 
    Since $\|\nu_t\|_{L^2} \le \|w_t\|_{L^2} < r$, Lemma~\ref{lem:full_remainder} gives
    \[
    \|\mathfrak{R}_{\Pi,\gamma}(w)(t)\|_{X'} = e^{\gamma t}\|\mathfrak{R}_\Pi(e^{-\gamma t}w_t)\|_{X'} \le C_\Pi e^{-\gamma t}\|w_t\|_{L^2}^2.
    \]
    Hence $\|\mathfrak{R}_{\Pi,\gamma}(w)\|_{L^2(0,T; X')} \le C_\Pi\|w\|_{L^\infty(0,T;L^2)}\|w\|_{L^2(0,T;L^2)}$.
    By Lemma~\ref{lem:closed_loop_domain_equivalence}, $D(L)$ is continuously embedded in $L^2(\T^d)$. 
    Therefore,
    \[
    \|w\|_{L^2(0,T;L^2)} \le C\|w\|_{L^2(0,T;D(L))}.
    \]
    Combining the last two estimates gives the result.
\end{proof}

\begin{lemma}[Uniqueness and maximal continuation]
    \label{lem:maximal_continuation}
    Assume~\ref{ass:H1}--\ref{ass:H3}, the hypotheses of Proposition~\ref{prop:are_diagonal}, and Assumptions~\ref{ass:H5}--\ref{ass:H6}.
    Fix a probability density $\mu_0 = \bar\mu + \nu_0$ with $\nu_0 \in L^2(\T^d)$ and $\|\nu_0\|_{L^2} < r$, and let $\nu$ be a local solution from Assumption~\ref{ass:H6}.
    Then $\nu$ admits a unique maximal continuation $\nu \colon [0,T_{\max} )\longrightarrow L^2(\T^d)$ for some $ T_{\max} \in (0,\infty]$, such that, for every $T < T_{\max}$, $\nu \in \mathcal{Y}_T$, equation~\eqref{eq:nonlinear_equation_x_prime} holds in $L^2(0,T;X')$, and, for every $t \in [0,T_{\max})$, $\bar\mu + \nu_t$ is a probability density and $\|\nu_t\|_{L^2} < r$.
    Moreover, if $T_{\max} < \infty$, then $\limsup_{t\uparrow T_{\max}} \|\nu_t\|_{L^2} \ge r$ or $\sup_{T < T_{\max}} \|\nu\|_{\mathcal{Y}_T} = \infty$.
\end{lemma}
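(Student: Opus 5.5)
The plan is to follow the standard maximal-continuation argument for semilinear evolution equations in $\mathcal{Y}_T$. We combine the local existence from Assumption~\ref{ass:H6}, a uniqueness estimate built from Proposition~\ref{prop:weighted_linear_density} and Lemma~\ref{lem:full_remainder}, and a restart argument.

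\emph{Uniqueness.} Let $\nu,\zeta\in\mathcal{Y}_T$ be two solutions of~\eqref{eq:nonlinear_equation_x_prime} with the same initial datum. Assume both remain probability densities with $L^2$-norm below $r$ on $[0,T]$. The difference $w=\nu-\zeta$ solves $\partial_t w - L_\Pi w = \mathfrak{R}_\Pi(\nu)-\mathfrak{R}_\Pi(\zeta)$ with $w(0)=0$.

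Proposition~\ref{prop:weighted_linear_density} with $\gamma=0$, applied on a short interval $[t_0,t_0+\tau]$, gives
\[
\|w\|_{\mathcal{Y}_{[t_0,t_0+\tau]}}\le C_{\rm lin,0}\bigl(\|w_{t_0}\|_{L^2}+\|\mathfrak{R}_\Pi(\nu)-\mathfrak{R}_\Pi(\zeta)\|_{L^2(t_0,t_0+\tau;X')}\bigr).
\]
By Lemma~\ref{lem:full_remainder}, the last term is at most $2C_\Pi r\,\tau^{1/2}\|w\|_{L^\infty L^2}$. Choose $\tau$ so that $2C_{\rm lin,0}C_\Pi r\tau^{1/2}<1/2$; this choice is independent of $t_0$. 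Absorbing the term then yields $w\equiv0$ on $[t_0,t_0+\tau]$ whenever $w_{t_0}=0$. Iterating over steps of length $\tau$ gives $w\equiv0$ on $[0,T]$.

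Since the time-translated problem is again of the form in Proposition~\ref{prop:weighted_linear_density}, the linear estimate holds on shifted intervals. Any $t_0$ is admissible because $\mathcal{Y}_T\hookrightarrow C([0,T];L^2)$.

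\emph{Maximal continuation.} Let $\mathcal{S}$ be the set of $T>0$ for which there exists a solution on $[0,T]$ in $\mathcal{Y}_T$ that stays a probability density with $\|\nu_t\|_{L^2}<r$. By Assumption~\ref{ass:H6}, $\mathcal{S}$ is nonempty, after shrinking $T$ using continuity into $L^2$ and $\|\nu_0\|_{L^2}<r$. By uniqueness, solutions on different intervals agree on their overlaps, so they glue into one function $\nu$ on $[0,T_{\max})$ with $T_{\max}=\sup\mathcal{S}$. Gluing is legitimate in $\mathcal{Y}_T$ because the $H^1(0,T;X')$ regularity is preserved when concatenating at a point where the traces match. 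The traces agree in $L^2$ by continuity.

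\emph{Blow-up alternative.} Suppose $T_{\max}<\infty$, $\limsup_{t\uparrow T_{\max}}\|\nu_t\|_{L^2}<r$, and $M:=\sup_{T<T_{\max}}\|\nu\|_{\mathcal{Y}_T}<\infty$. Then $\nu\in\mathcal{Y}_{T_{\max}}$ by monotone limits: the $L^\infty$, $L^2$ and $H^1$ norms are all lower semicontinuous in $T$. Continuity into $L^2$ then gives a limit $\nu_{T_{\max}}$ with $\|\nu_{T_{\max}}\|_{L^2}<r$. This limit is a probability density, being an $L^1$ limit of densities, and has zero mass. Restarting with Assumption~\ref{ass:H6} from $\bar\mu+\nu_{T_{\max}}$, which applies by autonomy of the equation, and gluing contradicts maximality.

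\emph{Main obstacle.} I expect the restart step to be the main difficulty. Assumption~\ref{ass:H6} provides local existence only from data with $\|\nu_0\|_{L^2}<r$, with no uniform lower bound on the existence time. The contradiction argument nevertheless only needs existence beyond $T_{\max}$, so this suffices. The remaining care is to verify that the spliced function satisfies~\eqref{eq:nonlinear_equation_x_prime} in $L^2(0,T;X')$ across the junction, which holds because the equation is an a.e.-in-time identity.
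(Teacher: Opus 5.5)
Your proposal is correct and follows the paper's argument. Uniqueness comes from the linear estimate of Proposition~\ref{prop:weighted_linear_density} combined with Lemma~\ref{lem:full_remainder}; solutions are glued up to $T_{\max}$; and restarting via Assumption~\ref{ass:H6} at $T_{\max}$ gives the continuation alternative. As you did at $t=0$, you should also shrink the restarted interval there so that $\|\nu_t\|_{L^2}<r$ persists.

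The only difference is cosmetic. You take $\gamma=0$ and use the a priori bound $\|\nu_t\|_{L^2}<r$ to get a step length independent of $t_0$. The paper instead uses $\gamma>0$, the vanishing of $\|\nu^i\|_{L^2(0,\tau;L^2)}$ as $\tau\downarrow 0$, and a supremum argument.
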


\begin{proof}
    Fix $\gamma \in (0, \lambda_\Pi)$ and let $\nu^1, \nu^2 \in \mathcal{Y}_T$ be two solutions of~\eqref{eq:nonlinear_equation_x_prime} with the same initial condition such that $\bar\mu + \nu^{i}_t$ is a probability density and $\|\nu^i_t\|_{L^2} < r$ for each $t \in [0,T]$ and $i = 1, 2$.
    Setting $z = \nu^1 - \nu^2$, we have $z(0) = 0$ and
    \[
    \partial_t z - L_\Pi z = \mathfrak{R}_\Pi(\nu^1) - \mathfrak{R}_\Pi(\nu^2).
    \]
    Applying Proposition~\ref{prop:weighted_linear_density} to $e^{\gamma t} z_t$, using Lemma~\ref{lem:full_remainder}, and observing that multiplication by the exponential term is bounded on $\mathcal{Y}_\tau$, uniformly for $\tau \le 1$, we obtain, for every $\tau \le \min\{T, 1\}$, 
    \[
    \|z\|_{\mathcal{Y}_\tau} \le C \left(\|\nu^1\|_{L^2(0, \tau; L^2)} + \|\nu^2\|_{L^2(0, \tau; L^2)} \right) \|z\|_{\mathcal{Y}_\tau}
    \]
    for a constant $C > 0$ independent of $\tau$.
    Since $\nu^1, \nu^2 \in L^2(0, T; L^2)$, when $\tau \downarrow 0$, $\|\nu^i\|_{L^2(0, \tau; L^2)} \to 0$ for $i=1,2$.
    Thus $z = 0$ on $[0, \tau_0]$ for some $\tau_0 > 0$.
    Let
    \[
    \tau_* \coloneqq \sup \left\{\tau \in [0,T]: z_t = 0 \text{ for every } t \in [0,\tau]\right\}.
    \]
    The preceding argument shows that $\tau_* > 0$.
    Moreover, since $\nu^1, \nu^2 \in C([0,T]; L^2)$, the two solutions agree on $[0,\tau_*]$. 
    Suppose that $\tau_* < T$, set $\widetilde{\nu}^i_t \coloneqq \nu^i_{\tau_* + t}$ for $i=1,2$, and $\widetilde{z} = \widetilde{\nu}^1 - \widetilde{\nu}^2$.
    Then $\widetilde{z}_0 = 0$. 
    Applying the preceding estimate on $[0,h]$ gives
    \[
    \|\widetilde{z}\|_{\mathcal{Y}_h} \le C \left(\|\widetilde{\nu}^1\|_{L^2(0, h; L^2)} + \|\widetilde{\nu}^2\|_{L^2(0, h; L^2)} \right) \|\widetilde{z}\|_{\mathcal{Y}_h},
    \]
    which using the same argument as above and $h > 0$ sufficiently small, implies $\widetilde{z} = 0$ on $[0,h]$, contradicting the definition of $\tau_*$. 
    Therefore $\tau_* = T$, and uniqueness follows.

    By Assumption~\ref{ass:H6}, there exists a local solution starting from $\nu_0$.
    Since $\mathcal{Y}_T \hookrightarrow C([0,T]; L^2)$ and $\|\nu_0\|_{L^2} < r$, we reduce the interval $[0,T]$ so that $\|\nu_t\|_{L^2} < r$ throughout that interval.
    Let $T_{\max}$ be the supremum of all $T > 0$ for which there exists a solution $\widehat{\nu} \in \mathcal{Y}_T$ satisfying $\widehat{\nu}_0 = \nu_0$, $\|\widehat{\nu}_t\|_{L^2} < r$ and $\bar\mu + \widehat{\nu}_t$ being a probability density for every $t \in [0,T]$.
    The local uniqueness proved above shows that any two such solutions agree on their common interval of existence and hence define a unique solution on $[0, T_{\max})$.
    Indeed, Assumption~\ref{ass:H6} may be applied at the endpoint of any such interval, and the resulting local solution can be concatenated with the preceding one because their $L^2$ traces agree.
    This gives the unique maximal continuation with the stated properties.

    Suppose now that $T_{\max} < \infty$, $\limsup_{t\uparrow T_{\max}} \|\nu_t\|_{L^2} < r$, and $\sup_{T < T_{\max}} \|\nu\|_{\mathcal{Y}_T} < \infty$.
    The latter estimate and monotone convergence in the defining norms of $\mathcal{Y}_T$ imply that $\nu \in \mathcal{Y}_{T_{\max}}$.
    Since $\mathcal{Y}_{T_{\max}} \hookrightarrow C([0,T_{\max}]; L^2)$, there exists $\nu_{T_{\max}} \coloneqq \lim_{t \uparrow T_{\max}} \nu_t$ in $L^2(\T^d)$ satisfying $\|\nu_{T_{\max}}\|_{L^2} < r$.
    Moreover, $\bar\mu + \nu_{T_{\max}}$ is a probability density, since the set of probability densities is closed under $L^2$ convergence on $\T^d$.
    Assumption~\ref{ass:H6}, applied with initial condition $\nu_{T_{\max}}$, provides a local solution starting at $T_{\max}$.
    After reducing its existence interval if necessary, this solution satisfies $\|\nu_t\|_{L^2} < r$. 
    Concatenating it with $\nu$ at $T_{\max}$ gives a continuation beyond $T_{\max}$, contradicting maximality.
    The asserted continuation alternative follows.
\end{proof}

\section{Numerical methodology and supplementary experiments}
\label{app:numerics}

The numerical experiments use a common spectral Galerkin discretization for the Hessian eigenvalue problem.
After describing this discretization, we give the supplementary experiments, including a nonlinear nonlocal test, an empirical attraction scan, and the constrained double-well example from Section~\ref{sec:constrained-dynamics}.
The sphere computation in Section~\ref{sec:sphere-example} uses the same construction, with spherical harmonics in place of a Fourier discretization.

\subsection{Numerical methodology}
\label{app:sec:numerical-methodology}

On the one-dimensional flat torus, we discretize $\T = [0,b]_{\rm per}$ with $N_{\rm fine} = 512$ equispaced grid points.
When a potential is defined on $[-b/2, b/2]$, it is shifted to $[0,b]$ by $x \mapsto x + b/2$.
For the sphere experiment from Section~\ref{sec:sphere-example}, Fourier modes are replaced by spherical harmonics.
When the stationary density is not explicit, it is computed by a fixed-point iteration of the relation
\[
\bar\mu = Z^{-1}\exp\left(-\frac{\Theta_E(\bar\mu)}{\sigma}\right), \qquad Z = \int_{\T} \exp\left(-\frac{\Theta_E(\bar\mu)}{\sigma}\right)\,dx.
\]

Let $(\chi_j)_{j=1}^M$ denote the nonconstant Fourier (or spherical harmonic) basis used for the discretization scaled to improve the conditioning of the $X$-norm.
We assemble the Hessian and metric matrices
\begin{equation}
    \label{eq:discrete-hessian-matrices}
    H_{ij} = a(\chi_j, \chi_i), \qquad M_{ij}= \langle\chi_j, \chi_i\rangle_X,
\end{equation}
and solve the generalized eigenvalue problem $Hv = \lambda Mv$.
The first $N$ eigenfunctions $(\phi_k)_{k=1}^N$ are normalized in $X$, so that $\langle \phi_i, \phi_j \rangle_X = \delta_{ij}$.
We set $\psi_k \coloneqq \rieszmap[\bar\mu]\phi_k \in X'$ and
\[
\xi_N(t) = \sum_{k=1}^N c_k(t) \phi_k, \qquad \nu_N(t) = \rieszmap[\bar\mu]\xi_N(t) = \sum_{k=1}^N c_k(t) \psi_k.
\]
Since the eigenfunctions are $X$-orthonormal, $c_k(t) = \langle\nu_N(t), \phi_k\rangle_{X',X}$ and $\|\nu_N(t)\|_{X'} = \|c(t)\|_{\ell^2}$.
Writing $D_N \coloneqq \operatorname{diag}(\lambda_1^\Pi, \ldots, \lambda_N^\Pi)$, the Galerkin approximation of equation~\eqref{eq:nonlinear_equation_x_prime} is
\begin{equation}
    \label{eq:closed-loop-galerkin-system}
    \dot{c}(t) = -D_N c(t) + R_N(c(t)), \qquad R_{N,k}(c) = \left\langle \mathfrak{R}_\Pi\left(\sum_{j=1}^N c_j \psi_j\right), \phi_k \right\rangle_{X',X}.
\end{equation}

For the one-dimensional experiments, we integrate the coefficient
system adaptively using an explicit Runge--Kutta method of order $8$. 
The sphere experiment uses first-order exponential
time differencing, exploiting the diagonal linear part.

\subsection{Additional numerical examples}
\label{app:sec:standard-models}

For the periodic double-well potential
\[
V(x) = -2\cos(4\pi x), \qquad x \in [0,1]_{\rm per},
\]
the first eigenvalue of $A$ is $\lambda_1 \approx 0.063$, corresponding to slow convergence.
Figure~\ref{fig:spectral_shift} confirms the discrete spectral shift predicted by Corollary~\ref{cor:spectral_gap}.
For the closed-loop operator, the observed spectral gap is well approximated by $\lambda_\Pi = \delta + \sqrt{(\lambda_1-\delta)^2 + \delta^2}$.
The time to 99\% decay decreases with $\delta$, while the integrated control cost increases.

\begin{figure}[!htbp]
    \centering
    \includegraphics[width=0.85\textwidth]{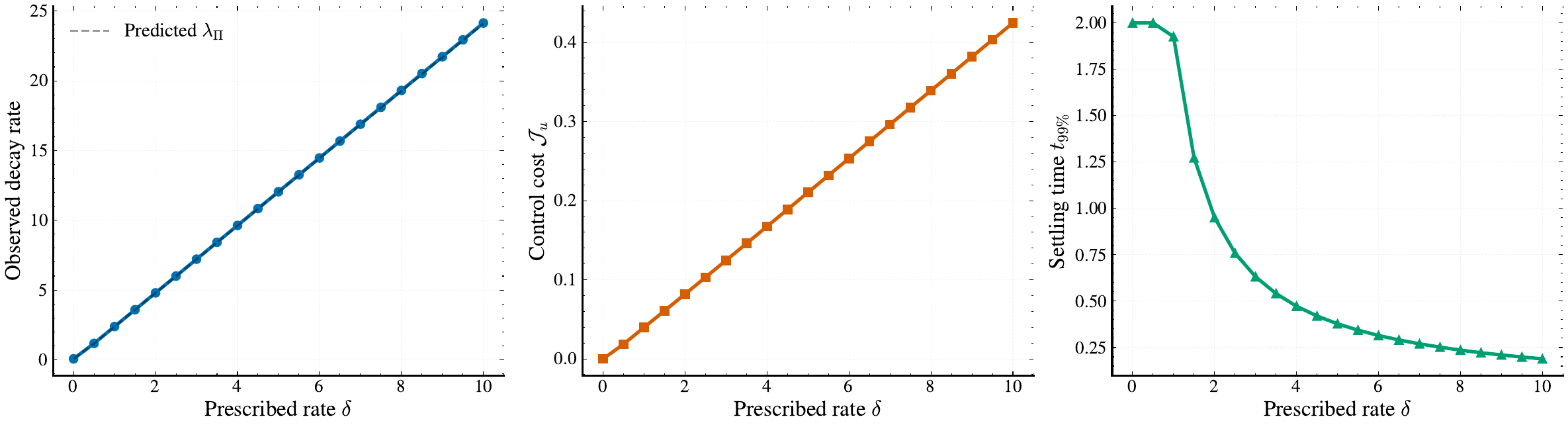}
    \caption{\textbf{Spectral shift and cost analysis}. 
    Double-well potential $V(x) = -2\cos(4\pi x)$ on $\T$ with $\sigma = 0.5$.
    We consider $N_{\text{fine}} = 512$ and $N = 64$.
    Left: observed decay rate and theoretical prediction $\lambda_\Pi$.
    Center: control cost $\mathcal{J}_u = \frac{1}{2} \int_0^T \|u\|_U^2\,dt$.
    Right: time to 99\% decay, defined as the first time $t$ such that $\|\mu_t-\bar\mu\|_{X'} \le 0.01\|\mu_0-\bar\mu\|_{X'}$ or $t = 2$.}
    \label{fig:spectral_shift}
\end{figure}

As a nonlinear nonlocal example, take
\[
\E(\mu) = \int_{\T} V(x) \, d\mu(x) + \int_{\T} \cos(K\ast\mu)(x) \, dx, \qquad K(x) = 5\cos(4\pi x),
\]
with $V(x) = -2\cos(4\pi x)$ and $\sigma = 0.5$.
Its first variation is $V - K * \sin(K * \mu)$.
For this model, Assumption~\ref{ass:H3} holds with
\[
\mathcal{K}_E \nu = -K * \left(\cos(K * \bar\mu)(K * \nu)\right).
\]
Indeed, since $K \in W^{2,\infty}(\T)$, $\|K * \nu\|_{W^{1,\infty}} \le C\|\nu\|_{X'}$.
Taylor expansion of the sine therefore yields
\[
\left\|\Theta_E(\bar\mu + \nu) - \Theta_E(\bar\mu) - \mathcal{K}_E \nu \right\|_X \le C\|\nu\|_{X'}^2,
\]
which verifies Assumption~\ref{ass:H3}.
The nonlinear term is the particular case $U(z) = \cos z$ of Example~\ref{ex:nonlinear-nonlocal-interaction}.
The following numerical experiment is illustrative, so Assumptions~\ref{ass:H4}--\ref{ass:H6} are not verified here.
In Figure~\ref{fig:nl_nonlocal}, the closed-loop dynamics decay for all tested amplitudes, while the uncontrolled dynamics exhibit large nonlinear transients.

\begin{figure}[!htbp]
    \centering
    \includegraphics[width=0.85\textwidth]{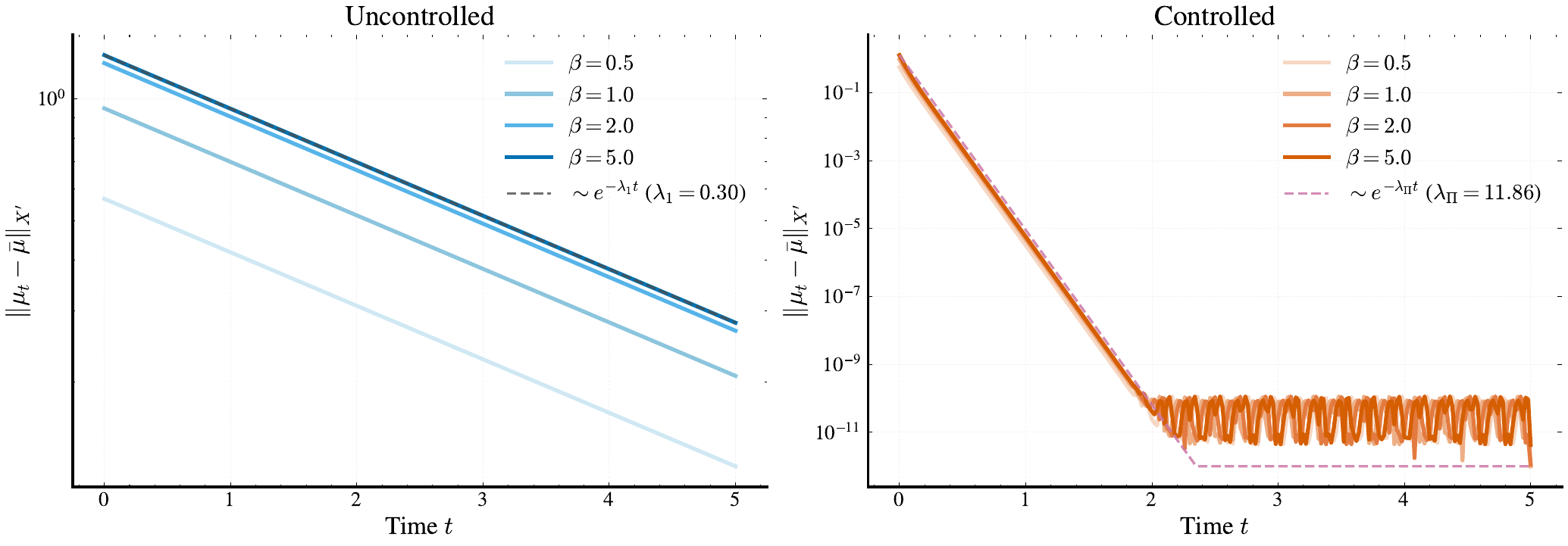}
    \caption{\textbf{Nonlinear nonlocal energy}.
    Decay of the $X'$-norm $\|\mu_t-\bar\mu\|_{X'}$ for initial densities $\mu_0 \propto \bar\mu e^{\beta \cos(2\pi x)}$ for $\beta \in \{0.5, 1.0, 2.0, 5.0\}$.
    We use $\sigma = 0.5$, $N_{\text{fine}} = 1024$, $N = 64$, and target rate $\delta = 5$.
    Left: uncontrolled dynamics.
    Right: closed-loop dynamics with all Hessian modes satisfying $\lambda_k \le \delta$ controlled.}
    \label{fig:nl_nonlocal}
\end{figure}

Following Section~\ref{sec:constrained-dynamics}, we consider the static moment constraint
\[
\int_{\T} \cos(2\pi x) \, \mu(dx) = 0,
\]
for which the unconstrained double-well equilibrium lies on the constrained set.
Thus, the Lagrange multipliers in Proposition~\ref{prop:constrained_projected_hessian} vanish, and the constrained operator $A_{\mathcal{M}}$ is associated with the Hessian form restricted to the constrained tangent space.
Figure~\ref{fig:constrained_static} shows that the projection removes the tunneling mode and increases the spectral gap of the constrained dynamics. 
A rank-$1$ feedback on the first active constrained mode then shifts this gap above the prescribed target rate.

\begin{figure}[!htbp]
    \centering
    \includegraphics[width=0.85\textwidth]{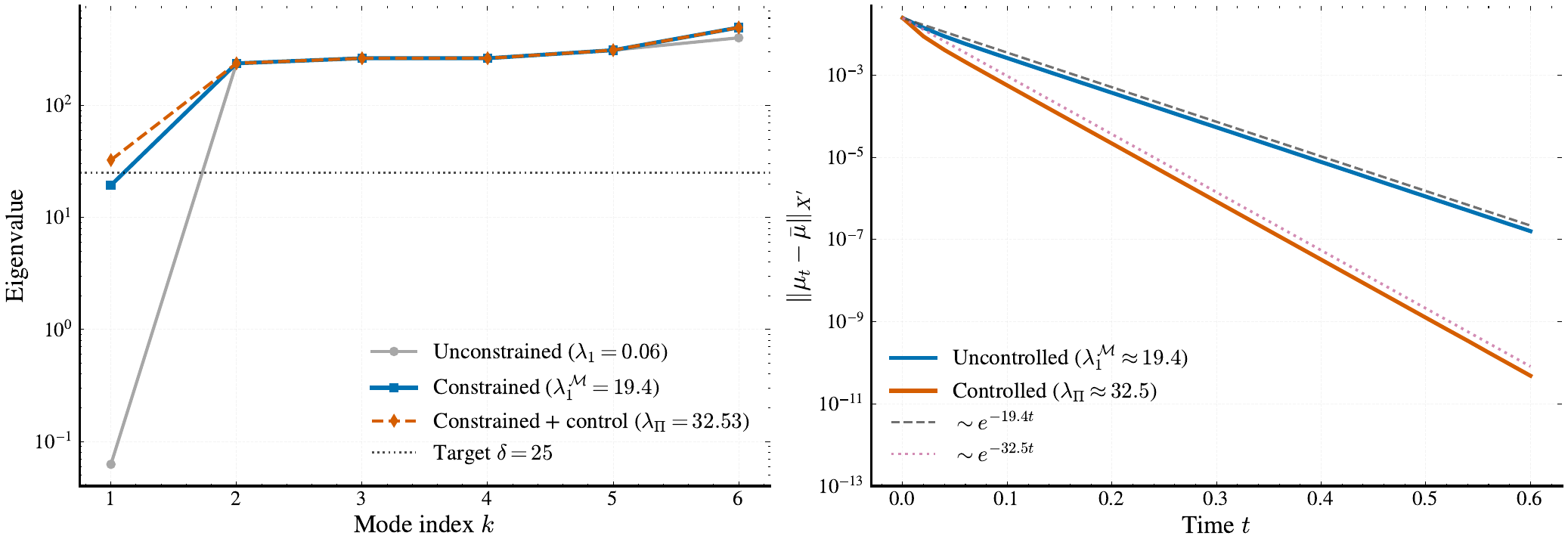}
    \caption{\textbf{Static constrained dynamics}. 
    Double-well potential $V(x) = -2\cos(4\pi x)$ on $\T$ with $\sigma = 0.5$ and constraint $\int_{\T}\cos(2\pi x) \, d\mu=0$.
    Simulation: $N_{\text{fine}} = 1024$, $N = 32$, $T = 0.6$, and $\Delta t = 10^{-3}$.
    Left: first unconstrained eigenvalues, active constrained eigenvalues of $A_{\mathcal{M}}$, and controlled constrained spectrum.
    The constraint removes the tunneling mode and lifts the projected constrained gap from $\lambda_1 \approx 0.06$ to $\lambda_1^{\mathcal{M}} \approx 19.4$.
    With target rate $\delta=25$ and one-mode Riccati weight $q_1 = 25$, the shifted rate is $\lambda_\Pi \approx 32.5$.
    Right: decay of $\|\mu_t - \bar\mu\|_{X'}$ for constrained uncontrolled and controlled dynamics.}
    \label{fig:constrained_static}
\end{figure}

\subsection{Empirical local attraction region}
\label{app:sec:empirical-basin}

Theorem~\ref{thm:local_stab} guarantees stability only for initial data sufficiently close to $\bar\mu$, so we scan a two-parameter family of initial perturbations in the span of two eigenfunctions of $A$.
To preserve positivity, we parameterize initial densities by
\[
\xi_0(r,\theta) = r(\cos\theta \phi_1 + \sin\theta \phi_2), \qquad \mu_0(r,\theta) \propto \bar\mu e^{\xi_0(r,\theta)}.
\]
For each $(r,\theta)$, the closed-loop dynamics are simulated and the observed decay rate $\kappa_{\mathrm{obs}}$ is estimated by linear regression of $\log\|\mu_t-\bar\mu\|_{X'}$ over the final part of the trajectory.
The run is classified as successful if $\kappa_{\mathrm{obs}} \ge \beta\lambda_\Pi$.
Figure~\ref{fig:basin_scan} reports the resulting empirical map.

\begin{figure}[!htbp]
    \centering
    \includegraphics[width=0.85\textwidth]{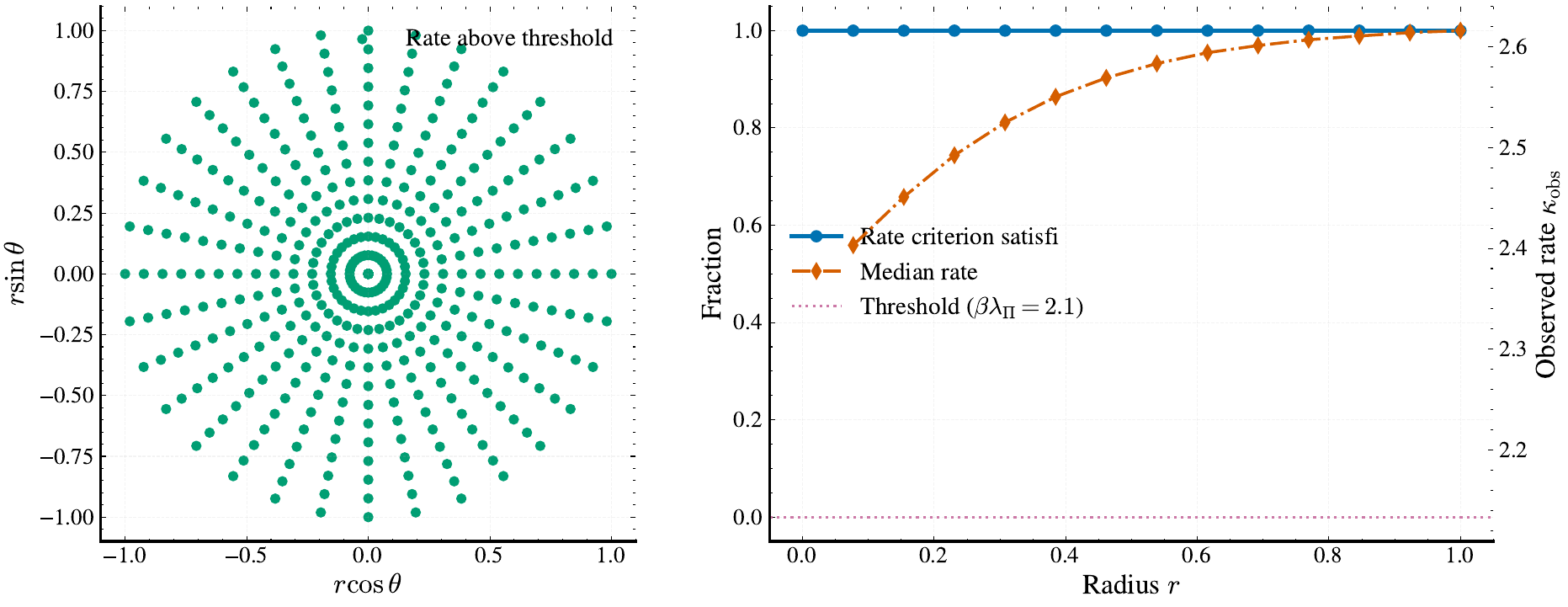}
    \caption{\textbf{Empirical attraction scan}. Closed-loop dynamics with $V(x) = -2\cos(4\pi x)$, $\sigma = 0.5$, and $\delta = 1$.
    Simulation: $N_{\text{fine}} = 512$, $N = 32$, $\Delta t = 5 \times 10^{-4}$, $T = 5$, scan $(r,\theta) \in [0,1] \times [0,2\pi)$ with $14 \times 32$ points, initialization $\mu_0 \propto \bar\mu e^{\xi_0}$, one-mode Riccati weight $q_1 = 1$, and threshold $\beta = 0.9$.
    Left: classification in the $(r \cos(\theta), r\sin(\theta))$ plane.
    Right: success fraction versus radius $r$, with median observed rate overlaid.}
    \label{fig:basin_scan}
\end{figure}

\end{document}